\documentclass[UTF-8,reqno]{amsart}
\usepackage{enumerate}
\usepackage{mhequ}
\usepackage{amsthm,amsmath,amssymb,url,color, booktabs,nccmath}
\usepackage[left=3.2cm,right=3.2cm,top=4cm,bottom=4cm]{geometry}
\usepackage{mathrsfs}
\usepackage{enumitem,dsfont}
\usepackage{subfigure}
\usepackage[graphicx]{realboxes}
\usepackage{tikz}

\usepackage{pinlabel}
\usepackage{graphicx}	
\usepackage{subcaption}
\usepackage{tikz}
\usepackage{tikz-cd}
\usepackage{color}
\usepackage[colorlinks=true]{hyperref}
\hypersetup{
    linkcolor=blue,          
    citecolor=red,        
    filecolor=blue,      
    urlcolor=cyan
}

\definecolor{darkergreen}{rgb}{0.0, 0.5, 0.0}

\numberwithin{equation}{section}
\def\theequation{\arabic{section}.\arabic{equation}}
\newcommand{\be}{\begin{eqnarray}}
\newcommand{\ee}{\end{eqnarray}}
\newcommand{\ce}{\begin{eqnarray*}}
\newcommand{\de}{\end{eqnarray*}}
\newtheorem{theorem}{Theorem}[section]
\newtheorem{lemma}[theorem]{Lemma}
\newtheorem{proposition}[theorem]{Proposition}
\newtheorem{Examples}[theorem]{Example}
\newtheorem{corollary}[theorem]{Corollary}

\newtheorem{definition}[theorem]{Definition}
\theoremstyle{definition}
\newtheorem{remark}[theorem]{Remark}

\def\<{{\langle}}
\def\>{{\rangle}}

\def\bx{{\mathbf{x}}}

\def\dif{{\mathord{{\rm d}}}}

\def\min{{\mathord{{\rm min}}}}

\def\={&\!\!=\!\!&}

\def\1{{\mathbf{1}}}

\def\geq{\geqslant}
\def\leq{\leqslant}
\def\ge{\geqslant}
\def\le{\leqslant}

\def\<{{\langle}}
\def\>{{\rangle}}

\def\bx{{\mathbf{x}}}

\def\dif{{\mathord{{\rm d}}}}

\def\min{{\mathord{{\rm min}}}}

\def\={&\!\!=\!\!&}
\def\bt{\begin{theorem}}
\def\et{\end{theorem}}
\def\bl{\begin{lemma}}
\def\el{\end{lemma}}
\def\br{\begin{remark}}
\def\er{\end{remark}}
\def\bx{\begin{Examples}}
\def\ex{\end{Examples}}
\def\bd{\begin{definition}}
\def\ed{\end{definition}}
\def\bp{\begin{proposition}}
\def\ep{\end{proposition}}
\def\bc{\begin{corollary}}
\def\ec{\end{corollary}}

\def\geq{\geqslant}
\def\leq{\leqslant}
\def\ge{\geqslant}
\def\le{\leqslant}

 \def\R{\mathbb R}
 \def\R{\mathbb R}

\def\<{\langle} \def\>{\rangle}

\allowdisplaybreaks

\begin{document}

\title[The Wave  Kinetic Theory for Quasi MMT]{The Wave  Kinetic Theory for Quasilinear MMT 
Equation}

\author{Huaxiang L\"u}
\address[H. L\"u]{Academy of Mathematics and Systems Science, Chinese Academy of Sciences, Beijing 100190, China, and Fakult\"at f\"ur Mathematik, Bielefeld Universit\"at, D 33615 Bielefeld, Germany}\email{lvhuaxiang22@mails.ucas.ac.cn }

\thanks{
Funded by the Deutsche Forschungsgemeinschaft (DFG,German Research Foundation)-Project-ID 317210226-SFB 1283"
}

\begin{abstract}

We study the one-dimensional quasilinear Majda--McLaughlin--Tabak (MMT) equation on a large torus $[0,L]$:
\begin{align*}
    i \partial_t u +2\pi|\nabla|^\sigma u +\lambda^{2}|\nabla|^\beta\left[ \left||\nabla|^\beta u\right|^{2} |\nabla|^\beta u\right]=0.
\end{align*}
Our focus is on the well-posedness of its dynamics and the emergence of kinetic behavior where the domain size $L$ tends to infinity and the nonlinearity $\alpha=\lambda^2L^{-1}$ vanishes. In contrast to semilinear dispersive models, the quasilinear structure leads to unavoidable derivative loss, which prevents the construction of solutions via iteration of the Duhamel formula.

Our results exhibit a dichotomy depending on the dispersion exponent $\sigma$. For $\sigma\in(1,2]$, we prove that, with high probability, solutions exist up to time scales $T_0 \sim \alpha^{-\frac54+} \wedge \alpha^{-\frac1{1-\beta}+}$, and that only trivial resonances occur, leading to a degenerate wave kinetic equation. For $\sigma\in(0,1)$, we prove the existence up to time scales $T_0 \sim \alpha^{-1-}$ and show that the second-order statistics are well approximated by the wave kinetic equation. In both cases, the solutions remain smooth while exhibiting smallness in suitable $L^\infty$-based norms despite having large total energy.

The proof proceeds in two main steps. First, we establish the
propagation of randomness for a suitably truncated equation, which
allows us to overcome the derivative loss and recover the kinetic
description. Then, we perform deterministic high-order energy estimates
and a bootstrap argument to extend the solution up to time $T_0$. The
main difficulty  lies in controlling the high-frequency
energy growth for a genuinely quasilinear equation with a general
 dispersion relation $|\xi|^\sigma$. In particular, different
regimes of $\sigma$ lead to fundamentally different four-wave resonance
structures, requiring separate treatments of trivial and nontrivial
resonant interactions. 

\end{abstract}

\keywords{Majda–McLaughlin–Tabak equation, wave kinetic equation, weak turbulence, resonant interactions, Feynman diagram}

\date{\today}

\maketitle

\tableofcontents

 \section{Introduction}

The kinetic theory of nonlinear wave systems serves as the formal foundation for the nonequilibrium statistical physics of such systems, and can be viewed as a natural extension of Boltzmann’s kinetic framework, originally introduced in the setting of interacting particle systems. The central objective of this theory is to extract macroscopic statistical dynamics from microscopic wave-wave interactions. To describe the effective energy spectrum, physicists formulated the Wave Kinetic Equation (WKE), whose role closely parallels that of the Boltzmann equation in classical particle kinetic theory, acting as a canonical bridge between microscopic interaction mechanisms and macroscopic kinetic-scale statistical descriptions. Then, Zakharov and his collaborators discovered a close connection to  hydrodynamic
 turbulence, whence the names “wave turbulence theory” and "Kolmogorov-Zakharov spectra". 

From a mathematical perspective, a rigorous derivation of the wave kinetic equation starting from dispersive equations constitutes a fundamental problem, and can be viewed as a  generalized form of Hilbert’s Sixth Problem for  statistical theory of wave systems.
Such a rigorous mathematical derivation has been developed in recent years. Through the precise mathematical analysis of Feynman diagram expansions, major breakthroughs have been achieved in the rigorous derivation of the WKE for NLS, or other semilinear models. We refer to Section \ref{sec:liter} for further relevant literature. However, for a large class of quasilinear equations, the situation is totally different, as solutions cannot be constructed through iterative applications of the Duhamel formula due to the unavoidable derivative loss.  In particular, the solution cannot be represented through iterative applications of  Feynman trees, which constitutes the core analytical strategy for semilinear equations.  This difficulty was recently overcome in the quasilinear setting by
Deng, Ionescu, and Pusateri \cite{DIP25a,DIP25} for the two-dimensional
gravity water-wave system, whose dispersion relation
corresponds to the case $\sigma=\frac12$.

Among the quasilinear models, a notable example is the toy model introduced by Majda, McLaughlin, and Tabak to conduct  numerical investigations of wave turbulence \cite{MMT97}. This equation is analytically tractable and straightforward to simulate, yet it still displays an exceptionally rich and diverse range of phenomena.
The universality of the weak turbulence scenario was called into question after studies of the MMT model revealed spectral behaviors that, in some cases, deviate from the classical Kolmogorov-Zakharov spectra predicted by wave turbulence theory. The introduction of the MMT model thus marked an important step toward a deeper understanding of wave turbulence. Nevertheless, a rigorous derivation of the quasilinear MMT equation remains an open problem.  

In this paper, we study the wave kinetic theory of the quasilinear MMT equation, which reads as:
  \begin{align*}
 \tag{MMT} \label{MMT}
\begin{cases}
i \partial_t u +2\pi|\nabla|^\sigma u \pm \lambda^{2}|\nabla|^\beta\left[ \left||\nabla|^\beta u\right|^{2} |\nabla|^\beta u\right]=0,  \quad x\in \mathbb{T}_L := [0,L],   \\
u(0,x) = u_{\textrm{in}}(x).
\end{cases}
\end{align*}
Here the sign of the nonlinearity has no effect on the kinetic description and we consider the defocusing case in this paper.
  The parameter $\sigma\in(0,1)\cup (1,2]$ controls the dispersion.  The parameter $\beta\in(0,1)$ quantifies the "degree of quasilinearity" of the equation. The restriction $0<\beta<1$ guarantees that the equation is indeed quasilinear. The case $\sigma=2,\beta=0$ corresponds to the cubic  Schr\"odinger equation (NLS).
 We rescale   the operators     $|\nabla|^\sigma$ and $|\nabla|^\beta$   into
\begin{align*} 2\pi|\nabla|^\sigma e^{ 2\pi ikx} =2\pi |k|^\sigma  e^{2\pi ik x},\ \ \
 |\nabla|^\beta e^{2\pi ikx} = |k|^\beta  e^{2\pi ik x}.
\end{align*} 

     The corresponding  wave kinetic equation reads as:
\begin{align*}
\tag{WKE}\label{WKE}
\begin{split}
\partial_t n(t, \xi) =&\mathcal K\left(n(t,\cdot)\right)(\xi), \\
\mathcal K(\phi)(\xi):=& \int_{\substack{(\xi_1, \xi_2, \xi_3)\in \R^{3}\\\xi_1-\xi_2+\xi_3=\xi,\\ |\xi_1|^\sigma-|\xi_2|^\sigma+|\xi_3|^\sigma=|\xi|^\sigma}}|\xi|^{2\beta}|\xi_1|^{2\beta}|\xi_2|^{2\beta}|\xi_3|^{2\beta} \phi \phi_1 \phi_2 \phi_3\left(\frac{1}{\phi_1}-\frac{1}{\phi_2}+\frac{1}{\phi_3}-\frac{1}{\phi}\right)\, \dif \xi_1 \dif \xi_2 \dif \xi_3,
\end{split}
\end{align*}
where we used the shorthand notations $\phi:=\phi(\xi),$ and $\phi_j:=\phi(\xi_j)$ for $i=1,2,3$. We note that \eqref{WKE} does not depend upon the sign of nonlinearity in \eqref{MMT}. In particular, as discussed in \cite{MMT97}, in the case $\sigma\in(1,2]$, the collision operator on the right hand of \eqref{WKE} is trivial, since the   dispersion relation $$\xi_1-\xi_2+\xi_3=\xi,\ \ |\xi_1|^\sigma-|\xi_2|^\sigma+|\xi_3|^\sigma=|\xi|^\sigma$$ only admits trivial solution  $\{\xi,\xi_2\}=\{\xi_1,\xi_3\}$.  In any case, the  main part of the integrand equals to 0.  The question now becomes what
 is the appropriate kinetic theory. However, in the case $\sigma \in (0,1)$, the resonant four-wave interactions admit additional nontrivial solutions. For instance, for the water-wave dispersion law $\sigma = \tfrac{1}{2}$ as in \cite{DIP25a,DIP25}, there exist the so-called Benjamin–Feir resonances (taking symmetry into account):
\begin{align*}
( \xi,\xi_1,\xi_2,\xi_3)=  \left( \frac{(a+b+\sqrt{ab})^2}{(\sqrt{a}+\sqrt{b})^2},\ a,\  -\frac{ab}{(\sqrt{a}+\sqrt{b})^2},\ b\right).
\end{align*}
However, for a general dispersive relation, the resonance structure usually does not admit an explicit characterization.

The kinetic  description  is  akin to a law of large numbers,  therefore
we start with a random distribution of the initial data. The conjecture   states that  in the limit of large torus size $L$ and vanishing strength of the nonlinearity 
\begin{align}
  \alpha:=\lambda^2L^{-1},\label{def:alpha}  
\end{align}
the effective dynamics of the Fourier-space mass density $\mathbf E |\widehat u(t, k)|^2$ ($k \in \mathbb{Z}_L:=L^{-1}\mathbb{Z}$) is well approximated  by
\begin{equation}\label{approximation}
\mathbf E |\widehat u(t, k)|^2 \approx n\left(\frac{t}{T_{\mathrm{kin}}}, k\right),
\end{equation}
 where $T_{\mathrm{kin}}\sim \alpha^{-2}=\frac{L^{2}}{\lambda^4}$ is the  kinetic timescale. While the discussion and derivation under the quasi-Gaussian random phase hypothesis are established in \cite{MMT97}, a rigorous derivation remains an open problem.

\subsection{Statement of the results}  

The spatial Fourier series of a function $u: \mathbb{T}_L \to \mathbb C$ is defined on $\mathbb{Z}_L:=L^{-1}\mathbb{Z}$ by
\begin{equation*}
\widehat{u}(k)=\int_{\mathbb{T}_L} u(x) e^{-2\pi i kx}\dif x,\quad \mathrm{\; such \ that \;}\quad u(x)=\frac{1}{L}\sum_{k \in \mathbb{Z}_L} \widehat{u}(k) \,e^{2\pi i kx}. 
\end{equation*}
 To match the kinetic approximation at time $t=0$, we consider 
well-prepared random homogeneous initial data of the form
 \begin{equation}\label{wellprepared}
 u_{\rm in}(x)=\frac{1}{L}\sum_{k \in \mathbb{Z}_L}\sqrt{n_{\textrm{in}}(k)} \, \eta_{k}(\omega)\,e^{2\pi i kx},
\end{equation}
where $n_{\mathrm{in}}$ is a Schwartz function,
and $\{\eta_k\}_{k\in\mathbb{Z}_L}$ are independent, identically distributed 
complex Gaussian random variables.

Our goal in this paper is to rigorously investigate wave turbulence
and to derive the wave kinetic equation  for the MMT model. Our main results are twofold. 
First, we establish long-time regularity for solutions to the MMT equation
on a large torus $\mathbb{T}_L$,  where the total energy is large,
but the local energy remains small.  Second, we show that as $L \to \infty$ and $\alpha := L^{-\gamma} \to 0$
for some small $\gamma \in (0,1)$, the second moment of the solution
$\mathbf{E}|\widehat{u}(t,k)|^2$ can be approximated by the power-series
expansion associated with the wave kinetic equation.   
\begin{theorem} \label{thm1}
Let $\sigma \in (0,1)\cup(1,2]$ and $\beta \in \left(0,\frac{\sigma}{4}\right)$. 
Assume that $\alpha$ satisfies the scaling law $\alpha = L^{-\gamma}$ for some $\gamma \in (0,1)$. 
Let $n_{\mathrm{in}} \in \mathcal{S}(\mathbb{R})$ be a Schwartz function, and let 
$\{\eta_k(\omega)\}_{k\in\mathbb{Z}_L}$ be i.i.d. complex normalized Gaussian random variables. Let $0<\delta \ll 1$, and define
 \begin{align}\label{def:T}
 T:= \begin{cases}
      L^{-10\delta}\min\{L,L^{\frac{(1+\gamma)(1+2\beta)}{2}} ,L^{\frac{5\gamma}{4}},L^{\frac{\gamma}{1-\beta}}\},\ &{\rm if}\ \sigma\in(1,2]; \\
       L^{-10\delta}\min\{ L^{\frac{1+2\beta}{2+2\beta-\sigma}},L^{\frac{5\gamma}{4}},L^{\frac{\gamma}{1-\beta}}\},\ &{\rm if}\ \sigma\in(0,1).
 \end{cases}
 \end{align} 
Let  $N_0:= 10 + \Bigl\lfloor \frac{100\beta^2}{\delta^2} \Bigr\rfloor$, and let
\begin{align*}
    T_0:=\begin{cases}
        T,\ \ &{\rm if}\  \sigma\in  (1,2];\\
        \min\{T,  L^{\gamma+\theta}\},\ \ &{\rm if}\  \sigma\in (0,1),
    \end{cases}
\end{align*}
 where $0<\theta\ll1$ is another constant depending on $N_0$.  

Then, with probability $\geq 1 - L^{-40}$,   the equation \eqref{MMT} admits a smooth solution on $[0,T_0]$ satisfying
\begin{align*}
  \sup_{t\in[0,T_0]} \|u(t)\|_{H^{N_0}} &\leq L^{\theta},  \ \
  \sup_{t\in[0,T_0]} \sum_{0\leq i\leq 2} \|\partial_x^i u(t)\|_{L^\infty} \leq L^{-1/2+\theta}.
\end{align*}

 Moreover, we have 
\begin{equation}\label{app}
\mathbf E |\widehat u(t, k)|^2 =n_{\mathrm{in}}(k)+\frac{t}{T_{\mathrm{kin}}}\mathcal K(n_{\mathrm{in}})(k)
+o_{l^\infty_k}\left(\frac{t}{T_{\mathrm {kin}}}\right)_{L \to \infty}
\end{equation}
for all $L^{0+} \leq t \leq T_0$, where $T_{\mathrm{kin}}=\alpha^{-2}/2$, and $o_{l^\infty_k}\left(\frac{t}{T_{\mathrm {kin}}}\right)_{L \to \infty}$ is a quantity that is bounded in $l^\infty_k$ by $L^{-\delta'}\cdot \frac{t}{T_{\mathrm {kin}}}$ for some $\delta'>0$.

\end{theorem}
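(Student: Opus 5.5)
The proof splits into a probabilistic step and a deterministic step, as the abstract indicates. The first step builds an approximate solution from a frequency-truncated equation whose Duhamel expansion converges. The second step closes a bootstrap for the true quasilinear solution using high-order energy estimates around that approximation.

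\textbf{Step 1: truncated equation and Feynman expansion.} Fix a frequency cutoff $|k|\le L^{\delta}$, which is harmless since $n_{\rm in}$ is Schwartz. Replace the nonlinearity by its projection $\Pi_{\le L^\delta}$, so the derivative loss $|k|^{4\beta}$ costs only $L^{4\beta\delta}$. Write the solution $u^{\rm app}$ as a sum of Feynman trees of order $n\le N$ plus a remainder. For each tree, one proves with probability $\ge 1-L^{-50}$ that
\[
\|\mathcal J_n\|_{l^\infty_k}\lesssim L^{-1/2+O(\delta)}\,(T/T^\ast)^{n/2},
\]
where $T^\ast$ is the admissible time scale. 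The tools are Wick/hypercontractivity estimates together with lattice counting bounds for the resonant sets
\[
\{k_1-k_2+k_3=k,\ |\,|k_1|^\sigma-|k_2|^\sigma+|k_3|^\sigma-|k|^\sigma|\le T^{-1}\}.
\]
The counting is where the two regimes of $\sigma$ separate.
\begin{itemize}
\item For $\sigma\in(1,2]$, strict convexity forces near-resonances to be close to trivial ones. One gets an improved count of roughly $L^2T^{-1}$ up to a $|k|^{-(\sigma-1)}$-type factor. Combined with the $|k|^{\beta}$ weights, this yields the thresholds $L^{(1+\gamma)(1+2\beta)/2}$, $L^{5\gamma/4}$ and $L^{\gamma/(1-\beta)}$. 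The trivial resonances contribute only a phase correction, which I would renormalize by a Wick-ordering gauge $u\mapsto e^{i t\cdot(\text{mass-type term})}u$.
\item For $\sigma\in(0,1)$, nontrivial resonances exist. A concavity-based count gives the threshold $L^{(1+2\beta)/(2+2\beta-\sigma)}$, and the series converges only for $t\le \alpha^{-1-\theta}=L^{\gamma+\theta}$.
\end{itemize}
The remainder solves a linearized equation around the tree sum. It is controlled by a $TT^\ast$/moment argument bounding the operator norm of the random linearization, as in Deng–Hani.

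\textbf{Step 2: deterministic energy bootstrap.} Set $w=u-u^{\rm app}$ and propagate the following on $[0,T_0]$:
\begin{itemize}
\item the bound $\|u\|_{H^{N_0}}\le L^\theta$;
\item the local smallness $\sum_{i\le2}\|\partial_x^i u\|_{L^\infty}\le L^{-1/2+\theta}$.
\end{itemize}
For the energy, construct a modified energy $E_{N_0}=\|u\|_{H^{N_0}}^2+$ a quartic correction that cancels the top-order quasilinear terms. The symmetric structure $|\nabla|^\beta[\,||\nabla|^\beta u|^2|\nabla|^\beta u]$ makes the highest-derivative term self-adjoint, up to commutators losing $\beta$ derivatives. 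The choice $N_0\sim\beta^2/\delta^2$ lets interpolation absorb these losses. This gives
\[
\frac{d}{dt}E_{N_0}\lesssim \lambda^2\|u\|_{W^{2,\infty}}^2 E_{N_0} \lesssim \alpha L\cdot L^{-1+2\theta}E_{N_0}=\alpha L^{2\theta}E_{N_0},
\]
so $E_{N_0}$ grows by at most a constant factor for $t\le T_0\lesssim\alpha^{-1-}$ once $\theta$ is tuned.

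For the $L^\infty$ norm, compare $w$ with $u^{\rm app}$. High frequencies of $w$ are small by the $H^{N_0}$ bound. Low frequencies satisfy a Duhamel estimate using the randomness of $u^{\rm app}$ from Step 1, together with the stability of the linearized operator.

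\textbf{Main obstacle and conclusion.} I expect the hardest part to be making the Step 1 counting bounds sharp enough, uniformly with the $|k|^{4\beta}$ weights. This is especially delicate for $\sigma\in(0,1)$, where the resonance manifold has no explicit description. My first attempt would be a dyadic decomposition in $|k_i|$ with the mean value theorem on $|\cdot|^\sigma$, treating the nearly parallel configurations separately.

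Finally, the asymptotic \eqref{app} follows from Step 1. Compute $\mathbf E|\widehat{u^{\rm app}}(t,k)|^2$ through second-order pairings. The first-order cross terms vanish by Gaussianity. The order-$\alpha^2 t$ term converges to $\tfrac{t}{T_{\rm kin}}\mathcal K(n_{\rm in})$ via the equidistribution of lattice points on the resonance surface, which is $\delta$-function approximation for $t\ge L^{0+}$ with $t\ll L$. For $\sigma\in(1,2]$ this limit is zero. All higher trees are $O(L^{-\delta'}t/T_{\rm kin})$, and the exceptional set contributes at most $L^{-40}$ times polynomial bounds.
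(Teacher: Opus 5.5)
\textbf{The main gap is in Step 2.} Your energy estimate cannot reach the time $T_0$ in the statement, and this is the central difficulty of the theorem, not a detail. Grant your quartic bound $\frac{d}{dt}E_{N_0}\lesssim\lambda^2\|u\|_{W^{2,\infty}}^2E_{N_0}\lesssim\alpha L^{2\theta}E_{N_0}$. Gronwall then gives $E_{N_0}(t)\le E_{N_0}(0)\exp(C\alpha L^{2\theta}t)$, so the bootstrap $\|u\|_{H^{N_0}}\le L^{\theta}$ closes only for $t\lesssim\alpha^{-1}L^{-2\theta+o(1)}$. At $t=\alpha^{-1-}$ the exponent is a positive power of $L$, so your claim that $E_{N_0}$ grows by at most a constant factor is false. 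Yet $T_0$ lies beyond $\alpha^{-1}$. For $\sigma\in(1,2]$ it reaches $L^{-10\delta}(\alpha^{-5/4}\wedge\alpha^{-1/(1-\beta)})$ when $\gamma$ is small, and for $\sigma\in(0,1)$ it can be $\alpha^{-1}L^{\theta}$.

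\textbf{The missing idea is a normal-form upgrade of the energy increment.} The paper proceeds as follows.
\begin{enumerate}
\item It writes $\frac{d}{dt}\|J^{N_0}u\|_{L^2}^2=\lambda^2Q$ with symbol $B\Psi$, where $\Psi=\sum\pm m_{N_0}^2(\xi_j)$. It integrates by parts in time to divide by $\Omega^{(\sigma)}$.
\item It proves $\|B\Psi\varphi_{\underline k}/\Omega^{(\sigma)}\|_{\tilde S^\infty}\lesssim 2^{(2N_0-\sigma+2\beta)\tilde k_1}2^{\beta(\tilde k_3+\tilde k_4)}$. This rests on the factorization $\Psi=(\xi_1-\xi_2)(\xi_2-\xi_3)\iint(m_{N_0}^2)''(\cdots)$, which vanishes on the trivial resonances. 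It also needs lower bounds on $|\Omega^{(\sigma)}|$ for each sign pattern, and scale-invariant derivative bounds on the resulting quotients.
\item The output is boundary terms $\lambda^2(A\epsilon)^2$ plus a sextic term $\lambda^4A^2\int\epsilon^4\approx\alpha^2L^{0+}t$. The $\sigma$ derivatives gained absorb the $4\beta$ loss from $\mathscr B$, which is where $\beta<\sigma/4$ is used.
\item For $\sigma\in(0,1)$ the quotient is singular on the nontrivial resonant set. The paper subtracts $\Psi(\xi^*)$ at the implicitly defined resonant point $\xi_3^*(\xi_2,\xi_4)$ and normal-forms $\Psi-\Psi(\xi^*)$. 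It bounds the resonant part directly, using $0<\xi_3^*<\min\{\xi_2,\xi_4\}$ to move a derivative onto a low frequency and then interpolating.
\item That last step yields only $\lambda^2A^{2-1/(4N_0)}\int\epsilon^{2+1/(4N_0)}$. This is exactly why $T_0\le\alpha^{-1}L^{\theta}$ in that regime.
\end{enumerate}
So the $\alpha^{-1-}$ restriction for $\sigma\in(0,1)$ comes from the energy estimate, not from convergence of the tree expansion as you assert. The couple bounds $(\alpha T^{4/5}L^{2\delta})^n$ hold in both regimes up to $T$.

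\textbf{A smaller issue in Step 1: the renormalization.} A scalar mass-type gauge $u\mapsto e^{it\cdot m}u$ does not remove the trivial resonances here. The resonant term is $2\alpha T\varphi_{\leq K}(k)|k|^{2\beta}\bigl(L^{-1}\sum_{k_1}|k_1|^{2\beta}|c_{k_1}|^2\bigr)c_k$. This is a frequency-dependent phase, and its coefficient is not conserved. The paper instead uses a deterministic $\Gamma(k,t)=\Gamma_0(k)Tt+\Gamma_1(k,t)$. Here $\Gamma_0(k)\sim\alpha|k|^{2\beta}$ comes from the trivial couples and $\Gamma_1$ from a fixed point. This is paired with enhanced pairings, which exclude the loop configurations. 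The shift $\Gamma_0$ changes the dispersion relation to $|k|^\sigma+\Gamma_0(k)$, which has two consequences.
\begin{enumerate}
\item Keeping $\Gamma_0$ perturbative in the counting lemma near $k=0$ is what forces $\alpha T^{1-\beta}<1$.
\item The truncated kernel lives on the perturbed resonance surface, so it must be compared with $\mathcal K$ at the end.
\end{enumerate}
Two attributions are also off. The thresholds $L^{(1+\gamma)(1+2\beta)/2}$ and $L^{(1+2\beta)/(2+2\beta-\sigma)}$ come from the Poisson-summation step after cutting off $|k_i|\lesssim T^{-1/(1+2\beta)}$, not from the counting. And reaching $\alpha^{-5/4}$ needs the irregular-chain cancellation and the molecule algorithm, not a generic $(T/T^\ast)^{n/2}$ bound.
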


\begin{remark}
\begin{enumerate}
   
    \item  In Theorem~\ref{thm1}, we also establish long-time regularity for solutions to \eqref{MMT} with high probability. 
The expectation $\mathbf{E}$ in \eqref{app} is taken over the event where a smooth solution exists on $[0,T_0]$, 
which has probability at least $1 - L^{-40}$. 
Outside this event, the corresponding quantity is defined to be zero.
    \item As in the quasilinear water-wave setting \cite{DIP25a,DIP25}, 
our solutions are assumed to be small in certain $L^\infty$-based norms, 
while allowing for large total energy as the size of the torus increases. 
The smallness of the $L^\infty$ norm also reflects the smallness of local energy per unit volume. 
This feature plays an essential role in the energy estimates and in controlling the quasilinear interactions over long time intervals. 
  
    \item The well-/ill-posedness theory for \eqref{MMT} itself remains a difficult and interesting problem. 
In the semilinear case $\beta=0$, this question has been extensively studied in the literature; 
see, for instance, \cite{CHHO13,CHHO14,CHKL14,CHKL15,HS15,BLLZ23} and the references therein. 
In contrast, the quasilinear regime $\beta>0$ is substantially more challenging. 
Indeed, the derivative-type nonlinear interactions amplify high-frequency components and lead to a derivative-loss mechanism, making the equation significantly harder to analyze.  
We refer to the recent work \cite{PPW26} for progress on the well-/ill-posedness theory in the quasilinear setting with $\sigma\in(1,2]$. 
    \item  As mentioned earlier, in the case $\sigma \in (1,2]$, the first-order term $\mathcal{K}(n_{\mathrm{in}})$ vanishes. In particular, we show that there is no nontrivial evolution of the second moment up to time $T_0$. This reflects the strong non-resonant structure of the equation in this regime. We also note that even the local well-posedness of \eqref{WKE} remains a highly nontrivial problem, especially in the regime $\beta > 0$, where the collision kernel becomes singular near low frequencies. Recent progress on the well-/ill-posedness theory of the WKE can be found in \cite{GIK20,GLZ25,AL25}. Our analysis does not rely on the well-posedness theory of the WKE itself; instead, we establish convergence only at the level of the first-order kinetic expansion.
 
\end{enumerate}
\end{remark}

\br We briefly comment on the time scale obtained in the present work. 
In principle, for cubic problems one expects to control solutions up to the kinetic time scale 
$T_{\mathrm{kin}} \sim \alpha^{-2}$, where wave turbulence phenomena emerge and the WKE becomes relevant. 
In the NLS case with $d \geq 2$, this was achieved in the series of works \cite{DH23a,DH23b,DH23c}. 
In the semilinear case $\beta = 0$, Vassilev \cite{Vas24} obtained a lifespan up to 
$T \sim \alpha^{-5/4+}$ in one dimension. 
However, in our setting, the one-dimensional nature of the problem combined with its quasilinear structure 
makes the analysis significantly more difficult. 
For $\sigma \in (1,2]$, we obtain a lifespan $T _0\sim \alpha^{-5/4+} \wedge \alpha^{-1/(1-\beta)+},$
while for $\sigma \in (0,1)$, we obtain $T_0 \sim \alpha^{-1-}$,
due to limitations of the energy estimates. 
We refer to Section~\ref{sec:ideaproof} for further discussion. Compared with \cite{Vas24}, our analysis requires the introduction of a renormalization procedure in order to eliminate certain resonant interactions. 
As a consequence, the resonant terms appearing in the counting argument acquire additional lower-order correction terms. 
This modification leads to the extra restriction $\alpha T^{1-\beta}<1,$ which is needed to ensure that the renormalized phase corrections remain perturbative in the counting estimates.  

Deng, Ionescu, and Pusateri \cite{DIP25} treated the two-dimensional gravity water-wave system, whose interface is one-dimensional, and established the existence of random solutions up to times of order $T \sim\alpha^{-4/3+}$. More recently, Vassilev and Wu \cite{VW26} obtained the analogous scale $ \alpha^{-4/3+}$ for the full $\beta$-FPUT system by means of a refined diagrammatic and counting procedure.
By contrast, the exponent $\alpha^{-5/4+}$ obtained here for $\sigma\in(1,2]$ results from our direct use of the molecule algorithm developed in \cite{Vas24}. We expect that, by replacing this step with the refined strategy developed in \cite{DIP25,VW26}, the propagation-of-randomness argument could also be extended to the scale $\alpha^{-4/3+}$ at least in the regime $\sigma\in(1,2]$. We do not pursue this optimization here, the main purpose of the present work is to address the the additional difficulties created by the general dispersion relation $|\xi|^\sigma$.  For $\sigma\in(0,1)$, moreover, the final lifespan is already constrained by the deterministic high-frequency energy estimate, so an improvement of the combinatorial counting alone would not extend the time scale in the main theorem.
\er
  
The derivation of the wave kinetic equation has be achieved for the cubic NLS \cite{DH23b,DH23c} and the MMT  with no derivatives in the nonlinearity  \cite{Vas24}.  
 However, under the circumstances we are considering,  the MMT equations \eqref{MMT} are quasilinear and solutions cannot
 be constructed by iteration of the Duhamel formula due to unavoidable derivative loss. Following the method  developed for 2D gravity water waves \cite{DIP25a,DIP25}, we establish our main result, Theorem \ref{thm1}, via a two-step strategy: propagation of randomness, and energy estimates.

 \textbf{Propagation of randomness.}
We begin by introducing a Fourier truncation of \eqref{MMT}. 
More precisely, we define the truncation operator
\[
\widehat{P_{\leq K}f}
=
\varphi_{\leq K}\widehat{f},
\]
where the cutoff function $\varphi_{\leq K}(x)=\varphi(x/2^K)$ is defined in Section~\ref{sec:pre}. 
The truncation parameter $K$ is chosen as
\begin{align}
    K:=\Bigl[\log_2\bigl(L^{\frac{\delta}{10\beta}}\bigr)\Bigr],
    \label{def:K}
\end{align}
where $\delta>0$ is the small constant appearing in Theorem~\ref{thm1}, 
and $[\cdot]$ denotes the floor function.

We then consider the truncated MMT equation on the time interval $[0,T]$:
\begin{align*}
\tag{Tr-MMT}\label{TrMMT}
\begin{cases}
i \partial_t u_{\rm tr}
+2\pi |\nabla|^\sigma u_{\rm tr}
+\lambda^{2}P_{\leq K}
|\nabla|^\beta
\Bigl[
\bigl||\nabla|^\beta u_{\rm tr}\bigr|^{2}
|\nabla|^\beta u_{\rm tr}
\Bigr]
=0,\\
u_{\rm tr}(0,x)=P_{\leq K}u_{\rm in}(x).
\end{cases}
\end{align*}

As a first step, we study the truncated equation, for which the iterated Duhamel expansion 
(and the associated Feynman tree representation) can be applied. 
Our goal is to establish the well-posedness of the solution, 
and to prove the convergence of solutions to the truncated MMT equation 
toward a corresponding truncated wave kinetic equation.

\begin{theorem}\label{thm2}
Let $\sigma\in (0,1)\cup (1,2],\beta\in(0,\frac\sigma4)$. We assume  $\alpha$ has the scaling law $\alpha=L^{-\gamma}$ with $\gamma\in(0,1)$. Let $n_{\mathrm{in}} \in \mathcal S$ be a Schwartz function, and $\eta_{k}(\omega)$ are i.i.d. complex normalized   Gaussian random variables. We assume $0<\delta\ll1$, and   define $T$ as in \eqref{def:T}. 
Let the truncation parameter $K$ be defined as in \eqref{def:K}.  Then, with probability $\geq 1 - L^{-40}$, the truncated MMT equation \eqref{TrMMT}  has a smooth function on $[0,T]$, and
\begin{equation}\label{approx2}
\mathbf E |\widehat u_{\rm tr}(t, k)|^2 =\varphi_{\leq K}^2(k)n_{\mathrm{in}}(k)+\frac{t}{T_{\mathrm{kin}}}\mathcal K_{\rm tr}(n_{\mathrm{in}})(k)+o_{l^\infty_k}\left(\frac{t}{T_{\mathrm {kin}}}\right)_{L \to \infty}
\end{equation}
for all $L^{0+} \leq t \leq T$, where $T_{\mathrm{kin}}=\alpha^{-2}/2$. 
Here, the truncated collision operator $\mathcal K_{\rm tr}$ is defined as 
\begin{align}
    \mathcal K_{\rm tr}(\phi)(\xi):=& \int_{\mathcal{E}}\varphi_{\leq K}^2(\xi_1)\varphi_{\leq K}^2(\xi_2)\varphi_{\leq K}^2(\xi_3)\varphi_{\leq K}^2(\xi)|\xi|^{2\beta}|\xi_1|^{2\beta}|\xi_2|^{2\beta}|\xi_3|^{2\beta}\notag\\
    &\quad\quad\quad\quad\times \phi \phi_1 \phi_2 \phi_3\left(\frac{1}{\phi_1}-\frac{1}{\phi_2}+\frac{1}{\phi_3}-\frac{1}{\phi}\right)\, \dif \xi_1 \dif \xi_2 \dif \xi_3,\label{bd:ktr}
\end{align}
where the integration domain  is defined as \begin{align}
    \mathcal{E}:=\{(\xi_1, \xi_2, \xi_3)\in \R^{3}:&\ \xi_1-\xi_2+\xi_3=\xi,\notag\\ &\ \quad\quad |\xi_1|^\sigma+\Gamma_0(\xi_1)-|\xi_2|^\sigma-\Gamma_0(\xi_2)+|\xi_3|^\sigma+\Gamma_0(\xi_3)=|\xi|^\sigma+\Gamma_0(\xi)\},\notag
\end{align}
where $\Gamma_0(\xi)$ will be defined in \eqref{def:gamma1} below satisfying $$\sup_{\xi\in\mathbb{R}}|\Gamma_0(\xi)|\lesssim   L^{-\delta}.$$
\end{theorem}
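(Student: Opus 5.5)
Since the cutoff $P_{\leq K}$ with $2^K\sim L^{\delta/(10\beta)}$ makes every derivative factor $|k|^\beta$ at most $L^{\delta/10}$, the plan is to treat \eqref{TrMMT} as a semilinear problem and run the Feynman-tree machinery of \cite{DH23b,Vas24} with this mild loss absorbed into the $L^{-10\delta}$ margin in \eqref{def:T}.

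\textbf{Renormalization and profile.} Pass to Fourier variables with the profile $a_k(t)=e^{-2\pi i(|k|^\sigma+\Gamma_0(k))t}\widehat u_{\rm tr}(t,k)$. Here $\Gamma_0(k)$ is the Wick-type phase correction
\[
\Gamma_0(k)\approx 2\alpha L^{-1}\textstyle\sum_{m}|k|^{2\beta}|m|^{2\beta}\varphi_{\leq K}^2(m) n_{\rm in}(m),
\]
which removes the trivial resonances ($\{k,k_2\}=\{k_1,k_3\}$). This is legitimate because $\sum_m$ is an $L^{-1}$-weighted Riemann sum, so $|\Gamma_0|\lesssim \alpha 2^{4\beta K}\lesssim L^{-\gamma+\delta/2}\lesssim L^{-\delta}$. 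The equation becomes
\[
\partial_t a_k=i\alpha\sum_{k_1-k_2+k_3=k}\varphi_{\leq K}(k)\,|k k_1k_2k_3|^\beta\, a_{k_1}\bar a_{k_2}a_{k_3}\,e^{2\pi i\Omega t},
\]
with renormalized phase $\Omega$ (including $\Gamma_0$) and the trivial pairings subtracted. Iterating Duhamel then writes $a_k=\sum_{n\le N}\mathcal J_n+R_N$ as a sum over ternary trees $\mathcal J_\mathcal T$ of order $n$.

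\textbf{Tree bounds and remainder.} For each tree I will prove $\sup_k|\mathcal J_n(t,k)|\lesssim L^{\theta}(L^{-\delta}\rho)^{n}$ with $\rho\ll1$, with probability $\ge1-L^{-50}$. The steps are:
\begin{enumerate}
\item Gaussian hypercontractivity reduces this to second moments, which are sums over couples (paired trees).
\item Those sums are bounded by counting lattice points. The derivative weights cost only $2^{4\beta Kn}\le L^{2n\delta/5}$, absorbed by the $L^{-10\delta}$ gain.
\end{enumerate}
The remainder $R_N$ is handled by a contraction argument in $l^2$/$h^{s}$. The linearized operator $a\mapsto\sum \mathcal J\cdot\mathcal J\cdot a$ is bounded via random-matrix ($TT^*$) estimates, and $N$ is chosen large enough that $(L^{-\delta})^N$ beats every polynomial loss, including the trivial bound $2^{KN_0}$ used to get smoothness.

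\textbf{Counting and the kinetic limit.} The counting estimates are the heart of the argument and use the molecule reduction of \cite{Vas24}. Each vertex-pair (atom) requires a two-vector counting bound: count $(k_1,k_3)\in\mathbb Z_L^2$, $|k_j|\lesssim 2^K$, such that $|\Omega-\text{const}|\le T^{-1}$. The case split is:
\begin{itemize}
\item For $\sigma\in(1,2]$, strict convexity of $|\xi|^\sigma$ gives the 1D bound $\lesssim L^{1+}(1+L/T)$ away from degenerate configurations. It yields the exponent $5\gamma/4$ exactly as in \cite{Vas24}. The factors $|k|^{2\beta}$ together with the small-frequency region give the constraint $L^{(1+\gamma)(1+2\beta)/2}$.
\item For $\sigma\in(0,1)$, the resonance set has nontrivial (Benjamin--Feir type) branches. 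Counting near them needs a transversality bound on $\nabla\Omega$, degenerate near $\xi=0$; the weights $|k|^{2\beta}$ compensate. This is the origin of $L^{(1+2\beta)/(2+2\beta-\sigma)}$.
\item In both cases $\Gamma_0$ perturbs $\Omega$ by $O(\alpha 2^{2\beta K}|k|^{2\beta}\cdots)$. This stays below the resolution $T^{-1}$ after Taylor expansion, or is absorbed, provided $\alpha T^{1-\beta}<1$, which explains $L^{\gamma/(1-\beta)}$.
\end{itemize}
I expect this $\sigma<1$ counting with the renormalized phase to be the main obstacle. Finally, for \eqref{approx2}, expand $\mathbf E|a_k|^2$ into couples. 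The order-0 term gives $\varphi_{\le K}^2 n_{\rm in}$. The order-2 couples give, via the standard $\frac{\sin^2(\pi\Omega t)}{\Omega^2}\to t\,\delta(\Omega)$ approximation and Riemann-sum replacement (error $L^{-\delta'}$ since $t\le T\ll L$ in the regime used), exactly $\frac{t}{T_{\rm kin}}\mathcal K_{\rm tr}(n_{\rm in})$ on $\mathcal E$. Higher-order couples and $R_N$ are $O(L^{-\delta'}t/T_{\rm kin})$ by the bounds above.
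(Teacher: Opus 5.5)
Your overall architecture matches the paper's: the truncation makes \eqref{TrMMT} effectively semilinear with an $L^{O(\delta)}$ loss per node, followed by tree and couple expansion, molecule counting, a contraction for the remainder, and explicit low-order couples. The renormalization step, however, has a genuine gap.

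You conjugate only by the time-independent phase $\Gamma_0(k)$ built from $n_{\rm in}$. So in the degenerate part of the nonlinearity ($k_1=k$ or $k_3=k$) you subtract $\varphi_{\le K}^2(k_1)n_{\rm in}(k_1)$ from $|a_{k_1}(t)|^2$, which removes only the pairing of two leaves at $k_1$. For $t>0$ one has $\mathbf E|a_{k_1}(t)|^2\neq\varphi_{\le K}^2(k_1)n_{\rm in}(k_1)$. For $\beta=0$ one renormalizes with the conserved mass, but $\sum_{k_1}|k_1|^{2\beta}|a_{k_1}|^2$ is not conserved.

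Consequently your expansion still contains couples in which, below a degenerate node, the two subtrees carrying $k_1$ are completely paired with each other in a nontrivial way. Already at order three this is a problem. Take a degenerate root carrying $a^{(1)}_{k_1}\overline{a^{(1)}_{k_1}}\,a^{(0)}_k$, pair the two order-one trees together, and pair $a^{(0)}_k$ with the trivial tree. This couple has size $\sim\alpha T\cdot\alpha^2T=\alpha^3T^2$ in rescaled time. That exceeds the needed bound $(\alpha T^{4/5}L^{2\delta})^3T^{-3/5}$ by a power of $T$. It even exceeds $t/T_{\rm kin}\sim\alpha^2T$ once $\alpha T>1$, which \eqref{def:T} allows. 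Figure~\ref{fig:1loop} (right) shows a variant whose lattice count $L^2$ is not $\lesssim(LT^{-1/5})^3T^{-3/5}$.

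The leading part of this contribution happens to be a pure phase. Your argument is couple-by-couple and has no mechanism to see that. So the molecule bounds, the summation over orders and the remainder estimate all fail as stated.

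The paper's remedy is an implicit, time-dependent renormalization $\Gamma(k,t)=\Gamma_0(k)Tt+\Gamma_1(k,t)$:
\begin{itemize}
\item $\partial_t\Gamma_1$ is defined through $\mathbf E[c^{\mathcal T^+}_{k_1}c^{\mathcal T^-}_{k_1}]$ summed over all nontrivial couples of order $\le N$, so it depends on $\Gamma_1$ itself. It is obtained by the contraction in the second half of Proposition~\ref{prop:couple}.
\item Together with Wick ordering at the level of products in \eqref{Duhamel0}, this restricts the expansion to enhanced couples, whose molecules have no loops and two degree-$3$ atoms.
\item Proposition~\ref{prop:est:af} makes the time-integral bounds uniform in $\Gamma_1$ with $|\partial_t\Gamma_1|\le 1$.
\end{itemize}

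A secondary problem is that you misplace the origin of the time restrictions, and so treat the genuinely delicate step as routine. Lemma~\ref{counting} is insensitive to the nontrivial resonances for $\sigma<1$; it only uses one-variable lower bounds on derivatives of $|x|^\sigma\pm|x-k|^\sigma$. Its one new hypothesis, $\alpha T^{1-\beta}D^{2-\sigma}<1$, ensures that derivatives of $\Gamma_0$ (of size $\lesssim\alpha T^{1-\beta}$ on $|x|\ge T^{-1/2}$) cannot spoil the $D^{\sigma-2}$ lower bounds coming from $|x|^\sigma$. It does not say that $\Gamma_0$ is below resolution; in fact $\Gamma_0T\sim\alpha TL^\delta$.

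The exponents $\frac{(1+\gamma)(1+2\beta)}{2}$ and $\frac{1+2\beta}{2+2\beta-\sigma}$ come instead from replacing the lattice sum by the integral, which you dismiss with ``$t\le T\ll L$''. That is insufficient, because $\nabla\overline\Omega$ is unbounded at zero frequency: through $\alpha|\xi|^{2\beta-1}$ from $\Gamma_0$ for every $\sigma$, and through $|\xi|^{\sigma-1}$ when $\sigma<1$. In Proposition~\ref{asymptotic} one therefore excises $|\xi_i|\lesssim T^{-1/(1+2\beta)}L^{-2\delta}$. Thanks to the vanishing weights $|\xi_i|^{2\beta}$ this costs only $O(L^{2-\delta}T^{-1})$. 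One then needs $T|\nabla\overline\Omega|\le L^{1-\delta/2}$ on the remaining region so that the nonzero Poisson modes are negligible, and that requirement is exactly where the two exponents come from.
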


Compared with the original collision kernel in \eqref{WKE}, 
the truncated collision kernel differs in two aspects. 
First, it contains the truncation factors $\varphi_{\leq K}$, 
which arise from the Fourier truncation procedure. 
Second, the integration domain contains the additional phase correction term $\Gamma_0$, 
which originates from the renormalization of trivial pairs.

\textbf{Energy estimates.} In the second step, we estimate the energy increment of higher-order derivatives of the solution. 
The goal is to establish a deterministic bound showing that the growth of the energy can be controlled by an $L^\infty$-based norm.
\begin{theorem}\label{thm:energy inequality}
    Let $\sigma\in(0,1)\cup (1,2]$ and $\beta\in(0,\frac{\sigma}{4})$. Let $N_0\geq10$ and $T_1<T_2\in\mathbb{R}$. Assume that $u$ is a solution to \eqref{MMT} satisfying for $t'\in[T_1,T_2]$ \begin{align}
   \|u(t')\|_{H^{N_0}}\leq A,\ \  \|u(t')\|_{ {W}^{2,0}}\leq \epsilon(t'),\label{bd:thm9.1}
\end{align}
where $A>1$, and $\epsilon:[T_1,T_2]\to[0,\infty)$ is a continuous function with $\epsilon(t)\ll \lambda^{-1}$.
Then   for any $t_1\leq t_2\in[T_1,T_2]$, it holds that
\begin{align*}
 \|u(t_2)\|_{H^{N_0}}^2\lesssim  \|u(t_1)\|_{H^{N_0}}^2 +\lambda^4A^2\int_{t_1}^{t_2}\epsilon^4(s)\dif s+1_{\{\sigma\in(0,1)\}}\lambda^2A^{2-\frac{1}{4N_0}}\int_{t_1}^{t_2}\epsilon^{2+\frac{1}{4N_0}}(s)\dif s.
\end{align*}
\end{theorem}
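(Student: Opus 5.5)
We plan to prove Theorem~\ref{thm:energy inequality} by a modified (normal-form corrected) high-order energy method. Differentiating $\|u\|_{\dot H^{N_0}}^2$ along \eqref{MMT} kills the linear part, since $|\nabla|^\sigma$ is self-adjoint. We are left with the quartic form
\begin{align*}
\frac{\dif}{\dif t}\|\partial_x^{N_0}u\|_{L^2}^2 = -2\lambda^2\,\mathrm{Im}\int \partial_x^{N_0}u\;\overline{\partial_x^{N_0}|\nabla|^\beta\bigl[\,||\nabla|^\beta u|^2|\nabla|^\beta u\bigr]}\,\dif x .
\end{align*}
We then expand by Leibniz (in Fourier, a symbol on $\{k_1-k_2+k_3=k\}$) and sort the frequency configurations.

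\textbf{Step 1: frequency sorting.} If at most one input carries frequency comparable to the output, and that input sits in the $|\nabla|^\beta\cdot$ outer slot, the derivative falls on one factor with total order $N_0+2\beta$. We symmetrize using self-adjointness of $|\nabla|^\beta$. The top-order term then becomes
\[
\lambda^2\,\mathrm{Im}\int |\nabla|^{\beta}\partial^{N_0}u\;\overline{|\nabla|^{\beta}\partial^{N_0}u}\;||\nabla|^\beta u|^2 ,
\]
which is real inside and vanishes. The second top-order term, $\int (|\nabla|^\beta\partial^{N_0}u)^2\,\overline{(|\nabla|^\beta u)^2}$, is not real. We commute so that only a commutator $[|\nabla|^\beta, a]$ with $a=||\nabla|^\beta u|^2$ or $(|\nabla|^\beta u)^2$ survives. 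Since $\beta<1$, the commutator gains a full derivative from the low-frequency coefficient, via a Coifman--Meyer/Kato--Ponce type estimate. All remaining terms have at least two low-frequency factors. We estimate these by $\lambda^2\|u\|_{H^{N_0}}^2\|u\|_{W^{2,0}}^2$, losing at most $2\beta<1$ derivatives on factors that we can absorb into $W^{2,0}$, using $\beta<\sigma/4$.

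\textbf{Step 2: normal form for the quartic remainder.} The bound $\lambda^2A^2\epsilon^2$ is too weak; we need $\lambda^4A^2\epsilon^4$. We therefore add a quartic correction $E_4$ to the energy, with symbol $m(k_1,k_2,k_3,k)/\Phi$, where $\Phi=|k_1|^\sigma-|k_2|^\sigma+|k_3|^\sigma-|k|^\sigma$. For the high--low configurations ($|k|\sim|k_1|\gg|k_2|,|k_3|$) and $\sigma>1$, the phase is $|\Phi|\gtrsim |k_1|^{\sigma-1}|k_2-k_3|$ away from trivial resonances $\{k,k_2\}=\{k_1,k_3\}$. On those trivial resonances the symbol cancels by the reality shown in Step 1, so the division is safe. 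Then $|E_4|\lesssim\lambda^2A^2\epsilon^2\ll A^2$ because $\epsilon\ll\lambda^{-1}$, so $E_2+E_4\simeq\|u\|_{H^{N_0}}^2$. Its time derivative consists of sextic terms bounded by $\lambda^4A^2\epsilon^4$, again losing $O(\beta)$ derivatives absorbed by the gain from $\Phi$.

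\textbf{Step 3: the case $\sigma\in(0,1)$ (main obstacle).} Here the phase is smaller and nontrivial resonances exist. High--low interactions have $|\Phi|\gtrsim|k_1|^{\sigma-1}|k_2-k_3|$, which decays at high frequency, so dividing by $\Phi$ costs derivatives. We split each quartic term by a threshold: $|\Phi|\ge\mu$ is normal-formed, and $|\Phi|<\mu$ is estimated directly. On the near-resonant part we use the smallness of the region, with the bound $|k_2-k_3|\lesssim\mu|k_1|^{1-\sigma}$, together with interpolation between $H^{N_0}$ and $W^{2,0}$. This produces a bound of the form $\lambda^2A^{2-\frac1{4N_0}}\epsilon^{2+\frac1{4N_0}}$. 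The $\frac1{4N_0}$ comes from trading a fraction of a derivative of $H^{N_0}$ for $L^\infty$ control via Gagliardo--Nirenberg. The most delicate point is choosing $\mu$ and checking that the normal-form part's derivative-loss $|k_1|^{1-\sigma}$ stays below what $N_0$ large and $\beta<\sigma/4$ allow. Finally, we integrate in time from $t_1$ to $t_2$ and use the equivalence of the modified and standard energies.
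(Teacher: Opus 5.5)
Your Step 3 does not work as stated, and it is where the real difficulty of the case $\sigma\in(0,1)$ lies.

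First, the bound $|\Phi|\gtrsim|k_1|^{\sigma-1}|k_2-k_3|$ is false for $\sigma<1$ in the high--low configuration. When $k_2,k_3$ have opposite signs, $|k_3|^\sigma-|k_2|^\sigma$ can be as large as $\max(|k_2|,|k_3|)^\sigma\gg|k_1|^{\sigma-1}|k_2-k_3|$. For suitable $|k_3|\approx|k_2|$ it exactly cancels $|k_1|^\sigma-|k|^\sigma=O(|k_1|^{\sigma-1}|k_2-k_3|)$. This configuration $k_3\approx-k_2$ is precisely the nontrivial resonant set, and there $|k_2-k_3|\approx 2|k_2|$ is not small. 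So the near-resonant region is not $\{|k_2-k_3|\lesssim\mu|k_1|^{1-\sigma}\}$.

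Second, and more fundamentally, smallness of a frequency region cannot be turned into a gain here. The low-frequency factors are controlled only in $L^\infty$ through $W^{2,0}$. The multilinear estimates (Lemmas~\ref{lem:holder}--\ref{lem:sinfty}) need smooth symbols whose scaled derivatives are bounded by their size. A cutoff $\chi(\Phi/\mu)$ fails this: $\xi_j\partial_{\xi_j}\chi(\Phi/\mu)$ can be as large as $|\xi_j|^\sigma/\mu$.

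The missing idea is to obtain the gain from the \emph{size of the symbol} on the resonant manifold. The paper does this as follows:
\begin{itemize}
\item It parametrizes the resonance by the implicit root $\xi_3^*(\xi_2,\xi_4)$ of \eqref{eq:resonant-root}, which satisfies $|\xi^\alpha\partial_\xi^\alpha\xi_3^*|\lesssim\xi_3^*$.
\item It splits $\Psi=(\Psi-\Psi(\xi^*))+\Psi(\xi^*)$.
\item It normal-forms the first piece: the difference quotient is a smooth symbol of size $2^{(2N_0-\sigma)\tilde k_1}$.
\item It estimates the second piece directly, using $|\Psi(\xi^*)|\lesssim\xi_2\xi_4(\xi_2+\xi_4)^{2N_0-2}$, i.e.\ a full derivative moved from a high frequency onto a low one.
\end{itemize}
It is this symbol gain, fed into $\|u\|_{\dot H^{N_0-1/4}}\lesssim A^{1-\frac{1}{4N_0-2}}\epsilon^{\frac{1}{4N_0-2}}$, that produces the $\epsilon^{2+\frac{1}{4N_0}}$ term.

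There is also an error in Step 1 that affects the derivative count. The term $\mathrm{Im}\int(|\nabla|^\beta\partial^{N_0}u)^2\,\overline{(|\nabla|^\beta u)^2}$ is not a commutator. In Fourier, the multiplier on the two high frequencies $\eta_1\approx-\eta_2$ is $|\eta_1|^\beta|\eta_2|^\beta$, which is symmetric and does not vanish on $\eta_1=-\eta_2$. Equivalently, in this configuration (both high frequencies on $u$) the symmetrized symbol is $\Psi\approx 2m_{N_0}(\xi_1)^2$, with no cancellation. So this term genuinely loses $2\beta$ derivatives. It cannot be bounded by $\lambda^2A^2\epsilon^2$, nor, for $\sigma<1$, by the weaker $\epsilon^{2+\frac{1}{4N_0}}$ bound.

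It must be removed by the normal form, which works because the interaction is strongly non-resonant: $|\Omega^{(\sigma)}|\approx 2|\xi_1|^\sigma$. The gain $2^{-\sigma\tilde k_1}$ must absorb $2\beta$ derivatives from $B$ and another $2\beta$ from $\mathscr B(u)$ placed in a high-frequency slot of the sextic terms. This is exactly where $\beta<\sigma/4$ enters, not in absorbing derivatives into $W^{2,0}$.

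With that correction, your Step 2 for $\sigma\in(1,2]$ is the paper's argument. However, the claim that the symbol cancels on trivial resonances must be made quantitative. The paper uses the factorization $\Psi=(\xi_1-\xi_2)(\xi_2-\xi_3)\int_0^1\!\int_0^1(m_{N_0}^2)''(X_{\eta,\theta})\,\dif\eta\,\dif\theta$, together with matching factorizations or lower bounds for $\Omega^{(\sigma)}$ in each sign configuration. Mere vanishing on the resonant set gives neither a bounded quotient nor the $\tilde S^\infty$ derivative bounds. This matters, for example, when all four frequencies are comparable and both $\xi_1-\xi_2$ and $\xi_2-\xi_3$ may be small.
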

We refer to Section~\ref{sec:pre} for the definition of the $L^\infty$-based space $W^{2,0}$. Compared with \cite{DIP25a}, which focused on the gravity
water-wave dispersion relation $|\xi|^{1/2}$, we establish high-frequency
energy estimates for general power-law dispersion relations
$|\xi|^\sigma$.

Subsequently, we carry out the main bootstrap argument to establish the existence of solutions to \eqref{MMT} up to time $T_0$, and to prove that each truncated term in \eqref{approx2} converges to the corresponding term in \eqref{app}. This completes the proof of Theorem~\ref{thm1}.

\subsection{Further relevant literature}\label{sec:liter}
The study of wave kinetic theory can be traced back to Peierls’ pioneering work \cite{Pei29} on anharmonic crystals. In that study, he first introduced  the phonon Boltzmann equation, which established a statistical physics framework and later inspired extensive generalizations and developments (see subsequent works in \cite{Spo06,Spo08}). Shortly afterwards, Nordheim \cite{Nor28} proposed the quantum Boltzmann equation to describe the dynamics of quantum interacting gases (see subsequent works in \cite{UU33,Hug83,ESY03}). Then, this framework had been extended to a wide class of wave systems \cite{Dav72,ZS67,Has62,BN69}. The MMT equation is introduced by Majda, McLaughlin and Tabak \cite{MMT97}  as a model for assessing the validity of weak turbulence theory for random waves 
in an unambiguous and transparent fashion.  Under certain experimental settings, their observations appear to contradict the classical theory, revealing a new spectral behavior known as the so-called “MMT spectrum” \cite{CMMT99,CMMT01}.  For further development, we refer to \cite{ZDP04,ZGPD01, DB23}.

The main difficulty lies in the rigorous justification of the convergence of the Feynman diagram expansions. In the linear case, it is firstly derived by   Spohn \cite{Spo77} for short times, and then by  Erd\"os, Salmhofer and Yau \cite{ESY03b,EY00} for a global result.  
Subsequently, the rigorous derivation of this problem has seen substantial progress, driven primarily by the work of  \cite{BGHS18,BGHS21,CG20,CG25,DH21,DH23a,DH23b}. In particular, Deng and Hani \cite{DH23c}  extended the  justification of NLS to
 arbitrarily long times that cover the full life span of the WKE. It is worth mentioning that this approach has already been widely applied to other dispersive equations, such as the Wick  NLS \cite{HSZ24} and the NLS with additive stochastic forcing \cite{GH24}. We also refer to \cite{ST21,HRST22} for the wave turbulence theory for   KdV
 type equations.
However, in one spatial dimension, the derivation of the WKE encounters new obstacles due to the difficulty of  divergent contributions in the Feynman expansion. Notably, even at the subcritical scale $T \lesssim   \alpha^{-2}$, \cite{DIP25} identified explicit pairs of coupled Feynman diagrams that yield divergent contributions which appear to lack mutual cancellation. This divergence is an pure one-dimensional phenomenon. 
 Despite these challenges, significant progress has been made in the rigorous derivation of kinetic and long-time regularity results. In particular, Jin and  Ma~\cite{JM21} first established long-time dynamics for NLS via normal form analysis in the regime $ \gamma > 1 $. For the MMT equation with $ \beta = 0 $, Vassilev~\cite{Vas24} derived the WKE up to timescales $ T \sim \alpha^{-5/4} $.
 
However, for quasilinear equations, there are only few results on rigorous justification, due to the unavoidable
derivative loss. Recently, Deng, Ionescu and Pusateri~\cite{DIP25a,DIP25} initiated the study of long-time regularity for quasilinear water-wave systems. Wu \cite{Wu25} and Vassilev--Wu \cite{VW26} established the WKE for the $\beta$-FPUT system, first for a reduced evolution equation and subsequently for the full system, using refined diagrammatic expansions and counting arguments.

\subsection{Ideas of proof}\label{sec:ideaproof}
In what follows, we present an overview of the main steps of the proof, identify the new challenges that arise in this context, and describe the techniques developed to overcome them.

First, for the propagation of randomness of the truncated equation, by using the truncation function to control the  derivative, we could write the solution as a sum of Feynman trees using the iterated Duhamel formula:
\begin{align*} 
     c_k(t)=c^{\leq N}_k (t)+\mathcal{R}^{N+1}_k,
\end{align*}
where $  c^{\leq N}_k (t) =\sum_{i=1}^N c^{i}_k (t)$ are the sum of the ternary trees of order no more than $N$, and $\mathcal{R}^{N+1}_k$ is the higher order remainder term. The constant $N$ is chosen large enough but independent of $L$. The main components and  ideas of the  analysis are the following:
\begin{enumerate}
   \item  \textbf{Renormalization.} 
To formulate the expansion in terms of ternary trees, it is necessary to remove the nonlinear resonant contributions, which would otherwise lead to unfavorable counting estimates (see Figure~\ref{fig:1loop}). 
In the semilinear MMT case ($\beta=0$), these resonances can be eliminated by introducing a linear oscillatory phase depending on the conserved mass of the solution. 
In the present quasilinear setting, however, such a simple renormalization is no longer sufficient. 
Instead, we introduce an implicit deterministic renormalization factor depending on both time and frequency in order to cancel part of the resonant interactions. 
This renormalization plays a crucial role throughout the counting argument and in the derivation of the effective kinetic dynamics.

\item \textbf{Counting estimates with renormalization.} 
In the two-/three-vector counting estimates, the renormalization of trivial couples introduces additional phase correction terms, which must be incorporated into the counting analysis. 
These corrections contain factors of the form $|k|^\beta$, whose singular behavior near $k=0$ creates substantial difficulties. 
As a consequence, the low-frequency regime requires a separate treatment. 
This modification leads to the additional restriction
\[
\alpha T^{1-\beta}<1,
\]
which ensures that the renormalized phase corrections remain perturbative within the counting estimates.  This also shows that the lifespan of solutions becomes shorter as $\beta$ decreases, which further highlights the fundamental difference between the quasilinear and semilinear regimes.
  \item \textbf{Algorithm.} 
In this paper, we reduce the moment estimates for the coefficients $c_k^i$ to a combinatorial counting problem on molecules. 
To carry out this analysis, we apply the algorithm introduced in \cite{Vas24}. 
A new feature in the present setting is the presence of resonant (degenerate) nodes arising from the quasilinear interactions. 
However, in the three-vector counting step, such resonant nodes contribute only the trivial bound $L$, which is acceptable since
\[
L <L^2 T^{-1}
\]
under the condition $T<L$. Therefore, degenerate nodes do not introduce significant additional difficulties into the counting procedure. 
Moreover, the renormalization procedure eliminates the potentially problematic counting configurations. 
In particular, the bad loop-type structures responsible for unfavorable counting estimates are ruled out after renormalization.  

 \item  \textbf{Convergence of low-order ternary trees.} 
In the proof of Theorem~\ref{thm2}, the higher-order ternary trees are treated as error terms, 
so it suffices to analyze the contributions of couples of orders $0$, $1$, and $2$. 
For these low-order terms, we compute explicit formulas and identify the cancellations arising from the renormalized resonant interactions. 
Based on these cancellations, we establish the asymptotic expansion stated in Proposition~\ref{asymptotic}. 
A new difficulty in the quasilinear setting is that the resulting integrands are no longer smooth and exhibit singular behavior near $k_i=0$. 
To overcome this issue, we introduce an additional truncation away from the low-frequency region and apply a stationary phase argument. 
Consequently, the regime where some $k_i$ are close to zero requires a separate analysis.
\end{enumerate}

To control the high-frequency component, we establish deterministic estimates for the energy increment of the solution. 
In the quasilinear setting, this appears to be the only available mechanism for propagating high-regularity bounds over long time intervals. 
Due to the cubic nonlinearity, one may initially expect an energy estimate of quartic type:
\begin{align*}
 \|u(t_2)\|_{H^{N_0}}^2
 \lesssim
 \|u(t_1)\|_{H^{N_0}}^2
 +\lambda^2A^2\int_{t_1}^{t_2}\epsilon^2(s)\dif s.
\end{align*}
However, with high probability, the $L^\infty$ norm satisfies $\epsilon(t)\sim L^{-1/2},$
which implies $ \lambda^2\epsilon^2\sim \alpha^{-1}.$ 
As a consequence, such a quartic estimate would only provide a lifespan up to
\[
T\sim \alpha^{-1}.
\]
At this timescale, the low-frequency counting estimates can already be handled by relatively crude arguments, and no genuinely kinetic behavior can be observed. 
The main difficulty is therefore to improve the energy estimate beyond the quartic level.  The work of Deng, Ionescu, and Pusateri \cite{DIP25a,DIP25}
established a framework for quasilinear wave turbulence  corresponding to
the dispersion relation $|\xi|^{1/2}$. A crucial feature of this case is
the special algebraic structure of the phase function, which admits a
favorable factorization on the resonant set.  For a general power-law dispersion
$|\xi|^\sigma$, such a factorization is no longer available in general.
The key ingredients and ideas leading to this improvement are summarized as follows:  
\begin{enumerate}
\item \textbf{Symmetrization and factorization of symbols.} 
Working in Fourier space, we study the quartic energy increment
\begin{align*}
Q(u,\overline u,u,\overline u)
:=
\frac{1}{L^{3}}
\sum_{\xi_1-\xi_2+\xi_3-\xi_4=0}
\widehat{u}_{\xi_1}\overline{\widehat{u}}_{\xi_2}
\widehat{u}_{\xi_3}\overline{\widehat{u}}_{\xi_4}
M(\xi),
\end{align*}
where $M(\xi)
:=
|\xi_1|^\beta|\xi_2|^\beta|\xi_3|^\beta|\xi_4|^\beta
\bigl(|\xi_3|^{2N_0}-|\xi_4|^{2N_0}\bigr).$ The first step is to exploit symmetry and rewrite the symbol as
\[
M(\xi)
=
\frac12
|\xi_1|^\beta|\xi_2|^\beta|\xi_3|^\beta|\xi_4|^\beta
\bigl(
|\xi_1|^{2N_0}-|\xi_2|^{2N_0}
+
|\xi_3|^{2N_0}-|\xi_4|^{2N_0}
\bigr).
\]
Next, after an integration by parts in time, the analysis reduces to estimating
\[
M(\xi)\,[\Omega^{(\sigma)}(\xi)]^{-1},
\]
where $\Omega^{(\sigma)}(\xi)
:=
|\xi_1|^\sigma-|\xi_2|^\sigma
+
|\xi_3|^\sigma-|\xi_4|^\sigma.$ 
This leads naturally to a small-divisor problem associated with the resonant phase function.
To overcome this difficulty, we exploit the resonance relation
\[
\xi_1-\xi_2+\xi_3-\xi_4=0
\]
and factorize the symbol:
 \begin{align*}
    |\xi_1|^{2N_0}&-|\xi_2|^{2N_0}+|\xi_3|^{2N_0}-|\xi_4|^{2N_0}\\
    &=2N_0(2N_0-1)
     (\xi_1-\xi_2)(\xi_2-\xi_3)\\
     &\quad\quad \times \int_0^1\int_0^1((1-\theta)\eta\xi_2+(1-\theta)(1-\eta)\xi_3+\eta\theta\xi_1+\theta(1-\eta)\xi_4)^{2N_0-2}\dif \eta\dif\theta.
\end{align*}
The key point is that the integral term retains sufficient regularity for large $N_0$, while the additional factor $(\xi_1-\xi_2)(\xi_2-\xi_3)$
provides cancellation against the small divisor arising from $\Omega^{(\sigma)}$.  Here, we have to analyze the phase function
$\Omega^{(\sigma)}$ for the full range of power-law dispersions
$|\xi|^\sigma$. Unlike the special cases $\sigma=\frac12$ and
$\sigma=2$, where the phase function admits explicit algebraic
factorizations, no universal factorization is available for a general
value of $\sigma$. Therefore, we need to perform a case-by-case analysis
of the resonance geometry and establish suitable factorizations.
Such structures naturally appear in normal form transformations, where division by resonant phases may otherwise lead to a loss of derivatives.

Moreover, expressing the symbol in this form allows us to obtain robust derivative bounds needed for the application of Lemma~\ref{lem:sinfty}. 
A crucial observation is that for any multi-index $\alpha$, any $a,b>0$, and any $\gamma\in\mathbb R$,
\begin{align*}
    |\xi^\alpha\partial_\xi^\alpha[(a\xi_1+b\xi_2)^\gamma]|
    \lesssim
    (a\xi_1+b\xi_2)^\gamma.
\end{align*}
This property guarantees that the differentiated symbols preserve the same scaling behavior as the original phase factors. Subsequently, we rewrite all relevant expressions in terms of combinations of homogeneous power-law factors, which allows us to recover the appropriate scaling properties and establish uniform symbol estimates.
 
 \item \textbf{Cubic resonances.} 
To improve the quartic energy estimate, it is necessary to analyze carefully the resonant four-wave interactions, characterized by the system
\begin{align*}
    \xi_1 - \xi_2 + \xi_3 - \xi_4 = 0,
    \qquad
    |\xi_1|^{\sigma} - |\xi_2|^{\sigma}
    + |\xi_3|^{\sigma} - |\xi_4|^{\sigma} = 0.
\end{align*}
For every $\sigma\in(0,1)\cup(1,2]$, this system always admits the trivial resonant solutions $\{\xi_1,\xi_3\}=\{\xi_2,\xi_4\}.$
When $\sigma\in(1,2]$, these are in fact the only resonances. 
In contrast, for $\sigma\in(0,1)$, the system also admits a family of genuinely nontrivial resonant solutions, which we denote by $(\xi_i^*)_{1\le i\le4}$.

In the regime $\sigma\in(1,2]$, only trivial resonances occur. 
These interactions lead to integrable contributions and therefore do not produce any nontrivial growth of the energy. 
To control the associated small divisors, we perform a detailed case-by-case analysis according to the signs and relative sizes of the frequencies $\xi_i$. 
Exploiting the symmetry of the interaction, we reduce the argument to three representative configurations. 
In each case, the resonance relation admits a suitable factorization structure, allowing us to control the small divisors and derive robust derivative estimates for the corresponding symbols. 
Combined with a normal form transformation, this yields the desired sextic energy increment.

The situation is substantially more delicate when $\sigma\in(0,1)$. 
In this regime, the nontrivial resonances generate genuine singularities, since
\[
M(\xi)[\Omega^{(\sigma)}(\xi)]^{-1}
\]
blows up near the resonant set $\xi=\xi^*$. 
For the gravity water-wave equation \cite{DIP25a} (corresponding to $\sigma=\frac12$), the cubic symbol vanishes exactly on the nontrivial resonant set, so the singularity cancels:
\[
\frac{M(\xi)}{\Omega^{(\sigma)}(\xi)}
=
\frac{M(\xi)-M(\xi^*)}
{\Omega^{(\sigma)}(\xi)-\Omega^{(\sigma)}(\xi^*)}.
\]
However, the MMT model does not enjoy such a favorable algebraic structure. 
Instead, we perform a similar renormalized decomposition, while treating separately the contribution of the resonant component $M(\xi^*)$. 

A further difficulty is that, unlike the case $\sigma=\frac12$, the nontrivial resonant solutions do not admit explicit formulas. 
As a result, the analysis relies on an implicit-function argument together with a careful study of the geometry of the resonant set. 
Through this refined analysis, we ultimately recover a quartic-order contribution, leading to a slight improvement in the lifespan of the solution.

\end{enumerate}

\subsection{Organization of the paper} In Section \ref{sec:pre}, we gather the preliminary results  that will be used throughout the text.
Then, the proof is carried out in two main steps. In Sections \ref{trees and LTE}-\ref{sec:proof of theorem 2}, we establish the propagation of randomness for the truncated equation \eqref{TrMMT}. Section \ref{trees and LTE} introduces the basic notions of trees, couples, and molecules, and expresses the solution in terms of ternary trees. In Section \ref{sec:mainest}, we gather some estimates for couples, including both counting estimates and integral bounds. Section \ref{sec:irregular} is devoted to the analysis of irregular chains and the demonstration of their cancellation. Section \ref{sec:Algorithm} presents our main algorithm for molecules. The treatment of the remainder term is given in Section \ref{bd:remainder}. Finally, in Section \ref{sec:proof of theorem 2}, we complete the proof of Theorem \ref{thm2}, which involves delicate calculations for   the low-order ternary trees. Then, in Sections~\ref{sec:acceptineq}–\ref{sec:proofthm1}, we establish the energy estimate stated in Theorem~\ref{thm:energy inequality} and present the proof of the main result, Theorem~\ref{thm1}, respectively. Finally, in Appendix~\ref{sec:app}, we include the algorithm introduced in~\cite{Vas24} for the reader’s convenience.

 \section{Preliminaries}\label{sec:pre}
 \subsection{The Littlewood-Paley projections and associated operators}
 For any $k\in\mathbb{Z}$, we define  an even smooth function $\varphi:\mathbb{R}\to[0,1]$ with  support in $[-\frac85, \frac85]$ and satisfying $\varphi=1$ in $[-\frac54, \frac54]$. We define for $k\in\mathbb{Z}$
\begin{align*}
    \varphi_k(x):=\varphi(x/2^{k})-\varphi(x/2^{k-1}),
\end{align*}
and
\begin{align*}
     \varphi_{\leq k}(x):=\varphi(x/2^{k}),\ \  \varphi_{\geq k}(x):=1-\varphi(x/2^{k-1}).
\end{align*}
Let $P_k,P_{\geq k}, P_{\leq k }$ be the  operators on $\mathbb{T}_L$ defined by the Fourier multipliers $\varphi_k, \varphi_{\leq k}, \varphi_{\geq k}$ respectively.  Moreover, we write
\begin{align*}
    P_k'=\sum_{a=-2,-1,0,1,2}P_{k+a},\ \  \varphi_k'=\sum_{a=-2,-1,0,1,2}\varphi_{k+a}.
\end{align*}
We also denote $\overline{\mathbb{Z}}:=\mathbb{Z}\cup\{-\infty\}$, and  define the operator $P_{-\infty}$ as
\begin{align*}
    P_{-\infty}f:=\frac{1}{L}\int_{\mathbb{T}_L}f\dif x.
\end{align*}
The operators $P_k$ are given by convolution in the physical space. More precisely, for $f\in L^1(\mathbb{T}_L)$ and $k\in\mathbb{Z}$, we have
\begin{align*}
    P_kf(x):&=\int_{\mathbb{T}_L}f(y)K(x-y)\dif y,\\
    K(x):&=\frac{1}{L}\sum_{\xi\in\mathbb{Z}_L}\varphi_k(\xi)e^{2\pi i\xi x}.
\end{align*}

\subsection{Multipliers }
We will often work with multipliers $m:\mathbb{R}^n\to\mathbb{C}$ operators defined by such multipliers. 
For $n\geq2$, we define
 
\begin{align*}   
   \mathbb{H}_{L}^n& := \{ (\xi_1, \ldots, \xi_{n+1}) \in (L^{-1}\mathbb{Z})^{n+1} : \xi_1 + \ldots + \xi_{n+1} = 0 \},\\
    \mathbb{H}_{\infty}^n &:= \{ (\xi_1, \ldots, \xi_{n+1}) \in \mathbb{R}^{n+1} : \xi_1 + \ldots + \xi_{n+1} = 0 \},
\end{align*} 
and the associated class of symbols
\begin{align*}
    \tilde S^\infty:=\tilde S^\infty(\mathbb{H}_L^n),
\end{align*}
where for $m:\mathbb{H}_L^n\to\mathbb{C}$,
\begin{equation}
   \| m \|_{\tilde S^\infty} := \bigg{\|}\frac{1}{L^n} \sum_{\xi_1, \ldots, \xi_{n} \in L^{-1} \mathbb{Z}} m(\xi_1, \ldots, \xi_{n},-\xi_1-\xi_2...-\xi_{n}) e^{2\pi i(x_1\xi_1+x_2\xi_2+...+x_n\xi_n)}\bigg{\|}_{L^1_x(\mathbb{T}_L^n)} < \infty.
\end{equation} 
Then we have the following property on  symbols:  \begin{lemma}$($\cite[Lemma 2.1]{DIP25a}$)$\label{lem:holder}
     Let $p_1,p_2,...,p_n\in[1,\infty],n\geq3$ satisfying $\frac1{p_1}+\frac1{p_2}+...+\frac1{p_n}=1$ and $m\in\tilde{S}^\infty(\mathbb{H}_L^{n-1})$. Then it holds that
     \begin{align*}
        \left| \frac{1}{L^{n-1}}\sum_{(\xi_1,\xi_2,...,\xi_n)\in\mathbb{H}_L^{n-1}}\widehat{f}_1(\xi_1)...\widehat{f}_n(\xi_n)m(\xi_1,\xi_2,...,\xi_n)\right|\lesssim \|f_1\|_{L^{p_1}} \|f_2\|_{L^{p_2}}... \|f_n\|_{L^{p_n}}\|m\|_{\tilde{S}^\infty}.
     \end{align*}
 \end{lemma}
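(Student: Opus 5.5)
We rewrite the multilinear sum in physical space using the kernel of $m$ and then apply Hölder's inequality. Set
\[
K(x_1,\dots,x_{n-1}):=\frac{1}{L^{n-1}}\sum_{\xi_1,\dots,\xi_{n-1}\in\mathbb{Z}_L} m(\xi_1,\dots,\xi_{n-1},-\xi_1-\dots-\xi_{n-1})\,e^{2\pi i(x_1\xi_1+\dots+x_{n-1}\xi_{n-1})},
\]
so that $\|m\|_{\tilde S^\infty}=\|K\|_{L^1(\mathbb{T}_L^{n-1})}$. By Fourier inversion on $\mathbb{T}_L^{n-1}$ we have $m(\xi_1,\dots,\xi_{n-1},\xi_n)=\int_{\mathbb{T}_L^{n-1}}K(y)e^{-2\pi i y\cdot(\xi_1,\dots,\xi_{n-1})}\,\dif y$ whenever $\xi_n=-\xi_1-\dots-\xi_{n-1}$. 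This holds first for finitely supported or suitably decaying $m$; the general case follows by the usual density and truncation argument in the frequencies.

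Next we insert this formula into the left-hand side and interchange the sum and the integral. This gives
\[
\int_{\mathbb{T}_L^{n-1}}K(y)\,\Big[\frac{1}{L^{n-1}}\sum_{\xi_1+\dots+\xi_n=0}\widehat f_1(\xi_1)e^{-2\pi i y_1\xi_1}\cdots\widehat f_{n-1}(\xi_{n-1})e^{-2\pi i y_{n-1}\xi_{n-1}}\,\widehat f_n(\xi_n)\Big]\dif y.
\]
The factor $e^{-2\pi i y_j\xi_j}\widehat f_j(\xi_j)$ is the Fourier coefficient of the translate $\tau_{y_j}f_j:=f_j(\cdot-y_j)$. The convolution-type sum is then identified by Parseval on $\mathbb{T}_L$ with the normalization $u=\frac1L\sum\widehat u(k)e^{2\pi ikx}$. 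Expanding each function via its Fourier series, $\int_{\mathbb{T}_L}g_1\cdots g_n\,\dif x=\frac{1}{L^{n-1}}\sum_{\xi_1+\dots+\xi_n=0}\widehat g_1(\xi_1)\cdots\widehat g_n(\xi_n)$, because $\int_{\mathbb{T}_L} e^{2\pi i x(\xi_1+\dots+\xi_n)}\dif x=L\,\mathbf 1_{\sum\xi_j=0}$ and there are $n$ factors of $1/L$. The bracket therefore equals $\int_{\mathbb{T}_L}\tau_{y_1}f_1\cdots\tau_{y_{n-1}}f_{n-1}\,f_n\,\dif x$.

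Finally we apply Hölder's inequality with exponents $p_1,\dots,p_n$ and use translation invariance of the $L^p(\mathbb{T}_L)$ norms. This bounds the bracket by $\prod_j\|f_j\|_{L^{p_j}}$ uniformly in $y$. Integrating against $|K(y)|$ then yields the claimed bound with constant $1$.

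The only delicate point is the justification of the interchange and of the Fourier inversion for $K$. We handle this by assuming first that the $f_j$ are trigonometric polynomials, so that all sums are finite and $m$ enters only through finitely many frequencies. The general case then follows by density in $L^{p_j}$ for $p_j<\infty$. For $p_j=\infty$ no density is needed: that factor is held fixed while the others are approximated, which is possible because at most one exponent can be $\infty$ when the remaining exponents are finite, and in the degenerate configurations we approximate only the finite-exponent factors.
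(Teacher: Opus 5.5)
Your main argument is correct and is the standard proof of this lemma. The paper gives no proof of its own and only cites \cite[Lemma 2.1]{DIP25a}. The $\tilde S^\infty$ norm is defined as the $L^1$ norm of the kernel precisely so that your route works: represent $m$ by $K$, use Parseval to get $\int_{\mathbb{T}_L}\tau_{y_1}f_1\cdots\tau_{y_{n-1}}f_{n-1}\,f_n\,\dif x$, then apply H\"older and translation invariance. This even gives constant $1$.

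What does not hold up is your last paragraph.
\begin{itemize}
\item The claim that at most one exponent can be infinite is false. For example, $(p_1,p_2,p_3,p_4)=(2,2,\infty,\infty)$ is admissible, and it is exactly how the lemma is applied in Section~\ref{sec:acceptineq}: two factors are controlled in $L^2$ by $A_k$ and two in $L^\infty$ by $\epsilon$.
\item More importantly, once you hold the $L^\infty$ factors fixed, the approximating configurations are no longer trigonometric polynomials. Your justification that ``all sums are finite'' therefore does not cover them.
\item In fact, for general $f_j\in L^{p_j}$ the Fourier-side sum need not converge absolutely. So the left-hand side must first be given a meaning before any density argument can run.
\end{itemize}

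The simple repair is to note that in every application the symbol carries the compactly supported factor $\varphi_{\underline k}$. Since $\varphi'_{k_j}=1$ on the support of $\varphi_{k_j}$, you may replace each $f_j$ by $P'_{k_j}f_j$. This is a trigonometric polynomial with $\|P'_{k_j}f_j\|_{L^p}\lesssim\|f_j\|_{L^p}$ uniformly in $L$ and $p\in[1,\infty]$, because its convolution kernel on $\mathbb{T}_L$ is the periodization of $2^{k_j}\check{\varphi}'_0(2^{k_j}\cdot)$, whose $L^1$ norm is independent of $k_j$ and $L$. Your finite-sum computation then covers everything the paper needs, including the inversion formula for $K$, which is now itself a trigonometric polynomial.

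Alternatively, for smooth $f_j$ you can justify the interchange directly by absolute convergence, using $|m|\le\|m\|_{\tilde S^\infty}$. No frequency truncation of $m$ is needed anywhere; sharp truncation would not be uniformly bounded on $L^1$ kernels anyway.
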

We also need to the following estimates on  the  multipliers.  
 \begin{lemma}$($\cite[Lemma 2.2]{DIP25a}$)$\label{lem:sinfty}
 \begin{enumerate}
     \item 
    If $m,m'\in \tilde{S}^\infty$, then $m\cdot m'\in \tilde{S}^\infty$ and it holds that 
     \begin{align*}
         \|m\cdot m'\|_{\tilde{S}^\infty}\lesssim   \|m \|_{\tilde{S}^\infty}  \|  m'\|_{\tilde{S}^\infty}.
     \end{align*}
   \item 
Assume that $n\geq1$ and $m:\mathbb{R}^n\to\mathbb{C}$ is a continuous compactly supported function and $\check{m}\in L^1(\mathbb{R}^n)$ is the  inverse Fourier transform of $m$. Then
\begin{align}
    \bigg \|\frac{1}{L^{n}}\sum_{\xi_1,\xi_2,...,\xi_n\in L^{-1} \mathbb{Z}}m(\xi_1,\xi_2,...,\xi_n)e^{2\pi i(x_1\xi_1+x_2\xi_2+...+x_n\xi_n)} \bigg \|_{L^1_x(\mathbb{R}^n)}\lesssim\|\check{m}\|_{L^1(\mathbb{R}^n)}.
\end{align}
\item  Assume that $\underline{k}=(k_1,...,k_{d+1})\in\mathbb{Z}^{d+1},\Lambda>0$,  and the multiplier $m:\mathbb{H}_L^d\to\mathbb{C}$  admits an extension $\tilde{m}:\mathbb{R}^{d+1}\to\mathbb{C}$, such that  $m(\xi)\varphi_{\underline{k}}(\xi)=\tilde{m}(\xi)\varphi_{\underline{k}}(\xi)$ for all $\mathbb{H}_L^d$. Here we introduce the notation $$\varphi_{\underline{k}}(\xi):=\varphi_{k_1}(\xi_1)\varphi_{k_2}(\xi_2)\cdots\varphi_{k_{d+1}}(\xi_{d+1}).$$  If it  satisfies the differential inequalities
\begin{align*}
    |2^{\alpha\cdot \underline{k}}\partial_{\xi}^\alpha[\tilde{m}(\xi_1,...,\xi_{d+1})\varphi_{\underline{k}}(\xi)]|\lesssim \Lambda,
\end{align*}
for any multi-index $\alpha=(\alpha_1,...,\alpha_{d+1})$ with $|\alpha|\leq \frac{d+2}{2}$ and any $\xi\in\mathbb{H}_{\infty}^d$, then
\begin{align*}
    \|m(\xi)\varphi_{\underline{k}}(\xi)\|_{\tilde{S}^\infty}\lesssim \Lambda.
\end{align*}
 \end{enumerate}

 \end{lemma}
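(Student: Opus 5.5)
The plan is to prove the three parts separately, with the common tool being the kernel representation of $\tilde S^\infty$: $\|m\|_{\tilde S^\infty}$ is the $L^1(\mathbb{T}_L^n)$ norm of the kernel $K_m(x)=L^{-n}\sum m(\xi)e^{2\pi i x\cdot\xi}$, where we parametrize $\mathbb H^n_L$ by the first $n$ coordinates.

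For part (1), note that multiplication of symbols corresponds to convolution of kernels on $\mathbb{T}_L^n$. Indeed,
\[
K_{mm'}(x)=\int_{\mathbb{T}_L^n}K_m(y)K_{m'}(x-y)\,\dif y,
\]
since $\int_{\mathbb{T}_L^n} e^{2\pi i y\cdot(\xi-\xi')}\dif y=L^n\mathbf 1_{\xi=\xi'}$, which exactly compensates one factor $L^{-n}$. Young's inequality on $\mathbb{T}_L^n$ then gives $\|K_{mm'}\|_{L^1}\le\|K_m\|_{L^1}\|K_{m'}\|_{L^1}$. This step is routine.

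For part (2), I would use Poisson summation. Set
\[
F(x)=L^{-n}\sum_{\xi\in(L^{-1}\mathbb Z)^n}m(\xi)e^{2\pi i x\cdot \xi}.
\]
By Poisson summation, $F(x)=\sum_{p\in\mathbb Z^n}\check m(x+Lp)$, interpreted first in the sense of distributions and then justified via $\check m\in L^1$ together with the continuity and compact support of $m$. Here $F$ is $L$-periodic, and its $L^1$ norm over one period $[0,L]^n$ is at most $\sum_p\int_{[0,L]^n}|\check m(x+Lp)|\dif x=\|\check m\|_{L^1(\mathbb R^n)}$. I read the statement's $L^1_x(\mathbb R^n)$ as the norm over a fundamental domain $\mathbb{T}_L^n$; otherwise the left side would be infinite for a periodic function.

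For part (3), I would reduce to part (2) by constructing a continuous compactly supported extension on $\mathbb R^n$, with $n=d$, using coordinates $(\xi_1,\dots,\xi_d)$ and $\xi_{d+1}=-\sum_{j\le d}\xi_j$. Define $\mu(\xi_1,\dots,\xi_d)=\tilde m\varphi_{\underline k}$ restricted to the hyperplane. This function is compactly supported, since each $|\xi_j|\lesssim 2^{k_j}$, and it agrees with $m\varphi_{\underline k}$ on the lattice. By part (2), it suffices to show $\|\check\mu\|_{L^1(\mathbb R^d)}\lesssim\Lambda$.

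Without loss of generality, let $k_{d+1}$ be comparable to the largest $k_j$. The support of $\mu$ then lies in a box $\prod_{j\le d}[-C2^{k_j},C2^{k_j}]$. Note that $\xi_{d+1}$ is controlled by the largest of the first $d$ variables, and if some other index is the maximum we relabel which coordinate is eliminated. I would then use the standard Sobolev/Bernstein bound
\[
\|\check\mu\|_{L^1}\lesssim\prod_j(2^{k_j})^{1/2}\Big\|\prod_j\bigl(1+2^{k_j}|x_j|\bigr)^{s}\check\mu\Big\|_{L^2}\cdot\Big\|\prod_j\bigl(1+2^{k_j}|x_j|\bigr)^{-s}\Big\|_{L^2}\prod_j 2^{-k_j/2},
\]
which is Cauchy--Schwarz with anisotropic weights. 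By Plancherel this reduces to an $L^2$ bound of the rescaled derivatives $2^{\alpha\cdot\underline k}\partial^\alpha\mu$, multiplied by the support volume $\prod_j 2^{k_j}$. The weight $\prod_j(1+2^{k_j}|x_j|)^{-s}$ is square integrable only if $s>1/2$ in each variable.

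The main obstacle is the derivative count. The hypothesis only allows $|\alpha|\le (d+2)/2$ in total, not $s>1/2$ per coordinate, which would require $d$ derivatives. I would therefore use an isotropic weight instead. After the anisotropic rescaling $x_j\mapsto 2^{-k_j}x_j$, the function $\mu$ becomes a function on a unit box whose derivatives of order $\le (d+2)/2>d/2$ are bounded by $\Lambda$. The $L^1$ norm of its Fourier transform is then controlled by its $H^{s}$ norm with $s>d/2$, via Cauchy--Schwarz with weight $(1+|x|)^{-s}$. The rescaling leaves $\|\check\mu\|_{L^1}$ invariant, and derivatives along the hyperplane are combinations of $\partial_{\xi_j}-\partial_{\xi_{d+1}}$. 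The factor $2^{k_j}$ against $2^{k_{d+1}}$ is harmless because $k_{d+1}$ is the largest, so the chain rule gives the needed bound by $\Lambda$. Whether $(d+2)/2$ suffices for $s>d/2$ in odd and even $d$ needs care: I would use $s=\lfloor (d+2)/2\rfloor$, which always exceeds $d/2$, interpolating if necessary.
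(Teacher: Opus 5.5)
Your proposal is correct and is the standard argument behind the cited \cite[Lemma 2.2]{DIP25a}; the paper itself gives no proof. The argument runs as follows: kernel convolution plus Young's inequality for (1), Poisson summation for (2) (your reading of $L^1_x(\mathbb{R}^n)$ there as $L^1(\mathbb{T}_L^n)$ is right), and for (3) eliminating the coordinate of largest dyadic size, rescaling to a unit box, and bounding $\|\check\mu\|_{L^1}\lesssim\|\mu\|_{H^s}$ with the integer $s=\lfloor (d+2)/2\rfloor>d/2$. With that choice of $s$ no interpolation is needed, and the anisotropic-weight detour can be dropped. The one routine point you leave implicit is that the $\tilde S^\infty$ norm is invariant under permuting the $d+1$ variables, because the induced change of variables on $\mathbb{T}_L^d$ is unimodular; this is what justifies your relabeling.
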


\subsection{Function spaces}
In this paper, we denote $H^N/\dot{H}^N$ by the  standard inhomogeneous/ homogeneous  Sobolev spaces on $\mathbb{T}_L$ respectively.  We define the $L^\infty$ based Sobolev space $W^{N,0}$ on $\mathbb{T}_L$ by
\begin{align*}
    \|f\|_{W^{N,0}}:=\|P_{-\infty}f\|_{L^\infty}+\sum_{k\in\mathbb{Z}}\|P_kf\|_{L^\infty}2^{Nk}.
\end{align*}
We define an additional $L^\infty$ based Sobolev space $\widetilde{W}^N$ by
\begin{align*}\|f\|_{\widetilde{W}^N}:=\|f\|_{L^\infty}+\sum_{k\in\mathbb{Z}^+}\|P_kf\|_{L^\infty}2^{Nk}.
\end{align*}
In particular, for $N\geq1$, we have
\begin{align}
    \|f\|_{\widetilde{W}^N}\lesssim  \|f\|_{W^{N,0}}.
\end{align}

  \section{Tree Expansions}\label{trees and LTE}
  From this section up to Section~\ref{sec:proof of theorem 2}, we focus on the low-frequency component, namely the truncated equation~\eqref{TrMMT}, and aim to prove Theorem~\ref{thm2}.
Our analysis is based on a Feynman diagram expansion. In this section, we show how the solution can be represented in terms of ternary trees (Feynman diagrams) and formulate the main estimates associated with these trees.  
 \subsection{First reductions}\label{reductions}
We consider the truncated MMT equation as introduced in \eqref{TrMMT}.
 Let $(\widehat{u}_{\rm tr})_k(t)$ denotes the Fourier coefficients of $u_{\rm tr}(t,x)$. First, we define $$a_k(t):= e^{-2\pi i|k|^\sigma t} (\widehat u_{\rm tr})_k(t),\ \ t\in[0,T],$$ and obtain the following equation for the Fourier modes:
 \begin{equation*}
\begin{cases}
 i \partial_t{a_k} =- \lambda^2 L^{-2} \sum\limits_{\substack{(k_1,k_2,k_{3}) \in (\mathbb{Z}_L)^3 \\ k - k_1 + k_2 -k_3 = 0}} a_{k_1}\overline{a_{k_2}}  a_{k_3} e^{2\pi i \Omega^{(\sigma)}(k_1,k_2,k_3,k)t}B_{k_1,k_2,k_3,k}\varphi_{\leq K}(k), \\ 
a_k(0) =(a_k)_{\rm in},
\end{cases}
\end{equation*}
where we define 
\begin{align*}
\Omega^{(\sigma)}(k_1,k_2,k_3,k) &:=|k_1|^\sigma-|k_2|^\sigma+|k_3|^\sigma-|k|^\sigma,\\ B_{k_1,k_2,k_3,k}&:=|k_1|^\beta|k_2|^\beta|k_3|^\beta|k|^\beta.
\end{align*}
Here and in the following, we write $\Omega^{(\sigma)}(k_1,k_2,k_3,k)=\Omega(k_1,k_2,k_3,k)$ when there is no ambiguity.
Then, we rescale the time by setting
$$
b_k(t)=a_k(Tt),\ \ t\in[0,1],
$$ 
which solves 
\begin{equation*}
\begin{cases}
 i \partial_t{b_k} =- \lambda^2 L^{-2} T\sum\limits_{k - k_1 + k_2 -k_3 = 0} b_{k_1}\overline{b_{k_2}}  b_{k_3} e^{2\pi i \Omega(k_1,k_2,k_3,k)Tt}B_{k_1,k_2,k_3,k}\varphi_{\leq K}(k),\\
b_k(0) =(a_k)_{\rm in}.
\end{cases}
\end{equation*}
Note that the positions of $k_1,k_3$ are symmetric above, so the sum above can be written as 
$$
\sum\limits_{ k - k_1 + k_2 -k_3 = 0}= \sum_{k_1, k_3\neq k}+2\sum_{k_1=k,k_3\neq k}+\sum_{k_1=k_2=k_3=k},
$$
which allows us to write, by introducing the renormalized variable
$$c_k=b_ke^{-2\pi i \Gamma(k,t)},\ \ t\in[0,1],$$
as
\begin{align}\label{ipartck}
 i \partial_t{c_k}& = - \lambda^2 L^{-2} T\sum\limits_{k - k_1 + k_2 -k_3 = 0} c_{k_1}\overline{c_{k_2}}  c_{k_3} e^{2\pi i \Theta(k_1,k_2,k_3,k,t)}B_{k_1,k_2,k_3,k}\varphi_{\leq K}(k) +2\pi\partial_t\Gamma(k,t)c_k\notag\\ 
& = -\alpha L^{-1} T
\bigg(   \sum\limits_{(k_1,k_2, k_{3})}^{\times} c_{k_1}\overline{c_{k_2}}  c_{k_3} e^{2\pi i \Theta(k_1,k_2,k_3,k,t)}B_{k_1,k_2,k_3,k}\varphi_{\leq K}(k) +|c_k|^2c_kB_{k,k,k,k}\varphi_{\leq K}(k) \bigg)\notag\\
 &\qquad- 2\alpha L^{-1} T\sum_{k_1}|c_{k_1}|^2B_{k_1,k_1,k,k}\varphi_{\leq K}(k)c_k+2\pi\partial_t\Gamma(k,t)c_k,
\end{align}
where we denote
$$\Theta(k_1,k_2,k_3,k,t):=\Omega(k_1,k_2,k_3,k)Tt+ \Gamma(k_1,t)- \Gamma(k_2,t)+ \Gamma(k_3,t)- \Gamma(k,t).$$ Here, we still do not emphasize $\sigma$ if it does not cause ambiguity.
We recall the relation $\alpha =\lambda^2L^{-1}$.  Here and below, the notation $\sum^{\times}_{(k_1,k_2, k_{3})}$ denotes summation under the constraints 
$k_j\in\mathbb{Z}_L$, $k_1-k_2+k_3=k$, and $k\not\in\{k_1,k_3\}$. 
The function $\Gamma(k,t)$ is introduced to handle resonant contributions via renormalization; 
its precise choice will be specified in the sequel. We additionally ask that $\Gamma(k,0)=0$.

Finally, we recall the well-prepared initial data (\ref{wellprepared}), which yields
\begin{equation}\label{initial0}
c_k(0)=(a_k)_{\mathrm{in}}=\varphi_{\leq K}(k)\sqrt{n_{\mathrm{in}}(k)} \cdot\eta_{k}(\omega).
\end{equation} 
 In the rest of this paper, we focus on the system \eqref{ipartck} on the time interval $[0,1]$ together with the  well-prepared  initial condition \eqref{initial0}.

\subsection{The tree expansion}
Now, we try to express the solution to \eqref{ipartck} in terms of certain ternary tree expansions, following the ideas in \cite{DH21,DH23a,Vas24,Wu25}. 
This approach provides a systematic way to represent the iterated Duhamel expansions of the solution in a combinatorial structure. 
More precisely, each node of the ternary tree corresponds to one trilinear interaction generated by the nonlinearity, while the leaves represent the initial data.  
In particular, this ternary tree formulation can be viewed as an alternative of the classical Feynman diagram expansion, 
which is often used in physics and wave turbulence theory.

\begin{definition}\label{tree1} 
\begin{enumerate}
\item Let $\mathcal{T}$ be a ternary tree. We use $\mathcal{L}$ to denote the set of leaves, use $\mathcal{N}=\mathcal{T}\backslash\mathcal{L}$  to denote the set of branching nodes, denote $n$ by the  number of branching nodes, and $\mathfrak{r} \in \mathcal N$ by the root node. Then the number of leaves is $l=2n+1$. The scale of a ternary tree $\mathcal T$ is defined as $|\mathcal T|=n$,   the number of branching nodes. In particular, the scale of a single node $\mathcal{T}=\bullet$ is 0.   The three children subtrees of $\mathcal{T}$ are defined by $\mathcal{T}_1,\mathcal{T}_2$
 and $\mathcal{T}_3$ from left to right.

\item   For each node $\mathfrak{n}\in\mathcal{N}$, let its children from left to right be $\mathfrak{n}_1$, $\mathfrak{n}_2$, $\mathfrak{n}_3$. We refer to them as  siblings of each other. We fix the sign $\zeta_{\mathfrak{n}}\in\{\pm\}$ as follows: first we fix $\zeta_{\mathfrak{r}}=\{\pm\}$, then for any node $\mathfrak{n}\in \mathcal{N}$, define $\zeta_{\mathfrak{n}_1}=-\zeta_{\mathfrak{n}_2}=\zeta_{\mathfrak{n}_3}=\zeta_{\mathfrak{n}}$. The sign of the root $\mathfrak{r}$ of $\mathcal{T}$ is denoted by the sign of $\mathcal{T}$.  The sign of subtrees $\mathcal{T}_1,\mathcal{T}_3$ are the same as $\mathcal{T}$, while the sign of $\mathcal{T}_2$ is opposite.  In the following, we typically denote by $\mathcal{T}^\zeta$ a tree with sign  $\zeta=+/-$. We define the conjugate
 $\overline{\mathcal{T}}$ of a tree $\mathcal{T}$ to be the same tree but with opposite sign.
\end{enumerate}
\end{definition}
 
 Then, we present a tree expression as a  approximation of  $c_k(t)$, which is defined  as follows: let $N$ be a integer to be determined later (independent of $L$), and let
\begin{align}
     c_k(t):=c^{\leq N}_k (t)+\mathcal{R}^{N+1}_k:=\sum_{i=1}^N c^{i}_k (t)+\mathcal{R}^{N+1}_k,\ \  c^{i}_k(t):=\sum_{|\mathcal{T}^+|=i}
 c^{\mathcal{T}^+}_k (t),\label{def:ck}
\end{align}
where the sum in the last term is taken over all the trees with sign $+$ and scale $i$.
 Here, $\mathcal{R}^{N+1}_k$ is denoted by the remainder term. 

For any tree $\mathcal{T}$ (which may have sign $+$ or $-$), the coefficient $c^{\mathcal{T}}_k(t)$ is defined inductively as:
\begin{align}
    c_k^{\bullet}(t)&=\varphi_{\leq K}(k)\sqrt{n_{\mathrm{in}}(k)} \cdot\eta_{k}^{\zeta_\mathfrak{r}}(\omega),\notag\\
     c_k^{\mathcal{T}}(t)&=\int_0^t\mathcal{W}(c^{\mathcal{T}_1}, c^{\mathcal{T}_2} , c^{\mathcal{T}_3})_k(t')\,\mathrm{d}t',\label{def:ct:int}
\end{align}
where we write $z^+=z$ and $z^-=\overline{z}$. We recall that $\mathcal{T}_1,\mathcal{T}_2,\mathcal{T}_3$ denote the three subtrees of the root $\mathfrak{r}$,  from left to right, and $\mathcal{W}$ is the trilinear operator defined  by
\begin{align}
    \label{Duhamel0}
\mathcal{W}(c^{\mathcal{T}_1}, c^{\mathcal{T}_2} , c^{\mathcal{T}_3})_k(t):
&=\frac{i\alpha T}{L}  \sum\limits_{(k_1,k_2, k_{3})}^{\times} c^{\mathcal{T}_1}_{k_1}{c^{\mathcal{T}_2}_{k_2}}  c^{\mathcal{T}_3}_{k_3} e^{2\pi i \Theta(k_1,k_2,k_3,k,t)}B_{k_1,k_2,k_3,k}\varphi_{\leq K}(k)\notag\\
&\quad+\frac{i\alpha T}{L}\left(c^{\mathcal{T}_1}_{k}{c^{\mathcal{T}_2}_{k}}c^{\mathcal{T}_3}_{k}-c^{\mathcal{T}_1}_{k}\mathbf{E}[{c^{\mathcal{T}_2}_{k}}c^{\mathcal{T}_3}_{k}]-\mathbf{E}[{c^{\mathcal{T}_1}_{k}}c^{\mathcal{T}_2}_{k}]c^{\mathcal{T}_3}_{k}\right)B_{k,k,k,k}\varphi_{\leq K}(k)\notag\\
&\quad+\frac{i\alpha T}{L}\sum_{k_1\neq k}\left(c^{\mathcal{T}_1}_{k_1}{c^{\mathcal{T}_2}_{k_1}} -\mathbf{E}[c^{\mathcal{T}_1}_{k_1}{c^{\mathcal{T}_2}_{k_1}} ] \right) B_{k_1,k_1,k,k}\varphi_{\leq K}(k)c^{\mathcal{T}_3}_{k}\notag\\
&\quad+\frac{i\alpha T}{L}\sum_{k_3\neq k}\left({c^{\mathcal{T}_2}_{k_3}}c^{\mathcal{T}_3}_{k_3} -\mathbf{E}[{c^{\mathcal{T}_2}_{k_3}}c^{\mathcal{T}_3}_{k_3} ]\right) B_{k,k_3,k_3,k}\varphi_{\leq K}(k)c^{\mathcal{T}_1}_{k}.
\end{align}
Here, we apply a renormalization at the level of the product, rather than to the individual factors. This approach partially eliminates the resonant contributions.
 
Comparing \eqref{Duhamel0} with \eqref{ipartck}, we choose the renormalization function so that \eqref{Duhamel0} provides a good approximation to \eqref{ipartck}:
\begin{align}
    \partial_t\Gamma(k,t):&=\frac{\alpha T}{\pi L}\sum_{ |\mathcal{T}^+|\leq N,\ |\mathcal{T}^-|\leq N }\sum_{k_1}\mathbf{E}[c^{\mathcal{T}^+}_{k_1} {c^{\mathcal{T}^-}_{k_1}} ] B_{k_1,k_1,k,k}\varphi_{\leq K}(k),\label{partgamma}\\
    \Gamma(k,0)&=0,
\end{align}
where the first sum is taken over all the positive and negative trees with scale no more than $N$. 
Here we use the fact that $B_{k_1,k_1,k,k}=B_{k,k_1,k_1,k}$. We also notice that the expression $c_k^{\mathcal{T}} = c_k^{\mathcal{T}}(\Gamma)$ depends on $\Gamma$, so that \eqref{partgamma} can be regarded as an ODE system. The choice of $\Gamma$ is then obtained by solving this ODE system via a fixed point argument.  

Compared with the semi-linear case studied in \cite{DH21,DH23a,Vas24}, resonant contributions still persist in the present setting. We therefore introduce the notion of enhanced trees to  capture these residual resonances.

 \begin{definition} \label{def:enhancedtress}
\begin{enumerate}
\item  (Enhanced Trees). An enhanced tree is a ternary tree $\mathcal{T}$ equipped with an additional set $\mathcal{N}_D\subset \mathcal{N}$ of degenerate branching nodes. Here, for each degenerate node $\mathfrak{n}\in \mathcal{N}_D$ and its three children $\{\mathfrak{n}_1,\mathfrak{n}_2,\mathfrak{n}_3\}$, we choose a marked child $\mathfrak{n}_c$ from $\{\mathfrak{n}_1,\mathfrak{n}_3\}$, and label
 the remaining child   as $\mathfrak{n}_{nc}$. The same degenerate node, with different choices of marked child, is regarded as distinct types of degenerate nodes. In particular, a node $\mathfrak{n}\in\mathcal{N}_D$ may be fully degenerate, meaning that both $\mathfrak{n}_1$ and $\mathfrak{n}_3$ can be chosen as marked children.
\item (Admissible Decoration) We assign to each $\mathfrak{n}\in\mathcal{T}$ an element $k_{\mathfrak{n}}\in\mathbb{Z}_L$. We say such a decoration $(\mathcal{N}_D,k[\mathcal{T}])$ is admissible if for any $\mathfrak{n}\in\mathcal{N}$ we have $k_{\mathfrak{n}}=k_{\mathfrak{n}_1}-k_{\mathfrak{n}_2}+k_{\mathfrak{n}_3}.$
Moreover, for any $\mathfrak{n}\in\mathcal{N}/\mathcal{N}_D$, it holds that  $k_{\mathfrak{n}}\not\in\{k_{\mathfrak{n}_1},k_{\mathfrak{n}_3}\}$. For any $\mathfrak{n}\in \mathcal{N}_D$, it holds that    $k_{\mathfrak{n}}=k_{\mathfrak{n}_c},\ k_{\mathfrak{n}_2}=k_{\mathfrak{n}_{nc}}.$ In particular, we say that a node is fully degenerate if $k_{\mathfrak{n}}=k_{\mathfrak{n}_1}=k_{\mathfrak{n}_2}=k_{\mathfrak{n}_3}$, which is consistent with the choice of marked child described above. 
 Moreover, an admissible decoration is uniquely determined by the values of $k_{\mathfrak{l}}$ for $\mathfrak{l}\in\mathcal{L}$.  
\item  For any $n\in\mathcal{N}$ we denote $$\Omega_{\mathfrak{n}}:=\Omega(k_{\mathfrak{n}_1},k_{\mathfrak{n}_2},k_{\mathfrak{n}_3},k_{\mathfrak{n}}),\  \ \sum\Gamma(\mathfrak{n},t)= \Gamma(k_{\mathfrak{n}_1},t)- \Gamma(k_{\mathfrak{n}_2},t)+ \Gamma(k_{\mathfrak{n}_3},t)- \Gamma(k_{\mathfrak{n}},t),$$ and $$\Theta_{\mathfrak{n}}(t):=\Theta(k_{\mathfrak{n}_1},k_{\mathfrak{n}_2},k_{\mathfrak{n}_3},k_{\mathfrak{n}},t):=\Omega_{\mathfrak{n}}Tt+\sum\Gamma(\mathfrak{n},t).$$ 

\end{enumerate}
\end{definition}

From now on, we will refer to an enhanced tree simply as a “tree” for convenience.
Note that the same underlying tree equipped with different decorations is regarded as distinct.

In the expression given by \eqref{Duhamel0}, the four terms on the right-hand side correspond to four distinct configurations: the root $\mathfrak{r}$ is non-degenerate and the decorations $k_i$ are distinct; the root $\mathfrak{r}$ is fully degenerate; the child $\mathfrak{n}_3$ of $\mathfrak{r}$ is degenerate; and the child $\mathfrak{n}_1$ of $\mathfrak{r}$ is degenerate.  

Then, we give the explicit expression for the tree representation $c^{\mathcal{T}}_{k}$.
\begin{proposition}\label{treedefs} 
For any enhanced tree $\mathcal{T}$ with scale $n$,  we have
\begin{align}
   \label{def:ct}
c^{\mathcal{T}}_{k}(t,\omega)&=\bigg(\frac{\alpha T}{L}\bigg)^n\sum_{\mathcal{D}}\prod_{\mathfrak{n}\in\mathcal{N}}(i\zeta_{\mathfrak{n}}
)\prod_{\mathfrak{n}\in\mathcal{N}}B_{k_{\mathfrak{n}_1},k_{\mathfrak{n}_2},k_{\mathfrak{n}_3},k_{\mathfrak{n}}}\prod_{\mathfrak{n}\in\mathcal{T}}\varphi_{\leq K}(k_\mathfrak{n})\notag\\ 
&\quad\quad \quad\quad \times \mathcal{A}_{\mathcal{T}}(t,\Theta [t,\mathcal{N}])\prod_{\mathfrak{l}\in\mathcal{L}}\sqrt{n_{\mathrm{in}}(k_{\mathfrak{l}})}\mathcal{B}_{\mathcal{T}}(\omega),
\end{align}where the sum on $\mathcal{D}$ is taken over all admissible decorations $(\mathcal{N}_D,k[\mathcal{T}])$ such that $k_{\mathfrak{r}}=k$, and the functions $\mathcal{A}_{\mathcal{T}}(t,\Theta [t,\mathcal{N}])$ are defined iteratively as
\begin{align}\label{def:a}
    \mathcal{A}_{\bullet}(t,\Theta [\mathcal{N}])&:=1,\notag\\
     \mathcal{A}_{\mathcal{T}}(t,\Theta [\mathcal{N}])&:=\int_0^t e^{2\pi i \zeta_{\mathfrak{r}}\Theta_{\mathfrak{r}}(t')}\prod_{i=1}^3 \mathcal{A}_{\mathcal{T}_i}(t',\Theta [\mathcal{N}_i])\dif t',
\end{align}
where $\mathcal{T}_i$ are three subtrees on $\mathcal{T}$ from left to right, and $\mathcal{N}_i$ is the set of branching nodes in  $\mathcal{T}_i$, with $\mathcal{N}=\mathcal{N}_1\cup \mathcal{N}_2\cup \mathcal{N}_3\cup \{\mathfrak{r}\}$. 

The noise terms are defined iteratively as:
\begin{align}
    \mathcal{B}_{\bullet}(\omega)&:=\eta_{k}(\omega)^{\zeta_{\mathfrak{r}}},\notag\\
    \mathcal{B}_{\mathcal{T}}(\omega)&:=\prod_{i=1}^3  \mathcal{B}_{\mathcal{T}_i}(\omega)-1_{\mathfrak{r}\in\mathcal{N}_D}  \mathcal{B}_{\mathcal{T}_{c}}(\omega)\mathbf{E}[  \mathcal{B}_{\mathcal{T}_{2}}(\omega)  \mathcal{B}_{\mathcal{T}_{{nc}}}(\omega)],\label{def:Bomega}
\end{align}
where, if $\mathfrak{r}\in\mathcal{N}_D$, we denote by $\mathcal{T}_{c}, \mathcal{T}_{2}, \mathcal{T}_{nc}$ the subtrees of $\mathfrak{r}$ rooted at the nodes $\mathfrak{n}_{c}, \mathfrak{n}_{2}, \mathfrak{n}_{nc}$, respectively. Here, the indicator $1_{\mathfrak{r}\in\mathcal{N}_D}$ counts twice (corresponding to $\mathfrak{n}_1$ and $\mathfrak{n}_3$) in the fully degenerate case.

\end{proposition}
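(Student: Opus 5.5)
We prove the formula \eqref{def:ct} by induction on the scale $n=|\mathcal{T}|$, unfolding the recursive definition \eqref{def:ct:int} together with the operator $\mathcal{W}$ from \eqref{Duhamel0}. For $n=0$ the statement is the definition of $c^\bullet_k$: the only admissible decoration has $k_{\mathfrak{r}}=k$, there are no branching nodes, $\mathcal{A}_\bullet=1$, $\mathcal{B}_\bullet=\eta_k^{\zeta_{\mathfrak r}}$, and the factor $\varphi_{\le K}(k)\sqrt{n_{\rm in}(k)}$ is exactly the product over $\mathcal{T}=\mathcal{L}=\{\mathfrak r\}$. One convention must be fixed here: since $c_k^{\mathcal{T}}$ carries the sign of its tree, $\mathcal{W}$ is read with the phase $e^{2\pi i\zeta_{\mathfrak r}\Theta}$ and prefactor $i\zeta_{\mathfrak r}$. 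For negative trees this is just the complex conjugate of the positive expression, using $\overline{\Theta}=\Theta$ because $\Gamma$ is real.

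For the inductive step, take $\mathcal{T}$ of scale $n$ with subtrees $\mathcal{T}_1,\mathcal{T}_2,\mathcal{T}_3$ of scales $n_1+n_2+n_3=n-1$. Substituting the induction hypothesis for each $c^{\mathcal{T}_j}_{k_j}(t')$ into $\mathcal{W}$ gives the prefactor $(\alpha T/L)^{1+n_1+n_2+n_3}=(\alpha T/L)^n$. The root contributes $i\zeta_{\mathfrak r}$ and $B_{k_1,k_2,k_3,k}\varphi_{\le K}(k)$. The time integral $\int_0^t e^{2\pi i\zeta_{\mathfrak r}\Theta_{\mathfrak r}(t')}\prod_j\mathcal{A}_{\mathcal{T}_j}(t')\,dt'$ is exactly $\mathcal{A}_{\mathcal{T}}$ from \eqref{def:a}. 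The initial-data weights multiply to $\prod_{\mathfrak l\in\mathcal L}\sqrt{n_{\rm in}(k_{\mathfrak l})}$, and the cutoffs combine to $\prod_{\mathfrak n\in\mathcal T}\varphi_{\le K}(k_{\mathfrak n})$. The only cutoff not produced by an earlier factor is the root's, which is supplied by $\mathcal{W}$; the leaf cutoffs come from the base case. The products over inner nodes from the three subtrees, together with the root, reproduce the products over $\mathcal{N}$.

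The main bookkeeping step is to match the four lines of \eqref{Duhamel0} with the admissible decorations of the root. The first line, with $k\notin\{k_1,k_3\}$, corresponds to $\mathfrak r\notin\mathcal N_D$. Its noise factor is $\prod_j\mathcal B_{\mathcal T_j}$ with no subtraction.

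The fourth line, summing over $k_3\ne k$ with $k_1=k$ and $k_2=k_3$, corresponds to $\mathfrak r\in\mathcal N_D$ with marked child $\mathfrak n_1$, so that $\mathcal T_c=\mathcal T_1$ and $\mathcal T_{nc}=\mathcal T_3$. The subtracted expectation $\mathbf E[c^{\mathcal T_2}_{k_3}c^{\mathcal T_3}_{k_3}]$ is handled by linearity. The deterministic coefficients depend only on the decoration, not on $\omega$, so they pull out of $\mathbf E$ and leave $\mathcal B_{\mathcal T_c}\mathbf E[\mathcal B_{\mathcal T_2}\mathcal B_{\mathcal T_{nc}}]$. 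This is precisely the correction term in \eqref{def:Bomega}. The third line is the symmetric case with marked child $\mathfrak n_3$.

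The second line, with $k_1=k_2=k_3=k$, is the fully degenerate node. There both subtractions appear, which matches the convention that the indicator in \eqref{def:Bomega} counts twice.

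The point needing care, and the likely main obstacle, is checking that the decorations generated are exactly the admissible ones, each counted once. Every decoration of the subtrees with roots $k_1,k_2,k_3$ satisfying $k_1-k_2+k_3=k$ must arise once across the four lines, tagged with the correct degeneracy type. The subtle case is the overlap $k_1=k_3=k$ within the degenerate lines. The sums there exclude $k_1\ne k$ and $k_3\ne k$, so this overlap is routed into the fully degenerate line rather than double counted. The phrase "regarded as distinct types" in Definition~\ref{def:enhancedtress} is then consistent with this bookkeeping.

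Once this matching is verified, summing over root decorations and the induced subtree decorations yields \eqref{def:ct}. Admissible decorations are determined by their leaf values, so the resulting sum is simply the sum over $\mathcal D$ with $k_{\mathfrak r}=k$.
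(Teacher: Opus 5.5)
Your proposal is correct and follows essentially the same route as the paper: induction on the scale, substituting the inductive formula for the three subtrees into $\mathcal{W}$, and matching the terms of \eqref{Duhamel0} with the non-degenerate, singly marked and fully degenerate root configurations, with the subtracted expectations producing the correction in \eqref{def:Bomega}. One point you leave implicit is that lines 2--4 of \eqref{Duhamel0} carry no phase factor; this is consistent with $\mathcal{A}_{\mathcal{T}}$ only because $k_{\mathfrak r}=k_{\mathfrak n_c}$ and $k_{\mathfrak n_2}=k_{\mathfrak n_{nc}}$ force $\Theta_{\mathfrak r}\equiv 0$ at a degenerate root, so you should state this explicitly.
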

\begin{proof} We proceed by induction. First, \eqref{def:ct} holds for $|\mathcal{T}|=0$ and $1$ by definition and by \eqref{def:ct:int}.  Now, assume that \eqref{def:ct} holds for all trees $\mathcal{T}$ with scale at most $n$. Let $\mathcal{T}$ be a tree of scale $n+1$ (with sign $+$, without loss of generality). Its three children $\mathcal{T}_i$, $i=1,2,3$, have scales at most $n$, so that \eqref{def:ct} holds for each $\mathcal{T}_i$ by the induction hypothesis. Therefore, by \eqref{def:ct:int}, we obtain  
\begin{align}
c^{\mathcal{T}}_{k}(t,\omega)&=\bigg(\frac{\alpha T}{L}\bigg)^{n+1}\int_0^t \Bigg{\{}
\sum_{k=k_1-k_2+k_3}\prod_{i=1}^3\Big{(}\sum_{\mathcal{D}_i}\prod_{\mathfrak{n}\in\mathcal{N}_i}(i\zeta_{\mathfrak{n}}
)\prod_{\mathfrak{n}\in\mathcal{N}_i}B_{k_{\mathfrak{n}_1},k_{\mathfrak{n}_2},k_{\mathfrak{n}_3},k_{\mathfrak{n}}}\notag\\
&\times\prod_{\mathfrak{n}\in\mathcal{T}_i}\varphi_{\leq K}(k_\mathfrak{n})\mathcal{A}_{\mathcal{T}_i}(s,\Theta [s,\mathcal{N}])\prod_{\mathfrak{l}\in\mathcal{L}_i}\sqrt{n_{\mathrm{in}}(k_{\mathfrak{l}})}\mathcal{B}_{\mathcal{T}_i}(\omega)\Big{)}e^{2\pi i \Theta_{\mathfrak{r}}(s)}i
B_{k_1 ,k_2 ,k_{3},k}\varphi_{\leq K}(k )\notag\\
&-1_{\mathfrak{r}\in\mathcal{N}_D}\sum_{k_2}
\prod_{i=1}^3\Big{(}\sum_{\mathcal{D}_i}\prod_{\mathfrak{n}\in\mathcal{N}_i}(i\zeta_{\mathfrak{n}}
)\prod_{\mathfrak{n}\in\mathcal{N}_i}B_{k_{\mathfrak{n}_1},k_{\mathfrak{n}_2},k_{\mathfrak{n}_3},k_{\mathfrak{n}}}\prod_{\mathfrak{n}\in\mathcal{T}_i}\varphi_{\leq K}(k_\mathfrak{n})\notag\\
&\times\mathcal{A}_{\mathcal{T}_i}(s,\Theta [s,\mathcal{N}])\prod_{\mathfrak{l}\in\mathcal{L}_i}\sqrt{n_{\mathrm{in}}(k_{\mathfrak{l}})}\mathcal{B}_{\mathcal{T}_{c}}(\omega)\mathbf{E}[\mathcal{B}_{\mathcal{T}_{2}}\mathcal{B}_{\mathcal{T}_{nc}}]\Big{)} iB_{k ,k ,k_{2},k_{2}}\varphi_{\leq K}(k )\Bigg{\}}\dif s.\notag
\end{align}
Here, in the first line on the right-hand side, the sum over $\mathcal{D}_i$ is taken over all admissible decorations of $\mathcal{T}i$ such that $k_{\mathfrak{n}_i}=k_i$.
In the second line, the sum over $\mathcal{D}_i$ is taken over all admissible decorations of $\mathcal{T}_i$ satisfying $k_{\mathfrak{n}_c}=k$ and $k_{\mathfrak{n}_2}=k_{\mathfrak{n}_{nc}}=k_2$.
We denote by $\mathcal{L}_i$ the set of leaves of $\mathcal{T}_i$.
We also remark that, when $\mathfrak{r}\in\mathcal{N}_D$, there are two possible choices of the marked child, corresponding to the two terms in \eqref{Duhamel0}.
Then, \eqref{def:ct} follows directly from the definitions \eqref{def:a} and \eqref{def:Bomega}.

\end{proof}

From now on, we write $B_\mathfrak{n}=B_{k_{\mathfrak{n}_1},k_{\mathfrak{n}_2},k_{\mathfrak{n}_3},k_{\mathfrak{n}}},\ \varphi_{\mathfrak{n}}=\varphi_{\leq K}(k_\mathfrak{n})$ for simplicity.

In order to analyze the second-order statistics of the solution, we need to consider products of tree expansions and compute their expectations. This naturally leads to pairing structures between leaves, arising from the independence of the Gaussian random variables.
 In addition, due to the presence of renormalization and degenerate nodes, it is necessary to impose further constraints on admissible pairings. 
\begin{definition}\label{def:couple}
\begin{enumerate} \item  (Couple)
    A couple $\mathcal{Q}$ consists of two trees $\mathcal{T}^+$ and $\mathcal{T}^-$, each labeled with
 opposite signs, together with the choice of degenerate nodes $\mathcal{N}_D=(\mathcal{N}_D)_+\cup( \mathcal{N}_D)_-$, and a partition $\mathcal{P}$ of the set of leaves $\mathcal{L}=\mathcal{L}_+ \cup \mathcal{L}_-$ into $(n+1)$ disjoint
 two-element subsets, where $n = n(\mathcal{T}^+) + n(\mathcal{T}^-)$ is called the order of the couple. The
 partition $\mathcal{P}$ must satisfy the condition $\zeta_l=-\zeta_{l'}$ for every pair $\{l,l'\} \in\mathcal{ P}$. For a couple
 $\mathcal{Q}=\{\mathcal{T}^+,\mathcal{T}^-,\mathcal{P},\mathcal{N}_D\}$, we denote by $\mathcal{N} = \mathcal{N}_+\cup \mathcal{N}_-$ the branching nodes. We define $$
 \zeta(\mathcal{Q}) =\prod_{\mathfrak{n}\in\mathcal{N}} (i\zeta_{\mathfrak{n}}).$$
 The trivial couple is given by two trivial trees whose roots are paired.
 A decoration $\mathcal{E}$  is obtained by decorating each tree $\mathcal{T}^+$ and $\mathcal{T}^-$, with  further requirement that $k_l = k_{l'}$ for every pair $\{l,l'\} \in\mathcal{ P}$. 
 A decoration  is called a $k$-decoration if $k_{\mathfrak{r}_+} = k_{\mathfrak{r}_-} = k$.
\item  (Enhanced Couples) We call couple $\mathcal{Q}=\{\mathcal{T}^+,\mathcal{T}^-,\mathcal{P},\mathcal{N}_D\}$  an enhanced couple if  the
 partition $\mathcal{P}$ further satisfies that for any $\mathfrak{n}\in\mathcal{N}_D$, the leaves   from the subtree with root $\{\mathfrak{n}_2,\mathfrak{n}_{nc}\}$  are not completely paired together. In the fully degenerate case, this condition is imposed for each choice of marked child. We refer to such a partition $\mathcal{P}$ as an enhanced pairing.
 \end{enumerate}
\end{definition}

\begin{remark}
  In this paper, the definition of enhanced couples given in Definition~\ref{def:couple} is in fact equivalent to that in \cite[Definition 3.3]{DIP25}. Note that if $k_{\mathfrak{n}_1}\neq k_{\mathfrak{n}_2}$,  then it is impossible for the subtrees rooted at $\mathfrak{n}_1$ and $\mathfrak{n}_2$ to be paired.   In the case $k_{\mathfrak{n}_1}=k_{\mathfrak{n}_2}=k_{\mathfrak{n}_1}=k_{\mathfrak{n}}$, from \eqref{Duhamel0}, we have the expression
    \begin{align*}
       c^{\mathcal{T}_1}_{k}{c^{\mathcal{T}_2}_{k}}c^{\mathcal{T}_3}_{k} -\mathbf{E}[c^{\mathcal{T}_1}_{k}{c^{\mathcal{T}_2}_{k}} ]c^{\mathcal{T}_3}_{k} -c^{\mathcal{T}_1}_{k}\mathbf{E}[c^{\mathcal{T}_2}_{k}{c^{\mathcal{T}_3}_{k}} ] ,
    \end{align*}
   which implies that neither $\mathcal{T}_1$ nor $\mathcal{T}_3$ can be paired with $\mathcal{T}_2$.
This is consistent with the definition of enhanced pairing in \cite[Definition 3.3]{DIP25}.
\end{remark}

We introduce the following Complex Isserlis' Theorem.
\begin{lemma}
\label{thm:Isserlis}\cite[Lemma A.2]{DH23a}
Let  $k_j \in \mathbb{Z}_L$ and signs $\{\zeta_j\}\subset \{+,-\}$ for $1 \leq j \leq n$ be fixed. It holds that
\begin{align*}
\mathbf{E}\left[\prod_{j=1}^n \eta^{\zeta_j}_{k_j}(\omega)\right] = 
\sum_{\mathcal{P}} 
\prod_{\{j,j'\}\in\mathcal{P} } {1}_{k_j = k_{j'}}
\end{align*}
where the summation ranges over all partitions $\mathcal{P}$ of $\{1,2,...,n\}$ into pairs $\{j,j'\}$ with  $\zeta_j = -\zeta_{j'}$.

\end{lemma}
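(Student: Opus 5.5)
The plan is to reduce to the real Isserlis (Wick) theorem, or equivalently to prove the identity directly from the moment structure of standard complex Gaussians. Assume the $\eta_k$ are i.i.d. with $\mathbf{E}\eta_k=0$, $\mathbf{E}\eta_k^2=0$ and $\mathbf{E}|\eta_k|^2=1$. First group the factors by frequency. Independence across distinct $k$ makes the expectation factor into a product over the distinct values $k$ of $\mathbf{E}[\eta_k^{p_k}\overline{\eta_k}^{q_k}]$. Here $p_k$ and $q_k$ count the indices $j$ with $k_j=k$ and $\zeta_j=+$, respectively $\zeta_j=-$.

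Next, compute the single-variable moment. Write $\eta=re^{i\phi}$ with $\phi$ uniform on $[0,2\pi)$ and independent of $r$. Rotation invariance then gives $\mathbf{E}[\eta^p\overline\eta^q]=0$ unless $p=q$. When $p=q$, the moment equals $\mathbf{E}|\eta|^{2p}=p!$, since $|\eta|^2$ is exponential with mean $1$.

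On the combinatorial side, consider a partition $\mathcal P$ into pairs with opposite signs. The factor $\prod 1_{k_j=k_{j'}}$ is nonzero only if every pair lies within a single frequency class. The admissible partitions therefore factor as a product over $k$ of perfect matchings between the $p_k$ plus-indices and the $q_k$ minus-indices in the class of $k$. Such a matching exists only if $p_k=q_k$, and then there are exactly $p_k!$ of them. Multiplying over $k$ matches the probabilistic side term by term, which proves the lemma.

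The only mildly delicate step is the moment computation $\mathbf{E}|\eta|^{2p}=p!$, together with the vanishing for $p\neq q$. Both follow from circular symmetry and the exponential law of $|\eta|^2$. Alternatively, one can apply the real Wick formula to the real and imaginary parts and check that the pairings $\eta\eta$ and $\overline\eta\,\overline\eta$ contribute zero because $\mathbf{E}\eta^2=0$. Either route is routine, so I expect no real obstacle.
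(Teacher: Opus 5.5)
Your proposal is correct. Factoring over distinct frequencies by independence, using circular symmetry and the exponential law of $|\eta|^2$ to get $\mathbf{E}[\eta^p\overline{\eta}^{\,q}]=p!\,1_{p=q}$, and counting the $p_k!$ opposite-sign matchings inside each frequency class proves the identity; your hypothesis $\mathbf{E}\eta_k^2=0$ is exactly what ``normalized complex Gaussian'' supplies. The paper gives no proof of its own and only cites Lemma A.2 of Deng--Hani, so there is nothing to compare against; your reduction is the standard proof of that lemma.
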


\begin{lemma}[Enhanced Tree Pairing]

For enhanced trees $\mathcal{T}^+$ and $\mathcal{T}^-$, the expectation satisfies
\begin{align*}
\mathbf{E}\left[\mathcal{B}_{\mathcal{T}^+}\mathcal{B}_{\mathcal{T}^-}\right] = 
\sum_{ \mathcal{P}}1.\label{lem:enhanced_pairing}
\end{align*}
where the summation ranges over all enhanced pairings between $\mathcal{T}^+$ and $\mathcal{T}^-$.
\end{lemma}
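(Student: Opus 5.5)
Our plan is to prove the lemma by induction on the total scale $|\mathcal{T}^+|+|\mathcal{T}^-|$, using the recursive definition \eqref{def:Bomega} together with the complex Isserlis theorem (Lemma~\ref{thm:Isserlis}). Throughout, the decoration of the leaves is fixed, and ``$\sum_{\mathcal P}1$'' is read as the sum over enhanced pairings compatible with it, i.e.\ with $k_l=k_{l'}$ for each pair. The first step is to unfold \eqref{def:Bomega} completely. This writes $\mathcal{B}_{\mathcal{T}}$ as an alternating sum, indexed by subsets $S\subset\mathcal{N}_D$ (with the marked-child choices fixed by the enhanced tree). Each summand is
\begin{align*}
(-1)^{|S|}\prod_{\mathfrak{l}\in\mathcal{L}\setminus\bigcup_{\mathfrak{n}\in S}\mathcal{L}(\mathfrak{n}_2,\mathfrak{n}_{nc})}\eta^{\zeta_\mathfrak{l}}_{k_\mathfrak{l}}\prod_{\mathfrak{n}\in S}\mathbf{E}\Big[\textstyle\prod_{\mathfrak{l}\in \mathcal{L}(\mathfrak{n}_2,\mathfrak{n}_{nc})}\eta^{\zeta_\mathfrak{l}}_{k_\mathfrak{l}}\,\cdot(\text{lower corrections})\Big],
\end{align*}
where $\mathcal{L}(\mathfrak{n}_2,\mathfrak{n}_{nc})$ denotes the leaves of the two subtrees rooted at $\mathfrak{n}_2,\mathfrak{n}_{nc}$. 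The point to be careful about is that the inner expectation itself contains the renormalized $\mathcal{B}$'s of the subtrees.

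The cleaner route is to prove a stronger statement by induction on the scale of a single tree, or more generally of a finite family of trees. The claim is this: for any finite family of enhanced trees $\mathcal{T}^{(1)},\dots,\mathcal{T}^{(m)}$ with fixed decorations,
\[
\mathbf{E}\Big[\prod_j\mathcal{B}_{\mathcal{T}^{(j)}}\Big]=\#\{\text{pairings of }\textstyle\bigcup_j\mathcal{L}^{(j)}\text{ respecting signs and }k\text{, enhanced in every tree}\}.
\]
Here ``enhanced in every tree'' means that for no $\mathfrak{n}\in\mathcal{N}_D$ of any tree are the leaves below $\{\mathfrak{n}_2,\mathfrak{n}_{nc}\}$ paired entirely among themselves. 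The base case, where all trees are single leaves, is exactly Lemma~\ref{thm:Isserlis}. For the inductive step, pick a tree whose root $\mathfrak{r}$ is in $\mathcal{N}_D$; if no root is degenerate, $\mathcal{B}$ factorizes into the product over subtrees and we simply pass to the family of subtrees. In the degenerate case, \eqref{def:Bomega} gives two terms. By induction, the first term $\mathbf{E}[\mathcal{B}_{\mathcal{T}_1}\mathcal{B}_{\mathcal{T}_2}\mathcal{B}_{\mathcal{T}_3}\cdots]$ counts all pairings that are enhanced inside the subtrees. The second term factors as $\mathbf{E}[\mathcal{B}_{\mathcal{T}_c}\cdots]\cdot\mathbf{E}[\mathcal{B}_{\mathcal{T}_2}\mathcal{B}_{\mathcal{T}_{nc}}]$, and applying the induction hypothesis to each factor identifies it as the count of exactly those pairings in which $\mathcal{L}(\mathfrak{r}_2)\cup\mathcal{L}(\mathfrak{r}_{nc})$ is paired internally. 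The difference is then precisely the count of pairings enhanced also at $\mathfrak{r}$.

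In the fully degenerate case the indicator counts twice, with marked children $\mathfrak{n}_1$ and $\mathfrak{n}_3$, so we subtract the two ``bad'' sets $\{\mathcal{L}_2\cup\mathcal{L}_3\text{ self-paired}\}$ and $\{\mathcal{L}_1\cup\mathcal{L}_2\text{ self-paired}\}$. Inclusion–exclusion would add back their intersection. This intersection is empty: self-pairing of $\mathcal{L}_2\cup\mathcal{L}_3$ forces the sign count $\#\{+\}=\#\{-\}$ on $\mathcal{L}_2\cup\mathcal{L}_3$, but a tree of sign $\zeta$ has one more leaf of sign $\zeta$ than of $-\zeta$. Since $\mathcal{T}_2$ and $\mathcal{T}_3$ carry opposite signs, their leaf counts balance, and the same holds for $\mathcal{T}_1\cup\mathcal{T}_2$. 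If both unions were self-paired, then $\mathcal{L}_2$ would have to pair into both $\mathcal{L}_1$ and $\mathcal{L}_3$ while $\mathcal{L}_1$ pairs only within $\mathcal{L}_1\cup\mathcal{L}_2$. Tracking the sign imbalance of each $\mathcal{T}_i$ then yields a contradiction.

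We expect the main obstacle to be this bookkeeping: making sure that the ``enhanced'' conditions for nodes inside the subtrees and the condition at the root combine correctly, without double-counting, in the fully degenerate case. If the emptiness argument above turns out to be too delicate, the fallback is to follow \cite[Definition 3.3]{DIP25} and check the fully degenerate identity directly from the expression $c^{\mathcal{T}_1}c^{\mathcal{T}_2}c^{\mathcal{T}_3}-\mathbf{E}[c^{\mathcal{T}_1}c^{\mathcal{T}_2}]c^{\mathcal{T}_3}-c^{\mathcal{T}_1}\mathbf{E}[c^{\mathcal{T}_2}c^{\mathcal{T}_3}]$ in the preceding remark. Applying the generalized statement to the family $\{\mathcal{T}^+,\mathcal{T}^-\}$ then yields the lemma.
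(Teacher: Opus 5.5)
Your proof is correct and follows essentially the paper's route. The paper only says the lemma is a direct consequence of the complex Isserlis theorem and defers the details to \cite[Lemma 5.14]{Wu25}; your induction on the total number of branching nodes of a family of trees, with Isserlis as the base case and inclusion--exclusion at a degenerate root, supplies those details.

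One small correction in the fully degenerate case. The two bad events are disjoint because the intersection of two pairing-closed leaf sets is again pairing-closed. So $\mathcal{L}_2=(\mathcal{L}_1\cup\mathcal{L}_2)\cap(\mathcal{L}_2\cup\mathcal{L}_3)$ would be self-paired, which is impossible since it has $2|\mathcal{T}_2|+1$ leaves, an odd number. Your sentence saying $\mathcal{L}_2$ would pair into both $\mathcal{L}_1$ and $\mathcal{L}_3$ gets this backwards, although your conclusion that the two bad sets are disjoint is right.
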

The proof of this lemma is a direct consequence of the Complex Isserlis' Theorem introduced in Lemma~\ref{thm:Isserlis}. We refer to \cite[Lemma 5.14]{Wu25} for a detailed proof.

\begin{definition}[Enhanced Couple Operator]
\label{def:enhanced_couple}
For an enhanced couple  $\mathcal{Q}=\{\mathcal{T}^+,\mathcal{T}^-,\mathcal{P},\mathcal{N}_D\}$ of order $n$, we define  
\begin{align*}
(\mathcal{K}_\mathcal{Q})(t,s,k) := \bigg(\frac{\alpha T}{L}\bigg)^n \zeta(\mathcal{Q}) \sum_{\mathcal{E}}  
\prod_{\mathfrak{n}\in\mathcal{N}}B_\mathfrak{n}\prod_{\mathfrak{n}\in\mathcal{Q}}\varphi_\mathfrak{n}
\int_{\mathcal{E}_\mathfrak{n}}\prod_{\mathfrak{n}\in\mathcal{N}}  e^{\zeta_\mathfrak{n}2\pi i\Theta_\mathfrak{n}(t_\mathfrak{n}) } \dif t_\mathfrak{n}
\prod_{\mathfrak{l} \in \mathcal{L}}^+ n_{\rm in}(k_l),
\end{align*}
where  the sum on $\mathcal{E}$ is taken over all $k$-decorations on $\mathcal{Q}$.
 The product $\prod_{\mathfrak{l} \in \mathcal{L}}^+$ is taken over all leaves with $+$ signs. 
The time integration domain is
\begin{align*}
\mathcal{E}_\mathfrak{n} = \{t{[\mathcal{N}]} : 0 < t_{\mathfrak{n}'} < t_\mathfrak{n}  \text{ whenever } \mathfrak{n}' \text{ is a child of } \mathfrak{n} ; 
t_\mathfrak{n}  < t \text{ for } \mathfrak{n}  \in \mathcal{N}^+, \text{ and } t_\mathfrak{n}  < s \text{ for } \mathfrak{n}  \in \mathcal{N}^-\}.
\end{align*}
\end{definition}

Then, as a direct result of \eqref{def:ct}, it holds that 
 \begin{align}
     \mathbf{E}[ c^{\mathcal{T}^+}_k (t)c^{\mathcal{T}^-}_k(s)] =\sum_{\mathcal{P}} (\mathcal{K}_\mathcal{Q})(t,s,k),
 \end{align}
where the  sum is taken over  all enhanced pairings $\mathcal{P}$ on the couple $(\mathcal{T}^+,\mathcal{T}^-)$.
In the following of this paper, we aim to establish the bounds on $(\mathcal{K}_\mathcal{Q})(t,s,k)$, for any fixed  enhanced pairing $\mathcal{P}$.

Going back to \eqref{partgamma}, we rewrite it in the form \begin{align}
    \partial_t\Gamma(k,t)&=\frac{\alpha T}{\pi L}\varphi_{\leq K}(k)|k|^{2\beta}\sum_{k_1}\varphi_{\leq K}^2(k_1)n_{\mathrm{in}}(k_1)|k_1|^{2\beta}\notag\\
    &\quad+\frac{\alpha T}{\pi L}\varphi_{\leq K}(k)|k|^{2\beta}\sum_{k_1}\sum_{\mathcal{Q}_{\leq N}}(\mathcal{K}_\mathcal{Q})(t,t,k_1)|k_1|^{2\beta},\notag
\end{align}
where the summation $\mathcal{Q}_{\leq N}$ is taken over all nontrivial admissible couples with the rank of each tree at most $N$.
We then decompose $$\Gamma(k,t)=\Gamma_0(k)Tt+\Gamma_1(k,t),$$ where 
 \begin{align}
     \Gamma_0(k)&:= \alpha  \varphi_{\leq K}(k)|k|^{2\beta}C_K,\notag\\
     \partial_t  \Gamma_1(k,t)&:=\frac{\alpha T}{\pi L}\varphi_{\leq K}(k)|k|^{2\beta}\sum_{k_1}\sum_{\mathcal{Q}_{\leq N}}(\mathcal{K}_\mathcal{Q})(t,t,k_1)|k_1|^{2\beta},\ \ \Gamma_1(k,0)=0.\label{def:gamma1}
 \end{align}
 Here  we define $$C_K:=\frac{1}{\pi L}\sum_{k_1\in\mathbb{Z}_L}\varphi_{\leq K}^2(k_1)n_{\mathrm{in}}(k_1)|k_1|^{2\beta}  \lesssim  \|n_{\mathrm{in}}(x)|x|^{2\beta}\|_{L^1}.$$
 By the choice of truncation function $\varphi_{\leq K}$, we obtain the bound
\begin{align*}
   \sup_{\xi\in\mathbb{R}} |\Gamma_0(\xi)|\lesssim  \alpha  L^\delta\lesssim L^{-\delta}.
\end{align*}
We remark that the contribution of the trivial couples, encoded in $\Gamma_0(k)$, is the dominant one among all renormalization terms. Indeed, it satisfies  
\[
\Gamma_0(k) T \sim \alpha TL^{\delta},
\]
which would impose the restrictive condition $\alpha T \lesssim 1$ if treated directly.
For this reason, we separate this term and treat it independently in the subsequent analysis.  

We now turn to the analysis of $\Gamma_1$. We observe that \eqref{def:gamma1} defines an ODE in $\Gamma_1$, since   the quantities $c_k^{\mathcal{T}} = c_k^{\mathcal{T}}(\Gamma)$ depend on $\Gamma$, and hence the kernels $\mathcal{K}_{\mathcal{Q}}$ depend implicitly on $\Gamma_1$. Therefore, \eqref{def:gamma1} should be regarded as a nonlinear integral equation for $\Gamma_1$.
 
\subsection{The  remainder term}\label{sec:rem}
In this section, we write down the equation for the remainder term $\mathcal{R}^{N+1}$. Our goal is to derive a precise equation for $\mathcal{R}^{N+1}$ and to exploit its structure in order to control it in the subsequent analysis.
 We recall that by definition,
\begin{align}\label{def:cleqn}
     c_k(t)=c^{\leq N}_k (t)+\mathcal{R}^{N+1}_k.
\end{align}
Then, combining \eqref{ipartck} and \eqref{def:ct:int} with the definition of the truncated expansion, we deduce that $\mathcal{R}^{N+1}$ satisfies the following equation:
\begin{equation}\mathcal{R}_k^{N+1}=(c_{\sim N})_k+\mathscr{L}_k(\mathcal{R}^{N+1}) +\mathscr{Q}_k(\mathcal{R}^{N+1})+\mathscr{C}_k(\mathcal{R}^{N+1}).\label{eqnr}
\end{equation} Here, the operators $\mathscr{L}$, $\mathscr{Q}$, and $\mathscr{C}$ represent, respectively, the linear, quadratic, and cubic contributions involving the remainder $\mathcal{R}^{N+1}$, while $(c_{\sim N})_k$ collects the source terms coming from trees of scale comparable to $N$.
More precisely, the relevant terms are defined as follows: 
\begin{align}
(c_{\sim N})_k&:=\sum_{\substack{n_1, n_2, n_3\leq N\\  n_1+n_2+n_3\geq N}}\int_0^t \mathcal{W}(c^{n_1},\overline {c^{n_2}},c^{n_3})_k\dif s,\notag
\end{align}
where we recall that $c^{i}_k(t):=\sum_{|\mathcal{T}^+|=i}
 c^{\mathcal{T}^+}_k (t),$ and $\mathcal{W}$ is defined in \eqref{Duhamel0}. Note that, in the second term of the expression, we take the complex conjugate, since $c^{i}$ is summed only over trees with sign $+$.
 
To define the remaining three operators, we introduce the trilinear form
\begin{align}
  \mathcal{W}_1(u,v,w)_k(t):&=\frac{i\alpha T}{L}\sum\limits_{(k_1,k_2, k_{3})}^{\times} u_{k_1}{v_{k_2}}  w_{k_3} e^{2\pi i \Theta(k_1,k_2,k_3,k,t)}B_{k_1,k_2,k_3,k}\varphi_{\leq K}(k)\notag\\
&\quad -\frac{i\alpha T}{L}u_{k}{v_{k}}w_{k}B_{k,k,k,k}\varphi_{\leq K}(k)\notag.    
\end{align}
Then we have
\begin{align}
\mathscr{L}(v)&:=\int_0^t \big(2\mathcal{W}_1(c^{\leq N}, \overline{c^{\leq N}}, v)+\mathcal{W}_1(c^{\leq N},\overline{ v}, c^{\leq N})+\mathscr{L}_1[c^{\leq N}](v)\big)\dif s, \notag \\
\mathscr{Q}(v)&:=\int_0^t \big(2\mathcal{W}_1(v,\overline{v}, c^{\leq N})+\mathcal{W}_1(v, \overline{c^{\leq N}}, v)+\mathscr{Q}_1[c^{\leq N}](v)\big)\dif s,\notag\\
\mathscr{C}(v)&:=\int_0^t  \big(\mathcal{W}_1(v, \overline{v}, v)+\mathscr{C}_1(v) \big)\dif s,\label{def:l}
\end{align} 
    where
\begin{align}
(\mathscr{L}_1)[c^{\leq N}]_k(v)&:=2\frac{i\alpha T}{L}\sum_{k_1}\left(|c^{\leq N}_{k_1}|^2 -\mathbf{E}[|c^{\leq N}_{k_1}|^2] \right) B_{k_1,k_1,k,k}\varphi_{\leq K}(k)v_k\notag\\
&\quad +2\frac{i\alpha T}{L}\sum_{k_1}\left(c_{k_1}^{\leq N}\overline{v_{k_1}}+v_{k_1}\overline{c_{k_1}^{\leq N}}\right) B_{k,k_1,k_1,k}\varphi_{\leq K}(k)c_k^{\leq N},\notag\\
(\mathscr{Q}_1)[c^{\leq N}]_k(v)&:=2\frac{i\alpha T}{L}\sum_{k_1}|v_{k_1}|^2 B_{k_1,k_1,k,k}\varphi_{\leq K}(k)c_k^{\leq N}\notag\\
    &\quad +2\frac{i\alpha T}{L}\sum_{k_1}\left(c_{k_1}^{\leq N}\overline{v_{k_1}}+v_{k_1}\overline{c_{k_1}^{\leq N}}\right) B_{k,k_1,k_1,k}\varphi_{\leq K}(k)v_k,\notag\\
(\mathscr{C}_1)_k(v)&:=2\frac{i\alpha T}{L}\sum_{k_1}|v_{k_1}|^2 B_{k_1,k_1,k,k}\varphi_{\leq K}(k)v_k.\notag
\end{align}
Here we used the symmetry of $\mathcal{W}_1$ and the symmetry of the coefficient $B_{k_1,k_2,k_3,k}$. We also re-decompose the summation as
 $$
\sum\limits_{ k - k_1 + k_2 -k_3 = 0}= \sum_{k_1, k_3\neq k}+2\sum_{k_1=k}-\sum_{k_1=k_2=k_3=k}.
$$

Moreover, by solving \eqref{eqnr} and using the definition of $\varphi_{\leq K}$, we deduce that $\mathcal{R}_k^{N+1}=0$ for all $|k|\geq \frac85\cdot 2^K$.  Let $\tilde\varphi_{\leq K}(x):=\varphi_{\leq K}(x/2)$ and  define the Fourier multiplier $\tilde\varphi_{\leq K}u=(\tilde\varphi_{\leq K}(k)u_k)_{k\in\mathbb{Z}_L}$. Then we
have  $$\tilde\varphi_{\leq K} (k)\mathcal{R}_k^{N+1}=\mathcal{R}_k^{N+1}.$$  
In particular, the remainder $\mathcal{R}^{N+1}$ is localized in frequencies where $\tilde\varphi_{\leq K}\equiv 1$, which allows us to insert this cutoff into the nonlinear terms.
From \eqref{eqnr}, it follows that
\begin{equation}\mathcal{R}_k^{N+1}=(c_{\sim N})_k+\mathscr{L}(\mathcal{R}_k^{N+1}) +\mathscr{Q}(\tilde\varphi_{\leq K}(k)\mathcal{R}_k^{N+1})+\mathscr{C}(\tilde\varphi_{\leq K}(k)\mathcal{R}_k^{N+1}).\notag
\end{equation}  
\begin{remark}
We remark that the function $\tilde\varphi_{\leq K}$ is not inserted into the linear operator $\mathscr{L}$, since it will be needed again later in the proof of Theorem~\ref{thm1}.
In particular, when treating the high-frequency component, it is not appropriate to include $\tilde\varphi_{\leq K}$ at this stage.

On the other hand, the inclusion of $\tilde\varphi_{\leq K}$ in the bilinear and trilinear operators is essential, as it ensures control of the growth of the factor $|k|^\beta$ in the nonlinear interactions.
We refer the reader to Section~\ref{sec:analysisL} for further details.
\end{remark}

\subsection{Statement of main estimates}\label{state} 
We define the $Z$ norm  
  for $\boldsymbol a(t)=(a_k(t))_{k \in \mathbb{Z}_L}$ by
\begin{equation*}
\|\boldsymbol a\|_{Z}^2 = \sup_{t\in[0,1]} L^{-1}\sum_{k \in \mathbb{Z}_L}\langle k\rangle^{10}|{a}_k(t)|^2 .\notag
\end{equation*}

  We recall that $N>0$ is a sufficiently large parameter to be fixed later.
To prove the derivation of the truncated equation, we will require estimates on the contributions of couples, as well as bounds for the linear operator $\mathscr{L}$.

\begin{proposition}[Bounds of tree couples]\label{prop:couple}
Let $1 \leq n \leq N^3$ and $t \in [0,1]$. For any function $\Gamma_1(k,t)$ satisfying
\[
\sup_k \sup_{t \in [0,1]} |\partial_t \Gamma_1(k,t)| \leq 1,
\]
we can define $c_{k}^{\mathcal{T}}=c_{k}^{\mathcal{T}}( \Gamma_1)$ as above, and it holds that
\begin{align}
\left|\sum_{|\mathcal{T}_{+}| + |\mathcal{T}_{-}| = n} \mathbf{E}\left[c_{k}^{\mathcal{T}_{+}}(t) c_{k}^{\mathcal{T}_{-}}(t)\right]\right| 
&\leq C \langle k \rangle^{-20} (\alpha T^{4/5} L^{2\delta})^n T^{-3/5}. \label{bd:couple}
\end{align}

Moreover, the right-hand side of \eqref{def:gamma1} defines a functional of $\Gamma_1$,  which we denote by $\mathscr{P}(\Gamma_1)$. For any 
\[
\sup_k \sup_{t \in [0,1]} |\partial_t \Gamma_1(k,t)| + |\partial_t {\Gamma}'_1(k,t)| \leq 1,
\]
it holds that
\begin{align*}
\sup_k \sup_{t \in [0,1]} |\mathscr{P}(\Gamma_1)| &\leq L^{-3\delta},\\
\sup_k \sup_{t \in [0,1]} |\mathscr{P}(\Gamma_1) - \mathscr{P}({\Gamma}'_1)| &\leq L^{-3\delta} \, \sup_k \sup_{t \in [0,1]} |\partial_t \Gamma_1 - \partial_t {\Gamma}'_1|.
\end{align*}
\end{proposition}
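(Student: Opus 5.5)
The plan follows the Deng--Hani / Vassilev scheme, adapted to degenerate nodes and the renormalized phase. By the enhanced pairing lemma,
\[
\mathbf{E}\left[c_{k}^{\mathcal{T}_{+}}(t)c_{k}^{\mathcal{T}_{-}}(t)\right]=\sum_{\mathcal{P}}\mathcal{K}_\mathcal{Q}(t,t,k).
\]
The number of enhanced couples of order $n\le N^3$ is $\lesssim C^n$, so it suffices to bound each $\mathcal{K}_\mathcal{Q}$. Some couples contain irregular chains, whose individual contributions are too large; for these we first group the couples into equivalence classes and exploit the cancellation between them, as announced for Section~\ref{sec:irregular}. The remaining estimate splits into two parts.

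\textbf{Step 1 (time integrals).} For fixed decoration we bound the time integral by
\[
\prod_{\mathfrak{n}}\langle T\Omega_\mathfrak{n}+\partial\text{-corrections}\rangle^{-1},
\]
up to $L^{\delta}$ losses. The point is to treat the phase $\Theta_\mathfrak{n}$ as $T t\bigl(\Omega_\mathfrak{n}+\sum\Gamma_0(\mathfrak{n})\bigr)+\sum\Gamma_1(\mathfrak{n},t)$. The hypothesis $|\partial_t\Gamma_1|\le1$ makes the $\Gamma_1$ part a bounded-derivative perturbation, which we handle by integrating by parts in each $t_\mathfrak{n}$; this yields the standard $\langle\cdot\rangle^{-1}$ decay with denominators built from the modified phase $|\xi|^\sigma+\Gamma_0(\xi)$. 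Degenerate nodes have zero phase, so they contribute only a factor $1$ in time but remove one free summation variable.

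\textbf{Step 2 (counting).} We reduce the decoration sum to a molecule counting problem: atoms are pairs of nodes, and bonds are the $k$-variables with constraints $|\Omega_\mathfrak{n}+\sum\Gamma_0|\le T^{-1}$ after dyadic localization. We then apply the algorithm of \cite{Vas24} (Appendix~\ref{sec:app}). The two-vector and three-vector counting lemmas must be re-proved for $|k|^\sigma+\Gamma_0(k)$. The hardest part is the region near $k=0$, where $|k|^{2\beta}$ in $\Gamma_0$ is non-smooth; there we give up the phase information, using only the size of the region, and this is where the restriction $\alpha T^{1-\beta}<1$ enters. Degenerate nodes give the trivial bound $L\le L^2T^{-1}$, and renormalization excludes the bad loop configurations. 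Each step of the algorithm then earns $(\alpha T^{4/5}L^{2\delta})$ per order. The prefactor $T^{-3/5}$ comes from the final step, and $\langle k\rangle^{-20}$ comes from $n_{\rm in}$ being Schwartz together with $B_\mathfrak{n}\lesssim 2^{4K\beta}\lesssim L^{\delta}$ per node.

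\textbf{Step 3 ($\mathscr{P}$).} For the statements on $\mathscr{P}$, we insert \eqref{bd:couple} into \eqref{def:gamma1}. We sum over $k_1$ with the weight $L^{-1}|k_1|^{2\beta}\langle k_1\rangle^{-20}$, which is $O(1)$, and over $n\ge1$. This gives
\[
|\mathscr{P}(\Gamma_1)|\lesssim \alpha T\cdot L^{\delta}\,\alpha T^{4/5}L^{2\delta}\,T^{-3/5}=\alpha^2T^{6/5}L^{3\delta}\le L^{-3\delta},
\]
using $T\le L^{5\gamma/4-10\delta}$. For the Lipschitz bound we repeat Steps 1--2 for the difference $\mathcal{K}_\mathcal{Q}(\Gamma_1)-\mathcal{K}_\mathcal{Q}(\Gamma_1')$. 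We expand
\[
e^{2\pi i\Gamma_1}-e^{2\pi i\Gamma_1'}=\int_0^1\frac{d}{d\theta}\,e^{2\pi i(\theta\Gamma_1+(1-\theta)\Gamma_1')}\,d\theta,
\]
which produces a factor $|\Gamma_1-\Gamma_1'|\le\sup|\partial_t\Gamma_1-\partial_t\Gamma_1'|$ at one node while the time-integration bounds are unchanged.

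The main obstacle is Step 2. The counting estimates must be uniform under the nonsmooth correction $\Gamma_0$ near zero frequency, and the irregular-chain cancellation must survive the renormalized phases.
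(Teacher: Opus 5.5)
Your outline follows the paper's route:
\begin{itemize}
\item enhanced pairings, then congruence classes with splicing of irregular chains (Lemma~\ref{lem-splice});
\item Proposition~\ref{prop:est:af} for the time integrals, with $\Gamma_1$ treated as a bounded-derivative perturbation;
\item the molecule algorithm with the modified counting Lemma~\ref{counting};
\item a bound on $\mathscr{P}(\Gamma_1)$ that is the same computation as in the paper.
\end{itemize}
(The number of couples of order $n$ is not $\lesssim C^n$, because of the pairings. It is, however, bounded independently of $L$, which is all you need.)

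Several mechanics in Step~2 are misstated.

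\emph{Counting constraint.} Decorations are counted under $|\overline\Omega_\mathfrak{n}-\lambda_\mathfrak{n}/T|\le T^{-1}$ for arbitrary integers $\lambda_\mathfrak{n}$, uniformly in $\lambda$, not under $|\overline\Omega_\mathfrak{n}|\le T^{-1}$. Summing the bound $\sum_{d}\prod_{\mathfrak{n}}\langle Tq_\mathfrak{n}\rangle^{-1}$ from Proposition~\ref{prop:est:af} over $\lambda$ costs only $(\log L)^{n}$.

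\emph{Where $\alpha T^{4/5}$ and $T^{-3/5}$ come from.} The rate $\alpha T^{4/5}$ is not gained operation by operation, and $T^{-3/5}$ does not come from a final step.
\begin{itemize}
\item A two-vector operation gives only $\mathfrak{C}=L$, i.e.\ a factor $\alpha T$ for its unit of $\chi$.
\item A three-vector operation gives $L^2T^{-1}D^2\log L$ for two units, i.e.\ $\alpha^2T$.
\item The stated rate is an average. It follows from the combinatorial inequality $m_2\le 3m_3-3$ in \eqref{prop-op-bound} together with $2m_3+m_2=n$, which give $T^{-m_3}\le T^{-n/5-3/5}$; the $-3$ is the source of $T^{-3/5}$.
\end{itemize}

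\emph{Small-gap versus large-gap chains.} The accounting above presupposes that double bonds have been treated separately, and here you need a distinction you did not mention. The cancellation over congruent couples gains only for small-gap chains ($|h|\le T^{-1/2}$), where it produces $(\alpha T^{1/2}L^{\delta})^q$. For large-gap chains, $\varphi(k)n_{\rm in}(k-h)-\varphi(k-h)n_{\rm in}(k)$ is not small. Large-gap opposite-direction and same-direction double bonds are therefore excluded from $m_2$ and removed with the improved count $LT^{-1/2}D^2$ of Lemma~\ref{counting}.

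\emph{Lipschitz bound.} Your argument interpolates $\theta\Gamma_1+(1-\theta)\Gamma_1'$ at every node; this still satisfies the derivative hypothesis. The $\theta$-derivative then supplies an $f_\mathfrak{n}$ for Proposition~\ref{prop:est:af}, bounded via $\Gamma_1(k,0)=\Gamma_1'(k,0)=0$. This is valid and somewhat cleaner than the paper's node-by-node difference with integration by parts.
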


Then, by applying a fixed point argument, we obtain the existence of $\partial_t \Gamma_1$. Moreover, it holds that
\begin{align}
   \sup_k\sup_{t\in[0,1]}|   \partial_t\Gamma_1(k,t)|\leq L^{-3\delta}.\label{bd:gamma'}
\end{align}

\begin{proposition}[Bounds of linear operator $\mathscr{L}$]\label{prop:l}
  With probability $\geq 1-L^{-40}$, it holds that for $0\leq n\leq N$
\begin{equation}\label{defop} \|\mathscr{L}^n\|_{Z\to Z} \lesssim(\alpha  T^{4/5}  L^{2\delta})^{n/2} L^{50}. 
\end{equation}
\end{proposition}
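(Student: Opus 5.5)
We will follow the standard strategy of \cite{DH21,DH23a,Vas24}: bound the kernel of $\mathscr{L}^n$ in a high moment, and then upgrade to an operator-norm bound via a $TT^*$ argument combined with a union bound over a net.

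\textbf{Step 1: expansion of the operator.} Write $\mathscr{L}^n(v)_k=\sum_{k'}\int \mathfrak{L}^n_{kk'}(t,s)\,v^{\pm}_{k'}(s)\,\dif s$. Since $\mathscr{L}$ is real-linear in $v$ ($\overline v$ appears), we split into the $v$ and $\overline v$ parts. Iterating the definition \eqref{def:l}, each term in $\mathfrak{L}^n_{kk'}$ is indexed by a \emph{flower tree}: a ternary tree of scale comparable to $n$ plus the sum of the scales of the attached $c^{\leq N}$ trees, with one distinguished leaf (the flower) carrying $v_{k'}$. All other leaves are Gaussian $\eta$'s. The contributions of $\mathscr{L}_1$, namely the degenerate nodes with renormalized $|c_{k_1}|^2-\mathbf{E}|c_{k_1}|^2$, fit into the enhanced-tree formalism of Definition \ref{def:enhancedtress}, with the same enhanced-pairing restriction. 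The kernel is thus a sum over flower trees $\mathcal{T}$ of expressions of the form \eqref{def:ct}, with $k_{\mathfrak{f}}=k'$ fixed. Note that the $\langle k\rangle^{10}$ weights in $Z$ cost nothing, since all frequencies are truncated at $2^K\le L^{\delta/(10\beta)}$ and $L^{50}$ absorbs such losses.

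\textbf{Step 2: high moments and counting.} By hypercontractivity (Gaussian chaos of bounded degree, since $n\le N$), it suffices to bound $\mathbf{E}|\sum_{k,k'}|\mathfrak{L}^n_{kk'}|^2|^{p}$ for a large fixed $p$, or more simply $\mathbf{E}\|\mathfrak{L}^n\|_{HS}^{2p}$. Equivalently, we consider $\mathbf{E}[(\mathfrak{L}^n(\mathfrak{L}^n)^*)^{p}]$ traces. Expanding by Isserlis (Lemma \ref{thm:Isserlis}) and the enhanced pairing lemma produces \emph{flower couples}, namely $2p$ copies glued along flowers. These are estimated exactly as in Proposition \ref{prop:couple}: the time integrals $\mathcal{A}_\mathcal{T}$ are bounded using the phase $\Theta$, including $\Gamma_0Tt$ and $\Gamma_1$ with \eqref{bd:gamma'}. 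The lattice sums are bounded by reducing to molecules and running the algorithm of \cite{Vas24} with the two-/three-vector counting estimates of Section \ref{sec:mainest}. Each branching node then gains $\alpha T^{4/5}L^{2\delta}$ overall. Because the flower frequency $k'$ is free rather than paired, the couple count loses relative to \eqref{bd:couple}. After taking the $2p$-th root, this yields a square-root gain $(\alpha T^{4/5}L^{2\delta})^{n/2}$ per power of $\mathscr{L}$, up to a polynomial factor in $L$.

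\textbf{Step 3: from moments to probability.} Chebyshev with $p$ large gives the bound with probability $1-L^{-50}$ for each fixed $(t,s)$ and each tree. The number of trees is $O_N(1)$, and a union bound over an $L^{-100}$-net in $t,s\in[0,1]$, together with the Lipschitz continuity of the kernel (derivatives are polynomial in $L$), yields probability $\ge1-L^{-40}$ simultaneously for $0\le n\le N$. Polynomial losses such as Hilbert–Schmidt versus operator norm, the dimension $\lesssim L\cdot 2^K$ of the frequency support, and the nets are all absorbed into $L^{50}$.

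\textbf{Main obstacle.} The expected main obstacle is the counting step for flower couples in the presence of degenerate nodes and the renormalized phases. A flower couple has an unpaired, fixed frequency, so one must check that the molecule algorithm still avoids the bad loop configurations after renormalization. One must also check that the trivial bound $L<L^2T^{-1}$ at degenerate nodes survives near the flower. We would handle this by first cutting the molecule at the flower vertices, treating each as a fixed-vector (two-vector) counting step, and then applying the algorithm of \cite{Vas24} to the remainder.
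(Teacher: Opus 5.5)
Your architecture matches the paper's: flower trees from iterating $\mathscr{L}$, reuse of the couple machinery, Gaussian hypercontractivity, and a weighted Hilbert--Schmidt bound whose polynomial losses are absorbed into $L^{50}$. Your observation that the kernel vanishes unless $|k|,|k'|\lesssim 2^K$ is a valid substitute for the paper's decay factor $\langle k-\zeta l\rangle^{-20}$ in that last step. The gap is in Step~2. First, $\mathbf{E}\|\mathfrak{L}^n\|_{HS}^{2p}=\mathbf{E}\bigl(\mathrm{tr}\,\mathfrak{L}^n(\mathfrak{L}^n)^*\bigr)^p$ and $\mathbf{E}\,\mathrm{tr}\bigl((\mathfrak{L}^n(\mathfrak{L}^n)^*)^p\bigr)$ are different quantities, so they are not ``equivalent''. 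More seriously, expanding either one directly by Isserlis produces $2p$ flower trees whose Gaussian leaves can be paired across all $2p$ copies, and such objects are not couples. Every tool you invoke is formulated and proved only for a two-tree couple whose molecule is connected with exactly two degree-$3$ atoms. This includes Proposition~\ref{prop:couple}, the enhanced-pairing lemma (which computes $\mathbf{E}[\mathcal{B}_{\mathcal{T}^+}\mathcal{B}_{\mathcal{T}^-}]$ for \emph{two} trees), the splicing of irregular chains, and the algorithm of Section~\ref{sec:Algorithm} with its key inequality $m_2\le 3m_3-3$. So the claim that ``these are estimated exactly as in Proposition~\ref{prop:couple}'' is unsupported, and this is exactly the step from which all the gain is supposed to come.

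The repair is to use hypercontractivity the way the paper does. For fixed $(t,s,k,k')$, each kernel entry is a Gaussian polynomial of degree $O(N^3)$, so the Gaussian hypercontractivity lemma bounds all its moments by its second moment. You therefore only need $\mathbf{E}|\mathfrak{L}^n_{kk'}(t,s)|^2$. Its Isserlis expansion gives genuine two-tree flower couples, in which both flowers carry the fixed frequency $k'$ and both flower parents carry the fixed time $s$. This is Proposition~\ref{prop-Lm}, where the flowers cost only polynomial factors: at most $L^{2}$ from irregular chains through a flower, and $L^{20}$ from the two un-integrated flower parents. You then take a union bound over the $\lesssim L^{2+2\delta}$ pairs $(k,k')$ and the $O(N^3)$ values of $(\zeta,m)$, and apply your Hilbert--Schmidt bound. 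Step~3 also needs correcting. An $L^{-100}$-net in $(t,s)$ has about $L^{200}$ points, so a per-point failure probability of $L^{-50}$ cannot be union-bounded. You need per-point failure $L^{-A}$ with $A\geq 250$, say, which hypercontractivity gives by taking the moment exponent large at the price of a factor $L^{\theta}$. You also need a high-probability bound on the random Lipschitz constant in $(t,s)$. The paper avoids the net altogether by combining hypercontractivity with the Gagliardo--Nirenberg inequality in $(t,s)$ to control the supremum in time directly.
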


These two propositions constitute the main results of the first part. The proof of Proposition~\ref{prop:couple} is presented in Sections~\ref{sec:irregular}--\ref{sec:Algorithm}, while the proof of Proposition~\ref{prop:l} is given in Section~\ref{bd:remainder}.

\subsection{ The molecules}\label{sec:molecule}
In order to establish the main estimates, we introduce the notion of molecules. Most of the notation follows \cite{DH23a,Vas24}; however, particular attention is required to handle the degenerate nodes, as in \cite{Wu25}.

Compared with the semilinear setting, the presence of degenerate nodes introduces additional constraints on the frequency assignments and pairing structures, which must be carefully incorporated into the definition of molecules. In particular, these nodes affect both the counting arguments and the available cancellations, and therefore require a refined treatment.

\begin{definition}\label{def-molecules}
\begin{enumerate}
\item (Molecules)
A  molecule $\mathbb{M}$ is a directed graph, where we call vertices as  atoms and edges as bonds. Multiple bonds between the same pair of atoms and loops are also allowed. Each atom has out-degree at most 2 and in-degree at most 2. The degree is the sum of out-degree and in-degree.
We write $v \in \mathbb{M}$ for an atom $v$ in $\mathbb{M}$, and $l \in \mathbb{M}$ for a bond $l$ in $\mathbb{M}$. We write $l \sim v$ If $v$ is an endpoint of $l$. 
We further require that $\mathbb{M}$ does not have any connected components consisting only of atoms each having degree 4 (where the degree is considered in the undirected sense).\\
For a molecule $\mathbb{M}$, we define the quantity
\begin{align*}
\chi := E - V + F,
\end{align*}
where $E$ is the number of bonds, $V$ is the number of atoms, and $F$ is the number of connected components.

\item (Atomic group) An atomic group in $\mathbb{M}$ is a subset of atoms together with some bonds connecting those atoms. 

\item (Bridge)
A single bond $l$ is called a bridge if removing it increases the number of connected components by one.
\item (Loop) A self-connecting bond is called a loop.
\end{enumerate}
\end{definition}

\begin{definition}[Molecules of couples]\label{couple-molecule}
Let $\mathcal{Q}$ be a nontrivial couple. We define the corresponding molecule $\mathbb{M} $ as follows. The atoms of $\mathbb{M}$ correspond to the branching nodes $\mathfrak{n} \in \mathcal{N}$ of $\mathcal{Q}$. For any two branching nodes $\mathfrak{n}_1, \mathfrak{n}_2$, we draw a bond between the corresponding atoms $v_1, v_2$ if one of the two case happens:
\begin{enumerate}
\item ($PC$) One of $\mathfrak{n}_1, \mathfrak{n}_2$ is a parent of the other, and if the child atom corresponds to a node with sign $-$, then the bond is directed  from the parent atom to the child atom. Otherwise, the direction is reversed;
\item ($LP$) A leaf child of $\mathfrak{n}_1$ is paired with a leaf child of $\mathfrak{n}_2$, the bond is directed from the atom whose corresponding child   has sign $-$  to  the other one with $+$.
\end{enumerate}

\end{definition}


\begin{definition}[Enhanced Molecule]
Let $\mathcal{Q}$ be an enhanced couple of order $n$. We construct its  enhanced molecule $\mathbb{M}(\mathcal{Q})$ in the same manner as Definition \ref{couple-molecule} with more restrictions:
\begin{enumerate}
    \item For any degenerate branching node $\mathfrak{n}\in\mathcal{N}_D$, there exists a corresponding atom $v_{\mathfrak{n}}$ in  $\mathbb{M}(\mathcal{Q})$.
    \item There are three atoms $v_c,v_2$, and $v_{nc}$ corresponding to the three children $\{\mathfrak{n}_c,\mathfrak{n}_2, \mathfrak{n}_{nc}\}$ of  $\mathfrak{n}$ (or their corresponding paired  nodes).
\end{enumerate}
\end{definition}
Then for any nontrivial couple $\mathcal{Q}$ of order $n$, the corresponding molecule $\mathbb{M}(\mathcal{Q})$ is connected.  It either has two degree 3 atoms or one degree 2
 atom, while all other atoms have degree 4. 

The following lemma shows the correspondence between molecules and couples. The proof can be found in \cite[Proposition 9.6]{DH23a}, or \cite[Proposition 2.12]{Wu25}.
\begin{lemma}
\label{prop:molecules-couples}
 For any molecule $\mathbb{M}$ with order $n$, there are at most $C^n$ different couples $\mathcal{Q}$  such that the corresponding  molecule are exactly $\mathbb{M}$.
\end{lemma}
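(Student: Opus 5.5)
The plan is to reconstruct the couple from the molecule one local structure at a time, and to show that at every step only $O(1)$ choices are available. The sign assignment is fixed by the bond directions, and the rest of the data is reconstructed from the local structure at each atom. The resulting bound is $C^n$, since a molecule of order $n$ has $n$ atoms and at most $2n$ bonds.

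\textbf{Step 1 (label each bond end).} Each atom $v$ corresponds to a branching node $\mathfrak{n}$. By Definition~\ref{couple-molecule}, each bond at $v$ is either a PC bond or an LP bond. The LP bond from a leaf child $\mathfrak{n}_j$ comes from one of the positions $\mathfrak{n}_1,\mathfrak{n}_2,\mathfrak{n}_3$. At the other end, the bond leads either to the parent or to one of the three children. So for each atom we specify, for each of its at most four incident bonds, which of these roles the bond plays at that atom: parent, or child in position $j\in\{1,2,3\}$ (either as a branching child or through a paired leaf child). This gives at most $4^4$ choices per atom, hence at most $C^n$ labelings in total.

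\textbf{Step 2 (the labeling determines the couple).} Given such a labeling, the parent–child relations are fixed, so both trees $\mathcal{T}^\pm$ are determined. The atoms of degree 3 or 2 identify the roots; together with the bond directions and the sign rules $\zeta_{\mathfrak{n}_1}=-\zeta_{\mathfrak{n}_2}=\zeta_{\mathfrak{n}_3}$, this also fixes which tree is $\mathcal{T}^+$, up to $O(1)$ root choices. Every child position not occupied by a branching child is a leaf. The LP bonds then fix the pairing $\mathcal{P}$, because they say which leaf position is paired with which.

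\textbf{Step 3 (enhanced data).} For the enhanced structure, at each atom we additionally choose whether the node lies in $\mathcal{N}_D$ and, if so, which marked child $\mathfrak{n}_c\in\{\mathfrak{n}_1,\mathfrak{n}_3\}$ it has, including the fully degenerate option. This costs at most $4$ choices per atom, so at most $4^n$ overall, and the total count remains $C^n$.

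\textbf{Main obstacle.} The delicate point is checking that the molecule really encodes enough to make Step 2 consistent. The concern is multiple bonds or loops between the same atoms, for example a node whose leaf children are paired with each other, which produces a loop. These cases only add more labelings to enumerate; they do not create ambiguity beyond the per-atom $O(1)$ choices, so the bound is unaffected. This is the argument of \cite[Proposition 9.6]{DH23a}, together with the degenerate-node bookkeeping of \cite[Proposition 2.12]{Wu25}.
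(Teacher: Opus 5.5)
Your proof is correct and is the same per-atom labeling argument as \cite[Proposition 9.6]{DH23a}, together with the $O(1)$-per-node choice of degenerate data from \cite[Proposition 2.12]{Wu25}; the paper gives no proof of its own and simply cites these two results. One case you leave implicit is when one tree is trivial: then exactly one child slot receives no bond end, that leaf is the one paired with the trivial root, and so the reconstruction is still unique.
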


\begin{definition}[Decorations of Molecules]
\label{def:decorations-molecules}
Let $T>0$. Let $\mathbb{M}$ be a molecule. For each atom $v \in \mathbb{M}$, we fix $c_v \in \mathbb{Z}_L$,  and $C_v \in \mathbb{R}$.
A  $(c_v,C_v)$-decoration of $\mathbb{M}$ is a set of $k_l\in\mathbb{Z}_L $ to each  $l \in \mathbb{M}$, such  that for each atom $v$,
\begin{align*}
\sum_{l \sim v} \zeta_{v,l} \,k_l \;=\; c_v,
\quad\text{and}\quad
\bigl|\sum_{l: l\sim v} \zeta_{v,l} (|k_l|^\sigma+\Gamma_0(k_l)) - C_v\bigr|\;<\;T^{-1}.
\end{align*}
\end{definition}
 Here we define $\zeta_{v,l}=1$ if $l$ goes out from $v$, and $\zeta_{v,l}=-1$ if $l$ goes into $v$. We also recall the definition of $\Gamma_0$ given in \eqref{def:gamma1}.

In particular, for a nontrivial couple $\mathcal{Q}$, the related molecule $\mathbb{M}(\mathcal{Q})$, and a $k$-decorations of $\mathcal{Q}$, we define a $k$-decoration of $\mathbb M(\mathcal Q)$ such that $c_v=0$, and for any $l\in\mathbb{M}$,
\begin{enumerate}
\item 
 If $l$ is LP, between leaves $\mathfrak{l}$ and $\mathfrak{l}'$, then define $k_l := k_{\mathfrak{l}} = k_{\mathfrak{l}'}$.
 \item  If $l$ is PC, with child $\mathfrak{n}$, we define $k_l := k_{\mathfrak{n}}$.
 \end{enumerate}
 The parameter $c_v$ in the decoration is usual taken as 
 \begin{equation*}
c_v=\begin{cases}
   0,& {\rm 
\ if}\ v {\rm\ has\ degree\ 2\ or\ 4},\\
 +k,&{\rm 
\ if}\ v {\rm\ has\ out-degree\ 2\ and\ in-degree\ 1},\\
 -k,& {\rm 
\ if}\ v {\rm\ has\ out-degree\ 1\ and\ in-degree\ 2}. 
\end{cases}
\end{equation*}
The choice of  $C_v$ will be decided in the following to derive different counting estimates.

At the end of this section, we introduce the notion of degenerate atoms.

\begin{definition}[Degenerate Atoms]
\label{def:degenerate-atoms}
Consider an enhanced molecule $\mathbb{M}$ equipped with a $(c_v,C_v)$-decoration as above. We say that an atom $v \in \mathbb{M}$ is degenerate if there exist two degenerate bonds $l_1, l_2 \sim v$, with opposite orientations at $v$, such that $k_{l_1}=k_{l_2}$. Furthermore, $v$ is fully degenerate if all bonds $l\sim v$ carry the same frequency $k_l$.
In particular, each degenerate branching node in the couple $\mathcal{Q}$ corresponds to a degenerate atom in the associated molecule $\mathbb{M}(\mathcal{Q})$. 
\end{definition}

\section{Collection of necessary estimates}\label{sec:mainest}
In this section, we collect several auxiliary results that will be used in the estimation of $\mathcal{K}_{\mathcal{Q}}$.
\subsection{The counting estimates}
 First, we have the following  two or three-vectors counting estimate.
\begin{lemma}\label{counting} Let $\sigma\in(0,1)\cup (1,2],\beta\in(0,\frac\sigma4)$ and  $D>1.$ Assume that $\alpha T^{1-\beta}D^{2-\sigma}<1$ and $D^2<T < L$.  Recall $\Gamma_0$ is defined in \eqref{def:gamma1}. Then, uniformly in $(k,a,b,c)\in(\mathbb{Z}_L)^4$ with $\max\{k,a,b,c\}\leq D$ and $m\in\mathbb{R}$, the sets
\begin{multline}
S_3=\big\{(x,y,z)\in (\mathbb{Z}_L)^3:x-y+z=k,\,\,\big||x|^\sigma+\Gamma_0(x)-|y|^\sigma-\Gamma_0(y)+|z|^\sigma+\Gamma_0(z)-m\big|\leq T^{-1},\\|x-a|\leq 1,\,\,|y-b|\leq 1,\,\,|z-c|\leq 1,\,\,\mathrm{and\ }k\not\in\{x,z\}\big\},\notag
\end{multline}
\begin{multline}
S_2^+=\big\{(x,y)\in(\mathbb{Z}_L)^2:x+ y=k,\,\,\big||x|^\sigma+\Gamma_0(x)+|y|^\sigma+\Gamma_0(y)-m\big|\leq T^{-1},\\|x-a|\leq 1,\,\,|y-b|\leq 1\big\},\notag
\end{multline}
\begin{multline}
S_2^-=\big\{(x,y)\in(\mathbb{Z}_L)^2:x- y=k,\,\,\big||x|^\sigma+\Gamma_0(x)-|y|^\sigma-\Gamma_0(y)-m\big|\leq T^{-1},\\\quad|x-a|\leq 1,\,\,|y-b|\leq 1\big\},\notag
\end{multline}satisfy the bounds
\begin{align*}
    \#S_3\lesssim L^{2}T^{-1}D^{2}\log L,\ \  \#S_2^+\lesssim LT^{-1/2}D,\ \  \#S_2^-\lesssim L.
    \end{align*}
    In particular, if $ |k|\geq T^{-1/2}$, it holds that
    \begin{align*}\#S_2^-\lesssim LT^{-1/2}D^{2}.
\end{align*} 
\end{lemma}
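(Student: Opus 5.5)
We treat the perturbed phase $\phi(x):=|x|^\sigma+\Gamma_0(x)$ as a small perturbation of $|x|^\sigma$. Since $\Gamma_0(x)=\alpha C_K\varphi_{\leq K}(x)|x|^{2\beta}$, we have $|\Gamma_0'(x)|\lesssim \alpha|x|^{2\beta-1}$ and $|\Gamma_0''(x)|\lesssim\alpha|x|^{2\beta-2}$. On the region $|x|\gtrsim T^{-1}$, the hypothesis $\alpha T^{1-\beta}D^{2-\sigma}<1$ together with $\beta<\sigma/4$ makes these derivatives smaller than those of $|x|^\sigma$, that is, smaller than $|x|^{\sigma-1}$ and $|x|^{\sigma-2}$, at least after possibly losing factors of $D$. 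The counts are then reduced to one-dimensional lattice counting for the function $\phi$. In $S_2^\pm$ we substitute $y=\pm(k-x)$, leaving the single variable $x\in\mathbb Z_L$ with $|x-a|\le1$. The condition is then that $g(x):=\phi(x)\pm\phi(\pm(k-x))$ lies in an interval of length $2T^{-1}$. In $S_3$ we first fix $y$ (with $O(L)$ choices) and set $z=k+y-x$. This reduces the count to an $S_2^+$-type problem with new total $k+y$ and shifted $m$, carrying the extra constraint $x\ne k$ and $z\ne k$, that is $x\neq k,\ x\neq y$.

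For $S_2^+$ the key is a lower bound on $|g''|$. For $\sigma\neq1$ the function $|x|^\sigma$ has second derivative $\sigma(\sigma-1)|x|^{\sigma-2}$, which is bounded below by $\gtrsim D^{\sigma-2}$ on $|x|\le D+1$. In the sum $\phi(x)+\phi(k-x)$ both second derivatives have the same sign, so $|g''|\gtrsim D^{\sigma-2}$ away from $x=0$ and $x=k$. The $\Gamma_0$ correction is dominated there, except in a tiny neighbourhood of the origin. We handle that neighbourhood by counting trivially: it has length $\lesssim T^{-1/2}$, so it contains $\lesssim LT^{-1/2}$ points. Away from it we apply the standard sublevel estimate: if $|g''|\ge\mu$, then $\{|g-m|\le T^{-1}\}$ is contained in at most two intervals, each of length $\lesssim (T^{-1}/\mu)^{1/2}$. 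This gives $\#S_2^+\lesssim L(T\mu)^{-1/2}+1\lesssim LT^{-1/2}D^{1-\sigma/2}$, which is at most $LT^{-1/2}D$; the last step uses $T<L$ to absorb the $+1$. The bound $\#S_2^-\lesssim L$ is trivial, since $x$ ranges over an interval of length $2$.

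For the refined bound on $S_2^-$ when $|k|\ge T^{-1/2}$, the function is $g(x)=\phi(x)-\phi(x-k)$, and we use $|g'|$ instead. By the mean value theorem $g'(x)=\phi'(x)-\phi'(x-k)$, and for $|x|^\sigma$ we have $|g'|\gtrsim |k|D^{\sigma-2}$ whenever $x$ and $x-k$ have the same sign. When they have opposite signs, $g'$ equals $\sigma(|x|^{\sigma-1}+|x-k|^{\sigma-1})$ up to sign, which is even larger. With $|g'|\ge\nu$, the sublevel set has length $\lesssim T^{-1}/\nu$, so the count is $\lesssim L T^{-1}|k|^{-1}D^{2-\sigma}+1\lesssim LT^{-1/2}D^2$. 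The $\Gamma_0'$ difference must be shown to be a perturbation, using $\alpha T^{1-\beta}D^{2-\sigma}<1$ together with a trivial treatment of the region $|x|\lesssim T^{-1}$ or $|x-k|\lesssim T^{-1}$ near the singularity.

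For $S_3$, fixing $y$ and applying the $S_2^+$ bound naively gives $L\cdot LT^{-1/2}D$, which is too weak. Instead we use the two-variable structure. Write the phase as $F(x,z)=\phi(x)-\phi(x+z-k)+\phi(z)$, and dyadically decompose according to the sizes of $|x-k|$ and $|z-k|$, which are nonzero. The exclusion $k\notin\{x,z\}$ ensures that $\partial_x F=\phi'(x)-\phi'(y)$ is comparable to $|z-k|\,D^{\sigma-2}$ by the mean value theorem, since $x-y=k-z$. For fixed $z$ with $|z-k|\sim 2^{-j}$, the one-variable derivative bound then gives $\lesssim L T^{-1}2^{j}D^{2-\sigma}+1$ values of $x$. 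There are $\sim L2^{-j}$ such $z$, so summing over the $\lesssim\log L$ dyadic scales with $2^{-j}\ge L^{-1}$ yields $L^2T^{-1}D^{2}\log L$. The $+1$ terms contribute at most $L\le L^2T^{-1}$ because $T<L$. The main obstacle is again controlling $\Gamma_0'(x)-\Gamma_0'(y)$ against $|x-y|D^{\sigma-2}$ when $x$ or $y$ is near $0$, where $|\cdot|^{2\beta}$ is singular. We handle this by splitting off $|x|,|y|\lesssim(\alpha T)^{\frac{1}{1-\beta}}$-type neighbourhoods and counting them trivially; the condition $\alpha T^{1-\beta}D^{2-\sigma}<1$ is designed exactly to make that exceptional contribution admissible.
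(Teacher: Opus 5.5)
Your overall plan matches the paper's: you treat $\Gamma_0$ as a perturbation of $|x|^\sigma$ away from the singular points, bound $S_2^+$ by a second-derivative sublevel count and $S_2^-$ by a first-derivative count with gap $|k|$, and bound $S_3$ by fixing $z$, counting $(x,y)$ with gap $k-z\neq0$, and summing $|k-z|^{-1}$ to get $\log L$.

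What goes wrong is the excision of neighbourhoods of the singularities of $|x|^{2\beta}$. This is exactly where the hypothesis $\alpha T^{1-\beta}D^{2-\sigma}<1$ is used. In the refined $S_2^-$ bound (and in your opening remark) you excise only $|x|,|x-k|\lesssim T^{-1}$. Off that set the mean value theorem only gives $|\Gamma_0'(x)-\Gamma_0'(x-k)|\lesssim\alpha|k|T^{2-2\beta}$. Beating $|k|D^{\sigma-2}$ would then require $\alpha T^{2-2\beta}D^{2-\sigma}\ll1$, which is not assumed.

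This is not just a lossy estimate. Take $\sigma=2$, $k\sim T^{-1/2}$ and $\alpha=T^{\beta-1-\epsilon}$ with $0<\epsilon<\frac12-\beta$, which the hypotheses allow. For $x>k$ one has $g'(x)=2k+2\beta\alpha C_K\bigl(x^{2\beta-1}-(x-k)^{2\beta-1}\bigr)$, and this vanishes at some $x$ with $T^{-1}\ll x-k\ll T^{-1/2}$. So there is no lower bound on $|g'|$ on your good set. The cut must be at $T^{-1/2}$, as you already did for $S_2^+$. The error is then $\lesssim\alpha T^{1-\beta}|k|$, which is precisely what the hypothesis compares to $|k|D^{\sigma-2}$. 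The excised points cost only $LT^{-1/2}\le LT^{-1/2}D^2$.

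In $S_3$ the problem is worse, for two reasons.

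\begin{itemize}
\item Perturbativity of $\Gamma_0'(x)-\Gamma_0'(y)$ again needs $|x|,|y|\gtrsim T^{-1/2}$. Your radius $(\alpha T)^{1/(1-\beta)}$ is not the relevant scale; under the hypotheses $\alpha T$ may even exceed $1$.
\item The excised set cannot be counted trivially. Taking $|x|\le T^{-1/2}$ and leaving $y$ free costs $LT^{-1/2}\cdot L$. This is not $\lesssim L^2T^{-1}D^2\log L$ in general, for instance for $D=L^{\delta}$, which is the case used later.
\end{itemize}

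You need the paper's case split according to which variable is small:
\begin{itemize}
\item If $|y|\le T^{-1/2}$, fix $y$ and apply the $S_2^+$ bound to $(x,z)$ with $x+z=k+y$. This gives $LT^{-1/2}\cdot LT^{-1/2}D$.
\item If $|x|,|z|\le T^{-1/2}$, the trivial count $L^2T^{-1}$ suffices.
\item If $|x|\le T^{-1/2}$ but $|y|,|z|\ge T^{-1/2}$, fix $x\neq k$ and run your derivative count in $z$ with gap $k-x$. This is the case symmetric to your main one.
\end{itemize}

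With a uniform $T^{-1/2}$ cutoff and this case split, your argument becomes the paper's. One small correction: you only need the lower bound $|\partial_xF|\gtrsim|z-k|D^{\sigma-2}$. Calling the two ``comparable'' is false in the opposite-sign case, but comparability is not needed.
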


The proof follows by a similar  argument to in \cite[Section 3.8]{DIP25}.
\begin{proof}[Proof of $\#S_2^-$]
The bound $\#S_2^- \lesssim L$ follows immediately from a counting argument. Indeed, there are at most $O(L)$ possible choices of $x\in\mathbb{Z}_L$ satisfying $|x-a|\leq 1$, and once $x$ is fixed, $y = x - k$  is uniquely determined. ·

For the case $|k| \geq T^{-1/2}$, if either $|x| \leq T^{-1/2}$ or $|y| \leq T^{-1/2}$, the bound $L\cdot T^{-1/2}$ follows directly by counting the number of admissible choices of $x$ or $y$.
We therefore assume that $|x|,|y|\geq T^{-1/2}$. If $x$ and $y$ have the same sign, then by the mean value theorem, 
 \begin{align*}
        \bigg|\partial_x[|x|^{2\beta}-|x-k|^{2\beta}]\bigg|\lesssim \frac{|k|}{(\min\{|x|,|y|\})^{2-2\beta}}\lesssim |k|T^{1-\beta}.
    \end{align*} 
   If $x$ and $y$ have opposite signs (say $x>0$ without loss of generality), we observe that $$\partial_x(|x|^{2\beta}-|k-x|^{2\beta})=2\beta(x^{2\beta-1}+(k-x)^{2\beta-1})\lesssim  \frac{|k|}{(\min\{|x|,|y|\})^{2-2\beta}}.$$
    This implies that, using $D^2\leq T$,
    \begin{align*}
        \bigg|\partial_x[\varphi_{\leq K}(x)|x|^{2\beta}-\varphi_{\leq K}(x-k)|x-k|^{2\beta}]\bigg|\lesssim \frac{|k|}{(\min\{|x|,|y|\})^{2-2\beta}}\lesssim |k|T^{1-\beta}.
    \end{align*} 

   Arguing similarly for the dispersive term $|x|^\sigma$ (recalling that $\sigma\leq 2$), we obtain
    \begin{align}
       \bigg| \partial_x[||x|^\sigma+\Gamma_0(x)-|x-k|^\sigma-\Gamma_0(x-k)-m]\bigg|\geq \sigma|\sigma-1||k|D^{\sigma-2}-C\alpha |k|T^{1-\beta}\geq C|k|D^{\sigma-2},\label{bd:counting}
    \end{align}
    provided that $\alpha T^{1-\beta}D^{2-\sigma}<1$. Then, by the definition of $S_2^-$, a standard counting argument yields that the number of admissible values of $x$ is bounded by  $L\cdot T^{-1}\cdot |k|^{-1}D^{2-\sigma}\leq LT^{-1/2}D^{2-\sigma}$.
\end{proof}
\begin{proof}[Proof of $\#S_2^+$]
By the same argument as in the case of $S_2^-$, we may assume that $|x|,|y|\geq T^{-1/2}$. Then we have 
 \begin{align*}
        \bigg|\partial_x^2[\varphi_{\leq K}(x)|x|^{2\beta}+\varphi_{\leq K}(k-x)|k-x|^{2\beta}]\bigg|\lesssim \frac{1}{(\min\{|x|,|y|\})^{2-2\beta}}\lesssim T^{1-\beta},
    \end{align*}
    and for $\max\{|x|,|y|\}\leq D$, we have
    \begin{align*}
       \bigg| \partial_x^2[|x|^\sigma&+\Gamma_0(x)+|k-x|^\sigma+\Gamma_0(k-x)]\bigg|\\
       &\geq \sigma|\sigma-1|( |x|^{\sigma-2}+|k-x|^{\sigma-2})-C\alpha T^{1-\beta}\geq C\cdot D^{\sigma-2},
    \end{align*}
 provided that $\alpha T^{1-\beta}D^{2-\sigma}<1$.  Therefore, by a standard second-derivative counting argument, the number of admissible choices of $x$ is bounded by $L \cdot D^{1-\sigma/2} \cdot T^{-1/2}$.
\end{proof}
\begin{proof}[Proof of $\#S_3$] First, we consider the choice of $y$. If $|y|\leq T^{-1/2}$, then there are at most $L T^{-1/2}$ possible choices for $y$. For each such fixed $y$, by the bound on $\#S_2^+$, the number of pairs $(x,z)$ satisfying $x+z=k+y$ is at most $L T^{-1/2} D$. This yields the desired bound $L T^{-1/2}\cdot L T^{-1/2} D$.

Next, assume that $|y|\geq T^{-1/2}$. If both $|x|,|z|\leq T^{-1/2}$, then there are at most $L T^{-1/2}\cdot L T^{-1/2}$ possible choices.  Now assume that $|x|\geq T^{-1/2}$. For each fixed $z$, we consider the two-variable constraint   $x-y=k-z$ with $m$ replaced by $m-(|z|^\sigma+\Gamma_0(z))$.  Applying an argument analogous to \eqref{bd:counting}, we obtain that the number of admissible pairs $(x,y)$ is bounded by   $LT^{-1}|k-z|^{-1}D^{2-\sigma}$. Summing over all $z$, we obtain
$$\sum_{z}LT^{-1}|k-z|^{-1}D^{2-\sigma}\lesssim L^{2}T^{-1}D^{2-\sigma}\log L.$$ Here, the logarithmic factor $\log L$ arises from summing the weight $|k-z|^{-1}$ over $z\in\mathbb{Z}_L$.
\end{proof}

We also introduce the following convergence of iterates.

\begin{proposition}
    \label{asymptotic}
Let $T  < L^{\frac{(1+\gamma)(1+2\beta)}{2}-10\delta}$ if $\sigma\in(1,2]$, $T  <L^{\frac{1+2\beta}{2+2\beta-\sigma}-10\delta} $ if $\sigma\in(0,1)$, and $\phi \in \mathcal{S}(\mathbb{R})$. 
For any $t \in (0,T)$, let   \begin{align*}
 \mathscr S_t(\phi):&=\sum\limits_{\substack{ k-k_1+k_2-k_3=0,\\ \overline \Omega_k\neq0}}\phi_k \phi_{k_1} \phi_{k_2} \phi_{k_3}  \left[ \frac{1}{\phi_k} - \frac{1}{\phi_{k_1}} + \frac{1}{\phi_{k_2}} - \frac{1}{\phi_{k_3}} \right]\left| \frac{\sin(\pi t\overline \Omega_k)}{\pi t \overline \Omega_k} \right|^2\notag\\
  &\quad\quad\quad\times
|k_1|^{2\beta}|k_2|^{2\beta}|k_3|^{2\beta}|k|^{2\beta}\varphi_{\leq K}^2(k_1)\varphi_{\leq K}^2(k_2)\varphi_{\leq K}^2(k_3)\varphi_{\leq K}^2(k).\\
\mathscr K_t(\phi):&=L^{2} \int_{\xi_1-\xi_2+\xi_3=\xi }\phi(\xi) \phi(\xi_1) \phi(\xi_2) \phi(\xi_3)  \left[ \frac{1}{\phi(\xi)} - \frac{1}{\phi(\xi_1)} + \frac{1}{\phi(\xi_2)} - \frac{1}{\phi(\xi_3)} \right]    \left| \frac{\sin(\pi t\overline \Omega( \xi))}{\pi t\overline\Omega( \xi)} \right|^2 \notag\\ &\quad\quad\times |\xi_1|^{2\beta}|\xi_2|^{2\beta}|\xi_3|^{2\beta}|\xi|^{2\beta}\varphi_{\leq K}^2(\xi_1)\varphi_{\leq K}^2(\xi_2)\varphi_{\leq K}^2(\xi_3)\varphi_{\leq K}^2(\xi)   \dif \xi_1  \dif\xi_2 \, \dif\xi_3.
\end{align*}
 Then
$$
\mathscr S_t=\mathscr K_t +O(L^{2-\delta} t^{-1}).
$$
\end{proposition}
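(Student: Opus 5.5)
This is a lattice-sum-to-integral comparison. The plan is to parametrize both sides by the free variables $(k_1,k_3)$ (resp. $(\xi_1,\xi_3)$), with $k_2=k_1+k_3-k$. Write $\overline\Omega_k$ for the renormalized phase, $\Omega+\Gamma_0$-corrections. The summand becomes $F(k_1,k_3)\,\bigl|\tfrac{\sin(\pi t\overline\Omega)}{\pi t\overline\Omega}\bigr|^2$, where $F$ collects the Schwartz factors, the weights $|\cdot|^{2\beta}$ and the cutoffs. Note that $\mathscr S_t=L^2\cdot L^{-2}\sum$ is a Riemann sum with mesh $L^{-1}$ in each of the two variables, and the difference with $\mathscr K_t$ is a Riemann-sum error. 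Two sources of error must be controlled: the nonsmoothness of $F$ near $\xi_j=0$, caused by the factors $|\xi_j|^{2\beta}$, and the rapid oscillation of the kernel, which has frequency $t$ in the phase. The set $\overline\Omega_k=0$ excluded from the sum contributes a null set to the integral and at most $O(L)$ lattice points (essentially the trivial resonances, or the lines $k_1=k$, $k_3=k$), which is harmless.

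\textbf{Step 1: low-frequency cutoff.} Insert a smooth partition $1=\chi(\xi_j/\rho)+(1-\chi(\xi_j/\rho))$ for each of the four frequencies, with $\rho=L^{-a}$ to be optimized. On the region where some $|\xi_j|\lesssim\rho$, bound both sum and integral crudely. Use $|\sin x/x|^2\le\min(1,x^{-2})$ and the fact, from the counting philosophy of Lemma~\ref{counting}, that for fixed $k_j$ the set where $|\overline\Omega|\le t^{-1}$ has measure $\lesssim t^{-1}$ up to powers of $D\sim L^{\delta}$. Together with the vanishing factor $|\xi_j|^{2\beta}$, this gives a contribution of $L^2\rho^{1+2\beta}t^{-1}$ up to $L^{O(\delta)}$. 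This is $\lesssim L^{2-\delta}t^{-1}$ as soon as $\rho\le L^{-O(\delta)}$. For the lattice sum, the extra constraint is that the number of lattice points with $|k_j|\le\rho$ is $\lesssim L\rho+1$, which requires $\rho\gg L^{-1}$.

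\textbf{Step 2: Poisson summation on the smooth part.} On the region where all $|\xi_j|\gtrsim\rho$, the integrand is smooth with derivatives $\partial^\alpha F\lesssim\rho^{-|\alpha|}$ up to $L^{O(\delta)}$ losses. Use the representation $|\sin(\pi t x)/(\pi t x)|^2=t^{-1}\int_{\mathbb R}(1-|s|)_+e^{2\pi i s t x}\,\dif s$, scaled as needed, so that the kernel is written as a superposition of phases $e^{2\pi i st\overline\Omega}$ with $|s|\le 1$. Apply Poisson summation in $(k_1,k_3)\in(L^{-1}\mathbb Z)^2$. The zero mode gives exactly $\mathscr K_t$. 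A nonzero mode $(m_1,m_3)\neq0$ gives an oscillatory integral with phase $st\overline\Omega(\xi)-L(m_1\xi_1+m_3\xi_3)$. Since $|\nabla\overline\Omega|\lesssim D^{\sigma}\rho^{\sigma-1}$ (plus the $\Gamma_0$-correction, which is controlled using $\alpha T^{1-\beta}<1$ as in \eqref{bd:counting}), the phase is nonstationary whenever $L|m|\gg tD^\sigma\rho^{\sigma-1}$. Repeated integration by parts then gives decay $(L|m|\rho)^{-M}$, which is summable in $m$.

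\textbf{Step 3: stationary region; the main obstacle.} When $L|m|\lesssim t\,|\nabla\overline\Omega|$, the phase can be stationary. Here I would use a stationary phase argument with the Hessian of $\Omega$: in the variable $(\xi_1,\xi_3)$ its determinant is comparable to products of $|\xi_j|^{\sigma-2}$ differences, nondegenerate away from trivial resonances after localizing. Each such mode is bounded by $L^2\cdot(st)^{-1}\cdot\rho^{-O(1)}$, after including the $t^{-1}$ from the kernel representation and integrating over $s$. The number of such modes is $\lesssim(t/L)^2$, up to powers of $D,\rho$. The total is $L^2t^{-1}\cdot(t/L)^2\cdot(\text{losses})$, which is acceptable precisely when $T$ is below the thresholds in the hypothesis. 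The factor $(1+\gamma)(1+2\beta)/2$ for $\sigma\in(1,2]$, or $(1+2\beta)/(2+2\beta-\sigma)$ for $\sigma\in(0,1)$, should come from balancing $\rho$ in Step 1 against the $\rho$-losses here; the $|\xi|^{\sigma-2}$ growth of the Hessian near zero explains the $\sigma$-dependence.

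I expect Step 3 to be the main difficulty. The degeneracies of the Hessian near trivial resonances $k_1=k$ or $k_3=k$ must be handled by an additional dyadic decomposition in $|k_1-k|$ and $|k_3-k|$, whose small scales are again estimated crudely by counting as in Step 1. Finally, the difference between $|\cdot|^\sigma+\Gamma_0$ and $|\cdot|^\sigma$ inside $\overline\Omega$ is treated perturbatively, using $\sup|\Gamma_0|\lesssim L^{-\delta}$ and the derivative bounds from the proof of Lemma~\ref{counting}.
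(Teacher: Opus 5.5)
Your Steps 1 and 2 are essentially the paper's proof. You write $g(x)=|\sin\pi x/(\pi x)|^2$ through $\widehat g(\tau)=(1-|\tau|)_+$, apply Poisson summation in $(k_1,k_3)$, and identify the zero mode with $\mathscr K_t$. You then integrate by parts in the other modes, after smoothly excising neighbourhoods of $\xi_j=0$. The gap is Step 3, together with how you read the hypotheses on $T$. There is no stationary region: the thresholds on $T$ are exactly the conditions that make every mode $m\neq0$ non-stationary.

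The paper excises at the scale $\rho=T^{-1/(1+2\beta)}L^{-2\delta}$. This is the largest scale at which the excised region can be bounded crudely, with no resonance gain: its measure is $\rho$, the weight is at most $\rho^{2\beta}$, and $g\le1$. This gives the bound $L^{2}\rho^{1+2\beta}L^{O(\delta)}\lesssim L^{2-\delta}T^{-1}\le L^{2-\delta}t^{-1}$. On the remaining support, for $|\tau|\le t<T$ and $\sigma<1$,
\[
|\tau\nabla\overline\Omega|\lesssim T\rho^{\sigma-1}+\alpha T\rho^{2\beta-1}=\bigl(T^{\frac{2+2\beta-\sigma}{1+2\beta}}+\alpha T^{\frac{2}{1+2\beta}}\bigr)L^{O(\delta)}.
\]
When $\sigma\in(1,2]$, the term $T\rho^{\sigma-1}$ is replaced by $TL^{O(\delta)}$. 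Requiring $|\tau\nabla\overline\Omega|\le L^{1-\delta/2}$ is what the hypotheses encode:
\begin{enumerate}
\item For $\sigma<1$, the first term gives $T<L^{\frac{1+2\beta}{2+2\beta-\sigma}-O(\delta)}$. For $\sigma\in(1,2]$, it gives $T<L^{1-10\delta}$, which comes from \eqref{def:T}.
\item The $\Gamma_0$ term gives $T<L^{\frac{(1+\gamma)(1+2\beta)}{2}-O(\delta)}$. For $\sigma<1$ this term is dominated by the first once $\alpha T^{1-\beta}<1$.
\end{enumerate}
Hence $|\nabla(\tau\overline\Omega-Lm\cdot\xi)|\ge\tfrac12|m|L$ for all $m\neq0$. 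Each integration by parts gains $\lesssim(\rho L|m|)^{-1}$, and the proof ends after your Step 2. Your own parameters show the same thing: with $\rho=L^{-O(\delta)}$ as in your Step 1, your count $(t/L)^2L^{O(\delta)}$ of stationary modes is less than $1$.

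Step 3 as sketched would not go through if it were needed. The Hessian of $\Omega(\xi_1,\xi_3)$ has determinant $\sigma^2(\sigma-1)^2(ab-ac-bc)$, where $a=|\xi_1|^{\sigma-2}$, $b=|\xi_3|^{\sigma-2}$ and $c=|\xi_1+\xi_3-k|^{\sigma-2}$. This vanishes on the curve $|\xi_2|^{2-\sigma}=|\xi_1|^{2-\sigma}+|\xi_3|^{2-\sigma}$, not on the trivial resonances. At $\xi_1=k$ it equals $-\sigma^2(\sigma-1)^2|\xi_3|^{2\sigma-4}\neq0$. So a dyadic decomposition in $|k_1-k|$ and $|k_3-k|$ does not remove the degeneracy, and your per-mode bound $L^2(st)^{-1}\rho^{-O(1)}$ is unjustified.

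The resonance-measure gain in your Step 1 is also not uniform as stated, for two reasons.
\begin{enumerate}
\item For fixed $\xi_1$ one only has $|\partial_{\xi_3}\Omega|\gtrsim D^{\sigma-2}|k-\xi_1|$, which degenerates as $\xi_1\to k$. For $|k|\lesssim\rho$ this yields only $\rho^{4\beta}t^{-1}$ up to logarithms, not $\rho^{1+2\beta}t^{-1}$.
\item The derivative $\partial\Gamma_0\sim\alpha|\xi|^{2\beta-1}$ is singular at the origin. This is why the proof of Lemma~\ref{counting} treats $|x|,|y|\le T^{-1/2}$ separately and needs $|k|\ge T^{-1/2}$ for the two-vector gain.
\end{enumerate}
All of this is avoidable. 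Tie $\rho$ to $T$ as above, use $t<T$ to bound the excised region, and drop Step 3.
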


\begin{proof}
First, fix $k$ and define \begin{align*}
    g(x):&=\left|\frac{\sin \pi x}{\pi x}\right|^2\leq 1,\\
    F(k_1,k_3):&=\phi_k \phi_{k_1} \phi_{k_1+k_3-k} \phi_{k_3}  \left[ \frac{1}{\phi_k} - \frac{1}{\phi_{k_1}} + \frac{1}{\phi_{k_1+k_3-k}} - \frac{1}{\phi_{k_3}} \right]\\
    &\quad\quad\times
 |k_1|^{2\beta}|k_1+k_3-k|^{2\beta}|k_3|^{2\beta}|k|^{2\beta}\varphi_{\leq K}^2(k_1)\varphi_{\leq K}^2(k_1+k_3-k)\varphi_{\leq K}^2(k_3)\varphi_{\leq K}^2(k)\\
 &\quad\quad \times \chi(T^{\frac{1}{1+2\beta}}L^{2\delta}|k_1|)\chi(T^{\frac{1}{1+2\beta}}L^{2\delta}|k_1+k_3-k|)\chi(T^{\frac{1}{1+2\beta}}L^{2\delta}|k_3|),
\end{align*}
where   $\chi$ is a smooth cutoff satisfying $\chi(x)=0$ for $|x|\leq \frac12$ and $\chi(x)=1$ for $|x|\geq 1$. These truncations remove the singularity near $|k_i|=0$. 

Define the truncated quantities
\begin{align*}
   (\mathscr S_{\rm tr})_t&:=\sum_{(k_1, k_3)\in \mathbb{Z}_L^{2},\overline\Omega(k_1,k_3)\neq 0}F(k_1,k_3) g\left(t\overline \Omega(k_1,k_3)\right),\\
 (\mathscr K_{\rm tr})_t&:=L^{2} \int_{\xi_1,\xi_3}F(\xi_1,\xi_3) g\left(t\overline \Omega(\xi_1,\xi_3)\right)\dif \xi_1\dif \xi_3,
\end{align*}
where
$$\overline\Omega(x,y) =|x|^\sigma-|x+y-k|^\sigma+|y|^\sigma-|k|^\sigma+\Gamma_0(x)-\Gamma_0(x+y-k)+\Gamma_0(y)-\Gamma_0(k),$$
and
$ \Gamma_0(x)=C_K\frac{\alpha }{\pi }\varphi_{\leq K}(x)|x|^{2\beta}.$

In the first step,  for  the truncated asymptotic functions, we show that $$
( \mathscr S_{\rm tr})_t= (\mathscr K_{\rm tr})_t +O(L^{2-\delta} t^{-1}).
$$
Since $\widehat g(\tau)=1-|\tau|$ on $[-1,1]$ and vanishes outside, we write $e(x):=e^{2\pi i x}$. Applying Poisson summation, we obtain 
\begin{align*}
 (  \mathscr S_{\rm tr})_t&=\sum_{(k_1, k_3)}F(k_1,k_3) g\left(t\overline \Omega(k_1,k_3)\right)=\sum_{(k_1, k_3)}F(k_1,k_3)\int_{-\infty}^{\infty}\hat{g}(\tau)e(\tau t\overline \Omega(k_1,k_3))\dif \tau\\
 &=\frac1t \sum_{(k_1, k_3)}F(k_1,k_3)\int_{-\infty}^{\infty}\hat{g}(\frac\tau t)e(\tau \overline \Omega(k_1,k_3))\dif \tau=\frac1t \int_{-\infty}^{\infty}\hat{g}(\frac\tau t)\sum_{(k_1, k_3)}F(k_1,k_3)e(\tau \overline \Omega(k_1,k_3))\dif \tau\\
 &=\frac1t \int_{-\infty}^{\infty}\hat{g}(\frac\tau t)\sum_{(k_1, k_3)\in\mathbb{Z}^{2}}F(\frac{k_1}{L},\frac{k_3}L)e(\tau \overline \Omega(\frac{k_1}L,\frac{k_3}L))\dif \tau\\
 &=\frac1t \int_{-\infty}^{\infty}\hat{g}(\frac\tau t) \sum_{(c,d)\in\mathbb{Z}^{2}}\int_{(x,y)\in\mathbb{R}^{2}}F(\frac{x}{L},\frac{y}L)e(\tau \overline \Omega(\frac{x}L,\frac{y}L))e(-c\cdot x-d\cdot y) \dif x\dif y\dif \tau\\
  &=\frac1t \int_{-\infty}^{\infty}\hat{g}(\frac\tau t) \int_{(x,y)\in\mathbb{R}^{2}}F(\frac{x}{L},\frac{y}L)e(\tau \overline \Omega(\frac{x}L,\frac{y}L))\dif x\dif y\dif \tau=:  (\mathscr S_{\rm tr})_1\\
   &\quad+\frac{L^{2}}t \int_{-\infty}^{\infty}\hat{g}(\frac\tau t) \sum_{(c,d)\in\mathbb{Z}^{2}/\{0,0\}}\int_{(x,y)\in\mathbb{R}^{2}}F(x,y)e(\tau \overline \Omega(x,y)-c\cdot Lx-d\cdot Ly) \dif x\dif y\dif \tau=:  (\mathscr S_{\rm tr})_2.
\end{align*}

For the main term, a change of variables yields
\begin{align*}
  ( \mathscr S_{\rm tr})_1=\frac{L^{2}}t \int_{-\infty}^{\infty}\hat{g}(\frac\tau t) \int_{(x,y)\in\mathbb{R}^{2}}F(x,y)e(\tau \overline \Omega(x,y))\dif x\dif y\dif \tau= (\mathscr K_{\rm tr})_t.
\end{align*}

For the second term $(\mathscr S_{\rm tr})_2$, in the case $\sigma\in(1,2]$, it holds that by the cut off function $\varphi$ and $\chi$,
\begin{align*}
   |\tau  \nabla  \overline \Omega(x,y)|\lesssim T(L^{\delta(\sigma-1)}+\alpha (T^{\frac{1}{1+2\beta}}L^{2\delta})^{1-2\beta})\leq L^{1-\delta/2},
\end{align*}
where we used $|\tau|\leq T < \min\{L^{1-10\delta},L^{\frac{(1+\gamma)(1+2\beta)}{2}-10\delta}\}$.

While in the case $\sigma\in(0,1)$, we have 
\begin{align*}
   |\tau  \nabla  \overline \Omega(x,y)|\lesssim T((T^{\frac{1}{1+2\beta}}L^{2\delta})^{1-\sigma}+\alpha (T^{\frac{1}{1+2\beta}}L^{2\delta})^{1-2\beta})\leq L^{1-\delta/2},
\end{align*}
where we used $|\tau|\leq T < L^{\frac{1+2\beta}{2+2\beta-\sigma}-10\delta}$ and $2\beta<\sigma$. In summary, we have 
\begin{align*}
   |\tau  \nabla  \overline \Omega(x,y)| \leq L^{1-\delta/2}.
\end{align*}

Then for $c\neq 0$ we have
\begin{align*}
|   \nabla_x[ \tau \overline\Omega(x,y)-c\cdot Lx-d\cdot Ly]|\geq L|c|-CL^{1-\delta/2}\geq \frac12|c|L,
\end{align*}
as $L$ goes large enough.

Now, from the definition of $F(x,y)$,  once we apply integration by parts, 
we notice that by the cut off function $\varphi$ and $\chi$,  $$|\partial_{x_i}|x|^{2\beta}|\leq 2\beta|x|^{2\beta-1}\lesssim (TL^{2\delta})^{1-2\beta},$$ 
we obtain a power of $TL^{2\delta}\cdot (|c|L)^{-1}$ from $F(x,y)$. By  integrating enough times, we have a gain of $L^{-2\delta}$,  then we obtain $(\mathscr S_{\rm tr})_2 =O(L^{2-\delta} t^{-1}).$

It remains to estimate the difference between truncated and original quantities. By the support of $1-\chi$, at least one of $|k_1|,|k_1+k_3-k|,|k_3|$ is $\lesssim T^{-{\frac{1}{1+2\beta}}}L^{-2\delta}$, and hence
\begin{align*}
 | ( \mathscr K_{\rm tr})_t- \mathscr K_t|&\leq   L^{2} \int_{\xi_1,\xi_3}\phi_\xi \phi_{\xi_1} \phi_{\xi_1+\xi_3-\xi} \phi_{\xi_3}  \left[ \frac{1}{\phi_\xi} - \frac{1}{\phi_{\xi_1}} + \frac{1}{\phi_{\xi_1+\xi_3-\xi}} - \frac{1}{\phi_{\xi_3}} \right](T^{-{\frac{1}{1+2\beta}}}L^{-2\delta} L^{\delta})^{1+2\beta}\dif \xi_1\dif \xi_3\\
 &=O(L^2(T^{-{\frac{1}{1+2\beta}}}L^{-\delta})^{1+2\beta})\leq O(L^{2-\delta}T^{-1})\leq O(L^{2-\delta}t^{-1}).
\end{align*} The same argument yields
\begin{align*}
 | (\mathscr S_{\rm tr})_t- \mathscr S_t|&\leq  O(L^{2-\delta}t^{-1}).
\end{align*}Combining the above estimates, we conclude the proof.
\end{proof}

\subsection{The integral estimates on $\mathcal{A}_{\mathcal{T}}$}
In this section, we estimate the time integral $\mathcal{A}_{\mathcal{T}}$ defined in \eqref{def:a}. We rewrite it in the form 
\begin{align*}
 \mathcal{A}_{\mathcal{T}}=   \int_{\mathcal{D}}\prod_{\mathfrak{n}\in\mathcal{N}}  e^{\zeta_\mathfrak{n}2\pi i[\Omega_\mathfrak{n}Tt_\mathfrak{n}+\sum\Gamma(\mathfrak{n},t_\mathfrak{n})] } \dif t_\mathfrak{n},
\end{align*}
where the time integration domain is
\begin{align*}
\mathcal{D} := \{t{[\mathcal{N}]} : 0 < t_{\mathfrak{n}'} < t_\mathfrak{n}<t  \text{ whenever } \mathfrak{n}' \text{ is a child of } \mathfrak{n} \}.
\end{align*}
We write $$\overline\Omega_\mathfrak{n}:=\Omega_\mathfrak{n}+\sum\Gamma_0(k): =\Omega_\mathfrak{n}+\Gamma_0(k_1)-\Gamma_0(k_2)+\Gamma_0(k_3)-\Gamma_0(k),$$
and  then have 
$$\Omega_\mathfrak{n}Tt_\mathfrak{n}+\sum\Gamma(\mathfrak{n},t_\mathfrak{n})=\overline\Omega_\mathfrak{n}Tt_\mathfrak{n}+\sum\Gamma_1(\mathfrak{n},t_\mathfrak{n}).$$
We  emphasize that the notation $\overline{\Omega}_{\mathfrak{n}}$ does not denote complex conjugation, but rather the renormalized dispersion relation defined above.

Then we have the following representation:
\begin{align*}
 \mathcal{A}_{\mathcal{T}}(t,\Theta[\mathcal{N}])=   \int_{\mathcal{D}}\prod_{\mathfrak{n}\in\mathcal{N}}  e^{\zeta_\mathfrak{n}2\pi i[\overline\Omega_\mathfrak{n}Tt_\mathfrak{n}+\sum\Gamma_1(\mathfrak{n},t_\mathfrak{n})] } \dif t_\mathfrak{n}.
\end{align*}
\begin{definition}
     We  fix $d_{\mathfrak{n}}\in\{0,1\}$ for each $\mathfrak{n}\in\mathcal{N}$, we can define $q_{\mathfrak{n}}$ for each $\mathfrak{n}\in\mathcal{T}$ inductively by
\begin{equation}
q_{\mathfrak{n}}=0\mathrm{\ if\ }\mathfrak{n}\in\mathcal{L};\quad q_{\mathfrak{n}}=d_{\mathfrak{n}_1}q_{\mathfrak{n}_1}-d_{\mathfrak{n}_2}q_{\mathfrak{n}_2}+d_{\mathfrak{n}_3}q_{\mathfrak{n}_3}+\overline\Omega_{\mathfrak{n}}\mathrm{\ if\ }\mathfrak{n}\in\mathcal{N}.\end{equation}
\end{definition}
\begin{proposition}\label{prop:est:af}
Let $\mathcal{T}$ be a nontrivial tree, and we assume there are some functions  $f_{\mathfrak{n}}:[0,1]\to \mathbb{C}$ satisfying $$\sup_{s\in[0,1]}(|f_{\mathfrak{n}}(s)|+|(\partial_tf_{\mathfrak{n}})(s)|)\leq C_{\mathfrak{n}}.$$
We define
\begin{align}
 \mathcal{A}^f_{\mathcal{T}}(t,\overline\Omega[\mathcal{N}],\Gamma_1[\mathcal{N}]):=   \int_{\mathcal{D}}\prod_{\mathfrak{n}\in\mathcal{N}}  f_{\mathfrak{n}}(t_{\mathfrak{n}})e^{\zeta_\mathfrak{n}2\pi i[\overline\Omega_\mathfrak{n}Tt_\mathfrak{n}+\sum\Gamma_1(\mathfrak{n},t_\mathfrak{n})] } \dif t_\mathfrak{n}.\label{def:af}
\end{align}
 
Then for any function $\Gamma_1(k,t)$ satisfying $$\sup_k\sup_{t\in[0,1]}|\partial_t\Gamma_1(k,t)|\leq 1,$$
it holds that
\begin{align*}
    |\mathcal{A}^f_{\mathcal{T}}(t,\overline\Omega[\mathcal{N}],\Gamma_1[\mathcal{N}])|\lesssim \sum_{d_\mathfrak{n}\in\{0,1\}}\prod_{\mathfrak{n}\in\mathcal{N}}C_\mathfrak{n}\langle Tq_\mathfrak{n}\rangle^{-1}.
\end{align*}
Here the universal constant is universal and independent of choice of $\Gamma_1$.
\end{proposition}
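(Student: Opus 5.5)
We argue by induction on the scale of $\mathcal{T}$, integrating by parts in the root time variable. This follows the standard argument for time integrals of tree expansions (as in \cite{DH21,Vas24}). The new features are the weights $f_{\mathfrak{n}}$ and the slowly varying phase corrections $\Gamma_1$. Both are absorbed into the amplitude because their time derivatives are $O(1)$ while $T\gg1$.

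\textbf{Reduction.} Since $\Gamma_1(k,0)=0$ and $|\partial_t\Gamma_1|\le 1$, each factor $e^{\zeta_{\mathfrak{n}}2\pi i\sum\Gamma_1(\mathfrak{n},t_{\mathfrak{n}})}$ has modulus $1$ and time derivative bounded by $8\pi$. We therefore set
\[
g_{\mathfrak{n}}(s):=f_{\mathfrak{n}}(s)\,e^{\zeta_{\mathfrak{n}}2\pi i\sum\Gamma_1(\mathfrak{n},s)} .
\]
These satisfy $\sup(|g_{\mathfrak{n}}|+|g_{\mathfrak{n}}'|)\lesssim C_{\mathfrak{n}}$, with a universal constant independent of $\Gamma_1$. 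The statement then reduces to the same estimate with $\Gamma_1\equiv0$ and $f$ replaced by $g$. We prove a slightly stronger inductive statement. Let $\mathcal{A}^g_{\mathcal{T}}(t)$ be the integral over $\mathcal{D}$ with upper bound $t$. The claim is that for every $t\in[0,1]$,
\[
\mathcal{A}^g_{\mathcal{T}}(t)=\sum_{d_{\mathfrak{r}}\in\{0,1\}} e^{2\pi i d_{\mathfrak{r}}\zeta_{\mathfrak{r}}Tq_{\mathfrak{r}}t}\,G_{d_{\mathfrak{r}}}(t),
\]
where $|G_{d}|+|\partial_tG_d|\lesssim \prod C_{\mathfrak{n}}\langle Tq_{\mathfrak{n}}\rangle^{-1}$, summed over the choices of $d_{\mathfrak{n}}$ for the non-root nodes. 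The $d_{\mathfrak{r}}=0$ part is time-independent up to a bounded-derivative amplitude, and the $d_{\mathfrak{r}}=1$ part carries the oscillation $e^{2\pi i\zeta_{\mathfrak{r}}Tq_{\mathfrak{r}}t}$. Signs are handled by noting that $\zeta_{\mathfrak{n}_2}=-\zeta_{\mathfrak{n}}$, which matches the $-d_{\mathfrak{n}_2}q_{\mathfrak{n}_2}$ term.

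\textbf{Inductive step.} Write $\mathcal{A}^g_{\mathcal{T}}(t)=\int_0^t g_{\mathfrak{r}}(s)e^{2\pi i\zeta_{\mathfrak{r}}\overline\Omega_{\mathfrak{r}}Ts}\prod_{i=1}^3\mathcal{A}^g_{\mathcal{T}_i}(s)\,ds$. Insert the inductive decomposition for each subtree; for a leaf child, $\mathcal{A}=1$ with $d=0$. Expanding the product gives terms of the form
\[
\int_0^t h(s)\,e^{2\pi i\zeta_{\mathfrak{r}}T q_{\mathfrak{r}}s}\,ds,\qquad q_{\mathfrak{r}}=d_1q_{\mathfrak{n}_1}-d_2q_{\mathfrak{n}_2}+d_3q_{\mathfrak{n}_3}+\overline\Omega_{\mathfrak{r}},
\]
with $|h|+|h'|\lesssim C_{\mathfrak{r}}\prod_{\text{children}}(\cdots)$. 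If $|Tq_{\mathfrak{r}}|\le1$, bound the integral trivially by $1\sim\langle Tq_{\mathfrak{r}}\rangle^{-1}$ and assign it to $d_{\mathfrak{r}}=0$. Otherwise, integrate by parts once:
\[
\int_0^t h e^{i\lambda s}ds=\frac{h(t)e^{i\lambda t}-h(0)}{i\lambda}-\frac1{i\lambda}\int_0^t h'e^{i\lambda s}ds,\qquad \lambda=2\pi\zeta_{\mathfrak{r}}Tq_{\mathfrak{r}}.
\]
The boundary term at $t$ gives the $d_{\mathfrak{r}}=1$ part. The term at $0$ and the remaining integral give the $d_{\mathfrak{r}}=0$ part. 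Every piece has size $\lesssim|Tq_{\mathfrak{r}}|^{-1}$, and its $t$-derivative is likewise bounded, either directly or after pulling out the exponential.

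\textbf{Main obstacle.} The delicate point is keeping the inductive hypothesis closed. The remaining integral $\frac1{i\lambda}\int_0^t h'e^{i\lambda s}ds$ contains no clean oscillating factor, yet its $t$-derivative $h'(t)e^{i\lambda t}/(i\lambda)$ still oscillates. We handle this by defining the $d_{\mathfrak{r}}=0$ amplitude to include the oscillating derivative, and we require only $|G|+|\partial_t G|$ bounds without any structure beyond the prefactor. Differentiating $h$ produces a derivative of a product. For children with $d_i=1$, this includes $T q_{\mathfrak{n}_i}$ times the amplitude, which would be fatal. This must be avoided by always factoring the children's exponentials into the total phase $q_{\mathfrak{r}}$ before differentiating, so that only amplitude derivatives, bounded by hypothesis, and the $g_{\mathfrak{r}}'$ term appear. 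With that bookkeeping the induction closes. The summation over $d_{\mathfrak{n}}\in\{0,1\}$ accounts for the $2^n$ branches, and the final constant depends only on $n\le N^3$ and is independent of $\Gamma_1$.
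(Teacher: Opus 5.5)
Your proof is correct. It uses the same mechanism as the paper: one integration by parts per branching node, with the boundary term at the upper limit handing that node's phase to its parent, which is exactly how $q_{\mathfrak{n}}$ arises. The difference is that you organize the induction in the opposite direction.

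The paper peels off the bottom layer. It takes a branching node $\mathfrak{n}_0$ whose children are all leaves, together with its two siblings (each a leaf or a parent of leaves only). It integrates these by parts and folds the boundary phases and amplitudes into the parent, setting $\tilde\Omega_{\mathfrak{n}_p}=q_{\mathfrak{n}_p}$ and taking $\overline f_{\mathfrak{n}_p}$ to be a product of $f_{\mathfrak{n}_p}$, $f_{\mathfrak{n}_i}$ and $\tilde f_{\mathfrak{n}_i}$. The reduced expression is again of the form $\mathcal{A}^f$ on a smaller tree, so no strengthened hypothesis is needed. This is precisely why the statement carries general weights $f_{\mathfrak{n}}$ with $C^1$ bounds.

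Inducting through the root, you must instead carry a $t$-dependent decomposition of $\mathcal{A}^g_{\mathcal{T}}(t)$ with $C^1$ control of the amplitudes. You correctly see that only $|G|+|\partial_tG|$ is needed, and that the children's exponentials must be merged into the total phase before $h$ is differentiated.

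Absorbing $e^{\zeta_{\mathfrak{n}}2\pi i\sum\Gamma_1(\mathfrak{n},\cdot)}$ into $g_{\mathfrak{n}}$ at the outset is a small gain over the paper. There, the merged correction $\sum\tilde\Gamma_1$ at $\mathfrak{n}_p$ is no longer of the original form $\sum\Gamma_1(\mathfrak{n},\cdot)$, so the paper's induction hypothesis is only reproduced after a tacit generalization.

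One bookkeeping point needs fixing. Since $q_{\mathfrak{r}}$ depends on the children's $d$'s, and theirs on the grandchildren's, your decomposition should be indexed by full assignments $(d_{\mathfrak{n}})_{\mathfrak{n}\in\mathcal{N}}$. That means one amplitude $G_{(d)}$ per assignment, with $|G_{(d)}|+|\partial_tG_{(d)}|\lesssim\prod_{\mathfrak{n}}C_{\mathfrak{n}}\langle Tq_{\mathfrak{n}}\rangle^{-1}$, rather than indexing by $d_{\mathfrak{r}}$ alone with the bound summed over the non-root choices. With that reading, the expansion in your inductive step is exactly right.
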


\begin{proof}
  We prove this result by induction.
For $|\mathcal{T}|=1$, we may assume without loss of generality that $\zeta_\mathfrak{r}=+$. Then, for the case $|T\overline\Omega_k|\geq 1$, we have
   \begin{align*}
        \int_0^t &  f_{\mathfrak{r}}(s)e^{ 2\pi i[\overline\Omega_kTs+\sum\Gamma_1(k,s)] } \dif s\\
        &= \frac{f_{\mathfrak{r}}(t)e^{ 2\pi i[\overline\Omega_kTt+\sum\Gamma_1(k,t)] }- f_{\mathfrak{r}}(0)}{2\pi i\overline\Omega_kT}\\
        &\quad - \int_0^t \frac{f'_{\mathfrak{r}}(s)e^{ 2\pi i[\overline\Omega_kTs+\sum\Gamma_1(k,s)] }+f_{\mathfrak{r}}(s)e^{ 2\pi i[\overline\Omega_kTs+\sum\Gamma_1(k,s)] }2\pi i\sum \Gamma_1'(k,s)}{2\pi i\overline\Omega_kT}   \dif s.
   \end{align*}
  Using the assumptions on $f_{\mathfrak{r}}$ and $\Gamma_1$, we obtain 
  \begin{align*}
    |\mathcal{A}^f_{\mathcal{T}}(t,\overline\Omega[\mathcal{N}],\Gamma_1[\mathcal{N}])|\lesssim C_\mathfrak{n}| T\overline\Omega_k|^{-1}\lesssim C_\mathfrak{n}\langle T\overline\Omega_k\rangle^{-1}= C_\mathfrak{n}\langle Tq_\mathfrak{n}\rangle^{-1}.
\end{align*}
Moreover, in the case $|T\overline\Omega_k|\leq 1$, the bound follows directly from
   \begin{align*}
      \left|  \int_0^t   f_{\mathfrak{r}}(s)e^{ 2\pi i \sum\Gamma_1(k,s) } \dif s\right|\leq C_{\mathfrak{r}}.
   \end{align*}

From now on, we assume that the estimate holds for all trees of order at most $n$.
Let $\mathcal{T}$ be a tree with $|\mathcal{T}|=n+1$. Then there exists a branching node $\mathfrak{n}_0$ whose three children are leaves. Moreover, we can choose two sibling nodes (denoted by $\mathfrak{n}_1$ and $\mathfrak{n}_2$) such that each of them is either a leaf or has only leaves as children. We denote the parent of $\mathfrak{n}_0$ by $\mathfrak{n}_p$. The subtrees rooted at $\mathfrak{n}_i$, $i=0,1,2$, are denoted by $\mathcal{T}_i$, respectively. With this decomposition, we can write
   \begin{align*}
       \mathcal{A}^f_{\mathcal{T}}=   \int_{\mathcal{D}_1}\prod_{\mathfrak{n}\in\mathcal{N}_1}  f_{\mathfrak{n}}(t_{\mathfrak{n}})e^{\zeta_\mathfrak{n}2\pi i[\overline\Omega_\mathfrak{n}Tt_\mathfrak{n}+\sum\Gamma_1(\mathfrak{n},t_\mathfrak{n})] } \dif t_\mathfrak{n}
       \mathcal{A}^f_{\mathcal{T}_0}(t_{\mathfrak{n}_p})\mathcal{A}^f_{\mathcal{T}_1}(t_{\mathfrak{n}_p})\mathcal{A}^f_{\mathcal{T}_2}(t_{\mathfrak{n}_p}) ,
   \end{align*}
   where $\mathcal{N}_1$ are the set of branching nodes without the branching nodes in $\mathcal{T}_i,i=0,1,2$ and  \begin{align*}
\mathcal{D}_1 = \{t{[\mathcal{N}_1]} : 0 < t_{\mathfrak{n}'} < t_\mathfrak{n}<t  \text{ whenever } \mathfrak{n}' \text{ is a child of } \mathfrak{n} \}.
\end{align*}

First, we assume that $|T\overline\Omega_{\mathfrak{n}_i}|\geq 1$ for $i=0,1,2$ (if $\mathcal{T}_i$ is nontrivial). Then, for each subtree $\mathcal{T}_i$, the quantity   $\mathcal{A}^f_{\mathcal{T}_i}$ is either equal to $1=\langle Tq_{\mathfrak{n}_i}\rangle^{-1}$ in the trivial case $|\mathcal{T}_i|=0,q_{\mathfrak{n}_i}=0$, or can be written in the form:
\begin{align*}
    \frac{f_{\mathfrak{n}_i}(t_{\mathfrak{n}_p})e^{ \zeta_{\mathfrak{n}_i}2\pi i[\overline\Omega_{\mathfrak{n}_i}Tt_{\mathfrak{n}_p}+\sum\Gamma_1(\mathfrak{n}_i,t_{\mathfrak{n}_p})] }}{2\pi i\overline\Omega_{\mathfrak{n}_i}T}+\frac{\tilde{f}_{\mathfrak{n}_i}(t_{\mathfrak{n}_p})}{2\pi i\overline\Omega_{\mathfrak{n}_i}T},
\end{align*}
where the function $\tilde{f}_{\mathfrak{n}_i}$ arises from integration by parts and satisfies $$\sup_{s\in[0,1]}(|\tilde{f}_{\mathfrak{n}_i}(s)|+|\tilde{f}'_{\mathfrak{n}_i}(s)|)\lesssim |f_{\mathfrak{n}_i}(0)| +\sup_{s\in[0,1]}|f'_{\mathfrak{n}_i}(s)| +\sup_{s\in[0,1]}|f_{\mathfrak{n}_i}(s)\sum\Gamma_1'({\mathfrak{n}_i},s)|\lesssim C_{\mathfrak{n}_i}.$$
Then we  deduce that \begin{align*}
       \mathcal{A}^f_{\mathcal{T}}=\prod_{i:|\mathcal{T}_i|=1}\frac{1}{2\pi i\overline\Omega_{\mathfrak{n}_i}T}\sum_{d_i,\overline  f_{\mathfrak{n}_p}}   \int_{\mathcal{D}_1}\prod_{\mathfrak{n}\in\mathcal{N}_1}  \overline f_{\mathfrak{n}}(t_{\mathfrak{n}})e^{\zeta_\mathfrak{n}2\pi i[\tilde\Omega_\mathfrak{n}Tt_\mathfrak{n}+\sum \tilde \Gamma_1(\mathfrak{n},t_\mathfrak{n})] } \dif t_\mathfrak{n},
   \end{align*}
where the sum is taken over all choices $d_i\in\{0,1\}$ for those indices $i$ with $|\mathcal{T}_i|= 1$, corresponding to whether integration by parts is applied to $\mathcal{T}_i$. The quantities $\tilde\Omega_{\mathfrak{n}}$ and $\tilde\Gamma_1$ are defined as before, except at the node $\mathfrak{n}_p$, where  $$\tilde \Omega_{\mathfrak{n}_p}=q_{\mathfrak{n}_p},\  \sum \tilde{\Gamma}_1=\sum \Gamma_1(k_{\mathfrak{n}_p})+\sum_{i}d_i\sum \Gamma_1(k_{\mathfrak{n}_i}).$$
   The functions $\overline f_{\mathfrak{n}}$ are defined as before, with the exception of the node $\mathfrak{n}_p$, where $\overline f_{\mathfrak{n}_p}$ is given by a product of functions among $f_{\mathfrak{n}_p}$, $f_{\mathfrak{n}_i}$, and $\tilde{f}_{\mathfrak{n}_i}$, depending on the choice of $d_i$.
 
  Since the new tree $\mathcal{T}'$, obtained by removing the nodes in $\mathcal{T}_i$, $i=0,1,2$, satisfies $|\mathcal{T}'|\leq n$, we may apply the induction hypothesis to obtain
\begin{align*}
    |\mathcal{A}^f_{\mathcal{T}}|\lesssim\prod_{i} \frac{1}{\langle Tq_i\rangle}\sum_{d_\mathfrak{n}\in\{0,1\}}\prod_{\mathfrak{n}\in\mathcal{N}_1}C_\mathfrak{n}\langle T\tilde q_\mathfrak{n}\rangle^{-1}\lesssim \sum_{d_\mathfrak{n}\in\{0,1\}}\prod_{\mathfrak{n}\in\mathcal{N}}C_\mathfrak{n}\langle Tq_\mathfrak{n}\rangle^{-1}.
\end{align*}
Here, $\tilde q_\mathfrak{n}$ is defined analogously to $q_\mathfrak{n}$, with $\overline{\Omega}_{\mathfrak{n}}$ replaced by $\tilde{\Omega}_{\mathfrak{n}}$.
We also note that in the case $|\mathcal{T}_i|=0$, one has $q_{\mathfrak{n}_i}=0$, and hence $\langle Tq_{\mathfrak{n}_i}\rangle^{-1}=1$, so that the above estimate remains valid.
 
Finally, if there exists some $ |T\overline{\Omega}_{\mathfrak{n}_i}| \leq 1 $ for $ i = 0,1,2 $,
the proof proceeds in the same manner with even simpler estimates.
In this case, we simply bound the corresponding factor involving $T\overline{\Omega}_{\mathfrak{n}_i}$ by $1$, and the rest of the argument remains unchanged.
\end{proof}

In particular, by taking $f_{\mathfrak{n}}\equiv 1$ for all $\mathfrak{n}\in\mathcal{N}$, we obtain the following estimate for the integral in \eqref{def:a}:
\begin{proposition}\label{prop:est:a}
Let $\mathcal{T}$ be a nontrivial tree. 
Then for any function $\Gamma_1(k,t)$ satisfying $$\sup_k\sup_{t\in[0,1]}|\partial_t\Gamma_1(k,t)|\leq 1,$$
it holds that
\begin{align*}
  |\mathcal{A}_{\mathcal{T}}(t,\Theta[\mathcal{N}])|\lesssim \sum_{d_\mathfrak{n}\in\{0,1\}}\prod_{\mathfrak{n}\in\mathcal{N}}\langle Tq_\mathfrak{n}\rangle^{-1}.
\end{align*}
Here the universal constant is universal and independent of choice of $\Gamma_1(k,t)$.
\end{proposition}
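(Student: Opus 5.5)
This is the special case $f_{\mathfrak{n}}\equiv 1$ of Proposition~\ref{prop:est:af}, so I would deduce it from that result instead of re-running the induction.

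\textbf{Step 1: identify the integrals.} Unfolding the recursive definition \eqref{def:a}, $\mathcal{A}_{\mathcal{T}}(t,\Theta[\mathcal{N}])$ is the iterated integral over the ordered domain $\mathcal{D}=\{0<t_{\mathfrak{n}'}<t_{\mathfrak{n}}<t\}$ of $\prod_{\mathfrak{n}} e^{\zeta_{\mathfrak{n}}2\pi i\Theta_{\mathfrak{n}}(t_{\mathfrak{n}})}$. I would check this by induction on the tree, using Fubini at the root. Next, substitute $\Gamma(k,t)=\Gamma_0(k)Tt+\Gamma_1(k,t)$. This gives
\[
\Theta_{\mathfrak{n}}(t_{\mathfrak{n}})=\overline\Omega_{\mathfrak{n}}Tt_{\mathfrak{n}}+\sum\Gamma_1(\mathfrak{n},t_{\mathfrak{n}}),
\]
so that $\mathcal{A}_{\mathcal{T}}=\mathcal{A}^f_{\mathcal{T}}$ with $f_{\mathfrak{n}}\equiv1$, in the notation of \eqref{def:af}.

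\textbf{Step 2: verify the hypotheses.} With $f_{\mathfrak{n}}\equiv1$, the hypothesis of Proposition~\ref{prop:est:af} holds with
\[
C_{\mathfrak{n}}=\sup_s\bigl(|f_{\mathfrak{n}}|+|f_{\mathfrak{n}}'|\bigr)=1.
\]
The assumption $\sup_k\sup_t|\partial_t\Gamma_1|\le1$ is the same in both statements.

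\textbf{Step 3: conclude.} Applying Proposition~\ref{prop:est:af} gives the bound $\sum_{d_{\mathfrak{n}}}\prod_{\mathfrak{n}}\langle Tq_{\mathfrak{n}}\rangle^{-1}$. Its constant is universal and independent of $\Gamma_1$, so the same holds here.

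\textbf{Main obstacle.} There is essentially none. The only point needing care is the bookkeeping in Step 1: one must confirm that the sign convention $\zeta_{\mathfrak{r}}$ at each subtree root in \eqref{def:a} matches the $\zeta_{\mathfrak{n}}$ appearing in \eqref{def:af}. This follows because subtrees inherit signs as in Definition~\ref{tree1}.
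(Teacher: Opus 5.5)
Your proposal is correct and matches the paper's argument. The paper also obtains this proposition by taking $f_{\mathfrak{n}}\equiv 1$ (so $C_{\mathfrak{n}}=1$) in Proposition~\ref{prop:est:af}, after rewriting $\mathcal{A}_{\mathcal{T}}$ as the ordered-time integral with phases $\overline\Omega_{\mathfrak{n}}Tt_{\mathfrak{n}}+\sum\Gamma_1(\mathfrak{n},t_{\mathfrak{n}})$, exactly as in your Step 1.
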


\section{The Irregular Chains}\label{sec:irregular}
In this section, we introduce the notion of irregular chains, first introduced in \cite{DH23a}.
These chains give rise to the worst two-vector estimates, and our goal is to exploit a delicate cancellation mechanism among them.
Compared with previous works, an additional difficulty here comes from the symbols generated by derivatives, which we control by the
truncation functions.

\subsection{Cancellation of irregular chains}\label{irregular}

\begin{definition} [Irregular Chains]\label{def:chain-couple}
 Consider an enhanced couple $\mathcal{Q}=\{\mathcal{T}^+,\mathcal{T}^-,\mathcal{P},\mathcal{N}_D\}$. A sequence of nodes $(\mathfrak{n}_0, \ldots, \mathfrak{n}_q)$ is called a  irregular chain if, for each $0 \leq j \leq q-1$, 
\begin{enumerate}
    \item $\mathfrak{n}_{j+1}$ is a child of $\mathfrak{n}_{j}$, and the other two children of $\mathfrak{n}_{j}$ are both leaves,
    \item $\mathfrak{n}_{j}$ has a child $\mathfrak{m}_{j}$ paired with a child $\mathfrak{p}_{j+1}$ of $\mathfrak{n}_{j+1}$, and $\mathfrak m_i$ has opposite sign as $\mathfrak n_{i+1}$,
    \item  we denote $\mathfrak p_0$ to be the remaining child of $\mathfrak n_0$, and 
   $\mathfrak e$ and $\mathfrak f$ to be the remaining two children of $\mathfrak n_q$ with sign $+,-$ respectively.

\end{enumerate}
\end{definition}See Figure \ref{fig:irregular} for examples of two irregular chains. In this configuration, we note that degenerate nodes may also be present.

Let $\mathcal{H}=(\mathfrak{n}_0, \ldots, \mathfrak{n}_q)$ be an irregular chain. By the  definition of irregular chain, $\mathfrak{p}_i$ has the same sign as $\mathfrak{n}_i$ so there are only two choices (the left node or the right node). We denote $\zeta_{\mathfrak{n}_i}=\zeta_i$. We fix the degeneracy of each atoms $\mathfrak{n}_0, \ldots, \mathfrak{n}_q$. Then if we fix  these relative positions and $\zeta_0$, the structure of $\mathcal{H}$ is uniquely
 determined by the signs of $\zeta_i,1\leq i\leq q$.
 
\begin{definition}\label{def:twist}

\begin{enumerate}
\item (Congruent relation) 
 We define two irregular chains to be congruent (denoted $\mathcal{H}'\equiv \mathcal{H}$), if the sign $\zeta_0$ are the same and  the relative
 position of each $\mathfrak{p}_j$  are the same (both left or right) for these two irregular chains. Then the  the irregular chains $\mathcal{H}$  in this congruence class is uniquely determined
 by the signs $\zeta_{i},1\leq i\leq q$.
  We  call any two congruent irregular chains in a congruence class as twists of each  other.  See Figure \ref{fig:irregular} for an example of two irregular chains as twists.
 \item (Congruent couple) We can consider an irregular chain $\mathcal{H}$ in a couple $\mathcal{Q}$ and all its
 twists. Note that if we twist $\mathcal{H}$, then this twist can be trivially extended to $\mathcal{Q}$, as long as we
 keep the remaining part of the couple $\mathcal{Q}$ unchanged (note that the positions of $\mathfrak{e}$ and $\mathfrak{f}$ may be
 switched, but the sub-trees attached at $\mathfrak{e}$ and $\mathfrak{f}$ are unchanged).  For a given couple $\mathcal{Q}$ and we  suppose that $\mathcal{H}_i,1\leq i\leq r$ are disjoint irregular chains in $\mathcal{Q}$. Then we
  perform twists separately and arbitrarily to  each $\mathcal{H}_i$, starting from $\mathcal{Q}$, and obtain a family of couple $\mathcal{Q}'$, and call them congruent couples, with notation $\mathcal{Q}\equiv\mathcal{Q}'$.  
  \item (Decoration relation) We fix one irregular chain $\mathcal{H}$ in a couple $\mathcal{Q}$, and consider its decoration projected on $\mathcal{H}$.  Then the
  decoration on $\mathcal{H}$ can be uniquely determined by the vectors
 $(k_i, l_i), 0 \leq i\leq q +1$, which are defined as follows:\\ for   $0 \leq i\leq q $ we have $(k_i,l_i)=(k_{\mathfrak{n}_i},k_{\mathfrak{p}_i})$ if $\zeta_i=+$, and $(k_i,l_i)=(k_{\mathfrak{p}_i},k_{\mathfrak{n}_i})$ if $\zeta_i=-$; for $j=q+1$ we denote $(k_{q+1},l_{q+1}) = (k_{\mathfrak{e}},k_{\mathfrak{f}})$.  See Figure \ref{fig:decoration} for the corresponding decorations of irregular chains in Figure \ref{fig:irregular}.
 \item (Gaps) By direct observation,   we have  \begin{align}
     k_0-l_0=k_1-l_1=\cdots=k_{q+1}-l_{q+1}:=h.
 \end{align} 
 We call the irregular chain a large gap (LG) or small gap (SG) provided $h> T^{-1/2}$ or $h\leq T^{-1/2}$ respectively.
 \end{enumerate}
\end{definition}

\begin{figure}
\begin{subfigure}
\centering
\begin{tikzpicture}[level distance=1.2cm,
  level 1/.style={sibling distance=1.2cm},
  level 2/.style={sibling distance=1.2cm}]
\tikzstyle{hollow node}=[circle,draw,inner sep=1.6]
\tikzstyle{solid node}=[circle,draw,inner sep=1.6,fill=black]
\tikzset{
red node/.style = {circle,draw=black,fill=red,inner sep=1.6},
blue node/.style= {circle,draw = black, fill= blue,inner sep=1.6}, 
purple node/.style= {circle,draw = black, fill= purple,inner sep=1.6}, 
orange node/.style= {circle,draw = black, fill= orange,inner sep=1.6},
yellow node/.style= {circle,draw = black, fill= yellow,inner sep=1.6},
green node/.style = {circle,draw=black,fill=green,inner sep=1.6}}
\node[solid node, label = right:{$\mathfrak{n}_0(+)$}] at (-5.5,.5){}
    child{node[solid node, label = right: {$\mathfrak{n}_1(+)$}]{}
        child{node[solid node, label=below: $\mathfrak{n}_2(+)$]{}
             child{node[solid node, label=below: $\mathfrak{e}(+)$]{}}
             child{node[solid node, label=below: $\mathfrak{f}(-)$]{}}
             child{node[green node, label=below: $\mathfrak{p}_2(+)$]{}}
        }
        child{node[green node, label=below: $\mathfrak{m}_1(-)$]{}}
        child{node[red node, label=below: $\mathfrak{p}_1(+)$]{}}
    }
    child{node[red node, label=below: $\mathfrak{m}_0(-)$]{}}
    child{node[solid node, label=below: $\mathfrak{p}_0(+)$]{}}
    
;

\node[solid node, label = right: {$\mathfrak{n}_0(+)$}] at (1,0.5){}
    child{node[red node, label=below: $\mathfrak{m}_0(+)$]{}}
    child{node[solid node, label = right: {$\mathfrak{n}_1(-)$}]{}
        child{node[green node, label=below: $\mathfrak{m}_1(-)$]{}}
        child{node[solid node, label=below: $\mathfrak{n}_2(+)$]{}
             child{node[solid node, label=below: $\mathfrak{e}(+)$]{}}
             child{node[solid node, label=below: $\mathfrak{f}(-)$]{}}
             child{node[green node, label=below: $\mathfrak{p}_2(+)$]{}}
             }
        child{node[red node, label=below: $\mathfrak{p}_1(-)$]{}}
    }
    child{node[solid node, label=below: $\mathfrak{p}_0(+)$]{}}
    
;
\end{tikzpicture}
\end{subfigure}
\caption{Two examples of congruent chains. Points with the same color are paired.}
\label{fig:irregular}
\end{figure}

\begin{figure}
\begin{subfigure}
\centering
\begin{tikzpicture}[level distance=1.2cm,
  level 1/.style={sibling distance=1.2cm},
  level 2/.style={sibling distance=1.2cm}]
\tikzstyle{hollow node}=[circle,draw,inner sep=1.6]
\tikzstyle{solid node}=[circle,draw,inner sep=1.6,fill=black]
\tikzset{
red node/.style = {circle,draw=black,fill=red,inner sep=1.6},
blue node/.style= {circle,draw = black, fill= blue,inner sep=1.6}, 
purple node/.style= {circle,draw = black, fill= purple,inner sep=1.6}, 
orange node/.style= {circle,draw = black, fill= orange,inner sep=1.6},
yellow node/.style= {circle,draw = black, fill= yellow,inner sep=1.6},
green node/.style = {circle,draw=black,fill=green,inner sep=1.6}}
\node[solid node, label = right:{$k_0(+)$}] at (-5.5,.5){}
    child{node[solid node, label = right: {$k_1(+)$}]{}
        child{node[solid node, label=below: $k_2(+)$]{}
             child{node[solid node, label=below: $k_3(+)$]{}}
             child{node[solid node, label=below: $l_3(-)$]{}}
             child{node[green node, label=below: $l_2(+)$]{}}
        }
        child{node[green node, label=below: $l_2(-)$]{}}
        child{node[red node, label=below: $l_1(+)$]{}}
    }
    child{node[red node, label=below: $l_1(-)$]{}}
    child{node[solid node, label=below: $l_0(+)$]{}}
    
;

\node[solid node, label = right: {$k_0(+)$}] at (1,0.5){}
    child{node[red node, label=below: $k_1(+)$]{}}
    child{node[solid node, label = right: {$l_1(-)$}]{}
        child{node[green node, label=below: $l_2(-)$]{}}
        child{node[solid  node, label=below: $k_2(+)$]{}
             child{node[solid node, label=below: $k_3(+)$]{}}
             child{node[solid node, label=below: $l_3(-)$]{}}
             child{node[green node, label=below: $l_2(+)$]{}}
             }
        child{node[red node, label=below: $k_1(-)$]{}}
    }
    child{node[solid node, label=below: $l_0(+)$]{}}
    
;
\end{tikzpicture}
\end{subfigure}
\caption{The corresponding decorated irregular chains are illustrated in Figure \ref{fig:irregular}.}\label{fig:decoration}
\end{figure}

In particular, for any congruent chain, we adopt the shorthand notation $\Omega_i=\Omega_{\mathfrak{n}_i}, \ \sum\Gamma(i,t)=\sum\Gamma(\mathfrak{n}_i,t)$  and $\Theta(i,t)=\Theta(\mathfrak{n}_i,t)$ for any congruent chains  for short. Moreover,  the expressions of   $\zeta_i\Omega_{ i}$ and $\zeta_i\sum\Gamma(i,t)$ are independent of the choice of $\zeta_i$: $$\zeta_i\Omega_i=|k_i|^\sigma- |k_i -h|^\sigma + |k_{i+1}- h|^\sigma-|k_{i+1}|^\sigma,$$
and
$$ \zeta_i\sum\Gamma(i,t)=\Gamma(k_i,t)- \Gamma(k_i -h,t)+ \Gamma(k_{i+1}- h,t)- \Gamma(k_{i+1},t).$$

 \begin{definition}
      (Splicing) Consider a couple $\mathcal{Q}$ and an irregular chain  $\mathcal{H}=(\mathfrak{n}_0,...,\mathfrak{n}_q)$ in it. We apply Splicing to this chain is to remove those nodes $(\mathfrak{n}_1,...,\mathfrak{n}_q)$ and their leaf
children $(\mathfrak{m}_0,...,\mathfrak{m}_{q-1},\mathfrak{p}_0,...,\mathfrak{p}_{q-1})$ from $\mathcal{Q}$ and redefining the children of $\mathfrak{n}_0$ by $\mathfrak{p}_0,\mathfrak{e}$ and $\mathfrak{f}$
  with their position determined by  their
 relative positions.  See Figure \ref{fig:splicing} for an example. We denote the rest couple by $\mathcal{Q}^{sp}$.
 \end{definition}

\begin{figure}
\begin{subfigure}
\centering
\begin{tikzpicture}[level distance=1.2cm,
  level 1/.style={sibling distance=1.2cm},
  level 2/.style={sibling distance=1.2cm}]
\tikzstyle{hollow node}=[circle,draw,inner sep=1.6]
\tikzstyle{solid node}=[circle,draw,inner sep=1.6,fill=black]
\tikzset{
red node/.style = {circle,draw=black,fill=red,inner sep=1.6},
blue node/.style= {circle,draw = black, fill= blue,inner sep=1.6}, 
purple node/.style= {circle,draw = black, fill= purple,inner sep=1.6}, 
orange node/.style= {circle,draw = black, fill= orange,inner sep=1.6},
yellow node/.style= {circle,draw = black, fill= yellow,inner sep=1.6},
green node/.style = {circle,draw=black,fill=green,inner sep=1.6}}
\node[solid node, label = right:{$\mathfrak{n}_0(+)$}] at (-5.5,.5){}
    child{node[solid node, label=below: $\mathfrak{e}(+)$]{}}
    child{node[solid node, label=below: $\mathfrak{f}(-)$]{}}
    child{node[solid node, label=below: $\mathfrak{p}_0(+)$]{}}
;

\node[solid node, label = right: {$k_0(+)$}] at (1,0.5){}
    child{node[solid node, label=below: $k_3(+)$]{}}
    child{node[solid node, label=below: $l_3(-)$]{}}
    child{node[solid node, label=below: $l_0(+)$]{}}   
;
\end{tikzpicture}
\end{subfigure}
\caption{The congruent couple in Figure \ref{fig:irregular} after splicing}
\label{fig:splicing}
\end{figure}

\begin{remark}
 For a non-degenerate atom $\mathfrak{n}_0$, after splicing, it may become degenerate. In this case, the two subtrees rooted at $\mathfrak{n}_2$ and $\mathfrak{n}_{nc}$ may become completely paired, so that the resulting couple is no longer enhanced in the sense of Definition \ref{def:couple}.
For example, in Figure \ref{fig:irregular} (or Figure \ref{fig:splicing}), if $k_0=k_3$ and $l_0=l_3$, then the node $\mathfrak{n}_0$ becomes degenerate, while the leaves in the subtree rooted at ${\mathfrak{f}_2, \mathfrak{p}_{0}}$ may be completely paired.

Similarly, for an originally degenerate node, the splicing procedure may also destroy the enhanced structure. We refer to such nodes as bad degenerate nodes.

\end{remark}  

 It is easy to see that for any $\mathcal{Q}'\equiv\mathcal{Q}$, one has $\mathcal{Q}^{sp}=\mathcal{Q}'^{sp}$.
We then define 
 \begin{align}
     \mathcal{G}_\mathcal{Q}(t,s,k):=\sum_{\mathcal{Q}'\equiv\mathcal{Q}}\mathcal{K}_\mathcal{Q}(t,s,k),\label{def:gq}
 \end{align}
 where the sum is taken over all congruent couples $\mathcal{Q}\equiv\mathcal{Q}'$.  
\begin{lemma}\label{lem-splice}
For an enhanced couple $\mathcal Q$ and one SG irregular chain $(\mathfrak n_0, \ldots, \mathfrak n_{q})$ in $\mathcal{Q}$, 
\begin{align*}
    \mathcal{G}_\mathcal{Q}(t,s,k) :&= \bigg(\frac{\alpha T}{L}\bigg)^{n_{sp}} \zeta(\mathcal{Q}^{sp}) \sum_{\mathcal{E}^{sp}}  
    \prod_{\mathfrak{n}\in\mathcal{N}^{sp}}B_\mathfrak{n}\prod_{\mathfrak{n}\in\mathcal{Q}^{sp}}\varphi_\mathfrak{n}
\int_0^1\int_{\mathcal{E}^{sp}_\tau} P_q(\tau, t_{\mathfrak n_0}, k[\mathcal N^{sp}])\\
&\quad\quad\quad\quad\quad\quad\quad\quad\times\prod_{\mathfrak{n}\in\mathcal{N}^{sp}}  e^{\zeta_\mathfrak{n}2\pi i\Theta_\mathfrak{n}(t_\mathfrak{n}) } \dif t_\mathfrak{n}\dif \tau
\prod_{\mathfrak{l} \in \mathcal{L}^{sp}}^+ n_{\rm in}(k_l),
\end{align*}
 for some function  $P_q$ which satisfies 
\begin{align}
\sup_{|k_{\mathfrak n_0} - k_{\mathfrak p_0}| \leq T^{-1/2}} \left|\left|P_q(\tau,t_0,  k[\mathcal N^{sp}])\right|\right|_{l^\infty} &\lesssim \left(\alpha T^{1/2}L^\delta\right)^q.\label{eq-pestimate}
\end{align}

In the above expression,  $\mathcal{Q}^{sp}$ is denoted 
the remained couple   by splicing out its single irregular chain (we abuse this notation a little). Its corresponding decorations, nodes, leaves are defined by a similar manner. The time internal is defined as 
\begin{equation}\label{eq-sig-setE}
\mathcal E^{sp}_{\tau} = \mathcal E^{sp} \cap \{ t_{\mathfrak n_0} \geq (t_{\mathfrak e}\vee t_{\mathfrak f} )+ \tau \}.
\end{equation}
Furthermore, $
{\mathcal{E}^{sp}}$ permits degeneracies at $\mathfrak{n}_0$, while the subtrees permit complete pairing.
\end{lemma}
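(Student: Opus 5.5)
We follow the splicing argument of \cite{DH23a} and \cite{Vas24}, adapted to the renormalized phases and the symbols $B_\mathfrak{n}$. The plan has three steps: isolate the chain's contribution, show that summing over twists produces a cancellation in the phases, and then integrate out the chain times.

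\textbf{Step 1: isolate the chain.} Fix the decoration of $\mathcal{Q}^{sp}$ and the time $t_{\mathfrak{n}_0}$. All remaining dependence on the chain is then collected in a chain kernel. This kernel involves:
\begin{itemize}
\item the free frequencies $k_1,\dots,k_q$, with $l_i=k_i-h$;
\item the factors $(\alpha T/L)^q\prod_{i}(i\zeta_i)$;
\item the symbols $\prod_{i=1}^{q}B_{\mathfrak{n}_i}$ and the cutoffs $\prod\varphi$;
\item the initial-data factors $n_{\rm in}(k_{\mathfrak{m}_i})$ for the paired leaves;
\item the nested time integral over $t_{\mathfrak{n}_q}<\dots<t_{\mathfrak{n}_1}<t_{\mathfrak{n}_0}$, with phases $e^{2\pi i\zeta_i\Theta(i,t_i)}$.
\end{itemize}
As noted just before the statement, $\zeta_i\Omega_i$ and $\zeta_i\sum\Gamma(i,t)$ do not depend on the twist. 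So the phase $\sum_i\zeta_i\Theta(i,t_i)$ is twist-independent. The only twist-dependent pieces are the sign factor $\prod(i\zeta_i)$ and the time ordering. For $\zeta_i=+$ the node $\mathfrak{n}_i$ is nested in the chain order. For $\zeta_i=-$ the roles of $k_i$ and $l_i$ are swapped, which changes which child carries the rest of the chain.

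\textbf{Step 2: cancellation over twists.} Summing over the $2^q$ twists, the pairs $\zeta_i=\pm$ combine into a difference of two evaluations. This difference comes from the extra factor $n_{\rm in}$, or from the position swap $k_i\leftrightarrow l_i$ in the symbols and cutoffs. Concretely, we get a telescoping difference such as
\[
n_{\rm in}(k_i)B(\dots k_i\dots)-n_{\rm in}(l_i)B(\dots l_i\dots),
\]
with $|k_i-l_i|=|h|\le T^{-1/2}$. We then rewrite the time integration. With $\tau=t_{\mathfrak{n}_0}-(t_{\mathfrak{e}}\vee t_{\mathfrak{f}})$, we integrate out the intermediate times $t_{\mathfrak{n}_1},\dots,t_{\mathfrak{n}_q}$ in the ordered simplex of length $\tau$. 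This produces $P_q(\tau,t_{\mathfrak{n}_0},k[\mathcal{N}^{sp}])$, with the domain $\mathcal{E}^{sp}_\tau$ as in \eqref{eq-sig-setE}. Because the splicing removes the constraints, the degeneracy and complete-pairing caveats at $\mathfrak{n}_0$ follow.

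\textbf{Step 3: estimate each chain node.} We bound each node by a single factor $\alpha T^{1/2}L^{\delta}$. The sum over $k_i$ with weight $L^{-1}$ is an integral in $k_i$. The time integral against $e^{2\pi i T\zeta_i\overline\Omega_i t_i}$ is handled by Proposition~\ref{prop:est:af}. The $\Gamma_1$ part is absorbed into $f_\mathfrak{n}$ using \eqref{bd:gamma'}, giving a factor $\langle T q_i\rangle^{-1}$. Here $\zeta_i\overline\Omega_i\approx h(\partial|\cdot|^\sigma(k_i)-\partial|\cdot|^\sigma(k_{i+1}))$, up to $\Gamma_0$ corrections controlled as in Lemma~\ref{counting} under $\alpha T^{1-\beta}<1$. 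Combining the cancellation gain $O(|h|\cdot\text{derivative})$ with the resulting $k_i$-integral, the factor $\alpha T\cdot|h|^{1}$ (times logarithms) is bounded by $\alpha T^{1/2}$ when $|h|\le T^{-1/2}$. The derivatives of $|k|^{2\beta}$ and of the cutoffs cost at most $L^{\delta}$, using $\varphi_{\le K}$ and the choice \eqref{def:K}.

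\textbf{Main obstacle.} The hardest point is the singular behavior of $|k_i|^{\beta}$ near $k_i=0$ in the differences of symbols. We would first cut out $|k_i|\lesssim T^{-1/2}$: this region contributes measure $T^{-1/2}$ directly, giving $\alpha T\cdot T^{-1/2}$. On the complement, the mean value bound $|h|\,|k_i|^{\beta-1}\lesssim T^{-\beta/2}\cdot$, with $|h|\le T^{-1/2}$, stays acceptable.
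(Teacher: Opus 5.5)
Your proposal follows the paper's proof: summing over twists leaves, at each chain node, the factor $\frac{\alpha T}{L}\sum_{k_i}|k_i|^{2\beta}|l_i|^{2\beta}\varphi_{\leq K}(k_i)\varphi_{\leq K}(l_i)\,[\varphi_{\leq K}(l_i)n_{\rm in}(k_i)-\varphi_{\leq K}(k_i)n_{\rm in}(l_i)]$ times a common time integral over the chain simplex, and bounding that integral trivially by $1$ gives $\alpha T|h|L^{\delta}\leq \alpha T^{1/2}L^{\delta}$ per node. Two of your structural claims are wrong, though correcting them only simplifies the argument: the time ordering $t_{\mathfrak n_q}<\dots<t_{\mathfrak n_0}$ does not depend on the twist, since $\mathfrak n_{j+1}$ is always a child of $\mathfrak n_j$, and $B_{\mathfrak n_i}=|k_i l_i k_{i+1} l_{i+1}|^{\beta}$ is also twist-independent; together with the invariance of $\zeta_i\Theta(i,t)$, these invariances are exactly why the twist sum collapses to a difference of initial-data and cutoff factors alone, so your ``main obstacle'' about differences of $|k_i|^{\beta}$ never arises and no oscillatory input from Proposition~\ref{prop:est:af} is needed in Step~3.
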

\begin{proof}
Without loss of generality, we assume that $\mathcal{Q}$ is the element satisfying  $\zeta_{i}=+$ for any  $0\leq i\leq q$. Then we have $k_{\mathfrak n_i} = k_i$, $k_{\mathfrak p_i} = k_{\mathfrak m_{i-1}} = l_i$, and $t_i = t_{\mathfrak n_i},0\leq i\leq q$,  $t_{q+1} = t_{\mathfrak e}\vee t_{\mathfrak f} $.  
  Then we define $\Theta(t)=\Omega Tt+\Gamma(t)$, where 
    \begin{align*}
    \Omega = \sum_{j = 0}^q \Omega_{j} = |k_{0}|^\sigma- |k_{0} -h|^\sigma + |k_{q+1}- h|^\sigma-|k_{q+1}|^\sigma.\\
   \Gamma(t)=\Gamma(k_0,t)- \Gamma(k_0 -h,t)+ \Gamma(k_{q+1}- h,t)- \Gamma(k_{q+1},t).
    \end{align*}
   
    For all $\mathcal Q'\equiv \mathcal{Q}$, the difference between  $\mathcal K_{\mathcal Q}$ and $\mathcal K_{\mathcal Q'}$ (run over all $\mathcal{Q}$) can be identified as:
 \begin{align*}
    \bigg(\frac{i\alpha T}{L}\bigg)^q&\sum\limits_{k_i \in \mathbb{Z}_L}  
    \prod_{i=0}^q |k_i|^\beta|k_{i+1}|^\beta|l_i|^\beta|l_{i+1}|^\beta\ \prod_{i=1}^q \varphi_{\leq K}({l_i})\varphi_{\leq K}({k_i})\notag\\ 
    &\prod_{i = 1}^q [\varphi_{\leq K}(l_i)n_{\rm in}(k_i)-\varphi_{\leq K}(k_i)n_{\rm in}(l_i)]
\int_{t_{q+1} < t_q < \ldots < t_1 < t_0}\prod_{i=0}^q  e^{\zeta_{i}2\pi i\Theta_i(t_i) } \dif t_1\dif t_2...\dif t_q,
 \end{align*}
    The resulting expressions are derived as follows:
    \begin{align*} 
    & 
    |k_0|^\beta|k_{q+1}|^\beta|l_0|^\beta|l_{q+1}|^\beta e^{ 2\pi i \Theta(t_0)} \bigg(\frac{i\alpha T}{L}\bigg)^q\sum\limits_{k_i \in \mathbb{Z}_L}
    \prod_{i=1}^q |k_i|^{2\beta}|l_i|^{2\beta}\prod_{i=1}^q \varphi_{\leq K}({l_i})\varphi_{\leq K}({k_i}) \\
     &\times\prod_{i = 1}^q [\varphi_{\leq K}(l_i)n_{\rm in}(k_i)-\varphi_{\leq K}(k_i)n_{\rm in}(l_i)]
\int_{t_{q+1} < t_q < \ldots < t_1 < t_0} \prod_{j = 1}^q e^{2\pi i( \Theta_{j}(t_{j})-\Theta_j(t_{0}))} \dif t_1\dif t_2...\dif t_q\\
    & = 
    |k_0|^\beta|k_{q+1}|^\beta|l_0|^\beta|l_{q+1}|^\beta e^{ 2\pi i \Theta(t_0)}  \int_{\tau\in [0,t_0-t_{q+1}]}P_q(\tau, t_0, k[\mathcal N^{sp}]) \dif \tau,
    \end{align*}
  and
    \begin{align}\label{bd:irre:cancel}
    P_q&(\tau, t_0, k[\mathcal N^{sp}]) :=\bigg(\frac{i\alpha T}{L}\bigg)^q\sum\limits_{k_i \in \mathbb{Z}_L} 
    \prod_{i=1}^q |k_i|^{2\beta}|l_i|^{2\beta}\prod_{i=1}^q \varphi_{\leq K}({l_i})\varphi_{\leq K}({k_i}) \notag\\
     &\times\prod_{i = 1}^q [\varphi(l_i)n_{\rm in}(k_i)-\varphi(k_i)n_{\rm in}(l_i)]
\int_{0<s_0 <s_1< \ldots < s_q < \tau} \prod_{j = 1}^q e^{2\pi i(  \Theta_{j}(t_0-s_{j})-\Theta_j(t_{0}))} \dif s_1\dif s_2...\dif s_q.
    \end{align}
    Using the  decaying  of $  n_{\mathrm{in}}$,   we have for the SG case where $|h|\leq T^{-1/2}$,
    \begin{align*}
    \frac{\alpha T}{L} \sum_{k \in \mathbb{Z}_L} \left| \varphi(k)n_{\mathrm{in}}(k - h)-\varphi(k-h)n_{\mathrm{in}}(k)\right| &\lesssim \alpha  T |h| \lesssim \alpha T^{\frac{1}{2}}.
    \end{align*}
 Together with the fact that
    \begin{align*}
        |k_i|^{2\beta}|l_i|^{2\beta} \varphi_{\leq K}({l_i})\varphi_{\leq K}({k_i})\lesssim L^{\delta}, 
    \end{align*} we imply \eqref{eq-pestimate}. 
   Now we could write $ \mathcal{G}_\mathcal{Q}(t,s,k)$ as desired expression by noticing that $\tau=t_{\mathfrak{n}_0}-(t_{\mathfrak e}\vee t_{\mathfrak f} )\in(0,1]$. 
    \end{proof}

\subsection{The choice of irregular chains}\label{sec:choicechain}
The remaining question is how to select such irregular chains and apply the splicing procedure to them. In general, we cannot simply splice all chains, as this may generate new irregular chains or additional terms, which in turn lead to unfavorable counting estimates. 
\begin{definition}\label{def:doublebond}
Consider  an enhanced couple $\mathcal{Q}$, and the corresponding molecule $\mathbb{M}(\mathcal{Q})$, we assume that there are two atoms, $v_1$ and $v_2$, connected by  two bonds. Then we denote by $\mathfrak{n}_j = \mathfrak{n}(v_j)$ for $j=1,2$.  Then we call the double bound is 

\begin{enumerate} 
\item type CL if: two  bonds is composed of one LP bond and one PC bond, and all other leaf-children  remain unpaired.
\item type CN if: two bonds are  both   LP, and the left two leaves are not paired.
\end{enumerate}
According to different choice of paired leaves between $\mathfrak{n}_j$, the bonds can be the opposite directions or same directions.
\end{definition}

\begin{definition}
\begin{enumerate}
\item (Double Chains)\label{def:chain-molecule}
Consider an enhanced couple $\mathcal{Q}$ and the associated molecule $\mathbb{M}(\mathcal{Q})$. A sequence of nodes $\mathbb{H}=(v_0,\ldots,v_q)$ is called a double chain if, for each $0\leq i\leq q-1$, the pair $(v_i,v_{i+1})$ is connected by two bonds.
Given a molecule $\mathbb{M}(\mathcal{Q})$ and a chain $\mathbb{H}$, there is at most one CN double bond; see, for instance, \cite[Proposition 5.7]{DH23a}.
We further call $\mathbb{H}$ a hyperchain if $v_0$ and $v_q$ are connected by a single bond, and a  pseudo-hyperchain if $v_0$ and $v_q$ are connected through another common atom $v\notin\{v_0,\ldots,v_q\}$.
 
\item  (Irregular chain) A double chain is called an irregular chain if all its bonds are CL and are oriented in opposite directions. Similarly, we define irregular hyperchains and irregular pseudo-hyperchains in the same way.
\end{enumerate}
\end{definition}
In the following, we collectively refer to these structures as (irregular) chain-like objects. For clarity, the term “chain” will be reserved for chains in the strict sense, excluding hyperchains and pseudo-hyperchains.

In Definition \ref{def:chain-couple}, we also introduced irregular chains for an enhanced couple. In fact, given a couple $\mathcal{Q}$ and the corresponding molecule $\mathbb{M}(\mathcal{Q})$, each irregular chain in $\mathcal{Q}$ corresponds to an irregular chain in $\mathbb{M}(\mathcal{Q})$, and conversely, each irregular chain in $\mathbb{M}(\mathcal{Q})$ corresponds to a unique irregular chain in $\mathcal{Q}$.
Moreover, for any irregular chain $\mathcal{H}$ and the associated double-bond chain $\mathbb{H}$ in $\mathbb{M}$, if $\mathcal{Q}^{sp}$ denotes the couple after splicing, then the corresponding molecule $\mathbb{M}^{sp}=\mathbb{M}(\mathcal{Q}^{sp})$ is obtained from $\mathbb{M}$ by merging all atoms in $\mathbb{H}$ into a single atom. We refer to this procedure as splicing at the level of molecules.

 Consider an enhanced couple $\mathcal Q$ and the associated enhanced molecule $\mathbb{M}(\mathcal{Q})$. Let $\mathscr C$ denote the collection of all maximal small-gap (SG) irregular chain-like objects. Here, “maximal” means that the chain cannot be extended further by adding adjacent nodes. In particular, these objects are pairwise disjoint.
 
We then apply the splicing procedure to the irregular chains in the following manner:

\medskip

\boxed{\rm Operation\ Splicing}
We consider each $\mathcal{C}\in \mathscr C$ and proceed as follows:
\begin{enumerate}
\item If $\mathcal{C}$ is a chain, we apply the splicing procedure directly to the corresponding irregular chain.
\item If $\mathcal{C}$ is a hyperchain or a pseudo-hyperchain, we select a middle double bond. This decomposes $\mathcal{C}$ into two chains (possibly consisting of a single node) on each side of the middle node, and we apply splicing to each of these two chains.

After this procedure, in the case of a hyperchain, the remaining connection becomes a triple bond, while in the case of a pseudo-hyperchain, it reduces to a single double-bond pseudo-hyperchain. 
\end{enumerate}

\medskip
We note that no new double bonds are created during the splicing procedure.

\begin{remark}
  We remark that when applying the splicing procedure to a chain, its type should be determined with respect to the current molecule, rather than the original one, since splicing one chain may change the type of other chains. We refer to \cite{Vas24,Wu25} for a detailed discussion.
In particular, wide ladder structures may arise in the molecule; see, for example, Figure \ref{fig:ladder}. 
\begin{figure}
\begin{tikzpicture}[scale = .9]
    \tikzstyle{every node} = [circle, scale = .5,  draw = black]
    \node (1) at (-5,0) {};
    \node (2) at (-4,0) {};
    \node (3) at (-3,0) {};
    \node (4) at (-2,0) {};
    \node (5) at (-1,0) {};
    \node (6) at (-5,1) {};
    \node (7) at (-4,1) {};
    \node (8) at (-3,1) {};
    \node (9) at (-2,1) {};
    \node (10) at (-1,1) {};

    \draw[-,  thick] (1.25) -- (2.155);
    \draw[-, thick] (2.205) -- (1.335);
    \draw[-,  thick] (2.25) -- (3.155);
    \draw[-, thick] (3.205) -- (2.335);
    \draw[-,  thick] (3.25) -- (4.155);
    \draw[-, thick] (4.205) -- (3.335);
    \draw[-,  thick] (4.25) -- (5.155);
    \draw[-, thick] (5.205) -- (4.335);

    \draw[-,  thick] (1) -- (6);
    \draw[-,  thick] (5) -- (10);
    
     \draw[-,  thick] (6.25) -- (7.155);
    \draw[-, thick] (7.205) -- (6.335);
    \draw[-,  thick] (7.25) -- (8.155);
    \draw[-, thick] (8.205) -- (7.335);
    \draw[-,  thick] (8.25) -- (9.155);
    \draw[-, thick] (9.205) -- (8.335);
    \draw[-,  thick] (9.25) -- (10.155);
    \draw[-, thick] (10.205) -- (9.335);

\end{tikzpicture}
\hspace{1cm}
\begin{tikzpicture}[scale = .9]
    \tikzstyle{every node} = [circle, scale = .5,  draw = black]
    \node (1) at (-5,0) {};
    \node (2) at (-4,0) {};
    \node (3) at (-3,0) {};
    \node (4) at (-2,0) {};
    \node (5) at (-1,0) {};
    \node (6) at (-3,1) {};

    \draw[-,  thick] (1.25) -- (2.155);
    \draw[-, thick] (2.205) -- (1.335);
    \draw[-,  thick] (2.25) -- (3.155);
    \draw[-, thick] (3.205) -- (2.335);
    \draw[-,  thick] (3.25) -- (4.155);
    \draw[-, thick] (4.205) -- (3.335);
    \draw[-,  thick] (4.25) -- (5.155);
    \draw[-, thick] (5.205) -- (4.335);

    \draw[-,  thick] (1) -- (6);
    \draw[-,  thick] (6) -- (5);

\end{tikzpicture}
\caption{In such a wide ladder configuration, applying the splicing operation to the first chain may cause the second chain to transform from an ordinary chain into a pseudo-hyperchain.}
\label{fig:ladder}
\end{figure}
\end{remark}
After the splicing operation, the molecule $\mathbb{M}(\mathcal{Q}^{sp})$ remains connected. By the construction of the irregular chains, the only small-gap (SG) irregular chains that remain have length one and occur only within pseudo-hyperchains.
We also note that there may exist SG CN double chains with opposite orientations; however, these are not irregular chains.

\begin{lemma} \label{cor-M}
For an enhanced couple $\mathcal{Q}$, we apply splicing to all the  chain-like objects as above, and  consider the resulting couple $\mathcal Q^{sp}$. Then we have
\begin{align*}
    \mathcal{G}_\mathcal{Q}(t,s,k) :&= \bigg(\frac{\alpha T}{L}\bigg)^{n_{sp}} \zeta(\mathcal{Q}^{sp}) \sum_{\mathcal{E}^{sp}}  
    \prod_{\mathfrak{n}\in\mathcal{N}^{sp}}B_\mathfrak{n}\prod_{\mathfrak{n}\in\mathcal{Q}^{sp}}\varphi_\mathfrak{n}
\int_{[0,1]^{\mathcal{N}_0^{sp}}}\int_{\mathcal{E}^{sp}_\tau} \prod_{\mathfrak{n}_0\in \mathcal{N}_0^{sp}}P_{q_{\mathfrak{n}_0}}(\tau_{\mathfrak{n}_0}, t_{\mathfrak n_0}, k[\mathcal N^{sp}])\\
&\quad\quad\quad\quad\quad\quad\quad\quad\times\prod_{\mathfrak{n}\in\mathcal{N}^{sp}}  e^{\zeta_\mathfrak{n}2\pi i\Theta_\mathfrak{n}(t_\mathfrak{n}) } \dif t_\mathfrak{n}\prod_{\mathfrak{n}_0\in \mathcal{N}_0^{sp}}\dif \tau_{\mathfrak{n}_0}
\prod_{\mathfrak{l} \in \mathcal{L}^{sp}}^+ n_{\rm in}(k_l),
\end{align*}

In the above expression,  $\mathcal N_0^{sp}$ are the nodes at which an irregular chain was spliced out and $P_{q_{\mathfrak n_0}}$ is given in Lemma \ref{lem-splice} with $q$ replaced by $q_{\mathfrak n_0}$, which is the length or irregular chain spliced out below $\mathfrak n_0$.    The time internal is defined as 
  $$\mathcal{E}^{sp}_{\pmb{\tau}} = \mathcal E \cap \{t_{\mathfrak n_0} \geq t_{\mathfrak n_0^{2}} + \tau_{\mathfrak n_0}, t_{\mathfrak n_0^{3}} + \tau_{\mathfrak n_0}\}_{\mathfrak n_0 \in \mathcal N_0^{sp}}.$$
  Furthermore, $\tilde{\epsilon}_{\mathcal{E}^{sp}}$ allows degeneracies at each $\mathfrak{n}_0\in\mathcal{N}_0^{sp}$, while the corresponding subtrees may admit complete pairing. We refer to such nodes as bad degenerate nodes.
\end{lemma}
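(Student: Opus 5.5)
The plan is to obtain Lemma~\ref{cor-M} by applying Lemma~\ref{lem-splice} repeatedly, once for each spliced chain. The Operation Splicing produces a finite list of irregular chains $\mathcal{H}_1,\dots,\mathcal{H}_r$. Case (1) of the operation contributes one chain; case (2) contributes the two subchains on either side of the selected middle double bond. These chains are pairwise disjoint, because the maximal SG chain-like objects are pairwise disjoint and case (2) splits an object into disjoint pieces. I will induct on $r$. Write $\mathcal{Q}_j$ for the couple obtained after splicing $\mathcal{H}_1,\dots,\mathcal{H}_j$, and $\mathcal{N}_0^{(j)}$ for the set of top nodes $\mathfrak n_0$ of the chains spliced so far. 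The induction hypothesis is that $\mathcal{G}_{\mathcal Q}$ has the stated form with $\mathcal{Q}^{sp}$ replaced by $\mathcal{Q}_j$, with a product of the factors $P_{q_{\mathfrak n_0}}$ over $\mathcal{N}_0^{(j)}$, and with a time domain carrying the constraints $t_{\mathfrak n_0}\geq t_{\mathfrak n_0^2}+\tau_{\mathfrak n_0}$ and $t_{\mathfrak n_0}\geq t_{\mathfrak n_0^3}+\tau_{\mathfrak n_0}$ for those nodes.

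For the inductive step I will use the definition \eqref{def:gq}. Summing over all congruent couples factorizes into independent sums over the twists of each $\mathcal{H}_i$, since twists of disjoint chains commute and leave the rest of the couple unchanged. I therefore fix the twists of $\mathcal{H}_{j+2},\dots,\mathcal{H}_r$ and sum over the twists of $\mathcal{H}_{j+1}$ alone. This runs the computation in the proof of Lemma~\ref{lem-splice} inside the expression already obtained for $\mathcal{Q}_j$.

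The key point is that this computation is local in the chain. It uses only the following ingredients:
\begin{itemize}
\item the decoration variables $(k_i,l_i)$ of the chain, with common gap $h$;
\item the phases $\zeta_i\Theta_i$, which do not depend on the twist signs;
\item the ordered time variables $t_{q+1}<t_q<\dots<t_0$ of the chain.
\end{itemize}
The other data can only involve the chain through $t_{\mathfrak n_0}$, $t_{\mathfrak e}$, $t_{\mathfrak f}$ and the external decorations. This covers the earlier factors $P_{q_{\mathfrak n_0'}}$, their parameters $\tau_{\mathfrak n_0'}$, and the constraints $t_{\mathfrak n_0'}\geq t_{(\mathfrak n_0')^{2,3}}+\tau_{\mathfrak n_0'}$. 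So the change of variables $s_j=t_0-t_j$ goes through unchanged. It produces a new factor $P_{q_{j+1}}$ together with the constraint $t_{\mathfrak n_0}\geq (t_{\mathfrak e}\vee t_{\mathfrak f})+\tau$, which is exactly the new constraint in $\mathcal{E}^{sp}_{\pmb\tau}$. The bound \eqref{eq-pestimate} holds for each factor separately, because the gap $h$ of each chain is fixed by the external decoration of $\mathcal{Q}^{sp}$ and is at most $T^{-1/2}$ by the SG assumption.

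I expect the main obstacle to be checking that this locality really holds when chains interact. There are two ways this can happen.
\begin{itemize}
\item \textbf{Nested chains.} Splicing one chain can change the type of another, as in the wide ladder of Figure~\ref{fig:ladder}. To handle this I order the splicings as in the operation and classify each chain in the current molecule. The top node $\mathfrak n_0'$ of an already spliced chain may then serve as $\mathfrak e$, $\mathfrak f$ or $\mathfrak p_0$ of a later chain. In that case $t_{\mathfrak e}$ is simply the time variable $t_{\mathfrak n_0'}$. Its constraint involving $\tau_{\mathfrak n_0'}$ sits below the new chain and is not touched by the substitution, so the domains compose.
\item \textbf{Enhanced pairing.} I need the summed object to stay well defined even though splicing can destroy the enhanced pairing condition. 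The sum in \eqref{def:gq} is taken before splicing, over enhanced couples. The spliced expression therefore keeps the original decoration set, and I only record that, in the new variables, degeneracies at $\mathfrak n_0$ and complete pairings of subtrees are allowed. These are the bad degenerate nodes in the statement, and they are not excluded.
\end{itemize}
Finally, collecting the prefactors $(\alpha T/L)^{n_{sp}}\zeta(\mathcal{Q}^{sp})$, the factors $B_\mathfrak n$ and $\varphi_\mathfrak n$ on the surviving nodes, and the leaf factors $n_{\rm in}$ gives the formula as stated.
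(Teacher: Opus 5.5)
Your proposal is correct and follows the paper's route. The paper proves Lemma~\ref{cor-M} with the one-line remark that it follows from Lemma~\ref{lem-splice} by taking all spliced chains into account; your induction over the disjoint chains supplies the detail that remark leaves implicit, namely that the twist sum and the substitution $s_j=t_0-t_j$ only involve $t_{\mathfrak n_0}$, $t_{\mathfrak e}\vee t_{\mathfrak f}$ and the external decoration, so the argument of Lemma~\ref{lem-splice} can be rerun on the intermediate expressions, which are no longer enhanced couples.
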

\begin{proof}
   This Lemma is  a direct consequence of Lemma  \ref{lem-splice} by taking all the chains into consideration.
\end{proof}

\section{The Algorithm on Molecules } \label{sec:Algorithm}
In this section, we consider the couple $\mathcal{Q}$ and the associated molecules $\mathbb{M}(\mathcal{Q})$. 
We first apply the splicing operation to the irregular chains, as in Lemma~\ref{cor-M}, 
to obtain $\mathcal{Q}^{sp}$. 
We then implement the algorithm introduced in \cite{Vas24} on the molecule $\mathbb{M}(\mathcal{Q}^{sp})$.
This procedure allows us to control the number of decorations of a molecule in a step-by-step manner. 
Suppose that at some stage of the algorithm we have a molecule $\mathbb{M}^{pre}$, 
and after performing one operation we obtain $\mathbb{M}^{post}$. 
Let $\mathfrak{D}^{pre}$ and $\mathfrak{D}^{post}$ denote the corresponding numbers of decorations. 
We associate to this operation a counting factor $\mathfrak{C}$ such that
\begin{align*}
    \mathfrak{D}^{pre} \leq \mathfrak{C}\,\mathfrak{D}^{post}.
\end{align*}

Compared to \cite{Vas24}, the main additional difficulty arises from possible resonances among atoms. 
We will show that the renormalization of the equation eliminates certain resonant interactions, 
so that these degenerate atoms do not lead to any additional difficulties.

\subsection{The algorithm}\label{sec:algo:dege}
First, we perform a pre-processing step: we apply the splicing operation to all  double chains, 
not only to the negative chains or the LG chains, in the same manner as in Operation Splicing before.

We next show that the resulting molecule contains no loops. 
Indeed, the splicing operation (and also the pre-processing step) cannot generate loops. 
Therefore, a loop could only arise if there exists a degenerate node $\mathfrak{n}$ 
whose two children are paired with each other. 
However, this configuration is excluded by the enhanced assumption in Definition \ref{def:couple}.

We now claim that the molecule contains two atoms of degree $3$, 
with all remaining atoms of degree $4$, rather than a single atom of degree $2$. 
We argue by contradiction. There are two possible cases.
In the first case, one tree in the couple is trivial and is paired with a child 
(say $\mathfrak{n}_1$) of the other root $\mathfrak{r}$. 
Then the two subtrees rooted at $\mathfrak{n}_2$ and $\mathfrak{n}_3$ must be fully paired, 
which contradicts the enhanced assumption in Definition \ref{def:couple}. 
In the second case, suppose that the splicing operation (or the pre-processing step) 
produces a molecule with a single degree $2$ atom instead of two degree $3$ atoms. 
In this situation, we may instead splice all but one of the double bonds, 
and then begin the algorithm by removing the two degree $3$ atoms. 
The resulting molecule still contains two atoms of degree $3$.

In Figure~\ref{fig:1loop}, we present two counterexamples that lead to undesired estimates 
in the absence of the enhanced assumption.

\begin{figure}
\begin{tikzpicture}[level distance=.8cm,
  level 1/.style={sibling distance=.8cm},
  level 2/.style={sibling distance=.8cm}]
\tikzstyle{hollow node}=[circle,draw,inner sep=1.6]
\tikzstyle{solid node}=[circle,draw,inner sep=1.6,fill=black]
\tikzset{
red node/.style = {circle,draw=black,fill=red,inner sep=1.6},
blue node/.style= {circle,draw = black, fill= blue,inner sep=1.6}, 
purple node/.style= {circle,draw = black, fill= purple,inner sep=1.6}, 
orange node/.style= {circle,draw = black, fill= orange,inner sep=1.6},
yellow node/.style= {circle,draw = black, fill= yellow,inner sep=1.6},
green node/.style = {circle,draw=black,fill=green,inner sep=1.6}}

\node[solid node, label = right: $k$ ] at (-5.5,.5){}
    child{node[solid node, label = left: $k$ ]{}
        child{node[solid node, label=left: $k$]{}
             child{node[solid node, label=left: $k$]{}
                  child{node[purple node, label=left: $k$]{}}
                  child{node[orange node, label=below: $k_4$]{}}
                  child{node[orange node, label=below: $k_4$]{}}
             }
             child{node[blue node, label=below: $k_3$]{}}
             child{node[blue node, label=below: $k_3$]{}}
        }
        child{node[green node, label=below: $k_2$]{}}
        child{node[green node, label=below: $k_2$]{}}
    }
    child{node[red node, label=below: $k_1$]{}}
    child{node[red node, label=below: $k_1$]{}}
;
\node[purple node, label=right: $k$] at (-3.5,.5){};

\end{tikzpicture}
\hspace{1cm}
\begin{tikzpicture}[level distance=.8cm,
  level 1/.style={sibling distance=2.3cm},
  level 2/.style={sibling distance=.8cm}]
\tikzstyle{hollow node}=[circle,draw,inner sep=1.6]
\tikzstyle{solid node}=[circle,draw,inner sep=1.6,fill=black]
\tikzset{
red node/.style = {circle,draw=black,fill=red,inner sep=1.6},
blue node/.style= {circle,draw = black, fill= blue,inner sep=1.6}, 
purple node/.style= {circle,draw = black, fill= purple,inner sep=1.6}, 
orange node/.style= {circle,draw = black, fill= orange,inner sep=1.6},
yellow node/.style= {circle,draw = black, fill= yellow,inner sep=1.6},
green node/.style = {circle,draw=black,fill=green,inner sep=1.6}}

\node[solid node, label = right: $k$ ] at (-5.5,.5){}
    child{node[solid node, label = left: $k_1$ ]{}
        child{node[purple node, label=below: $k_1$]{}}
        child{node[green node, label=below: $k_2$]{}}
        child{node[orange node, label=below: $k_2$]{}}
    }
    child{node[solid node, label=right: $k_1$]{}
        child{node[purple node, label=below: $k_1$]{}}
        child{node[green node, label=below: $k_2$]{}}
        child{node[orange node, label=below: $k_2$]{}}}
    child{node[blue node, label=below: $k$]{}}
;

\node[blue node, label=right: $k$] at (-3.5,.5){};

\end{tikzpicture}
\caption{If the enhanced assumption is not imposed, then in the case $n = 4$ shown on the left, 
all sibling children are mutually paired (as indicated by nodes of the same color). 
In this situation, the parameters $k_i$, $1 \leq i \leq 4$, can be chosen independently, 
leading to the worst-case counting estimate of order $L^{4}$. 
The corresponding molecule structure contains four loops.\\
The example on the right shows that problematic counting estimates may also arise 
from configurations that do not contain loops. 
In this case, nodes of the same color are mutually paired, 
and the resulting counting estimate for the parameters $k_1$ and $k_2$ is of order $L^{2}$, 
which may  not be bounded by $(LT^{-1/5})^3 T^{-3/5}$. 
Under the enhanced assumption, such configurations are excluded. }
\label{fig:1loop}
\end{figure}

 In the following, we apply the algorithm introduced in \cite[Section 7]{Vas24} 
to remove all remaining atoms. 
Compared with the assumptions in \cite[Section 7.1]{Vas24}, 
degenerate atoms may still be present in our setting. 
However, as we shall see, under the assumption that no loops are present, 
these degenerate atoms do not introduce additional difficulties.

For the reader’s convenience, we provide a complete description of the algorithm 
in Appendix~\ref{sec:app}. 
Here we summarize the types of operations according to the values of 
$\mathfrak{C}$ and $\Delta \chi$:
\begin{itemize}
    \item \textbf{Bridge Operations} (including Operation 1). 
    In this case, $\Delta \chi = 0$. 
    By \cite[Lemma 9.14]{DH23a}, for a bridge $l$, the parameter $k_l$ is fixed, 
    hence $\mathfrak{C} = 1$. 
    Let $m_0$ denote the total number of such operations.
    
    \item \textbf{Sole Atom Operations} (including Operation 9). 
    Here $\Delta \chi = 0$ and $\mathfrak{C} = 1$. 
    Let $m_1$ denote their total number.
    
    \item \textbf{Two-Vector Counting Operations} (including Operation 0 and Operations 5--8). 
    In this case, $\Delta \chi = -1$ (since no bridge is present), 
    and by Lemma~\ref{counting}, $\mathfrak{C} = L$. 
    Let $m_2$ denote the number of such operations.
    
    \item \textbf{Three-Vector Counting Operations} (including Operations 2--4). 
    Here $\Delta \chi = -2$ (as neither bridges nor loops are present). 
    If the atom is non-degenerate, then by Lemma~\ref{counting},
    \[
    \mathfrak{C} = L^{2}T^{-1}D^2 \log L.
    \]
    If the atom is degenerate, then trivially $\mathfrak{C} = L$, 
    which is bounded by $L^{2}T^{-1}D^2 \log L$ since $T < L$. 
    We assume throughout that the decoration parameter $D$ remains bounded. 
    Let $m_3$ denote the total number of such operations.
\end{itemize}

Then,  we need the following key bound  as in \cite[Proposition 7.6]{Vas24}:

\begin{lemma}
  For any molecule $\mathbb{M}(\mathcal{Q})$ obtained after applying  Operation Splicing 
and the  Pre-processing Step,  we have
  \begin{align}\label{prop-op-bound}
    m_2 &\leq 3m_3 -3.
\end{align}
\end{lemma}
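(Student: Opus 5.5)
The plan is to follow the bookkeeping of \cite[Proposition 7.6]{Vas24}. The only new ingredient is to check that degenerate atoms, which are absent in the semilinear setting, do not break the case analysis. Throughout the algorithm the Euler quantity $\chi=E-V+F$ is tracked. At the start, $\mathbb{M}(\mathcal{Q}^{sp})$ is connected with two degree-$3$ atoms and all other atoms of degree $4$; this was shown above using the enhanced assumption and the absence of loops. Hence $E=2V-1$, $F=1$, and so $\chi_{\rm init}=V$. At the end all atoms are removed and $\chi_{\rm fin}=0$. Since bridge and sole-atom operations have $\Delta\chi=0$, two-vector operations have $\Delta\chi=-1$ and three-vector operations have $\Delta\chi=-2$, we get the identity
\[
m_2+2m_3=\chi_{\rm init}=V .
\]
Each operation removes a bounded number of atoms, so we also obtain a linear relation between $V$ and $m_0+m_1+m_2+m_3$. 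Together these reduce \eqref{prop-op-bound} to showing that two-vector operations cannot occur too often relative to three-vector ones.

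The core step is a local charging argument, as in \cite{Vas24}. I would verify operation by operation in Appendix~\ref{sec:app} that the algorithm never performs more than three two-vector steps (Operations 0 and 5--8) between consecutive three-vector steps (Operations 2--4), and that the first three-vector step is preceded by none. Equivalently, I would assign each two-vector operation to a later three-vector operation, with each three-vector operation receiving at most three. The $-3$ would come from the start and the end of the algorithm. The initial two degree-$3$ atoms are removed by a three-vector step, since no bridge is present, and the final atoms are removed by bridge or sole-atom operations, which cost nothing. The combinatorial input is that Operations 5--8 are only triggered when the current molecule contains degree-$2$/degree-$3$ atoms arising from the previous removal. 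These configurations are exactly double bonds or short chains. The pre-processing step, which splices \emph{all} double chains, together with the no-loop property, limits how many such configurations can be created by a single three-vector removal.

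The expected obstacle is to check that degenerate (and bad degenerate) atoms do not create configurations outside the case list of \cite{Vas24}. In particular, a degenerate atom behaves like a pair of bonds carrying the same frequency, which could mimic a double bond or a loop after removals. I would handle this in two steps. First, a degenerate atom is only ever counted with the trivial factor $L$. Second, the enhanced assumption (the subtrees at $\mathfrak{n}_2,\mathfrak{n}_{nc}$ are not fully paired) forbids the collapse that produces loops or isolated double bonds; this is exactly the obstruction illustrated in Figure~\ref{fig:1loop}. With loops excluded and double chains pre-spliced, the graph-theoretic reduction of \cite{Vas24} does not distinguish degenerate from non-degenerate atoms. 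The inequality $m_2\le 3m_3-3$ therefore transfers verbatim.
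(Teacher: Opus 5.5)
Your reduction is the same as the paper's. The lemma is \cite[Proposition 7.6]{Vas24} applied to $\mathbb{M}(\mathcal{Q}^{sp})$, and $m_2,m_3$ are determined by the bond structure alone. Degeneracy is a property of a decoration, so it can change the factor $\mathfrak{C}$ of a three-vector step but never which operation is performed. Your worry about degenerate atoms ``mimicking'' double bonds or loops is therefore moot. What must be checked here is only the structural input (connected, no loops, two degree-$3$ atoms, double chains pre-spliced), which the paper does just before the lemma. Your identity $m_2+2m_3=V$ is also correct.

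The gap is the mechanism you propose for the core step. With no two-vector step before the first three-vector step and at most three between consecutive ones, you only get $m_2\le 3(m_3-1)+(\text{two-vector steps after the last three-vector step})$. So your $-3$ rests on the claim that the algorithm ends with free operations, and that claim is false. Write $w^\pm$ for a leaf child of $w$ with sign $\pm$, and build the couple as follows.
\begin{itemize}
\item $\mathcal{T}^+$ has root $u$ with children $x$, a leaf, $y$ (so $x,y$ carry sign $+$), and $x,y$ have only leaf children.
\item $\mathcal{T}^-$ has root $v$ whose middle child $z$ (sign $+$) has only leaf children and whose other two children are leaves.
\item The pairing is $\{u^-,z^+\}$, $\{v^-,x^+\}$, $\{v^-,y^+\}$, $\{x^-,y^+\}$, $\{y^-,z^+\}$, $\{z^-,x^+\}$.
\end{itemize}
The molecule is $K_5$ minus the edge $uv$. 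All bonds are single and there are no loops, so splicing and pre-processing change nothing, and $u,v$ are the two degree-$3$ atoms.

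The algorithm is forced on this molecule:
\begin{itemize}
\item Operation 4 removes $u$, leaving a $K_4$.
\item Operation 4 removes one more atom, leaving a triangle of degree-$2$ atoms.
\item Operation 5 removes one of these atoms; a bridge removal and two sole-atom removals finish.
\end{itemize}
Thus $m_3=2$ and $m_2=1$, so the inequality holds. However, the only two-vector step comes after the last three-vector step, and there is no later three-vector step to charge it to.

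This behaviour is structural. Operations 2--4 take priority over 5--8, so degree-$2$ atoms created by three-vector removals are postponed and may only be processed at the end. An operation-by-operation check of your local claims therefore cannot succeed. Whatever the accounting in \cite{Vas24} is, it is not this one. Since the paper simply imports that proposition, the sound version of your proof is to do the same and drop the reconstructed charging argument.
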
In fact, the proof does not depend on the decorations of the molecule, 
and hence degenerate atoms do not affect the result.

\begin{proposition}\label{prop-rigidity:nodege}
 We consider a enhanced couple $\mathcal{Q}^{sp}$ as in Proposition \ref{cor-M}, and its corresponding molecule $\mathbb M = \mathbb M(\mathcal Q^{sp})$, with scale $n$. For a $(k_v,C_v)$-decoration as in Definition \ref{def:decorations-molecules} for some fixed $C_v$, we additionally that there are $k_v^0\in\mathbb{Z}_L$ satisfying $|k_v^0|\leq D$ for some $D>1$, and for any $v\in\mathbb{M}$, $$|k_v-k_v^0|\leq 1.$$
Then, the number $\mathfrak D$ of such  decorations is bounded by 
\begin{equation} 
\mathfrak D \leq C^n L^{n}T^{-\frac{n}{5} - \frac{3}{5}}(\log L)^nD^{2n}.
\end{equation}
Here the universal constant $C$ is uniform in the decoration and the choice of $k_v^0$ and $C_v$.
\end{proposition}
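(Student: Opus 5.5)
The plan is to run the algorithm of \cite[Section 7]{Vas24} (recalled in Appendix~\ref{sec:app}) on $\mathbb M=\mathbb M(\mathcal Q^{sp})$ and multiply the counting factors $\mathfrak C$ of the individual operations. Since each operation gives $\mathfrak D^{pre}\le \mathfrak C\,\mathfrak D^{post}$, and the algorithm ends with the empty molecule (for which $\mathfrak D=1$), we get
\[
\mathfrak D\le C^n \prod \mathfrak C = C^n\, L^{m_2}\,(L^2T^{-1}D^2\log L)^{m_3}.
\]
Here bridge and sole-atom operations contribute $\mathfrak C=1$, and the factor $C^n$ absorbs the combinatorial choices made at each step. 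The hypotheses $|k_v-k_v^0|\le 1$ and $|k_v^0|\le D$ are exactly the localization needed to apply Lemma~\ref{counting} at every counting step. The condition $\alpha T^{1-\beta}D^{2-\sigma}<1$ is assumed from the standing choice of $T$. At each step the values $c_v$ and $C_v$ are shifted by bonds already decorated, and the uniformity of Lemma~\ref{counting} in $(k,a,b,c,m)$ handles these shifts.

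First, the bookkeeping. Every atom is removed exactly once, so $m_0+m_1+m_2+m_3=n$. Next, track $\chi=E-V+F$. Initially $E=2n-1$ (two degree-$3$ atoms and the rest of degree $4$, as established after the pre-processing step), $V=n$ and $F=1$, so $\chi_{\rm init}=n$. At the end $\chi=0$. Since $\Delta\chi=0,-1,-2$ for the four operation types, we get
\[
m_2+2m_3=n.
\]
Combined with \eqref{prop-op-bound}, $m_2\le 3m_3-3$, this yields $n\le 5m_3-3$, that is, $m_3\ge \frac{n+3}{5}$.

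With $m_2=n-2m_3$, the product becomes
\[
L^{n-2m_3}L^{2m_3}T^{-m_3}(D^2\log L)^{m_3}=L^n T^{-m_3}(D^2\log L)^{m_3}\le L^nT^{-\frac{n+3}{5}}(\log L)^nD^{2n},
\]
using $T>1$, $m_3\le n$ and $D,\log L\ge 1$. This is the claimed bound.

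The step that needs care is justifying the factor assigned to each operation in the presence of degenerate atoms and the bad degenerate nodes created by splicing. For a degenerate atom the three-vector count in Lemma~\ref{counting} does not apply, since it excludes $k\in\{x,z\}$. I will use the trivial bound $\mathfrak C=L$ instead, which is at most $L^2T^{-1}D^2\log L$ because $T<L$. The point is that this still corresponds to $\Delta\chi=-2$, which requires the absence of loops and bridges at that step. The absence of loops comes from the enhanced pairing assumption, which the preceding discussion checks survives splicing and pre-processing. I would also verify that Vassilev's proof of \eqref{prop-op-bound} is purely graph-theoretic, so decorations and degeneracies do not affect it. Two-vector steps use $\#S_2^\pm\lesssim L$, and here degeneracy is harmless.
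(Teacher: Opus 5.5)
Your bookkeeping ($\chi=n$ initially, $m_2+2m_3=n$, and the final arithmetic) is fine, but there is a genuine gap: you never deal with the double bonds that survive Operation Splicing. Operation Splicing only removes maximal \emph{small-gap irregular} chain-like objects. So $\mathbb M(\mathcal Q^{sp})$ can still contain arbitrarily long double chains whose two bonds are equally oriented, or oppositely oriented with gap $|h|>T^{-1/2}$. If you run the algorithm directly, these are removed by Operations 0 and 3 and charged $\mathfrak C=L$, and then \eqref{prop-op-bound} is not available. The paper states it only for molecules obtained after Operation Splicing \emph{and} the Pre-processing Step; you cite that step for the degree count but never perform it or pay for it. Without it the inequality is false. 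Take a double hyperchain $v_0,\dots,v_{n-1}$: consecutive atoms doubly bonded with equal orientations, and $v_0$, $v_{n-1}$ joined by a single bond. The algorithm performs one Operation 3, then $n-2$ Operations 0, then Operation 9. So $m_3=1$, $m_2=n-2$, and your product only gives $L^nT^{-1}$.

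The missing step, which is how the paper's proof starts, is to remove all these good double bonds first. You merge their endpoints, treating hyperchains and pseudo-hyperchains as in Operation Splicing so that no loop is created. Each removal has $\Delta\chi=-1$ and, by Lemma~\ref{counting}, costs only $\mathfrak C=LT^{-1/2}D^2$. For equal orientations use $\#S_2^+\lesssim LT^{-1/2}D$. For opposite orientations use $\#S_2^-\lesssim LT^{-1/2}D^2$, which applies because the relevant difference is the gap $h$ with $|h|>T^{-1/2}$. Per unit of $\chi$, this is as good as a three-vector step. If $p$ bonds are removed this way, the remaining molecule has $\chi=n-p$ and satisfies \eqref{prop-op-bound}. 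Hence $m_2+2m_3=n-p$ and $m_3\ge (n-p+3)/5$. The total count is $C^nL^nT^{-p/2-m_3}D^{2(p+m_3)}(\log L)^{m_3}$, with $p/2+m_3\ge (n+3)/5$ and $p+m_3\le n$, which gives the claim. Separately, your identity $m_0+m_1+m_2+m_3=n$ is wrong, since a bridge operation deletes a bond rather than an atom, but you never use it.
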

\begin{proof}
For such a molecule $\mathbb{M}$, we first consider the double bonds, 
namely those for which the two bonds are either oriented in the same direction 
or have LG-opposite directions (recall Definition \ref{def:twist} for LG). 
These bonds are not included in $m_2$. 
Once such a bond is removed, we strictly eliminate one double bond, 
and the corresponding change is
$\Delta \chi = -2 + 1 + 0 = -1.$
By Lemma~\ref{counting}, we may then apply the advantageous two-vector counting estimate
$L T^{-\frac{1}{2}} D^2$
either for LG-irregular chains or for same-directional chains. 
In summary, we remove all such bonds and have
$\mathfrak{C} = L T^{-\frac{1}{2}} D^2$ and $\Delta \chi = -1.$
These two bounds are comparable   to the three-vector counting estimate 
that will be used in the subsequent analysis.

 Then  we apply the algorithm above to the remaining molecule and cumulate the count estimate for each instance. By counting on $\chi$ we have $2m_3 + m_2 = n$, together with \eqref{prop-op-bound}, it holds that $5m_3 - 3 \geq n$, which implies that 
\begin{align*}
    \mathfrak D &\leq C^n (L^{2}T^{-1}D^2 \log L)^{m_3}L^{dm_2}= C^n L^{n}( \log L)^{m_3}  T^{-m_3}D^{2m_3} \\
    & \leq C^n L^{n}  T^{-\frac{n}{5} - \frac{3}{5}} (\log L)^nD^{2n}.
    \end{align*}
\end{proof}

\subsection{Bounds on couples} \label{sec-boundoncouple}
In previous sections, we have establish the estimates on the counting problem. In this section, we are in position to prove  Proposition \ref{prop:couple}.
\begin{proof}[Proof of Proposition \ref{prop:couple}]
First, we consider to prove \eqref{bd:couple}, the left hand side equals to
     \begin{align}
     \sum_{\mathcal{Q}_{n}}\mathbf{E}[ c^{\mathcal{T}^+}_k (s)c^{\mathcal{T}^-}_k(s)] = \sum_{|\mathcal{Q}|=n}(\mathcal{K}_\mathcal{Q})(t,s,k).
 \end{align}
 where the sum on the right hand side is taken over all the enhanced couples with $n$ nodes. Now since $n\leq N^3$, we know that there are at most  $O(C^{N^3}(N^3)!)$ such kinds of couples. By taking into account the different ways of choosing degenerate nodes and pairing, there are also at most $O(C^{N^3}(N^3)!)$ ways.  Then by classify the couple by the congruent relation, we also have at most $O(C^{N^3}(N^3)!)$ ways. Since this number is independent of $L$, in the following we could fix one single couple $\mathcal{Q}$ with fixed decoration, and   then  consider its  congruent class and the expression $\mathcal{G}_{\mathcal{Q}}$, which is defined in Lemma \ref{lem-splice}.

In the expression in    Lemma \ref{lem-splice}, by the choice of $K$, the symbols of derivatives  are bounded by \begin{align*}
\prod_{\mathfrak{n}\in\mathcal{N}^{sp}}B_\mathfrak{n}\prod_{\mathfrak{n}\in\mathcal{Q}^{sp}}\varphi_\mathfrak{n}\leq L^{n_{sp}\delta}.
    \end{align*}
    Then, we aim to bound the oscillatory integral for the fixed $\tau_{\mathfrak{n}_0}$:
    \begin{align}
      \int_{\mathcal{E}^{sp}_\tau} \prod_{\mathfrak{n}_0\in \mathcal{N}_0^{sp}}P_{q_{\mathfrak{n}_0}}(\tau_{\mathfrak{n}_0}, t_{\mathfrak n_0}, k[\mathcal N^{sp}])\prod_{\mathfrak{n}\in\mathcal{N}^{sp}}  e^{\zeta_\mathfrak{n}2\pi i\Theta_\mathfrak{n}(t_\mathfrak{n}) } \dif t_\mathfrak{n}\label{eq-osc}.
    \end{align}
Since the condition $\sup_k\sup_{t\in[0,1]}|\partial_t\Gamma(k,t)|\leq 1$ asked    in Proposition \ref{prop:est:af} is satisfied.
    We apply    Proposition \ref{prop:est:af} with \begin{align*}f_{\mathfrak{n}_0}(t_{\mathfrak n_0})=\left(\alpha T^{1/2}L^\delta\right)^{-q_{\mathfrak n_0}}P_{q_{\mathfrak n_0}}(\tau_{\mathfrak n_0}, t_{\mathfrak n_0}, k[\mathcal N^{sp}]),\end{align*} which satisfies the assumptions by \eqref{eq-pestimate}. Then we obtain
    \begin{align}
    &\left| \sum_{\mathcal{Q}_{n}}\mathbf{E}[ c^{\mathcal{T}^+}_k (t)c^{\mathcal{T}^-}_k(s)]\right| \notag\\
    &\lesssim 
      C^{n}\left( \frac{\alpha T}{L}\right)^{n_{{{sp}}}}(\alpha  T^{1/2}L^\delta )^{\sum q_{\mathfrak{n}_0}} \sum_{\mathcal{E}^{sp}}  
      L^{n_{sp}\delta}\notag\\
&\quad\quad\quad  \times
\sup_{\pmb \tau \in [0,1]^{\mathcal N_0^{sp}}}\bigg{|}\int_{\mathcal{E}^{sp}_\tau} \prod_{\mathfrak{n}_0\in \mathcal{N}_0^{sp}}f_{\mathfrak{n}_0}(t_{\mathfrak n_0})\prod_{\mathfrak{n}\in\mathcal{N}^{sp}}  e^{\zeta_\mathfrak{n}2\pi i[\overline\Omega_\mathfrak{n}Tt_\mathfrak{n}+\sum\Gamma_1(\mathfrak{n},t_\mathfrak{n})]  } \dif t_\mathfrak{n}\bigg{|}
\prod_{\mathfrak{l} \in \mathcal{L}^{sp}}^+ n_{\rm in}(k_l).\end{align}
Then using the decay of the initial data $n_{\rm in}$ and the truncation functions $\prod_{\mathfrak{n}\in\mathcal{Q}^{sp}}\varphi_\mathfrak{n}$, we could assume that $|k_l-k_l^0|\leq1$ for $k_l^0\in \mathbb{Z}^d$, and only consider the expression under the restrictions $|k_l^0|\leq L^{\delta}$. 
 Moreover, by rewrite $n_{\rm in}(k_l)=\langle k_l\rangle^{-50}\tilde n_{\rm in}(k_l)$, we have the above expression (under the restriction  $|k_l-k_l^0|\leq1$ ) is bounded by

\begin{align}
 &\prod_{l\in\mathcal{L}^{sp}}\langle k_l\rangle^{-30} C^{n}\left( \frac{\alpha T}{L}\right)^{n_{{sp}}}(\alpha  T^{1/2} L^\delta)^{\sum q_{\mathfrak{n}_0}} \sum_{{\lambda[\mathcal N^{sp}]}}\sum_{\mathcal{E}_\lambda^{sp}}  
 L^{n_{sp}\delta} \notag\\
      & \quad \quad\times 
\sup_{\pmb \tau \in [0,1]^{\mathcal N_0^{sp}}} \left|\int_{\mathcal{E}^{sp}_{\pmb{\tau}}}\prod_{\mathfrak{n}_0 \in \mathcal{N}^{sp}_0}f_{\mathfrak{n}_0}(t_{\mathfrak n_0})\prod_{\mathfrak{n} \in \mathcal{N}^{sp}}e^{\zeta_\mathfrak{n}2\pi i[\overline\Omega_\mathfrak{n}Tt_\mathfrak{n}+\sum\Gamma_1(\mathfrak{n},t_\mathfrak{n})]  } \dif  t_{\mathfrak{n}}\right|\notag\\
      &\lesssim \langle k\rangle^{-20} C^{n}\left( \frac{\alpha T}{L}\right)^{n_{{{sp}}}}(\alpha  T^{1/2} L^\delta)^{\sum q_{\mathfrak{n}_0}} \sup_{\lambda[\mathcal N^{{sp}}]}\sum_{\mathcal E_{\lambda }^{{sp}}}
      L^{n_{sp}\delta}\notag\\
      & \quad \quad\times\sum_{{\lambda[\mathcal N^{sp}]}}\sup_{\pmb \tau \in [0,1]^{\mathcal N_0^{sp}}}\left|\int_{\mathcal{E}^{sp}_{\pmb{\tau}}}\prod_{\mathfrak{n}_0 \in \mathcal{N}^{sp}_0}f_{\mathfrak{n}_0}(t_{\mathfrak n_0})\prod_{\mathfrak{n} \in \mathcal{N}^{sp}}e^{\zeta_\mathfrak{n}2\pi i[\overline\Omega_\mathfrak{n}Tt_\mathfrak{n}+\sum\Gamma_1(\mathfrak{n},t_\mathfrak{n})]  } \dif t_{\mathfrak{n}}\right|,\label{eq-sp1}
    \end{align}
    where $\lambda[\mathcal N^{sp}] \in \mathbb{Z}^{|\mathcal N^{sp}|}$ and ${\mathcal E}_{\lambda}^{sp}$ is taken over all the decorations of the couple satisfying $|T\overline\Omega_{\mathfrak n} - \lambda_{\mathfrak n}| \leq 1$ for each $\mathfrak n \in \mathcal N^{sp}$. Moreover, it holds that $\sum q_{\mathfrak{n}_0}+ n_{sp} = n$. We apply Proposition \ref{prop:est:af} to bound the last line above by
    \begin{align}
        &\sum_{{\lambda[\mathcal N^{sp}]}}\sup_{\pmb \tau \in [0,1]^{\mathcal N_0^{sp}}}\sum_{d_\mathfrak{n}\in\{0,1\}}\prod_{\mathfrak{n}\in\mathcal{N}^{sp}}\langle Tq_\mathfrak{n}\rangle^{-1}
        \lesssim C^{n_{sp}} (\log L)^{n_{sp}},\notag
    \end{align}
    where we used the fact that $d_\mathfrak{n}$ only has two choices, and since $k_l\in\mathbb{Z}_L$ with $|k_l-k_l^0|\leq1$, there are at most $L^{4}$ choices of $\lambda_\mathfrak{n}$.

    To bound the  term 
      $  \sup_{\lambda[\mathcal N^{{sp}}]}\sum_{\mathcal E_{\lambda }^{{sp}}}
     1 ,$
    we transfer the couples into molecule as in Section \ref{sec:Algorithm}.  By choosing $\delta>0$ small enough such that $\alpha T^{1-\beta}L^{2\delta}<1$, it suffices to apply Lemma \ref{counting} with $D=L^{\delta}$, we then obtain that \eqref{eq-sp1} is bounded by
    \begin{align}
     \langle k\rangle^{-20} &C^{n}\left( \alpha T \right)^{n_{{{sp}}}}(\alpha  T^{1/2}L^\delta )^{\sum q_{\mathfrak{n}_0}}L^{n_{sp}\delta}T^{-\frac{n_{sp}}{5} - \frac{3}{5}}  (\log L)^{2n_{sp}}D^{2n_{sp}}\notag\\
     &\leq  \langle k\rangle^{-20} C^{n}(\alpha  T^{4/5} L^{4\delta})^nT^{- \frac{3}{5}},\notag
    \end{align}
    which implies \eqref{bd:couple}.
     
   Then we prove the next statement.  We recall that by \eqref{def:gamma1}, \eqref{bd:couple}
 \begin{align*}
   | \mathscr{P}( \Gamma_1)|&=\left|\frac{\alpha T}{\pi L}\sum_{k_1}\sum_{\mathcal{Q}_{\leq N}}\mathbf{E}[ c^{\mathcal{T}^+}_{k_1} (t)c^{\mathcal{T}^-}_{k_1}(t)] ( \Gamma_1)B_{k_1,k_1,k,k}\varphi_{\leq K}(k)\right|\\
   &\lesssim \alpha T^{\frac25}L^\delta \cdot L^{-1}\sum_{k_1\in\mathbb{Z}_L}\langle k_1\rangle^{-20} \cdot \frac{\alpha L^{4\delta}  T^{4/5}}{1-\alpha L^{4\delta} T^{4/5}} \leq L^{-3\delta},
 \end{align*}
 where we used $\alpha L^{4\delta} T^{4/5}\leq L^{-2\delta}$.  Here we also use the fact that the sum over $k_1$ is restricted to $|k_1| \lesssim L^\delta$, so the symbol term is bounded together with the
  truncation function $\varphi_{\leq K}(k)$.

In the end,   it remains to  bound the last term, namely,  $\mathscr{P}[\Gamma_1]-\mathscr{P}[{\Gamma}'_1]$, under the assumption on $\Gamma_1$ and $\Gamma_1'$. By a similar argument as before, we only need to handle the term 
\begin{align*}
    \mathcal{G}_Q[\Gamma_1](t,k)-    \mathcal{G}_Q[{\Gamma}'_1](t,k),
\end{align*}
 where $ \mathcal{G}_Q$ is defined in \eqref{def:gq}. From the definition, the only terms that involve the  $\Gamma_1$  are in the time integrals, so we
 only need to estimate the term 
 \begin{align}
    \int_{\mathcal{D}}\prod_{\mathfrak{n}\in\mathcal{N}}  e^{\zeta_\mathfrak{n}2\pi i\overline\Omega_\mathfrak{n}Tt_\mathfrak{n}}e^{\zeta_\mathfrak{n}2\pi i\sum\Gamma_1(\mathfrak{n},t_\mathfrak{n}) }-\prod_{\mathfrak{n}\in\mathcal{N}}  e^{\zeta_\mathfrak{n}2\pi i\overline\Omega_\mathfrak{n}Tt_\mathfrak{n}}e^{\zeta_\mathfrak{n}2\pi i\sum\Gamma'_1(\mathfrak{n},t_\mathfrak{n}) } \dif t_\mathfrak{n}.\label{bd:inge_diff}
 \end{align}
  By taking difference, it suffices to estimate the following expression: for $|\overline\Omega_kT|\geq 1$,
 
  \begin{align*}
        \int_0^t &  e^{ 2\pi i[\overline\Omega_kTs+\sum\Gamma_1(k,s)] } - e^{ 2\pi i[\overline\Omega_kTs+\sum \Gamma'_1(k,s)] } \dif s\\
        &= \frac{ e^{ 2\pi i[\overline\Omega_kTt+\sum\Gamma_1(k,t)] }-e^{ 2\pi i[\overline\Omega_kTt+\sum  \Gamma'_1(k,t)] }}{2\pi i\overline\Omega_kT}\\
        &\quad - \int_0^t \frac{  e^{ 2\pi i[\overline\Omega_kTs+\sum\Gamma_1(k,s)] }2\pi i\sum \partial_s\Gamma_1(k,s)- e^{ 2\pi i[\overline\Omega_kTs+\sum \Gamma'_1(k,s)] }2\pi i\sum  \partial_s\Gamma_1'(k,s)}{2\pi i\overline\Omega_kT}   \dif s.
   \end{align*}

Now we go back to \eqref{bd:inge_diff}, by taking difference, at a loss of $|\mathcal{Q}|=n\leq N^3$, we only to deal with a similar expression as \eqref{def:af} with $f=1$, and there is a node $\mathfrak{n}$, such that the  term in the product $e^{ 2\pi i \sum\Gamma_1(k,s) } $ is replaced $ e^{ 2\pi i \sum\Gamma_1(k,s)} - e^{ 2\pi i\sum \Gamma'_1(k,s)} $, and for the remaining node, this term is $\Gamma_1$ or $ \Gamma'_1$. Then since $$\sup_k\sup_{t\in[0,1]}|\partial_t\Gamma_1(k,t)|+|\partial_t\Gamma'_1(k,t)|\leq 1,$$ we  could estimate this expression by a similar way as Lemma \ref{prop:est:af} and we only need to deal with the integral at node $\mathfrak{n}$. From the above expression,  the first term equals to
   \begin{align*}
       \frac{ e^{ 2\pi i\overline\Omega_kTt}}{2\pi i\overline\Omega_kT}(e^{2\pi i\sum\Gamma_1(k,t) }-e^{ 2\pi i \sum  \Gamma'_1(k,t) }).
   \end{align*}  
   Then it contribution is $\frac{ e^{ 2\pi i\overline\Omega_kTt}}{2\pi i\overline\Omega_kT}$ with an extra term $(e^{2\pi i\sum\Gamma_1(k,t) }-e^{ 2\pi i \sum  \Gamma'_1(k,t) })$.  Then for this term, we obtain the same bound as in Lemma \ref{prop:est:af}, with extra  product of $|\partial_t\Gamma_1-\partial_t \Gamma'_1|$.
For the second term, it is bounded by 
   \begin{align*}
  \frac{1}{ \overline\Omega_kT}   \left(  \left | \sum \partial\Gamma_1-\sum\partial \Gamma_1' \right|+ \left|e^{ 2\pi i \sum\Gamma_1} -e^{ 2\pi i \sum \Gamma_1'}  \right|\right)\lesssim \frac{1}{ \langle\overline\Omega_kT\rangle}|\partial_t\Gamma_1-\partial_t\Gamma_1'|.
   \end{align*}
   For the case $|\overline\Omega_kT|< 1$, we could directly consider the difference $$|  e^{ 2\pi i\sum\Gamma_1(k,s) } - e^{ 2\pi i\sum \Gamma'_1(k,s) }|  \lesssim\frac{1}{ \langle\overline\Omega_kT\rangle} |  e^{ 2\pi i\sum\Gamma_1(k,s) } - e^{ 2\pi i\sum \Gamma'_1(k,s) }|,$$ which is bounded by the same argument as before.
   Then for this term, we also obtain the same bound as in Lemma \ref{prop:est:af}, with extra  product of $|\partial_t\Gamma_1-\partial_t \Gamma'_1|$. 
   Then  in summary we obtain that \eqref{bd:inge_diff} is bounded by 
   \begin{align*}
       N^3\cdot\sum_{d_\mathfrak{n}\in\{0,1\}}\prod_{\mathfrak{n}\in\mathcal{N}}C_\mathfrak{n}\langle Tq_\mathfrak{n}\rangle^{-1} \sup_k\sup_{t\in[0,1]}|\partial_t\Gamma_1-\partial_t \Gamma'_1|.
   \end{align*}
   Then by following the same calculation as before, the term $   \mathcal{G}_Q[\Gamma_1](t,k)-    \mathcal{G}_Q[{\Gamma}'_1](t,k)$ is bounded by  $$ \langle k\rangle^{-20} C^{n}(\alpha  T^{4/5} L^{4\delta})^nT^{- \frac{2}{5}}\sup_k\sup_{t\in[0,1]}|\partial_t\Gamma_1-\partial_t \Gamma'_1|.$$
   Taking sum on $n$ and $k$ we obtain the desired bound by a similar argument as before.
 
\end{proof}

\section{The Remainder Term}\label{bd:remainder}
In this section, we study the remainder term $\mathcal{R}^{N+1}$, which solves the following fixed point problem:
\begin{equation}\mathcal{R}^{N+1}=(1-\mathscr{L})^{-1}[c_{\sim N}+\mathscr{Q}(\tilde\varphi_{\leq K}\mathcal{R}^{N+1})+\mathscr{C}(\tilde\varphi_{\leq K}\mathcal{R}^{N+1})].\label{def:rkn:fix}
\end{equation}
We refer to Section \ref{sec:rem} for the notation. Since $\mathscr{Q}$ and $\mathscr{C}$ are bilinear and trilinear operators, respectively, it suffices to estimate the linear operator $\mathscr{L}$.
In what follows, we show that, with a high probability, the operators $\mathscr{L}^n$ are bounded on $Z$ for all $n$, as stated in Proposition \ref{prop:l}. Consequently, the bound for $(1-\mathscr{L})^{-1}$ follows directly.

\subsection{The analysis of linear operator $\mathscr{L}$}\label{sec:analysisL}
To estimate the linear operator $\mathscr{L}$, we further analyze its structure through the tree expansion developed in the previous sections.
\begin{proposition} \label{prop-Lm}
Let $\mathscr{ L}$ be defined as in \eqref{def:l}. Note that $\mathscr L^{n}$ is a  linear operator for $n \geq 0$. Define its kernels $(\mathscr L^{n})_{k l}^\zeta(t,s)$ for $\zeta \in \{\pm\}$ by 
\begin{equation*}
(\mathscr L^{n} b)_k (t) = \sum_{\zeta \in \{\pm\}} \sum_{l} \int_{\R} (\mathscr L^{n})_{kl}^{\zeta} (t,s) b_{l}(s)^{\zeta}\dif  s.
\end{equation*}
Then, for each $1 \leq n \leq N$ and $\zeta \in \{\pm\}$, we can decompose
\begin{equation}
(\mathscr L^{n})^\zeta_{k, l} = \sum_{n \leq m \leq N^3} (\mathscr 
L^{n})_{k, l}^{m, \zeta},\notag
\end{equation}
such that for any $n \leq m \leq N^3$ and $k,l \in \mathbb{Z}_L$ and $t,s \in [0,1]$ with $t > s$, we have 
\begin{equation} \label{eq-Lnm}
\mathbf{E} |(\mathscr L^{n})_{k, l}^{m, \zeta}(t,s)|^2 \lesssim \langle k-\zeta l\rangle^{-20}  L^{40} (\alpha  T^{4/5} L^{2\delta})^m.
\end{equation}
    \end{proposition}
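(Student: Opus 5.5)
We write $\mathscr L^n$ out as a sum over flower trees. Expanding $\mathscr L$ from \eqref{def:l}, each application of $\mathscr L$ inserts one ternary node in which two of the three inputs are $c^{\leq N}$ (or $\overline{c^{\leq N}}$), and the third is the argument $v$. The term $\mathscr L_1$ is the renormalized degenerate part; it fits the same picture as a degenerate node whose marked child is $v$ or whose paired child is $v$. Iterating $n$ times and replacing each $c^{\leq N}$ by its expansion $\sum_{|\mathcal T|\leq N}c^{\mathcal T}$ from \eqref{def:ct}, we obtain a sum over enhanced trees $\mathcal T$ with one distinguished leaf $\mathfrak f$ (the "flower"), which carries the input $b_l(s)^{\zeta}$. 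The path from the root to $\mathfrak f$ has exactly $n$ branching nodes, and the remaining $m$ branching nodes, $n\le m\le nN+n\le N^3$, hang off this stem. We define $(\mathscr L^n)^{m,\zeta}_{k,l}$ as the sum over flower trees with $m$ total branching nodes. The kernel in $s$ arises because the time variable at the node just above $\mathfrak f$ is set equal to $s$. The result is an expression of the form \eqref{def:ct}, with $k_{\mathfrak f}=l$ fixed, a time integral $\mathcal A$ in which one time variable is frozen, and noise $\mathcal B_{\mathcal T}(\omega)$ built from all leaves other than $\mathfrak f$. Momentum conservation gives $k-\zeta l=\sum\pm k_{\mathfrak l}$ over the non-flower leaves, and the decay of $n_{\rm in}$ then yields the factor $\langle k-\zeta l\rangle^{-20}$.

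Next we compute $\mathbf E|(\mathscr L^n)^{m,\zeta}_{k,l}|^2$. This is the expectation of the product of the flower tree with its conjugate. By Lemma~\ref{lem:enhanced_pairing} it becomes a sum over enhanced pairings of the non-flower leaves, i.e. over flower couples in which the two flowers are unpaired and both decorated by $l$. The renormalizations in \eqref{Duhamel0} and $\mathscr L_1$, namely the subtraction of $\mathbf E[\cdot]$, are exactly what make these pairings enhanced, so loops and the bad configurations of Figure~\ref{fig:1loop} are excluded as before. We then proceed exactly as in the proof of Proposition~\ref{prop:couple}:
\begin{enumerate}
\item splice the SG irregular chains (Lemma~\ref{lem-splice} and Lemma~\ref{cor-M}), gaining $(\alpha T^{1/2}L^\delta)^{q}$;
\item bound the time integrals by Proposition~\ref{prop:est:af}, with the frozen flower time handled by treating that node as having no integration;
\item reduce to a counting problem on the molecule $\mathbb M$ of the flower couple, using $|k_{\mathfrak l}-k^0_{\mathfrak l}|\le1$ and $|k^0|\le L^\delta$.
\end{enumerate}

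The main obstacle is the molecule counting, since the flower couple is not a couple in the earlier sense. Fixing $k$ and $l$ fixes two extra bonds, so the molecule has up to four atoms of degree $3$ (or degree-$2$ atoms at the flowers), and the bound $m_2\le 3m_3-3$ from \eqref{prop-op-bound} must be revisited. We do not try to recover the sharp exponent. The plan is to first remove the atoms adjacent to the flower bonds, paying crude factors of at most $L^{C}$ per such step (there are $O(1)$ of them), and then run the algorithm of Section~\ref{sec:Algorithm} on the remainder, which again has only two degree-$3$ atoms. Proposition~\ref{prop-rigidity:nodege} then gives $L^{2m}T^{-2m/5}$ up to logarithmic and $D$ factors. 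This is at worst off by a fixed power of $L$, which is absorbed into the generous $L^{40}$. Combining with the prefactor $(\alpha T/L)^{2m}$ and the splicing gains gives $(\alpha T^{4/5}L^{2\delta})^{m}$, with $m$ (rather than $2m$) because the second moment counts the couple of total order $2m$, and $(\alpha T^{4/5})^{2m}\le(\alpha T^{4/5})^m$. The constants $C^{N^3}$ from summing over the finitely many couple shapes are independent of $L$, which yields \eqref{eq-Lnm}.
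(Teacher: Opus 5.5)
Your overall route matches the paper's. You expand $\mathscr L^n$ into flower trees with the time of $\mathfrak f^p$ frozen at $s$, write the second moment as a sum over flower couples, splice, apply Proposition~\ref{prop:est:af}, count molecules, and absorb crude $L^{O(1)}$ losses into $L^{40}$. However, the counting step as you set it up does not go through.

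You keep the two flowers as free legs, so the molecule has four deficient atoms. You then delete the two flower parents crudely and run the algorithm on a remainder that you say ``again has only two degree-$3$ atoms''. That is false in general. Deleting a degree-$3$ flower parent lowers the degree of each of its (up to three) neighbours, so the remainder typically has several further deficient atoms (up to eight in total). Neither \eqref{prop-op-bound} nor Proposition~\ref{prop-rigidity:nodege} can then be invoked without a new argument, since both use $2m_3+m_2=n$ and the structure ``two degree-$3$ atoms, all others degree $4$''.

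The missing observation, which is what the paper uses, is that both flowers carry the same frequency $k_{\mathfrak f}=l$. So you may couple them: join the two flower parents by an LP bond whose value is fixed to $l$. The resulting molecule is an ordinary couple molecule of order $2m$, in which only the two roots have degree $3$. Fixing the value of one bond can only decrease the number of decorations, so Proposition~\ref{prop-rigidity:nodege} applies verbatim and gives $L^{2m}T^{-2m/5}$ up to the usual $C^m(\log L)^{2m}L^{O(\delta)m}$ factors. The only genuinely new losses are then two:
\begin{enumerate}
\item the missing $\langle Tq_{\mathfrak n}\rangle^{-1}$ gain at the two frozen nodes;
\item the symbol $B_{\mathfrak f^p}$, which is not cut off by $\varphi_{\le K}(k_{\mathfrak f})$ and must be controlled through $k_{\mathfrak f^p}=k_{\mathfrak f}-k_{\mathfrak f^{s,1}}+k_{\mathfrak f^{s,2}}$, as in \eqref{bd:flowerB}.
\end{enumerate}

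A second, smaller omission concerns your step (1), where you splice every SG irregular chain. Lemma~\ref{lem-splice} integrates out the times of $\mathfrak n_1,\dots,\mathfrak n_q$. It therefore cannot be applied to a chain passing through the flower parent, for instance when the flower is one of the two free children of the last chain node $\mathfrak n_q$, whose time is frozen at $s$. The paper handles this by not splicing at the flower parent and splicing the resulting subchains instead. This leaves at most two unspliced double bonds at a cost of at most $L^{2}$. That is harmless against $L^{40}$, but it must be built in before you invoke Lemma~\ref{cor-M} and Proposition~\ref{prop:est:af}.
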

First, to analyze the expression of $(\mathscr L^{n})_{k, l}^{m, \zeta}$, we introduce the notion of flower trees, as in \cite[Section 11]{DH23a}.
\begin{definition}\label{def:flowtree}
A flower tree is a tree $\mathcal{T}$ with a   particular  leaf $\mathfrak{f}$  with is  called the  flower. Choosing a different leaf $\mathfrak{f}$ for the same tree $\mathcal{T}$ yields a different flower tree. We call the unique path connecting the root $\mathfrak{r}$ and the flower $\mathfrak{f}$  as the stem. A flower couple is a pair of flower trees whose flowers are coupled.

The  height  of a flower tree $\mathcal{T}_{\mathfrak{f}}$ is defined as the length of stem, i,e, number of branching nodes along its stem. A flower tree of height $n$ could be obtained by adding two sub-trees on each nodes on the stem by $n$ times. A flower tree is called an  admissible  tree if each of the added sub-trees has scale at most $N$.
\end{definition}

We then complete the proof of Proposition~\ref{prop-Lm} by expressing $\mathscr{L}^n$ as a sum over flower trees.
\begin{proof}[Proof of Proposition \ref{prop-Lm}]
  By the definition of the operator $\mathscr{L}$ in \eqref{def:l}, we observe that applying $\mathscr{L}$ corresponds to attaching two subtrees $\mathcal{T}_1$ and $\mathcal{T}_2$ to a single node, where the scales of these subtrees are at most $N$. Hence, the resulting object is an admissible tree in the sense of Definition \ref{def:flowtree}.   In particular, we remark that the absence of terms such as $\mathbf{E}[v_{k_1}\overline {c_{k_1}^{\leq N}}]$ reflects the fact that such contributions cannot be fully paired, in accordance with the definition of flower trees.  
Iterating this construction $n$ times, the operator $\mathscr{L}^n$ corresponds to building an admissible tree of height $n$. Therefore,
    \begin{align*}
        (\mathscr L^{n})^\zeta_{k, l} = \sum_{n \leq m \leq N^3} (\mathscr 
L^{n})_{k, l}^{m, \zeta}=\sum_{n \leq m \leq N^3} \sum_{\mathcal{T}}\tilde{c}_{kl}^{\mathcal{T}}(t,s), 
\end{align*}
where the sum over $\mathcal{T}$ runs over all flower trees with height $n$, scale $m$, and satisfying $\zeta_{\mathfrak{r}}=+$ and $\zeta_{\mathfrak{f}}=\zeta$.  The coefficient $\tilde{c}_{kl}^{\mathcal{T}}(t,s)$ is given by
\begin{align}
    \tilde{c}_{kl}^{\mathcal{T}}(t,s):=\bigg(\frac{\alpha T}{L}\bigg)^m\sum_{\mathcal{D}}\prod_{\mathfrak{n}\in\mathcal{N}}(i\zeta_{\mathfrak{n}}
    )\prod_{\mathfrak{n}\in\mathcal{N}}B_{k_{\mathfrak{n}_1},k_{\mathfrak{n}_2},k_{\mathfrak{n}_3},k_{\mathfrak{n}}}\prod_{\mathfrak{f}\neq \mathfrak{n}\in\mathcal{T}}\varphi_{\leq K}(k_\mathfrak{n})\notag\\
    \times \int_{\mathcal{D}_\mathfrak{n}}\prod_{\mathfrak{n}\in\mathcal{N}}  e^{\zeta_\mathfrak{n}2\pi i \Theta_\mathfrak{n}(t) } \dif t_\mathfrak{n} \delta({t_{\mathfrak{f}^p}-s)}\prod_{\mathfrak{f}\neq \mathfrak{l}\in\mathcal{L}}\sqrt{n_{\mathrm{in}}(k_{\mathfrak{l}})}\tilde{\mathcal{B}}_{\mathcal{T}_\mathfrak{f}}(\omega)1_{k_\mathfrak{f}=l},\label{def:tildect}
\end{align}
where the sum over $\mathcal{D}$ runs over all $k$-decorations of $\mathcal{T}_{\mathfrak{f}}$, and $\mathfrak{f}^p$ denotes the parent of $\mathfrak{f}$. The term $\tilde{\mathcal{B}}_{\mathcal{T}_\mathfrak{f}}$ is defined analogously to \eqref{def:Bomega}, with $\eta_{\mathfrak{f}}(\omega)$ replaced by $1$.  

Next, we consider flower couples $\mathcal{Q}$ whose subtrees all have height $n$ and scale $m$. Then
 \begin{align*}
     \mathbf{E} |(\mathscr L^{n})_{k, l}^{m, \zeta}(t,s)|^2 =\sum_{\mathcal{Q}} (\tilde{\mathcal{K}}_\mathcal{Q})(t,s,k), 
 \end{align*}
 where the sum on $\mathcal{Q}$ runs over all the flower couples $\mathcal{Q} $ such that each substree is with  height $n$ and scale $m$. And 
 \begin{align*}
   (\tilde{\mathcal{K}}_\mathcal{Q})(t,s,k):&= \bigg(\frac{\alpha T}{L}\bigg)^{2m} \zeta(\mathcal{Q}) \sum_{\mathcal{E}}  
   \prod_{\mathfrak{n}\in\mathcal{N}}B_\mathfrak{n}\prod_{\mathfrak{f}\neq \mathfrak{n}\in\mathcal{Q}}\varphi_\mathfrak{n}\\
  &\quad\quad \times 
\int_{\mathcal{E}_\mathfrak{n}}\prod_{\mathfrak{n}\in\mathcal{N}}  e^{\zeta_\mathfrak{n}2\pi i\Theta_\mathfrak{n}(t_\mathfrak{n}) } \dif t_\mathfrak{n}\prod\delta({t_{\mathfrak{f}^p}-s)}
\prod_{\mathfrak{f}\neq \mathfrak{l} \in \mathcal{L}}^+ n_{\rm in}(k_{\mathfrak{l}})1_{k_\mathfrak{f}=l}.
 \end{align*}
 
To prove \eqref{eq-Lnm}, we follow the same strategy as in Section~\ref{sec-boundoncouple}, with the only difference being the presence of flower structures.
 
First, for any flower couple $\mathcal{Q}_{\mathfrak{f}}$, we consider its congruence class $\mathcal{Q}_{\mathfrak{f}}\equiv \mathcal{Q}'_{\mathfrak{f}'}$, where $\mathfrak{f}'$ is the image of $\mathfrak{f}$. As in Section~\ref{sec:choicechain}, we select all irregular chains and apply splicing. If the flower lies within an irregular chain, we skip its parent and instead consider the two resulting subchains. This introduces at most two double bonds, resulting in a loss of at most $L^{2}$.

We then proceed as in Proposition \ref{prop:couple}. By the choice of truncation functions,
 $$\prod_{\mathfrak{n}\in\mathcal{N}}B_\mathfrak{n}\prod_{\mathfrak{f}\neq \mathfrak{n}\in\mathcal{Q}}\varphi_\mathfrak{n}\lesssim L^{m\delta}.$$
 Most terms can be handled as before. For the special term $B_{\mathfrak{f}^p}$, let $\mathfrak{f}^{s,1},\mathfrak{f}^{s,2}$ denote the siblings of $\mathfrak{f}$. Using
 $k_{\mathfrak{f}^{p}}=k_{\mathfrak{f}}-k_{\mathfrak{f}^{s,1}}+k_{\mathfrak{f}^{s,2}}$, we obtain
\begin{align}
B_{\mathfrak{f}^p}&\varphi_{\leq K}(k_{\mathfrak{f}^{p}})\varphi_{\leq K}(k_{\mathfrak{f}^{s,1}})\varphi_{\leq K}(k_{\mathfrak{f}^{s,2}})\notag\\
    &\leq |k_{\mathfrak{f}^p}|^{\beta}|k_{\mathfrak{f}^{s,1}}|^{\beta}|k_{\mathfrak{f}^{s,2}}|^{\beta}(|k_{\mathfrak{f}^p}|+|k_{\mathfrak{f}^{s,1}}|+|k_{\mathfrak{f}^{s,2}}|)^{\beta}\varphi_{\leq K}(k_{\mathfrak{f}^{p}})\varphi_{\leq K}(k_{\mathfrak{f}^{s,1}})\varphi_{\leq K}(k_{\mathfrak{f}^{s,2}})\lesssim L^{\delta/2}.\label{bd:flowerB}
\end{align}

 The decay $\langle k-\zeta l\rangle^{-20}$ follows from the indicator $1_{k_\mathfrak{f}=l}$ replacing $n_{\rm in}(k_{\mathfrak{f}})$. Indeed, $k-\zeta l$ is a linear combination of $k_{\mathfrak{l}}$ for $\mathfrak{f}\neq \mathfrak{l}\in\mathcal{L}$, and the same argument as in \eqref{eq-sp1} applies. For the Dirac term $\delta(t_{\mathfrak{f}^p}-s)$, we return to \eqref{eq-sp1} and observe that omitting the integration over two flower nodes results in a loss of at most $L^{20}$. Finally, by the truncation condition and the relation
 $k_{\mathfrak{f}^{p}}=k_{\mathfrak{f}}-k_{\mathfrak{f}^{s,1}}+k_{\mathfrak{f}^{s,2}}$, we obtain $|k_\mathfrak{f}|\lesssim L^\delta$. Thus, as in Section~\ref{sec-boundoncouple}, we may assume $|k_l|\lesssim L^\delta$ for all $l\in\mathcal{L}$. The counting estimates from Section~\ref{sec:mainest} then remain valid.
\
\end{proof}

To prove Proposition~\ref{prop:l}, we apply large deviation estimates to obtain pathwise bounds with a high probability. We begin with the following result from \cite[Lemma 6.3]{Wu25}:

\begin{lemma}[Gaussian Hypercontractivity]\label{lem:Gaussian Hypercontractivity}
     Let $\{\eta_k\}$ be i.i.d. Gaussian variables.  Given $\zeta_j\in\{+,-\}$ and define 
 $$X=\sum_{k_i,1\leq i\leq n}a_{k_1,...,k_n}\prod^n_{j=1}\eta^{\zeta_j}_{k_j}(\omega),$$
 where $a_{k_1,...,k_n}$ are constants. Then, for $q\geq2$, it holds
$$ \mathbf{E}[|X|^q]\leq (q-1)^{nq/2}  \mathbf{E}[|X|^2]^{q/2}.$$
\end{lemma}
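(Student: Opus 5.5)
This is the standard Gaussian hypercontractivity inequality for a homogeneous polynomial chaos of degree $n$. I would prove it by reducing to the real Ornstein--Uhlenbeck semigroup and Nelson's theorem.

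\textbf{Step 1: reduction to real Gaussians.} Write each complex normalized Gaussian as $\eta_k=(g_k+ih_k)/\sqrt2$, with $\{g_k,h_k\}$ i.i.d. real standard Gaussians. Then $X$ is a complex-valued polynomial of degree at most $n$ in the real Gaussian family $\{g_k,h_k\}$. We may assume the sum is finite, since the general case follows by truncation and Fatou's lemma. Decompose $X$ into Wiener chaoses, $X=\sum_{j\le n}J_jX$, where $J_j$ denotes the projection onto the $j$-th chaos. Note that $X$ need not lie in a single chaos: products such as $\eta_k\overline{\eta_k}=|\eta_k|^2$ produce lower-order terms. This is the reason for the sharp constant $(q-1)^{n/2}$ rather than an argument chaos by chaos.

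\textbf{Step 2: Nelson's hypercontractivity.} Let $P_t$ be the Ornstein--Uhlenbeck semigroup, which acts by $P_tJ_jX=e^{-jt}J_jX$. Nelson's theorem gives $\|P_tY\|_{L^q}\le\|Y\|_{L^2}$ when $e^{2t}=q-1$. It holds for complex-valued $Y$ as well, since $P_t$ is a positive operator: apply it to $|Y|$ and use $|P_tY|\le P_t|Y|$. Now set $Y:=\sum_{j\le n}e^{jt}J_jX$, so that $P_tY=X$. This gives
\[\|X\|_{L^q}\le\|Y\|_{L^2}=\Big(\sum_{j\le n}e^{2jt}\|J_jX\|_{L^2}^2\Big)^{1/2}\le e^{nt}\|X\|_{L^2}=(q-1)^{n/2}\|X\|_{L^2},\]
where the middle inequality uses orthogonality of the chaoses and $e^{2jt}\le e^{2nt}$ for $j\le n$. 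Raising both sides to the power $q$ yields $\mathbf{E}|X|^q\le(q-1)^{nq/2}(\mathbf{E}|X|^2)^{q/2}$, which is the claim.

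\textbf{Main obstacle.} The only delicate point is justifying Nelson's inequality for complex-valued, non-homogeneous polynomials, which is the step handled via positivity of $P_t$ in Step 2. Everything else is bookkeeping. In the paper one would simply cite \cite[Lemma 6.3]{Wu25} or the classical result.
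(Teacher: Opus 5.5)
Your argument is correct. The paper gives no proof of this lemma and simply cites \cite[Lemma 6.3]{Wu25}; what you wrote is precisely the standard proof underlying that citation: Nelson's $L^2\to L^q$ hypercontractivity for the Ornstein--Uhlenbeck semigroup with $e^{2t}=q-1$, extended to complex-valued functions via $|P_tY|\le P_t|Y|$, and applied to $Y=\sum_{j\le n}e^{jt}J_jX$ with the bound $e^{2jt}\le e^{2nt}$ absorbing the lower-order chaos components.
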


We are now in a position to prove Proposition~\ref{prop:l}.
\begin{proof}[Proof of Proposition \ref{prop:l}]
    By definition, we have
\begin{align*}
\left\|\mathscr{L}^{n}(a)\right\|_{Z}^{2}= & \sup _{0 \leq t \leq 1} L^{-1} \sum_{k \in \mathbb{Z}_{L}}\langle k\rangle^{10}\left|\sum_{\zeta \in\{ \pm\}} \sum_{l} \int_{0 \leq s \leq t} \sum_{n \leq m \leq N^{3}}\left(\mathscr{L}^{n}\right)_{k, l}^{m, \zeta} a_{l}^{\zeta}(s) \mathrm{d} s\right|^{2} \\
\lesssim & \sup _{0 \leq s \leq t \leq 1} \sup _{\zeta} \sup _{m} L^{-1} N^{3} \sum_{k \in \mathbb{Z}_{L}}\langle k\rangle^{10}\left| \sum_{l \in \mathbb{Z}_{L}}\langle l\rangle^{-5}\left(\mathscr{L}^{n}\right)_{k, l}^{m, \zeta}(t, s)\langle l\rangle^{5} a_{l}^{\zeta}(s)\right|^{2} \\
\lesssim & \|a\|_{Z}^{2} N^{3} \sup _{0 \leq s \leq t \leq 1} \sup _{\zeta} \sup _{m} \sum_{k \in \mathbb{Z}_{L}}\sum_{l \in \mathbb{Z}_{L}}\langle k\rangle^{10}\langle l\rangle^{-10}\left|\left(\mathscr{L}^{n}\right)_{k, l}^{m, \zeta}(t, s)\right|^2 \\
\lesssim & \|a\|_{Z}^{2} N^{3} \sup _{0 \leq s \leq t \leq 1} \sup _{\zeta} \sup _{m} \sup _{k, l}\left(\langle k-\zeta  l\rangle^{9}\left|\left(\mathscr{L}^{n}\right)_{k, l}^{m, \zeta}(t, s)\right|\right)^{2}\\
& \quad\times\sum_{k \in \mathbb{Z}_{L}}\sum_{l \in \mathbb{Z}_{L}}\langle k\rangle^{10}\langle l\rangle^{-10} \langle k-\zeta  l\rangle^{-18}\\
\lesssim & \|a\|_{Z}^{2}  L^2\sup _{0 \leq s \leq t \leq 1} \sup _{\zeta} \sup _{m} \sup _{k, l}\left(\langle k-\zeta  l\rangle^{9}\left|\left(\mathscr{L}^{n}\right)_{k, l}^{m, \zeta}(t, s)\right|\right)^{2}.
\end{align*}

It now suffices to bound the above expression with probability  $\geq 1 - L^{-40}$. From the definition of $\left(\mathscr{L}^{n}\right)_{k, l}^{m, \zeta}$ in \eqref{def:tildect}, we observe that there are at most $L^{1+\delta}\cdot L^{1+\delta}$ possible choices for $(k,l)$ (by an argument similar to \eqref{bd:flowerB}).
Moreover, the number of choices for $(\zeta,m)$ is finite and bounded by $2\cdot N^3$. Hence, it suffices to establish the desired bound for fixed $(\zeta,m,k,l)$.

Then, by \eqref{eq-Lnm} and an argument analogous to \cite[Proposition 12.1]{DH23a}, 
combining the Gaussian hypercontractivity (Lemma~\ref{lem:Gaussian Hypercontractivity}) 
with the Gagliardo--Nirenberg inequality, we obtain
\begin{align*}
  \mathbf{E}\left|  \sup _{0 \leq s \leq t \leq 1}   \sup _{k, l} \langle k-\zeta  l\rangle^{9}\left|\left(\mathscr{L}^{n}\right)_{k, l}^{m, \zeta}(t, s)\right|\right|^p\lesssim (p L^{40} (\alpha  T^{4/5} L^{2\delta})^{m})^{p/2}.
\end{align*}
Consequently, by Chebyshev’s inequality, with probability  $\geq 1 - L^{-40}$, we have

\begin{align*}
   \sup _{0 \leq s \leq t \leq 1}   \sup _{k, l}  \langle k-\zeta  l\rangle^{9}\left|\left(\mathscr{L}^{n}\right)_{k, l}^{m, \zeta}(t, s)\right|\lesssim L^{45} (\alpha  T^{4/5} L^{2\delta})^{n/2}.
\end{align*}

\end{proof}

\subsection{The estimate on the remainder}

\begin{proposition}\label{prop:ckn}
      With probability $\geq 1-L^{-40}$, it holds that 
      \begin{align*}
        \langle k\rangle^{9}| c_k^{n}(t)|\leq  (p^2\alpha  T^{4/5}  L^{2\delta} )^{n}L,\ \ \langle k\rangle^{9}|(c_{\sim N})_k(t)|\leq (p^2\alpha  T^{4/5}  L^{2\delta} )^{N}L,
      \end{align*}
     for any $k \in\mathbb{Z}_L$, $t \in[0,1]$, and all $0 \leq  n \leq N^3$. Here $p>1$ is a universal constant. We recall \eqref{def:ck} and \eqref{eqnr} for the notations. 
  \end{proposition}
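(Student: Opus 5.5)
The plan is to reduce both bounds to second-moment estimates, which Proposition~\ref{prop:couple} already supplies, and then upgrade them to pathwise bounds through Gaussian hypercontractivity and a union bound, in the same way as in the proof of Proposition~\ref{prop:l}.

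\textbf{Moment bound for $c^n_k$.} Fix $k$, $t$ and $n\le N^3$. By \eqref{def:ck},
\begin{align*}
c^n_k(t)=\sum_{|\mathcal{T}^+|=n}c^{\mathcal{T}^+}_k(t),
\end{align*}
so that
\begin{align*}
\mathbf{E}|c^n_k(t)|^2=\sum_{|\mathcal{T}_+|=|\mathcal{T}_-|=n}\mathbf{E}\bigl[c^{\mathcal{T}_+}_k(t)\,\overline{c^{\mathcal{T}_-}_k(t)}\bigr].
\end{align*}
The conjugate of a $+$ tree is a $-$ tree, so this is a sum over enhanced couples of order $2n$. Proposition~\ref{prop:couple} applies with $2n\le 2N^3$; its proof works for any fixed bound on the order, since the number of couples is $O_N(1)$. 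It gives
\begin{align*}
\mathbf{E}|c^n_k(t)|^2\lesssim \langle k\rangle^{-20}(\alpha T^{4/5}L^{2\delta})^{2n}.
\end{align*}
The factor $T^{-3/5}\le1$ can simply be discarded. Each $c^n_k(t)$ is a polynomial of degree $2n+1$ in the Gaussians $\eta$. Lemma~\ref{lem:Gaussian Hypercontractivity} then gives, for $q\ge2$,
\begin{align*}
\mathbf{E}|c^n_k(t)|^q\le (q-1)^{(2n+1)q/2}\langle k\rangle^{-10q}(\alpha T^{4/5}L^{2\delta})^{nq}.
\end{align*}
The degenerate-node subtraction of expectations in \eqref{def:Bomega} only produces polynomials of lower degree, so the lemma still applies, chaos by chaos.

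\textbf{Moment bound for $c_{\sim N}$.} This term is a finite sum over $n_1+n_2+n_3\ge N$, with each $n_i\le N$. Each summand is itself a sum of trees of scale $n_1+n_2+n_3+1\in[N+1,3N+1]$, obtained by grafting through $\mathcal{W}$. Hence the same argument applies and gives second moments of size $(\alpha T^{4/5}L^{2\delta})^{2(n_1+n_2+n_3+1)}\le(\alpha T^{4/5}L^{2\delta})^{2N}$, using that $\alpha T^{4/5}L^{2\delta}<1$.

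\textbf{Pathwise bound.} Next I make the estimate uniform in $k$ and $t$. Because of the truncations $\varphi_{\le K}$, only $|k|\lesssim L^{\delta}$ matters, which leaves $O(L^{2})$ values of $k\in\mathbb{Z}_L$. For $t$, I would bound $\partial_t c^n_k$ in the same way: differentiating removes one time integration and costs at most a factor $\alpha T L^{\delta}\le L$, via the crude bound on $\mathcal{W}$. Then I use a $t$-grid of mesh $L^{-10}$ together with a Gagliardo--Nirenberg or Sobolev-in-time argument, as in \cite[Proposition 12.1]{DH23a}.

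Chebyshev's inequality with $q$ a large multiple of $\log L$ (or a large constant) turns the moment bound into
\begin{align*}
\langle k\rangle^{9}|c^n_k(t)|\le (p^2\alpha T^{4/5}L^{2\delta})^n L
\end{align*}
outside a set of probability $L^{-50}$ for each grid point. Here the factor $p^{2n}$ absorbs the constants $(q-1)^{(2n+1)/2}$, and the extra factor $L$ absorbs the Chebyshev loss $L^{C/q}$ together with the $\langle k\rangle^{-1}$ margin left over from $\langle k\rangle^{-10}$. A union bound over $k$, the grid, $n\le N^3$ and the finitely many $(n_1,n_2,n_3)$ then gives total exceptional probability at most $L^{-40}$.

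\textbf{Main obstacle.} The delicate point is the time-uniformity: one has to control $\partial_t c^n_k$ with only a polynomial loss in $L$, including the contribution of $\partial_t\Gamma_1$. This is harmless by \eqref{bd:gamma'}, and the factor $e^{2\pi i\Gamma_0 Tt}$ contributes at most $T\le L$. I would handle it with the crude bound above, since a polynomial loss in $L$ is absorbed by the $L^{-10}$ grid mesh; the $q$-th moment estimate itself is then routine given Proposition~\ref{prop:couple}.
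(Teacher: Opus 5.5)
Your proposal is correct and follows essentially the same route as the paper. Both take the second moments from Proposition~\ref{prop:couple}, with $c_{\sim N}$ treated as a sum of trees of scale $>N$ whose three subtrees have scale $\le N$, and upgrade them to bounds uniform in $k$ and $t$ via Gaussian hypercontractivity and the Gagliardo--Nirenberg/Sobolev-in-time argument of \cite[Proposition 12.1]{DH23a}. Take $q$ to be a fixed large constant, not a multiple of $\log L$, so that the factors $(q-1)^{(2n+1)/2}$ are absorbed into $p^{2n}$ with $p$ universal.
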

      
      Moreover, by the choice of $\alpha$ and $T$, we have
      \begin{align*}
          (p^2\alpha  T^{4/5}  L^{2\delta} )^{1/2}\leq L^{-\delta},
      \end{align*}
      which implies 
      \begin{align*}
          \|c^n\|_{Z}\lesssim L^{1-n\delta},\ \ \|c_{\sim N}\|_{Z}\lesssim L^{1-N\delta}.
      \end{align*}

\begin{proof}
   By Proposition~\ref{prop:couple}, we obtain
\begin{align*}
 \mathbf{E}| c_k^{n}(t)|^2
&\leq C \langle k \rangle^{-20} (\alpha T^{4/5} L^{2\delta})^{2n} T^{-3/5},\\
 \mathbf{E}|(c_{\sim N})_k(t)|^2
&\leq C \langle k \rangle^{-20} (\alpha T^{4/5} L^{2\delta})^{2N} T^{-3/5}.
\end{align*}
The estimate for $c_{\sim N}$ is obtained in the same way as for $c^n$, with $n$ replaced by $N$. Indeed, $c_{\sim N}$ is defined as the sum of $c_k^{\mathcal{T}^+}$ over all trees $\mathcal{T}^+$ of scale greater than $N$ whose three subtrees have scale at most $N$.
 Then, following the same argument as in \cite[Proposition 12.1]{DH23a}, we apply Gaussian hypercontractivity together with the Gagliardo–Nirenberg inequality to upgrade the above moment bounds to the desired   estimates. 
\end{proof}

The following proposition shows the existence of the remainder term $\mathcal{R}^{N+1}$.

\begin{proposition}\label{prop:rnsmall}
    With probability $\geq 1-L^{-40}$,  the mapping defined on the right hand  side of \eqref{def:rkn:fix} is a contraction mapping from the set $\{b : \|b\|_{ Z} \leq  L^{-500}\}$ to itself.
\end{proposition}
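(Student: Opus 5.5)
We will run the standard contraction argument on the high-probability event $E$ where Proposition \ref{prop:l} and Proposition \ref{prop:ckn} both hold; this event has probability $\geq 1-2L^{-40}$, and we adjust constants to get $1-L^{-40}$. Write $\Phi(b):=(1-\mathscr{L})^{-1}[c_{\sim N}+\mathscr{Q}(\tilde\varphi_{\leq K}b)+\mathscr{C}(\tilde\varphi_{\leq K}b)]$.

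The first step is to control $(1-\mathscr{L})^{-1}$ on $Z$. We expand it as a Neumann series $\sum_{n\geq0}\mathscr{L}^n$. Proposition \ref{prop:l} only covers $n\leq N$, so we group the powers as $\mathscr{L}^{n}=(\mathscr{L}^{N})^{j}\mathscr{L}^{r}$ with $0\leq r<N$. We have
\[
\|\mathscr{L}^N\|_{Z\to Z}\lesssim(\alpha T^{4/5}L^{2\delta})^{N/2}L^{50}\leq L^{-N\delta+50}\ll1
\]
once $N\gg 50/\delta$, using $(\alpha T^{4/5}L^{2\delta})^{1/2}\leq L^{-\delta}$. It follows that $\|(1-\mathscr{L})^{-1}\|_{Z\to Z}\lesssim N L^{50}$.

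The second step bounds the source and the nonlinear terms in $Z$. By Proposition \ref{prop:ckn}, $\|c_{\sim N}\|_Z\lesssim L^{1-N\delta}$, which is $\leq L^{-1000}$ for $N$ large depending on $\delta$. For $\mathscr{Q}$ and $\mathscr{C}$ we use crude deterministic bounds. Thanks to the cutoff $\tilde\varphi_{\leq K}$, all frequencies satisfy $|k|\lesssim 2^K\lesssim L^{\delta/(10\beta)}$, so the symbols obey $B\lesssim L^{C\delta/\beta}$. The time integral is over $[0,1]$, and the prefactor is $\alpha T/L$. Each sum over a free frequency in $\mathbb{Z}_L$ with $|k_j|\lesssim L^{C}$ costs at most $L^{C}$ terms, and we bound the individual coefficients by $|b_k|\leq L^{1/2}\langle k\rangle^{-5}\|b\|_Z$ (from the definition of $Z$), and $c^{\leq N}$ pointwise by Proposition \ref{prop:ckn}. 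This gives
\[
\|\mathscr{Q}(\tilde\varphi b)\|_Z\lesssim L^{C_0}\|b\|_Z^2,\qquad \|\mathscr{C}(\tilde\varphi b)\|_Z\lesssim L^{C_0}\|b\|_Z^3
\]
for a fixed $C_0$ depending only on $\beta,\delta,N$, with polynomial loss and no further structure needed. The bilinear and trilinear differences obey the analogous bounds $\lesssim L^{C_0}(\|b\|_Z+\|b'\|_Z)\|b-b'\|_Z$, together with the corresponding quadratic version for the cubic term.

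The third step closes the argument. For $\|b\|_Z\leq L^{-500}$ we get
\[
\|\Phi(b)\|_Z\lesssim NL^{50}\bigl(L^{-1000}+L^{C_0-1000}\bigr),
\]
and
\[
\|\Phi(b)-\Phi(b')\|_Z\lesssim NL^{50+C_0-500}\|b-b'\|_Z.
\]
Both are acceptable provided $C_0<400$.

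The main obstacle is ensuring that the loss $C_0$ in the crude bounds on $\mathscr{Q}$ and $\mathscr{C}$ is genuinely bounded, for instance below $400$, independently of $N$. Several things must be tracked: the powers of $L$ from the $Z$-normalization $L^{-1}\sum_k$, the factor $L^{1}$ in the pointwise bound of $c^{\leq N}$ from Proposition \ref{prop:ckn}, and the growth $|k|^{4\beta}\lesssim L^{2\delta/5}$ from the symbols. The exponent $500$ in the statement, and the freedom to choose $N$ large so that $L^{1-N\delta}$ beats everything, leave ample room. If a sharper bound were needed, we would replace the free-sum counting by Cauchy--Schwarz in the $\ell^2$ structure of $Z$, so that the sums over $k_1,k_2$ cost only $L$ each.
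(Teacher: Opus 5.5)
Your proposal is correct and follows essentially the same route as the paper. On the good event you bound $(1-\mathscr{L})^{-1}$ on $Z$ by factoring through $\mathscr{L}^N$ and using Proposition \ref{prop:l}, take the smallness of $c_{\sim N}$ from Proposition \ref{prop:ckn}, and control $\mathscr{Q}$ and $\mathscr{C}$ by crude polynomial losses made harmless by the cutoff $\tilde\varphi_{\leq K}$. The paper gets a loss of $L^{21}$ there, and your pointwise count gives an exponent of only a few units, so the condition $C_0<400$ is not a genuine obstacle. You also write out the Lipschitz (contraction) estimate, which the paper leaves implicit.
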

\begin{proof}
We first restrict to the complement of an exceptional set of probability at most $L^{-40}$.  
We then estimate the operators $\mathscr{Q}$ and $\mathscr{C}$ defined in \eqref{def:l}. By definition, we have
\begin{align}
  \mathcal{W}_1&(\tilde\varphi_{\leq K}u,\tilde\varphi_{\leq K}v,\tilde\varphi_{\leq K}w)_k(t)\notag\\
  &=\frac{i\alpha T}{L}  \sum\limits_{(k_1,k_2, k_{3})}^{\times} u_{k_1}{v_{k_2}}  w_{k_3} e^{2\pi i \Theta(k_1,k_2,k_3,k,t)}B_{k_1,k_2,k_3,k}\tilde\varphi_{\leq K}(k_1)\tilde\varphi_{\leq K}(k_2)\tilde\varphi_{\leq K}(k_3)\varphi_{\leq K}(k)\notag\\
&\quad -\frac{i\alpha T}{L}u_{k}{v_{k}}w_{k}B_{k,k,k,k}\varphi_{\leq K}(k)\tilde\varphi_{\leq K}(k)^3\notag.    
\end{align}
Here we recall that $\tilde\varphi_{\leq K}u=(\tilde\varphi_{\leq K}(k)u_k)_{k\in\mathbb{Z}_L}$ (see Section~\ref{sec:rem}). By the truncation properties of $\tilde\varphi_{\leq K}$, we obtain
 
\begin{align*}
B_{k_1,k_2,k_3,k}\tilde\varphi_{\leq K}(k_1)\tilde\varphi_{\leq K}(k_2)\tilde\varphi_{\leq K}(k_3)\varphi_{\leq K}(k)\leq L^\delta,\\
B_{k,k,k,k}\varphi_{\leq K}(k)\tilde\varphi_{\leq K}(k)^3\leq L^\delta.
\end{align*}
Consequently,
\begin{align}\label{bd:est:w1}
  &\bigg\| \int_0^t\mathcal{W}_1(\tilde\varphi_{\leq K}u,\tilde\varphi_{\leq K}v,\tilde\varphi_{\leq K}w)(s)\dif s\bigg\|_{Z}\notag\\
  & \leq  \int_0^t\big\|\mathcal{W}_1(\tilde\varphi_{\leq K}u,\tilde\varphi_{\leq K}v,\tilde\varphi_{\leq K}w)(s)\big\|_{Z}\dif s\leq L^{21}\|u\|_Z\|v\|_Z\|w\|_Z,
   \end{align}
   and similarly,
   \begin{align*}
    \bigg\| \int_0^t(\mathscr{Q}_1)_k[\tilde\varphi_{\leq K}u](\tilde\varphi_{\leq K}v)\dif s\bigg\|_{Z}&\leq L^{21}\|u\|_Z\|v\|_Z^2,\\
    \bigg\| \int_0^t  (\mathscr{C}_1)_k(\tilde\varphi_{\leq K}v)\dif s\bigg\|_{Z}&\leq L^{21}\|v\|_Z^3.
\end{align*}
Here we recall the definition of $\mathscr{Q}_1$ and $\mathscr{C}_1$ in Section \ref{sec:rem}.

 Using that $\tilde\varphi_{\leq K}c^{\leq N}=c^{\leq N}$, we rewrite \begin{align*} 
\mathscr{Q}(\tilde\varphi_{\leq K}v)&:=\int_0^t \big(2\mathcal{W}_1(\tilde\varphi_{\leq K}v,\tilde\varphi_{\leq K}\overline{v}, \tilde\varphi_{\leq K}c^{\leq N})\\
&\quad\quad\quad +\mathcal{W}_1(\tilde\varphi_{\leq K}v, \tilde\varphi_{\leq K}\overline{c^{\leq N}},\tilde\varphi_{\leq K} v)+\mathscr{Q}_1[\tilde\varphi_{\leq K}c^{\leq N}](\tilde\varphi_{\leq K}v)\big)\dif s,\\
\mathscr{C}(\tilde\varphi_{\leq K}v)&:=\int_0^t \mathcal{W}_1(\tilde\varphi_{\leq K}v, \tilde\varphi_{\leq K}\overline{v}, \tilde\varphi_{\leq K}v)+\mathscr{C}_1(\tilde\varphi_{\leq K}v)\dif s.
\end{align*} 

Let $\|b\|_{Z}\leq L^{-500}$. By Proposition~\ref{prop:ckn} (for sufficiently large $N$), we obtain, with probability   $\geq 1-L^{-40}$,
\begin{align}
   \|c_{\sim N}& +\mathscr{Q}(\tilde\varphi_{\leq K}b)+\mathscr{C}(\tilde\varphi_{\leq K}b)\|_Z\notag\\
   &\leq \|c_{\sim N}\|_Z + L^{21}\|b\|_Z^2(\sum_{n\leq N}\|c^n\|_Z)+ L^{21}\|b\|_Z^3\leq  L^{-600}.\label{bd:jlc}
\end{align}
Then we write  
\begin{align*}
    (1 -\mathscr{L})^{-1} =(1-\mathscr{L}^N)^{-1}(1+\mathscr{L} +...+\mathscr{L}^{N-1}),
\end{align*}
which is a map on $Z$. Here we understand  $(1-\mathscr{L}^N)^{-1}$  by the Neumann series. By applying Proposition \ref{prop:l}, we have 
\begin{align}\label{bd:1-l}
    \| (1 -\mathscr{L})^{-1} \|_{Z\to Z} \lesssim L^{70}.
\end{align}
Together with \eqref{bd:jlc} we finish the proof.
\end{proof}
Then, by a fixed point argument together with Proposition~\ref{prop:rnsmall}, we obtain that, with probability   $\geq 1 - L^{-40}$, there exists a solution $\mathcal{R}^{N+1}$ to \eqref{def:rkn:fix} satisfying  \begin{align}
     \|\mathcal{R}^{N+1}\|_{ Z} \leq  L^{-500}.\label{bd:mathcalr}
\end{align}

\section{Proof of Theorem \ref{thm2}}\label{sec:proof of theorem 2}

\begin{proof}[Proof of Theorem \ref{thm2}]
We denote by $E$ the complement of all exceptional sets appearing in Proposition~\ref{prop:l} and Proposition~\ref{prop:rnsmall}, so that
     $$\mathbf{P}(E) \geq  1-L^{-40}.$$
 By the reductions in Section~\ref{reductions}, we have
 $$\mathbf{E}|\widehat{u}_{\rm tr}(k,t)|^2=\mathbf{E}|c_k(s)|^2,\ \ s = \frac tT.$$
 We further denote $\mathbf{E}_E[\ \cdot\ ]:=\mathbf{E}[ 1_E\cdot\ ]$. Then
 \begin{align*}
  \mathbf E|\widehat u_{\rm tr}(k, t)|^2&=   \mathbf{E}_E |c_k(s)|^2+O(L^{-40}) \\
  &=\mathbf{E}_E| c^{\leq N}_k (s)|^2 +2\mathrm{Re}\mathbf{E} _E[c^{\leq N}_ k (s)\overline{\mathcal{R}^{N+1}_k (s)}] +\mathbf{E}_E |\mathcal{R}^{N+1}_k (s)|^2+O(L^{-40}) .
 \end{align*}
 For the terms involving the remainder, by Proposition~\ref{prop:ckn} and \eqref{bd:mathcalr}, we obtain
 \begin{align*}
     2\mathrm{Re}\mathbf{E} _E[c^{\leq N}_ k (s)\overline{\mathcal{R}^{N+1}_k (s)}] +\mathbf{E}_E |\mathcal{R}^{N+1}_k (s)|^2=O(L^{-10}). 
 \end{align*}
Next, using Proposition~\ref{prop:couple}, we estimate
\begin{align*}
    \mathbf{E}_E| c^{\leq N}_k (s)|^2
    &\leq \sum_{n=0}^{2}\sum_{|\mathcal{T}^+|+|\mathcal{T}^-|=n}\mathbf{E}_E[c^{\mathcal{T}^+}_kc^{\mathcal{T}^-}_k(s)]+\sum_{n=3}^{2N} C \langle k\rangle^{-20}  (\alpha  T^{4/5}  L^{2\delta})^nT^{- \frac{3}{5}}\\
    &\leq \sum_{n=0}^{2}\sum_{|\mathcal{T}^+|+|\mathcal{T}^-|=n}\mathbf{E}_E[c^{\mathcal{T}^+}_kc^{\mathcal{T}^-}_k(s)]+C\frac{T}{T_{\mathrm{kin}}} \cdot\frac{\alpha  T^{4/5}  L^{6\delta}}{1-\alpha  T^{4/5}  L^{2\delta}}\\
    &=\sum_{n=0}^{2}\sum_{|\mathcal{T}^+|+|\mathcal{T}^-|=n}\mathbf{E}_E[c^{\mathcal{T}^+}_kc^{\mathcal{T}^-}_k(s)]+O(L^{-\delta}\frac{T}{T_{\mathrm{kin}}}).
\end{align*}
Here we used the fact that $\alpha  T^{4/5}  L^{6\delta}\leq L^{-\delta}.$

It remains to estimate the lower-order terms appearing in the above expression.
For the case $n=0$, by definition,   $c_k^{\bullet}(s)= \varphi_{\leq K}(k)\sqrt{n_{\mathrm{in}}(k)}\cdot\eta_k(\omega)^\zeta$, where $\zeta=\pm$ depends on the sign of the root.
Taking expectation, we obtain that the zeroth-order contribution is  $\varphi_{\leq K}^2(k)\, n_{\mathrm{in}}(k)$.

 For the case $n=1$, we have $\mathbf{E}_E[c^{\mathcal{T}^+}_kc^{\mathcal{T}^-}_k(s)]=0$,  since there is no admissible  decorations in this case.

 It remains to consider the case $n=2$. In this case, we need to estimate the terms $\mathbf{E}|c^1_k(s)|^2$ and $2\mathrm{Re}\mathbf{E}\left[\overline{c^0_k(s)}\, c^2_k(s)\right]$. For the former, by definition, we have
 
\begin{align}
c_{k}^{1}(s)&=\frac{i\alpha T}{L}  \sum\limits_{(k_1,k_2, k_{3})}^{\times} (a_{\rm in})_{k_1}\overline {(a_{\rm in})_{k_2}}  (a_{\rm in})_{k_3} \int_0^se^{2\pi i[\overline\Omega_kTs+\sum\Gamma_1(k,s)]}\dif sB_{k_1,k_2,k_3,k}\varphi_{\leq K}(k)\notag\\
&\quad+\frac{i\alpha Ts}{L} \left(|(a_{\rm in})_{k}|^2 -2\mathbf{E}|(a_{\rm in})_{k}|^2  \right) B_{k,k,k,k}\varphi_{\leq K}(k) (a_{\rm in})_{k}\notag\\
&\quad+2\frac{i\alpha Ts}{L}\sum_{k_1\neq k}\left(|(a_{\rm in})_{k_1}|^2 -\mathbf{E}|(a_{\rm in})_{k_1}|^2  \right) B_{k_1,k_1,k,k}\varphi_{\leq K}(k) (a_{\rm in})_{k}\notag\\
&=\frac{i\alpha T}{L} \sum\limits_{(k_1,k_2, k_{3}),\overline \Omega_k\neq 0}^{\times} (a_{\rm in})_{k_1}\overline {(a_{\rm in})_{k_2}}  (a_{\rm in})_{k_3} \frac{e^{2\pi i\overline \Omega_kTs}-1}{2\pi i\overline\Omega_kT}B_{k_1,k_2,k_3,k}\varphi_{\leq K}(k)\notag\\
&\quad+\frac{i\alpha T}{L}  \sum\limits_{(k_1,k_2, k_{3}),\overline \Omega_k\neq 0}^{\times} (a_{\rm in})_{k_1}\overline {(a_{\rm in})_{k_2}}  (a_{\rm in})_{k_3}\frac{Er_1(s)}{T\overline\Omega_k}B_{k_1,k_2,k_3,k}\varphi_{\leq K}(k)\notag\\
&\quad+\frac{i\alpha T}{L} \sum\limits_{(k_1,k_2, k_{3}),\overline \Omega_k= 0}^{\times} (a_{\rm in})_{k_1}\overline {(a_{\rm in})_{k_2}}  (a_{\rm in})_{k_3} \int_0^se^{2\pi i\sum\Gamma_1(k,u)}\dif uB_{k_1,k_2,k_3,k}\varphi_{\leq K}(k)\notag\\
&\quad+\frac{i\alpha Ts}{L} \left(|(a_{\rm in})_{k}|^2 -2\mathbf{E}|(a_{\rm in})_{k}|^2  \right) B_{k,k,k,k}\varphi_{\leq K}(k) (a_{\rm in})_{k}\notag\\
&\quad+2\frac{i\alpha Ts}{L}\sum_{k_1\neq k}\left(|(a_{\rm in})_{k_1}|^2 -\mathbf{E}|(a_{\rm in})_{k_1}|^2  \right) B_{k_1,k_1,k,k}\varphi_{\leq K}(k) (a_{\rm in})_{k},\label{bd:ck1}
\end{align}
where we define the error term by
\begin{align}
    Er_1(s):= -\int_0^se^{2\pi i[ \overline\Omega_kTu+\sum\Gamma_1(k,u)]}\sum\partial_u\Gamma_1(k,u)\dif u+ \frac{e^{2\pi i[ \overline\Omega_kTs+\sum\Gamma_1(k,s)]}-e^{2\pi i \overline\Omega_kTs}}{2\pi i}.\notag
\end{align}
By \eqref{bd:gamma'}, we obtain
\begin{align}
   | Er_1(s)|&\lesssim s\sup_{u\in[0,s]}\sup_k|\partial_t\Gamma_1(k,u)|+|e^{2\pi i \sum\Gamma_1(k,s)}-1|\notag\\
   &\lesssim s \|\partial_t\Gamma_1(k,s)\|_{L_k^\infty C^0_s}+|  \sum\Gamma_1(k,s)|\lesssim s \|\partial_t\Gamma_1(k,s)\|_{L_k^\infty C^0_s}\lesssim sL^{-3\delta}.\label{def:er}
\end{align}

We denote each term on the right-hand side of \eqref{bd:ck1} by $C_i(s)$ for $1 \leq i \leq 5$, and estimate them in turn.   By Lemma~\ref{thm:Isserlis} together with a direct computation, we obtain

\begin{align*}
\mathbf{E}|C_1(s)|^2&=\frac{2\alpha^2T^2s^2
}{L^{2}}\sum_{(k_1, k_2, k_3); \overline \Omega_k\neq 0}^\times n_{\mathrm{in}}(k_1)n_{\mathrm{in}}(k_2)n_{\mathrm{in}}(k_3)\left|\frac{\sin \pi\overline \Omega _kTs}{\pi \overline\Omega _kTs}\right|^2\\
&\quad\quad\times B^2_{k_1,k_2,k_3,k}\varphi_{\leq K}^2(k_1)\varphi_{\leq K}^2(k_2)\varphi_{\leq K}^2(k_3)\varphi_{\leq K}^2(k),\\
\mathbf{E}|C_4(s)|^2&=\frac{\alpha^2 T^2s^2}{L^{2}}\mathbf{E}[(|\eta_{k}(\omega)|^2 -2 )^2|\eta_{k}(\omega)|^2]B^2_{k,k,k,k}\varphi_{\leq K}^8(k)n_{\mathrm{in}}(k)|n_{\mathrm{in}}(k)|^2\\
&=\frac{2\alpha^2 T^2s^2}{L^{2}}B^2_{k,k,k,k}\varphi_{\leq K}^8(k)n_{\mathrm{in}}^3(k),\\
    \mathbf{E}|C_5(s)|^2&=\frac{4\alpha^2 T^2s^2}{L^{2}}\sum_{k_1\neq k}\mathbf{E}(|\eta_{k_1}(\omega)|^2 -1 )^2 \mathbf{E}|\eta_{k}(\omega)|^2 B^2_{k_1,k_1,k,k}\varphi_{\leq K}^4(k_1)\varphi_{\leq K}^4(k)n_{\mathrm{in}}(k)|n_{\mathrm{in}}(k_1)|^2\\
    &=\frac{4\alpha^2 T^2s^2}{L^{2}}\sum_{k_1\neq k}B^2_{k_1,k_1,k,k}\varphi_{\leq K}^4(k_1)\varphi_{\leq K}^4(k)n_{\mathrm{in}}(k)n_{\mathrm{in}}^2(k_1).
\end{align*}
Moreover, by Lemma~\ref{thm:Isserlis} and the resonance relations, we have for $1\leq i\leq 4$, \begin{align*}
   \mathrm{Re} \mathbf{E}[C_i(s)\overline{C_5(s)}]=0,
\end{align*} 
and   \begin{align*}
   \mathrm{Re} \mathbf{E}[(C_1(s)+C_2(s)+C_3(s))\overline{C_4(s)}]+\mathrm{Re} \mathbf{E}[(C_1(s)+C_2(s))\overline{C_3(s)}]=0.
\end{align*}  

For the term $C_3$, since $\overline \Omega_k=0$, we apply the 3-counting estimate in Lemma~\ref{counting} with $T$ replaced by $T L^{4\delta}$ and choose $D=L^\delta$. Then
\begin{align*}
   \mathbf{E}|C_3(s)|^2 &\lesssim \frac{\alpha^2 T^2s^2}{L^{2}} \sum\limits_{(k_1,k_2, k_{3}),\overline \Omega_k= 0}^{\times}  n_{\mathrm{in}}(k_1)n_{\mathrm{in}}(k_2)n_{\mathrm{in}}(k_3)
B^2_{k_1,k_2,k_3,k}\varphi_{\leq K}^2(k_1)\varphi_{\leq K}^2(k_2)\varphi_{\leq K}^2(k_3)\varphi_{\leq K}^2(k)\notag\\
   &\lesssim \frac{\alpha^2 T^2s^2}{L^{2}}\cdot  L^{2-4\delta}T^{-1}\log L\cdot L^\delta=O(\frac{t}{T_{\rm kin}}L^{-\delta}).
\end{align*}

Similarly, applying the 3-counting estimate in Lemma~\ref{counting} again, we obtain
\begin{align*}
\mathbf{E}&[C_1\overline{C_2}(s)+C_2\overline{C_1}(s)+|C_2(s)|^2]\\
&\lesssim\frac{\alpha^2T^2}{L^{2}}\sum_{(k_1, k_2, k_3); \overline \Omega_k\neq 0}^\times n_{\mathrm{in}}(k_1)n_{\mathrm{in}}(k_2)n_{\mathrm{in}}(k_3)\frac{|Er_1|^2+|Er_1|}{|T\overline \Omega_k|^2} \notag\\
&\quad \quad \times B^2_{k_1,k_2,k_3,k}\varphi_{\leq K}^2(k_1)\varphi_{\leq K}^2(k_2)\varphi_{\leq K}^2(k_3)\varphi_{\leq K}^2(k)\\
&\lesssim\frac{\alpha^2T^2}{L^{2}} \cdot L^{2}T^{-1}\log L\cdot \sum_{j\neq 0}\frac{sL^{-3\delta}}{j^2}\cdot L^\delta=O(\frac{t}{T_{\rm kin}}L^{-\delta}).
\end{align*}

In summary, we obtain
\begin{align*}
     \mathbf{E}|c^1_k(s)|^2=\mathbf{E}|C_1(s)|^2+\mathbf{E}|C_4(s)|^2+\mathbf{E}|C_5(s)|^2+O(\frac{t}{T_{\rm kin}}L^{-\delta}).
\end{align*}

For the term $2\mathbf{E}\mathrm{Re}\overline{c^0_k(s)}c^2_k(s)$, we assume that the root and the other branching node are decorated by $k$ and $k'$, respectively. A direct inspection of the admissible decorations shows that the two branching nodes must be degenerate simultaneously. As a consequence, only the configurations illustrated in Figure~\ref{fig:type2tree} are possible for the corresponding enhanced couples $\mathcal{Q}$.
Moreover, in each of these cases, the resonance relations imply that  $\overline \Omega_k+\overline \Omega_{k'}=0$ and $\sum\Gamma_1(k,s)+\sum\Gamma_1(k',s)=0$.

\begin{figure}
\begin{subfigure}
\centering
\begin{tikzpicture}[level distance=.8cm,
  level 1/.style={sibling distance=.8cm},
  level 2/.style={sibling distance=.8cm}]
\tikzstyle{hollow node}=[circle,draw,inner sep=1.6]
\tikzstyle{solid node}=[circle,draw,inner sep=1.6,fill=black]
\tikzset{
red node/.style = {circle,draw=black,fill=red,inner sep=1.6},
blue node/.style= {circle,draw = black, fill= blue,inner sep=1.6}, 
purple node/.style= {circle,draw = black, fill= purple,inner sep=1.6}, 
orange node/.style= {circle,draw = black, fill= orange,inner sep=1.6},
yellow node/.style= {circle,draw = black, fill= yellow,inner sep=1.6},
green node/.style = {circle,draw=black,fill=green,inner sep=1.6}}

\node[solid node, label = right: $k$ ] at (-5.5,.5){}
    child{node[solid node, label = left: $k_1$ ]{}
        child{node[blue node, label=below: $k_2$]{}}
        child{node[red node, label=below: $k_3$]{}}
        child{node[green node, label=below: $k$]{}}
    }
    child{node[blue node, label=below: $k_2$]{}}
    child{node[red node, label=below: $k_3$]{}}
;
\node[green node, label=right: $k$] at (-3.5,.5){};

\node[solid node, label = right: $k$ ] at (.5,.5){}
    child{node[solid node, label = left: $k$ ]{}
        child{node[green node, label=below: $k$]{}}
        child{node[green node, label=below: $k$]{}}
        child{node[green node, label=below: $k$]{}}
    }
    child{node[green node, label=below: $k$]{}}
    child{node[green node, label=below: $k$]{}}
;
\node[green node, label=right: $k$] at (3,.5){};

\node[solid node, label = right:{$k$}] at (-5.5,-2.7){}
    child{node[solid node, label = left: $k_1$ ]{}
        child{node[blue node, label=below: $k_1$]{}}
        child{node[red node, label=below: $k$]{}}
        child{node[green node, label=below: $k$]{}}
    }
    child{node[blue node, label=below: $k_1$]{}}
    child{node[red node, label=below: $k$]{}}
;
\node[green node, label=right: $k$] at (-3,-2.7){};

\node[solid node, label = right:{$k$}] at (.5,-2.7){}
    child{node[solid node, label = left: $k$ ]{}
        child{node[green node, label=below: $k$]{}}
        child{node[red node, label=below: $k_3$]{}}
        child{node[blue node, label=below: $k_3$]{}}
    }
    child{node[blue node, label=below: $k_3$]{}}
    child{node[red node, label=below: $k_3$]{}}
;
\node[green node, label=right: $k$] at (3,-2.7){};

\end{tikzpicture}
\end{subfigure}
\caption{All possible types of couples $\mathcal{Q}$ of order $n=2$, where the second branching node is the left child of the root. Symmetry should also be taken into account.
Nodes with the same color are paired with each other. The first configuration in the first row correspond to the non-degenerate case, while the two in the second row correspond to the degenerate case.}
\label{fig:type2tree}
\end{figure}

Now, similarly to \eqref{bd:ck1}, we distinguish four cases: the non-degenerate case with $\overline \Omega_k,\overline \Omega_{k'}\neq 0$; the non-degenerate case with $\overline \Omega_k=\overline \Omega_{k'}=0$; the fully degenerate case; and the partially degenerate case with $k\neq k_1$.

We begin with the non-degenerate case with $\overline \Omega_k,\overline \Omega_{k'}\neq 0$. 
In this case, the time integral $\mathcal{A}_{\mathcal{T}}$ takes the form
 \begin{align*}
     &\int_0^se^{2\pi i[\overline \Omega_kTs_1+\sum\Gamma_1(k,s_1)]}\int_0^{s_1}e^{2\pi i[\overline \Omega_{k'}Ts_2+\sum\Gamma_1(k',s_2)]}\dif s_2\dif s_1\\
     &=  \int_0^se^{2\pi i[\overline \Omega_kTs_1+\sum\Gamma_1(k,s_1)]}\notag\\ 
     &\quad\quad\quad\quad\times\Big[\frac{e^{2\pi i[\overline \Omega_{k'}Ts_1+\sum\Gamma_1(k',s_1)]}-1}{2\pi i\overline \Omega_{k'}T}-\int_0^{s_1}\frac{\partial_t\sum\Gamma_1(k',s_2)}{\overline \Omega_{k'}T}e^{2\pi i[\overline \Omega_{k'}Ts_2+\sum\Gamma_1(k',s_2)]}\dif s_2\Big]\dif s_1\\
      &=\frac1{2\pi i\overline \Omega_{k'}T}  \int_0^s1-e^{2\pi i[\overline \Omega_kTs_1+\sum\Gamma_1(k,s_1)]}\dif s_1+\widetilde {Er}_2=\frac{-s}{2\pi i\overline \Omega_{k}T}  -\frac{  e^{2\pi i\overline \Omega_kTs}-1}{4\pi^2|\overline \Omega_{k}T|^2}  +Er_2.
 \end{align*}
where we used the fact that $\overline \Omega_k+\overline \Omega_{k'}=0$ and $\sum\Gamma_1(k,s)+\sum\Gamma_1(k',s)=0$ among all the possible pairs. Here the error term  $Er_2$  is defined as 
 \begin{align*}
     Er_2&:=-\frac{  e^{2\pi i[\overline \Omega_kTs+\sum\Gamma_1(k,s)]}- e^{2\pi i\overline \Omega_kTs}}{4\pi^2|\overline \Omega_{k}T|^2}\\
     &\quad-\int_0^se^{2\pi i[\overline \Omega_kTs_1+\sum\Gamma_1(k,s_1)]} \int_0^{s_1}\frac{\partial_t\sum\Gamma_1(k',s_2)}{\overline \Omega_{k'}T}e^{2\pi i[\overline \Omega_{k'}Ts_2+\sum\Gamma_1(k',s_2)]}\dif s_2 \dif s_1\\
     &\quad+\int_0^s \frac1{2\pi i\overline \Omega_{k'}T}  \frac{\partial_t\sum\Gamma_1(k,s_1)}{\overline \Omega_{k}T}   e^{2\pi i[\overline \Omega_kTs_1+\sum\Gamma_1(k,s_1)]}\dif s_1.
 \end{align*}
We now estimate $Er_2$. For the second term above, we apply Proposition~\ref{prop:est:af} with
  $$f=\int_0^{s_1}\partial_t\sum\Gamma_1(k',s_2)e^{2\pi i[\overline \Omega_{k'}Ts_2+\sum\Gamma_1(k',s_2)]}\dif s_2.$$By \eqref{bd:gamma'}, we have
 \begin{align*}
     |f|+|\partial_tf|\lesssim \sup_k \|\partial_t\Gamma_1(k,s)\|_{C^0_s}\leq L^{-3\delta}.
 \end{align*}
 Hence, Proposition~\ref{prop:est:af} yields
\begin{align*}
   \left| \int_0^se^{2\pi i[\overline \Omega_kTs_1+\sum\Gamma_1(k,s_1)]} \int_0^{s_1}\frac{\partial_t\sum\Gamma_1(k',s_2)}{\overline \Omega_{k'}T}e^{2\pi i[\overline \Omega_{k'}Ts_2+\sum\Gamma_1(k',s_2)]}\dif s_2 \dif s_1\right|\lesssim s\frac{\sup_k \|\partial_t\Gamma_1(k,s)\|_{C^0_s}}{|T\overline\Omega_k|^2}..
\end{align*}
Consequently,
 \begin{align*}
     | Er_2|\lesssim \sup_k\sup_{s\in[0,1]}\frac{|\sum\Gamma_1(k,s)|+s\|\partial_t\Gamma_1(k,s)\|_{C^0_s}}{|\overline \Omega_{k}T|^2}\lesssim \frac{ s \|\partial_t\Gamma_1(k,s)\|_{C^0_sL_k^\infty}}{|\overline \Omega_{k}T|^2}\lesssim \frac{ sL^{-3\delta}}{|\overline \Omega_{k}T|^2}.
 \end{align*}
 
The contribution of this case to $2\,\mathbf{E}\mathrm{Re}\,\overline{c^0_k(s)}c^2_k(s)$ is

\begin{align*}
2\bigg(\frac{\alpha T}{L}\bigg)^2&\sum\limits_{(k_1,k_2, k_{3}),\overline \Omega_k\neq  0}^{\times}n_{\mathrm{in}}(k_1)n_{\mathrm{in}}(k_2)n_{\mathrm{in}}(k_3)n_{\mathrm{in}}(k)\left(\frac{1}{n_{\mathrm{in}}(k_1)}-\frac{1}{n_{\mathrm{in}}(k_2)}+\frac{1}{n_{\mathrm{in}}(k_3)}\right)\\
&\times \Big(\left|\frac{\sin \pi \overline \Omega _kTs}{\pi\overline  \Omega _kT}\right|^2+2\mathrm{Re}(Er_2) \Big)
B^2_{k_1,k_2,k_3,k}\varphi_{\leq K}^2(k_1)\varphi_{\leq K}^2(k_2)\varphi_{\leq K}^2(k_3)\varphi_{\leq K}^2(k),
\end{align*}
where the term with $\mathrm{Re}(Er_2) $ is bounded by using 3 number counting estimate in Lemma \ref{counting} by:
\begin{align*}
    \frac{\alpha^2 T^2}{L^{2}}\cdot L^{2}T^{-1}\log L\cdot sL^{-3\delta}\cdot\sum_{j\neq 0}\frac{1}{j^2}\cdot L^\delta=O(\frac{t}{T_{\rm kin}}L^{-\delta}).
\end{align*}

Second, we consider the non-degenerate case with $\overline \Omega_k=\overline \Omega_{k'}=0$ and $k\neq k_1,k_3$. Its contribution is bounded by
\begin{align*}
\bigg(\frac{\alpha T}{L}\bigg)^2&\sum\limits_{(k_1,k_2, k_{3}),\overline \Omega_k= 0}^{\times}n_{\mathrm{in}}(k_1)n_{\mathrm{in}}(k_2)n_{\mathrm{in}}(k_3)n_{\mathrm{in}}(k)\left(\frac{1}{n_{\mathrm{in}}(k_1)}+\frac{1}{n_{\mathrm{in}}(k_2)}+\frac{1}{n_{\mathrm{in}}(k_3)}\right)\\
&\times 
B^2_{k_1,k_2,k_3,k}\varphi_{\leq K}^2(k_1)\varphi_{\leq K}^2(k_2)\varphi_{\leq K}^2(k_3)\varphi_{\leq K}^2(k),
\end{align*}
which is bounded by $O\!\left(\frac{t}{T_{\rm kin}}L^{-\delta}\right)$ by the same argument as for $C_3$. Here all terms are nonnegative, as we take absolute values. 

Third, we consider the fully degenerate case with  $k= k_1=k_3=k_2$, its contribution is 
\begin{align*}
   2i^2(1-1+1)\frac{\alpha^2 T^2s^2}{L^{2}}B^2_{k,k,k,k}\varphi_{\leq K}^8(k)n_{\mathrm{in}}(k)|n_{\mathrm{in}}(k)|^2=-\mathbf{E}|C_4(s)|^2,
\end{align*}
where the factor $1-1+1$ corresponds to the three possible positions (left, middle, or right) of the second branching node relative to the root. The prefactor $2$ reflects that there are exactly two admissible pairings, since the root node can only pair with two leaves carrying the $+$ sign (see Figure~\ref{fig:type2tree}).

Finally, we consider the partially degenerate case with $k_1\neq k$ or $k_3\neq k$. Its contribution is
\begin{align*}
    4i^2(1-1+1)\frac{\alpha^2 T^2s^2}{L^{2}}\sum_{k_1\neq k}B^2_{k_1,k_1,k,k}\varphi_{\leq K}^4(k_1)\varphi_{\leq K}^4(k)n_{\mathrm{in}}(k)|n_{\mathrm{in}}(k_1)|^2=-\mathbf{E}|C_5(s)|^2,
\end{align*}
where the factor $4$ comes from the four admissible pairings, corresponding to permutations of the two $+$ leaves in Figure~\ref{fig:type2tree}.

Combining the above estimates, we obtain
\begin{align*}\mathbf{E}|c^1_k(t)|^2+2\mathrm{Re}\mathbf{E}[\overline{c^0_k(t)}c^2_k(t)]=\frac{2\alpha^2t^2}{L^{2}}\cdot\mathscr{S}_{t}(n_{\mathrm{in}}) +O(\frac{t}{T_{\rm kin}}L^{-\delta}),\end{align*} where we recall  \begin{align*}
 \mathscr S_t(\phi):&=\sum\limits_{\substack{ k-k_1+k_2-k_3=0,\\ \overline \Omega_k\neq0}}\phi_k \phi_{k_1} \phi_{k_2} \phi_{k_3}  \left[ \frac{1}{\phi_k} - \frac{1}{\phi_{k_1}} + \frac{1}{\phi_{k_2}} - \frac{1}{\phi_{k_3}} \right]\left| \frac{\sin(\pi t\overline \Omega_k)}{\pi t \overline \Omega_k} \right|^2\notag\\
  &\quad\quad\quad\times
|k_1|^{2\beta}|k_2|^{2\beta}|k_3|^{2\beta}|k|^{2\beta}\varphi_{\leq K}^2(k_1)\varphi_{\leq K}^2(k_2)\varphi_{\leq K}^2(k_3)\varphi_{\leq K}^2(k).
\end{align*} 
Consequently,
\begin{align*}\mathbf{E}|\widehat{u}_{\rm tr}(k,t)|^2=\varphi_{\leq K}^2(k)n_{\mathrm{in}}(k)+\frac{2\alpha^2t^2}{L^{2}}\mathscr{S}_{t}(n_{\mathrm{in}})(k)+O(\frac{t}{T_{\rm kin}}L^{-\delta }).\end{align*}

By the asymptotic estimate in Proposition~\ref{asymptotic}, we further obtain \begin{align*}\mathbf{E}|\widehat{u}_{\rm tr}(k,t)|^2=\varphi_{\leq K}^2(k)n_{\mathrm{in}}(k)+\frac{2\alpha^2t^2}{L^{2}}\mathscr{K}_{t}(n_{\mathrm{in}})(k)+O(\frac{t}{T_{\rm kin}}L^{-\delta}),\end{align*}
 where we recall $\mathscr K_t$ is defined by 

\begin{align*}
\mathscr K_t(\phi):&=L^{2} \int_{\xi_1-\xi_2+\xi_3=\xi }\phi(\xi) \phi(\xi_1) \phi(\xi_2) \phi(\xi_3)  \left[ \frac{1}{\phi(\xi)} - \frac{1}{\phi(\xi_1)} + \frac{1}{\phi(\xi_2)} - \frac{1}{\phi(\xi_3)} \right]    \left| \frac{\sin(\pi t\overline \Omega( \xi))}{\pi t\overline\Omega( \xi)} \right|^2 \notag\\ &\quad\quad\times |\xi_1|^{2\beta}|\xi_2|^{2\beta}|\xi_3|^{2\beta}|\xi|^{2\beta}\varphi_{\leq K}^2(\xi_1)\varphi_{\leq K}^2(\xi_2)\varphi_{\leq K}^2(\xi_3)\varphi_{\leq K}^2(\xi)   \dif \xi_1  \dif\xi_2 \, \dif\xi_3.
\end{align*}

Finally, the proof follows from the identity that, for any smooth function $f$,
\begin{align*}
t \int \left| \frac{\sin(\pi t x)}{\pi t x} \right|^2 f(x)\dif x =  f(0) + O(t^{-1}),
\end{align*}
which completes the proof.
\end{proof}

\section{An acceptable energy inequality}\label{sec:acceptineq}
In the previous sections, we derived the truncated wave kinetic equation   from the truncated MMT equation. 
From the present section on, we shift our focus to the high-frequency component of the full MMT equation.
The analysis is based on a suitable energy inequality, as stated in Theorem~\ref{thm:energy inequality}. 
The key idea is to show that the increment of the high-frequency energy can be controlled by an $L^\infty$-type norm of the solution. 
This control will play a crucial role when dealing with solutions arising from random initial data.

Throughout this section and the next section, we denote the nonlinear term  by 
\begin{align*}
    \mathscr{B}(u,v,w):= -\lambda^2 |\nabla|^\beta\left[|\nabla|^\beta u \cdot|\nabla|^\beta\overline v   \cdot|\nabla|^\beta w\right].
\end{align*}

Let $N_0\geq 10$. We define the operator $J^{N_0}$ via the Fourier multiplier
\begin{align*}
    m_{N_0}(\xi):=\varphi_{\geq 0}(\xi)\langle\xi\rangle^{N_0}.
\end{align*}
We then introduce the high-frequency energy profile on the time interval $[T_1,T_2]$:
\begin{align*}
    A(t):=\int |J^{N_0}u(t)|^2\dif x\leq \|u(t)\|_{H^{N_0}}^2\leq A^2,
\end{align*}
where the last inequality holds under the assumption of 
Theorem~\ref{thm:energy inequality}.  

Our goal is to derive the following energy increment estimate for $A(t)$:
 \begin{proposition}\label{prop:highenergy} 
Under the assumption of Theorem \ref{thm:energy inequality}, it holds for $t_1\leq t_2\in[T_1,T_2]$ that: 
     \begin{align*}
  |   A(t_2)-A(t_1)|&\lesssim\lambda^2[\|u(t_1)\|_{H^{N_0}}\epsilon(t_1)]^2+\lambda^2[\|u(t_2)\|_{H^{N_0}}\epsilon(t_2)]^2\\
  &\quad+\lambda^4A^2\int_{t_1}^{t_2}\epsilon^4(s)\dif s+1_{\{\sigma\in(0,1)\}}\lambda^2A^{2-\frac{1}{4N_0}}\int_{t_1}^{t_2}\epsilon^{2+\frac{1}{4N_0}}(s)\dif s.
\end{align*}
 \end{proposition}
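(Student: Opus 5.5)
We compute $\frac{d}{dt}A(t)=2\mathrm{Re}\int J^{N_0}\bar u\, J^{N_0}(-i\mathscr B(u,u,u))\,\dif x$; the linear dispersive term drops out since $|\nabla|^\sigma$ is self-adjoint. In Fourier variables this is a quartic form
\[
\frac{d}{dt}A=\lambda^2\,\mathrm{Im}\,\frac{1}{L^3}\sum_{\xi_1-\xi_2+\xi_3-\xi_4=0}\widehat u_{\xi_1}\overline{\widehat u}_{\xi_2}\widehat u_{\xi_3}\overline{\widehat u}_{\xi_4}\,|\xi_1\xi_2\xi_3\xi_4|^\beta\bigl(m_{N_0}^2(\xi_4)-m_{N_0}^2(\xi_3)\bigr)\cdot c,
\]
and after symmetrization the symbol becomes $\tfrac12 B_{\xi}\bigl(m^2_{N_0}(\xi_1)-m^2_{N_0}(\xi_2)+m^2_{N_0}(\xi_3)-m^2_{N_0}(\xi_4)\bigr)$. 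We dyadically decompose $\xi_j\in\mathrm{supp}\,\varphi_{k_j}$. The configurations where the two highest frequencies are comparable and the other two are much smaller (the "high-high-low-low" case) are the dangerous ones. By the factorization of the introduction, the symbol equals $(\xi_1-\xi_2)(\xi_2-\xi_3)$ times a smooth average of degree $2N_0-2$, so it gains two powers of the low frequencies. The fully trivial resonant part $\{\xi_1,\xi_3\}=\{\xi_2,\xi_4\}$ makes the symmetrized symbol vanish identically, so it contributes nothing.

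\textbf{Step 1: normal form.} For $\sigma\in(1,2]$ we have $\Omega^{(\sigma)}\neq0$ off the trivial set. We integrate by parts in time and write the quartic term as $\frac{d}{dt}$ of a quartic boundary form with symbol $M/\Omega^{(\sigma)}$, plus sextic terms obtained by substituting the equation $\partial_t\widehat u=\ldots$ into one factor. The main technical step is to show that on each dyadic piece $M\varphi_{\underline k}/\Omega^{(\sigma)}$ has $\tilde S^\infty$ norm bounded by $2^{2N_0\max k}\cdot 2^{\beta\sum k_j}$ times low-frequency factors that are absorbable. For this we do a case analysis on the signs and relative sizes of the $\xi_j$, reducing by symmetry to three configurations. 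In each configuration we factor $\Omega^{(\sigma)}$ as $(\xi_1-\xi_2)(\xi_2-\xi_3)$ times an integral of $|\cdot|^{\sigma-2}$ evaluated along homogeneous combinations $(a\xi_1+b\xi_2)^\gamma$, and then use the scaling bound $|\xi^\alpha\partial^\alpha(a\xi_1+b\xi_2)^\gamma|\lesssim(a\xi_1+b\xi_2)^\gamma$. Lemma \ref{lem:sinfty}(3) then converts these derivative bounds into $\tilde S^\infty$ bounds. Lemma \ref{lem:holder}, with two $L^2$ factors carrying the $J^{N_0}$ weight and two $L^\infty$ factors measured in $W^{2,0}$ (the $\beta<\sigma/4$ condition absorbing $|\nabla|^\beta$ losses), bounds the boundary term by $\lambda^2\|u\|_{H^{N_0}}^2\epsilon^2$. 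The same estimate applied to the sextic terms gives $\lambda^4A^2\epsilon^4$. Summability over dyadic scales follows from the low-frequency gains and the $2^{-k}$ off-diagonal decay. Throughout, we use $\epsilon\ll\lambda^{-1}$ so that the nonlinear substitution is perturbative.

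\textbf{Step 2: $\sigma\in(0,1)$, the main obstacle.} Here $\Omega^{(\sigma)}$ vanishes on a nontrivial resonant curve $\xi^*$ while $M$ does not, so the division above is singular. We first parametrize the resonant set by an implicit function argument, writing e.g. $\xi_4=\xi_4^*(\xi_1,\xi_3)$ near the curve and checking that $\partial\Omega/\partial\xi$ is nondegenerate there. We then split $M=(M-M(\xi^*))+M(\xi^*)$ with a smooth cutoff to a neighborhood of the curve of width $\rho$. On $(M-M(\xi^*))/\Omega$ the quotient is regular, and we proceed as in Step 1 to get sextic bounds. The remaining piece $M(\xi^*)\chi_\rho$, together with $M/\Omega$ restricted to the complementary thin region, is kept as a quartic term. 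For this term we do not divide. Instead we estimate it directly, exploiting the small measure of the $\rho$-neighborhood against the $\tilde S^\infty$ norm (cost of $\rho^{-C}$ from derivatives versus a gain in the size of the symbol). We then interpolate the $H^{N_0}$ factor: one high factor is traded as $A^{1-\frac1{4N_0}}\epsilon^{\frac1{4N_0}}$ using $\|P_ku\|_{L^2}$ versus $\|P_ku\|_{L^\infty}$ at a suitably chosen frequency threshold. Optimizing $\rho$ should produce $\lambda^2A^{2-\frac1{4N_0}}\epsilon^{2+\frac1{4N_0}}$.

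I expect the derivative bounds on the implicitly defined $\xi^*$ near the degenerate points of the resonant curve (where $\xi_j\to0$ or two frequencies coincide) to be the hardest part. Finally, integrating $\frac{d}{dt}(A-\text{boundary})$ over $[t_1,t_2]$ yields Proposition \ref{prop:highenergy}.
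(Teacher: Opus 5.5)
Your energy identity, symmetrization and Step 1 (normal form, the three sign configurations, homogeneous symbol bounds fed into Lemma~\ref{lem:sinfty}(3) and Lemma~\ref{lem:holder}) follow the paper's argument. One correction: the multiplier bound you need is $2^{(2N_0-\sigma)\tilde k_1}$, not $2^{2N_0\tilde k_1}$. The $2^{-\sigma\tilde k_1}$ gain is exactly what pays for the $4\beta$ lost derivatives ($2\beta$ from $B$ on the two top frequencies, $2\beta$ from $\mathscr B(u)$), which is where $\beta<\sigma/4$ enters.

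The gap is in Step 2. You propose to handle the resonant quartic piece by playing the small measure of a $\rho$-neighbourhood against the $\tilde S^\infty$ norm, but no such mechanism exists. Since $\|m\|_{\tilde S^\infty}\ge\sup|m|$, restricting a symbol of size $\Lambda$ to a thin set never brings its $\tilde S^\infty$ norm below $\Lambda$. Lemma~\ref{lem:sinfty}(3) in fact charges $(2^{\tilde k_1}/\rho)^{C}$ for the cutoff. Lemma~\ref{lem:holder}, with two factors in $L^\infty$, does not see support size at all. Any Fourier-side measure or counting bound would replace those $L^\infty$ norms by $\ell^1$-type norms and lose the $\epsilon\sim L^{-1/2}$ smallness. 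Outside the neighbourhood, dividing by $|\Omega^{(\sigma)}|\gtrsim\rho\,2^{(\sigma-1)\tilde k_1}$ also costs $\rho^{-1}$. So optimizing in $\rho$ drives $\rho$ to the dyadic scale, i.e.\ no cutoff at all. You are then left with a quartic term with symbol $B(\xi)\Psi(\xi^*)$, and you give no reason it is acceptable. Bounded naively, this symbol has order $2N_0+2\beta$ on the two top frequencies. It is then not even controlled by $\|u\|_{H^{N_0}}^2\epsilon^2$, and there is no Sobolev deficit for the interpolation $A^{1-\frac1{4N_0}}\epsilon^{\frac1{4N_0}}$ to exploit.

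The missing idea is a structural derivative gain of $\Psi$ on the resonant set.
\begin{itemize}
\item Nontrivial resonances occur only in the sectors where one sign differs from the other three (after flipping $\xi_2,\xi_4$). Take the representative sector $(-,+,+,+)$, where $|\xi_1|=\xi_2+\xi_3+\xi_4$.
\item There the resonant root $\xi_3^*(\xi_2,\xi_4)$ is globally unique, and concavity forces $0<\xi_3^*<\min\{\xi_2,\xi_4\}$.
\item Inserting this into the factorization of $\Psi$ gives $|\Psi(\xi^*)|\lesssim(\xi_2+\xi_3^*)(\xi_3^*+\xi_4)2^{(2N_0-2)\tilde k_1}\lesssim 2^{(2N_0-1)\tilde k_1}2^{\tilde k_3}$. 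One power of the top frequency is traded for the third-largest one.
\end{itemize}
The paper therefore takes $Q_1$ with symbol $B(\xi)\Psi(\xi^*)$ on the whole sector, with no cutoff and no normal form. It bounds $Q_1$ directly by Lemma~\ref{lem:holder}, with one top-frequency factor only in $H^{N_0-\frac12+\beta}$ (after inserting $\varphi_{\ge-10}$). Gagliardo--Nirenberg then interpolates that factor to $A^{1-\frac1{4N_0}}\epsilon^{\frac1{4N_0}}$.

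The remainder $Q_2$ has normal-form symbol $(\Psi(\xi)-\Psi(\xi^*))/(\Omega^{(\sigma)}(\xi)-\Omega^{(\sigma)}(\xi^*))$. This is a globally regular difference quotient in $\xi_3$, because $\partial_{\xi_3}\Omega^{(\sigma)}=\sigma[(\xi_2+\xi_3+\xi_4)^{\sigma-1}+\xi_3^{\sigma-1}]\gtrsim 2^{(\sigma-1)\tilde k_1}$. Its derivative bounds come from $|\xi^\alpha\partial_\xi^\alpha\xi_3^*|\lesssim\xi_3^*$. This global parametrization by $(\xi_2,\xi_4)$ also removes your concern about degenerate points of a local parametrization of the resonant set.
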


The proof of Proposition~\ref{prop:highenergy}, which forms the core of this section, will be presented later. 
Assuming this result, Theorem~\ref{thm:energy inequality} follows directly by combining it with the conservation of mass.

\begin{proof}[Proof of Theorem \ref{thm:energy inequality}]
In Proposition~\ref{prop:highenergy}, we derived an energy increment estimate 
for the high-frequency component. 
In the case $\sigma\in(1,2]$, using the smallness condition 
$\lambda^2 \epsilon^2(t)\ll 1$, we obtain 
 \begin{align*}
    A(t_2)&\leq A(t_1)+ \|u(t_1)\|_{H^{N_0}}^2+\frac12\|u(t_2)\|_{H^{N_0}}^2+\lambda^4A^2\int_{t_1}^{t_2}\epsilon^4(s)\dif s\\
    &\leq  2\|u(t_1)\|_{H^{N_0}}^2+\frac12\|u(t_2)\|_{H^{N_0}}^2+\lambda^4A^2\int_{t_1}^{t_2}\epsilon^4(s)\dif s.
\end{align*}

For the low-frequency part, we use the conservation of mass to deduce
\begin{align*}
    \|P_{\leq 4}u(t_2)\|_{H^{N_0}}^2&\lesssim \|u(t_2)\|_{L^2}^2=\|u(t_1)\|_{L^2}^2 \leq \|u(t_1)\|_{H^{N_0}}^2.
\end{align*}
 
Combining the high- and low-frequency estimates, we obtain
    \begin{align*}
        \|u(t_2)\|_{H^{N_0}}^2&\leq  A(t_2)+\|P_{\leq 4}u(t_2)\|_{H^{N_0}}^2\\
        &\leq   C\|u(t_1)\|_{H^{N_0}}^2+\frac12\|u(t_2)\|_{H^{N_0}}^2+\lambda^4A^2\int_{t_1}^{t_2}\epsilon^4(s)\dif s.
    \end{align*}
  Absorbing the term $\tfrac12\|u(t_2)\|_{H^{N_0}}^2$ into the left-hand side yields the desired estimate.

The case $\sigma\in(0,1)$ follows by the same argument.
\end{proof}

\subsection{ Proof of  Proposition \ref{prop:highenergy}}
 
Since $u$ solves \eqref{MMT}, a standard energy computation yields, for $t\in[T_1,T_2]$,
\begin{align*}
   i( A(t_2)-A(t_1))&=\int_{t_1}^{t_2}\langle i\partial_tJ^{N_0}u,J^{N_0}u\rangle+\langle J^{N_0}u, -i\partial_tJ^{N_0} u\rangle\dif t\\
    &=\int_{t_1}^{t_2} \langle \mathscr{B}(u,u,u),J^{2N_0}{u} \rangle- \langle J^{2N_0}u,\mathscr{B}({u},{u},u) \rangle\dif t.
\end{align*}
Here $\langle\cdot,\cdot\rangle$ denotes the standard complex $L^2$ inner product on $\mathbb{T}_L$. Passing to Fourier variables, we obtain
\begin{align*}
  |   A(t_2)-A(t_1)|&\lesssim\lambda^2\left|\int_{t_1}^{t_2} Q(u,\overline u,u,\overline u)\dif t\right|,
\end{align*}
where \begin{align*}
Q(u,\overline u,u,\overline u):=\frac{1}{L^{3}}\sum_{\xi_1-\xi_2+\xi_3-\xi_4=0}\widehat{u}_{\xi_1}\overline{\widehat{u}}_{\xi_2}\widehat{u}_{\xi_3}\overline{\widehat{u}}_{\xi_4}B_{\xi_1,\xi_2,\xi_3,\xi_4}(m_{N_0}(\xi_3)^2-m_{N_0}(\xi_4)^2).
\end{align*}
Here we recall  $B_{\xi_1,\xi_2,\xi_3,\xi_4}=|\xi_1|^\beta|\xi_2|^\beta|\xi_3|^\beta|\xi_4|^\beta.$ By symmetry under permutations of the indices, we rewrite $Q$ as
 \begin{align*}
Q(u,\overline u,u,\overline u)&=\frac{1}{2L^{3}}\sum_{\xi_1-\xi_2+\xi_3-\xi_4=0}\widehat{u}_{\xi_1}\overline{\widehat{u}}_{\xi_2}\widehat{u}_{\xi_3}\overline{\widehat{u}}_{\xi_4}B_{\xi_1,\xi_2,\xi_3,\xi_4}\\
&\quad\quad\quad\times(m_{N_0}(\xi_1)^2-m_{N_0}(\xi_2)^2+m_{N_0}(\xi_3)^2-m_{N_0}(\xi_4)^2).
\end{align*}

Then it suffices to establish an estimate for $Q$. 
We introduce the notation $$\varphi_{\underline{k}}(\xi):=\varphi_{k_1}(\xi_1)\varphi_{k_2}(\xi_2)\varphi_{k_3}(\xi_3)\varphi_{k_4}(\xi_4)$$ and let $\tilde{k}_1 \ge \tilde{k}_2 \ge \tilde{k}_3 \ge \tilde{k}_4$ 
denote the decreasing rearrangement of the set 
$\{k_1,k_2,k_3,k_4\} \subset \mathbb{Z}$.

Let $\epsilon' > 0$ be a small constant. 
For any $s \in [t_1,t_2]$ and $k \in \mathbb{Z}$, define
    \begin{align*}
        A_k(s):=\sum_{l\in\mathbb{Z}}2^{-\epsilon'|l-k| }\|P_lu(s)\|_{H^{N_0}}\sim \sum_{l\in\mathbb{Z}}2^{-\epsilon'|l-k|}\cdot 2^{\max\{l, 0\}N_0}\|P_lu(s)\|_{L^2}.
    \end{align*}
   By Young's inequality, for any $s\in[t_1,t_2]$,
    \begin{align*}
        \sum_{k\in\mathbb{Z}} |A_k(s)|^2\lesssim\|u(s)\|_{H^{N_0}}^2\lesssim A^2 .
    \end{align*}
In particular, for $\sigma \in (0,1)$, define
    \begin{align*}
        \tilde{A}_k(s):&=\sum_{l\in\mathbb{Z}}2^{-\epsilon'|l-k| }\|P_lP_{\geq-10}u(s)\|_{H^{N_0-\frac12+\beta}}\\
        &\sim \sum_{l\in\mathbb{Z}}2^{-\epsilon'|l-k|}\cdot 2^{\max\{l, 0\}\cdot (N_0-\frac12+\beta)}\|P_lP_{\geq-10}u(s)\|_{L^2}.
    \end{align*}
   By Young's inequality and the Gagliardo--Nirenberg inequality, for any $s\in[t_1,t_2]$ and $\beta \in (0,\tfrac14)$,
    \begin{align*}
        \sum_{k\in\mathbb{Z}} |\tilde{A}_k(s)|^2&\lesssim\|P_{\geq-10}u(s)\|_{H^{N_0-\frac12+\beta}}^2\lesssim  \| u(s)\|_{\dot{H}^{N_0-\frac14}}^2\\
        &\lesssim \| u(s)\|_{{H}^{N_0 }}^{2(1-\frac1{4N_0-2})} \| u(s)\|_{L^\infty}^{ 2\frac1{4N_0-2}}\lesssim A^{2(1-\frac1{4N_0})}\epsilon^{2\frac1{4N_0}},
    \end{align*}
where the implicit constant is independent of the size of the torus $L$.

  By the same argument as \cite[Lemma 5.2, (5.20)]{DIP25a}, 
to obtain the desired bound in Proposition~\ref{prop:highenergy}, 
it suffices to prove that \begin{align}
2^{\epsilon'(\max\{\tilde k_3,0\}-\min\{\tilde k_4,0\})}&\left|\int_{t_1}^{t_2} Q(P_{k_1}u,P_{k_2}\overline u,P_{k_3}u,P_{k_4}\overline u)\dif t\right|\notag\\
&\lesssim [A_l(t_1)\epsilon(t_1)]^2+[A_l(t_2)\epsilon(t_2)]^2+\lambda^2\int_{t_1}^{t_2}A_l^2(s)\epsilon^4(s)\dif s\notag\\
&\quad+1_{\{\sigma\in(0,1)\}}\int_{t_1}^{t_2}\tilde{A}_l(s)A_l(s)\epsilon^2(s)\dif s,\label{bd:9.2mid}
\end{align}
for any $(k_1,k_2,k_3,k_4)\in\mathbb{Z}_l:=\{(k_1,k_2,k_3,k_4)\in\mathbb{Z}^4,l=\max\{k_1,k_2,k_3,k_4\} \}$.  Here we recall that $P_k$ are the  Littlewood-Paley projections.

In what follows, we use the shorthand

\begin{align}
   \Psi(\xi)&:=  \Psi(\xi_1,\xi_2,\xi_3,\xi_4)=m_{N_0}(\xi_1)^2-m_{N_0}(\xi_2)^2+m_{N_0}(\xi_3)^2-m_{N_0}(\xi_4)^2,\notag\\
   \Omega^{(\sigma)}(\xi)&:= \Omega^{(\sigma)}(\xi_1,\xi_2,\xi_3,\xi_4)=|\xi_1|^\sigma-|\xi_2|^\sigma+|\xi_3|^\sigma-|\xi_4|^\sigma,\label{def:psiphi}
\end{align}
and let \begin{align*}
Q^{(\sigma)}(u,\overline u,u,\overline u):=\frac{1}{L^{3}}\sum_{\xi_1-\xi_2+\xi_3-\xi_4=0}\widehat{u}_{\xi_1}\overline{\widehat{u}}_{\xi_2}\widehat{u}_{\xi_3}\overline{\widehat{u}}_{\xi_4}B_{\xi_1,\xi_2,\xi_3,\xi_4}\frac{\Psi(\xi)}{\Omega^{(\sigma)}(\xi)}.
\end{align*}
By writing 
\begin{align*}
    Q^{(\sigma)}&(P_{k_1}u,P_{k_2}\overline u,P_{k_3}u,P_{k_4}\overline u)(t_2)- Q^{(\sigma)}(P_{k_1}u,P_{k_2}\overline u,P_{k_3}u,P_{k_4}\overline u)(t_1)\\
    &=\int_{t_1}^{t_2}\partial_tQ^{(\sigma)}(P_{k_1}u,P_{k_2}\overline u,P_{k_3}u,P_{k_4}\overline u)(s)\dif s,
\end{align*}
together with the fact that $u$  is the
solution to 
\eqref{MMT}, we obtain 
\begin{align}
& \left| \int_{t_1}^{t_2}Q (P_{k_1}u,P_{k_2}\overline u,P_{k_3}u,P_{k_4}\overline u)(s)\dif s\right|\notag\\
  &\leq | Q^{(\sigma)}(P_{k_1}u,P_{k_2}\overline u,P_{k_3}u,P_{k_4}\overline u)(t_2)- Q^{(\sigma)}(P_{k_1}u,P_{k_2}\overline u,P_{k_3}u,P_{k_4}\overline u)(t_1)|\notag\\
 &+  \bigg{|}\int_{t_1}^{t_2}Q^{(\sigma)}  (P_{k_1}\mathscr{B}(u,u,u),P_{k_2}\overline u,P_{k_3}u,P_{k_4}\overline u)(s)-Q^{(\sigma)}  (P_{k_1}u,P_{k_2}\mathscr{B}(\overline u,\overline u,\overline u),P_{k_3}u,P_{k_4}\overline u)(s)\notag\\
 &\quad +Q^{(\sigma)}  (P_{k_1} u,P_{k_2}\overline u,P_{k_3}\mathscr{B}(u,u,u),P_{k_4}\overline u)(s)-Q^{(\sigma)}  (P_{k_1} u ,P_{k_2}\overline u,P_{k_3}u,P_{k_4}\mathscr{B}(\overline u,\overline u,\overline u))(s)\dif s\bigg{|}.\label{bd:exp:Q}
\end{align}

We note that similar expressions appear in \cite[Section 5]{DIP25a} 
and \cite[Section 3.1]{MMT97}. 
In what follows, we estimate each term in \eqref{bd:exp:Q}, 
distinguishing between the cases $\sigma \in (0,1)$ and $\sigma \in (1,2]$.

\subsubsection{Case 1: $\sigma=2$.}

Although this case is closely related to the range $\sigma\in(1,2)$
considered below, we present a separate and more detailed argument for
clarity. Under the constraint
\[
    \xi_1-\xi_2+\xi_3-\xi_4=0,
\]
we have
\begin{align*}
    \Omega^{(2)}(\xi)
    =2(\xi_3-\xi_4)(\xi_4-\xi_1).
\end{align*}

We first consider the boundary terms in \eqref{bd:exp:Q}, which are of
the form
\begin{align*}
&Q^{(2)}
  (P_{k_1}u,P_{k_2}\overline u,P_{k_3}u,P_{k_4}\overline u)
\\
&\qquad
=\frac{1}{L^{3}}
\sum_{\xi_1-\xi_2+\xi_3-\xi_4=0}
\widehat{u}_{\xi_1}
\overline{\widehat{u}}_{\xi_2}
\widehat{u}_{\xi_3}
\overline{\widehat{u}}_{\xi_4}
B_{\xi_1,\xi_2,\xi_3,\xi_4}
\frac{\Psi(\xi)}{\Omega^{(2)}(\xi)}
\varphi_{\underline k}(\xi).
\end{align*}
The quotient $\Psi/\Omega^{(2)}$ is understood through the smooth
extension constructed below.

Using the constraint
$\xi_1-\xi_2+\xi_3-\xi_4=0$ and applying the mean value theorem twice,
we obtain, for $N_0\geq10$,
\begin{align}
\Psi(\xi)
&=(\xi_1-\xi_2)
  \int_0^1
  (m_{N_0}^2)'
  \bigl(\theta\xi_1+(1-\theta)\xi_2\bigr)
  \dif\theta 
-(\xi_4-\xi_3)
  \int_0^1
  (m_{N_0}^2)'
  \bigl(\theta\xi_4+(1-\theta)\xi_3\bigr)
  \dif\theta
\notag\\
&=(\xi_1-\xi_2)
  \int_0^1
  \Big[
      (m_{N_0}^2)'
      \bigl(\xi_2+\theta(\xi_1-\xi_2)\bigr) 
      -(m_{N_0}^2)'
      \bigl(\xi_3+\theta(\xi_1-\xi_2)\bigr)
  \Big]
  \dif\theta
\notag\\
&=(\xi_1-\xi_2)(\xi_2-\xi_3)
  \int_0^1\int_0^1
  (m_{N_0}^2)''
  \bigl(
      \eta\xi_2+(1-\eta)\xi_3
      +\theta(\xi_1-\xi_2)
  \bigr)
  \dif\eta\dif\theta
\notag\\
&=(\xi_1-\xi_2)(\xi_2-\xi_3)
  \int_0^1\int_0^1
  (m_{N_0}^2)''
  \Big(
      (1-\theta)\eta\xi_2
      +(1-\theta)(1-\eta)\xi_3
\notag\\
&\hspace{5.5cm}
      +\eta\theta\xi_1
      +\theta(1-\eta)\xi_4
  \Big)
  \dif\eta\dif\theta.
\label{bd:phi=}
\end{align}
 Hence, 
\begin{align*}
\frac{\Psi(\xi)}{\Omega^{(2)}(\xi)}
&=
\frac12
\int_0^1\int_0^1
(m_{N_0}^2)''
\Big(
    (1-\theta)\eta\xi_2
    +(1-\theta)(1-\eta)\xi_3
\\
&\hspace{4.6cm}
    +\eta\theta\xi_1
    +\theta(1-\eta)\xi_4
\Big)
\dif\eta\dif\theta.
\end{align*}
The right-hand side provides a smooth extension of
$\Psi/\Omega^{(2)}$ across the resonant set.

Together with the bound
\[
    |(m_{N_0}^2)''(\xi)|
    \lesssim |\xi|^{2N_0-2},
\]
this gives 
\begin{align}
\left|
    \varphi_{\underline{k}}(\xi)
    \frac{\Psi(\xi)}{\Omega^{(2)}(\xi)}
\right|
\lesssim
2^{(2N_0-2)\widetilde{k}_1}.
\notag
\end{align}
It follows that
\begin{align}
&\left|
B_{\xi_1,\xi_2,\xi_3,\xi_4}
\frac{\Psi(\xi)}{\Omega^{(2)}(\xi)}
\varphi_{\underline k}(\xi)
\right| 
\lesssim
2^{(2N_0-2)\widetilde{k}_1}
2^{\beta(
    \widetilde{k}_1+\widetilde{k}_2
    +\widetilde{k}_3+\widetilde{k}_4)}
\lesssim
2^{(2N_0-2+2\beta)\widetilde{k}_1}
2^{\beta(\widetilde{k}_3+\widetilde{k}_4)}.
\notag
\end{align}

We next verify the corresponding derivative bounds. Since
$(m_{N_0}^2)''$ has polynomial growth, for every derivative order
needed below,
\[
    \big|
        \xi^\alpha
        \partial_\xi^\alpha
        (m_{N_0}^2)''
    \big|
    \lesssim
    |\xi|^{2N_0-2}.
\]
Denoting the affine expression in \eqref{bd:phi=} by
$X_{\eta,\theta}(\xi)$, the chain rule gives
\begin{align*}
\left|
\xi_1\partial_{\xi_1}
\left[
    \frac{\Psi}{\Omega^{(2)}}
\right]
\right|
&\lesssim
\int_0^1\int_0^1
|\eta\theta\xi_1|\,
\left|
    (m_{N_0}^2)'''
    \bigl(X_{\eta,\theta}(\xi)\bigr)
\right|
\dif\eta\dif\theta
\\
&\lesssim
\int_0^1\int_0^1
|\eta\theta\xi_1|\,
|X_{\eta,\theta}(\xi)|^{2N_0-3}
\dif\eta\dif\theta \lesssim
2^{(2N_0-2)\widetilde{k}_1}.
\end{align*}
By repeated application of the chain rule, the same argument yields,
for every relevant multi-index $\alpha$,
\begin{align}
\left|
\xi^\alpha\partial_{\xi}^\alpha
\left[
    \frac{\Psi}{\Omega^{(2)}}
\right]
\right|
\lesssim
2^{(2N_0-2)\widetilde{k}_1}.
\label{bd:example}
\end{align}

Each localized factor
$|\xi_j|^\beta\varphi_{k_j}(\xi_j)$ satisfies the corresponding
symbol estimates. Hence, for every relevant multi-index $\alpha$,
\begin{align*}
&\left|
\xi^\alpha\partial_{\xi}^\alpha
\left(
B_{\xi_1,\xi_2,\xi_3,\xi_4}
\frac{\Psi}{\Omega^{(2)}}
\varphi_{\underline k}(\xi)
\right)
\right| 
\lesssim
2^{(2N_0-2+2\beta)\widetilde{k}_1}
2^{\beta(\widetilde{k}_3+\widetilde{k}_4)}.
\end{align*}
By Lemma~\ref{lem:sinfty}, it follows that
\begin{align}
\left\|
B_{\xi_1,\xi_2,\xi_3,\xi_4}
\frac{\Psi}{\Omega^{(2)}}
\varphi_{\underline k}(\xi)
\right\|_{\widetilde S^\infty}
\lesssim
2^{(2N_0-2+2\beta)\widetilde{k}_1}
2^{\beta(\widetilde{k}_3+\widetilde{k}_4)}.
\label{bd:bpsi/phi2}
\end{align}

By Lemma~\ref{lem:holder}, for $\epsilon'>0$ sufficiently small,
we obtain
\begin{align*}
&2^{\epsilon'
   (\max\{\widetilde k_3,0\}
    -\min\{\widetilde k_4,0\})}
\left|
Q^{(2)}
(P_{k_1}u,P_{k_2}\overline u,
 P_{k_3}u,P_{k_4}\overline u)(s)
\right|
\\
&\quad
\lesssim
A_{\widetilde k_1}(s)
A_{\widetilde k_2}(s)
\epsilon^2(s)
2^{\beta\widetilde{k}_3
   -(2-\epsilon')\max\{\widetilde{k}_3,0\}} 
2^{\beta\widetilde{k}_4
   -2\max\{\widetilde{k}_4,0\}
   -\epsilon'\min\{\widetilde k_4,0\}}.
\end{align*}
Here we used the fact that the frequency constraint implies $ 2^{\widetilde k_1}
    \sim
    2^{\widetilde k_2}.$ 
Consequently, with
$l=\widetilde k_1=\max\{k_1,k_2,k_3,k_4\}$, the slow variation of the
frequency envelope gives
\[
    A_{\widetilde k_2}(s)\lesssim A_l(s).
\]
Choosing $0<\epsilon'<\beta$ sufficiently small, all the remaining
dyadic factors are bounded. Evaluating the above estimate at
$s=t_1$ and $s=t_2$, we obtain the first two terms on the right-hand
side of \eqref{bd:9.2mid}. 

Next, we consider the space-time integrals in \eqref{bd:exp:Q}.  
By symmetry, it suffices to focus on the first term:
 \begin{align*}
     \int_{t_1}^{t_2}Q^{(2)}  (P_{k_1}\mathscr{B}(u,u,u),P_{k_2}\overline u,P_{k_3}u,P_{k_4}\overline u)\dif s.
 \end{align*}
 We denote $\mathscr{B}(u):=\mathscr{B}(u,u,u)$, and establish the following estimates:
 \begin{align}
    2^{(N_0-2\beta)\max\{k_1,0\}} \|P_{k_1}  \mathscr{B}(u)\|_{L^2}\lesssim   \lambda^2 A_{k_1}\epsilon^2,\ \
   \|P_{k_1}  \mathscr{B}(u)\|_{L^\infty}\lesssim \lambda^22^{\beta k_1}  \epsilon^3,\label{bd:pkbu}
 \end{align}
 where we suppress the time dependence and write $\epsilon(t)=\epsilon$.
 
We first consider $k \geq 0$ and $p \in \{2, \infty\}$. Then we have
 \begin{align}
     \|P_k(uv)\|_{L^p}&\lesssim \|P_k(P'_kuP_{\leq k-4}v)\|_{L^p}+ \|P_k(P_{\leq k-4}uP'_kv)\|_{L^p}+\sum_{a,b\geq k-4,|a-b|\leq 8}\|P_k(P_auP_bv)\|_{L^p}\notag\\
     &\lesssim \sum_{a\geq k-4}\|P_au\|_{L^p}\|v\|_{\widetilde{W}^0}+\sum_{a\geq k-4}\|P_av\|_{L^p}\|u\|_{\widetilde{W}^0},\label{bd:pkuv}
 \end{align}
 where we place the low frequency terms  in $L^\infty$ and place the high frequency terms in $L^p$. We recall the definition of $\widetilde{W}^{0}$ in Section \ref{sec:pre}.
In particular, by \eqref{bd:thm9.1} we have 
 \begin{align*}
      \|P_k(|\nabla|^\beta u\cdot |\nabla|^\beta \overline u)\|_{L^p}\lesssim \sum_{a\geq k-4}\|P_a|\nabla|^\beta u\|_{L^p}\epsilon.
 \end{align*}
 
 This implies that
 \begin{align*}
    \|\ ||\nabla|^\beta u|^2\|_{\widetilde{W}^1}&\lesssim \epsilon^2+\sum_{k\geq0}2^k\sum_{a\geq k-4}\|P_a|\nabla|^\beta u\|_{L^\infty}\epsilon\\
    &\lesssim \epsilon^2+\sum_{a\geq-4}\sum_{0\leq k\leq a+4} 2^k \|P_a|\nabla|^\beta u\|_{L^\infty}\epsilon\lesssim \epsilon^2.
 \end{align*}

Then, returning to \eqref{bd:pkuv}, we obtain
\begin{align}
     \|P_k(||\nabla|^\beta u|^2|\nabla|^\beta u)\|_{L^p}
     &\lesssim \sum_{a\geq k-4}\|P_a||\nabla|^\beta u|^2\|_{L^p}\epsilon+\sum_{a\geq k-4}\|P_a|\nabla|^\beta u\|_{L^p}\epsilon^2 \notag\\
     &\lesssim \sum_{a\geq k-4}\sum_{b\geq a-4}\|P_b|\nabla|^\beta u\|_{L^p}\epsilon^2.\notag
 \end{align}
 
This implies that
\begin{align*}
    2^{(N_0-\beta)k}\|P_{k}(||\nabla|^\beta u|^2|\nabla|^\beta u)\|_{L^2}&\lesssim 2^{(N_0-\beta)k}\sum_{a\geq k-4}\sum_{b\geq a-4}2^{-b(N_0-\beta)}\|P_b  u\|_{H^{N_0}}\epsilon^2\\
    &\lesssim \sum_{b\geq k-8}\sum_{b+4\geq a\geq k-4}2^{(N_0-\beta)(k-b)}\|P_b  u\|_{H^{N_0}}\epsilon^2\\
    &\lesssim \sum_{b\geq k-8}2^{(N_0-\beta-\epsilon'')(k-b)}\|P_b  u\|_{H^{N_0}}\epsilon^2\lesssim A_k\epsilon^2,\\
     \|P_{k}(||\nabla|^\beta u|^2|\nabla|^\beta u)\|_{L^\infty}&\lesssim \sum_{a\geq k-4}\sum_{b\geq a-4}2^{b\beta}\|P_b  u\|_{L^\infty}\epsilon^2\\
    &\lesssim \sum_{b\geq k-8}(b+8-k)2^{b\beta}\|P_b  u\|_{L^\infty}\epsilon^2\lesssim \epsilon^3.
\end{align*}

By \eqref{bd:thm9.1} and the definition of $\mathscr{B}(u)$, we have for $k_1\geq0$
\begin{align*}
2^{(N_0-2\beta)k_1} \|P_{k_1}  \mathscr{B}(u)\|_{L^2}
&\lesssim 
\lambda^2 2^{(N_0-\beta)k_1}
\|P_{k_1}(\big||\nabla|^\beta u\big|^2\,|\nabla|^\beta u)\|_{L^2}\lesssim 
\lambda^2 A_{k_1}\epsilon^2,
\end{align*}
and
  \begin{align*}
   \|P_{k_1}  \mathscr{B}(u)\|_{L^\infty}\lesssim \lambda^22^{\beta k_1}  \epsilon^3.
 \end{align*}
This proves \eqref{bd:pkbu} for $k_1\geq 0$.

For $k_1<0$, we argue similarly. For $p\in\{2,\infty\}$, we have
\begin{align*}
  \|  P_{\leq 0}(uv)\|_{L^p}\lesssim \sum_{a,b\geq0,|a-b|\leq 8}\|P_0^*(P_a^*uP_b^*v)\|_{L^p}\lesssim \sum_{a \geq0}\|P_a^*u\|_{L^p}\|v\|_{\widetilde{W}^0}+\sum_{a \geq0}\|P_a^*v\|_{L^p}\|u\|_{\widetilde{W}^0},
\end{align*}
where we define $P_0^*=P_{\leq 0},P_a^*=P_a$ for $a\geq1$. Since no frequency growth occurs in the low-frequency regime, 
the same argument as above applies and yields \eqref{bd:pkbu} for $k_1<0$.

 Then, with \eqref{bd:pkbu} in hand, for $\epsilon'>0$ sufficiently small, it follows from Lemma~\ref{lem:holder} and \eqref{bd:bpsi/phi2} that
 \begin{align*}
    & 2^{\epsilon'(\max\{\tilde k_3,0\}-\min\{\tilde k_4,0\})}|  Q^{(2)}  (P_{k_1}\mathscr{B}(u),P_{k_2}\overline u,P_{k_3}u,P_{k_4}\overline u)(s)|\\
     &\quad\lesssim \lambda^2 A_{\tilde k_1}(s)A_{\tilde k_2}(s)\epsilon^4(s)2^{\beta \tilde{k}_3-2\max\{\tilde{k}_3,0\}}\cdot 2^{\beta \tilde{k}_4-2\max\{\tilde{k}_4,0\}-\epsilon' \min\{\tilde k_4,0\}}\cdot   2^{(4\beta-2)\max\{\tilde{k}_1,0\}}.
 \end{align*}
    By using the fact that $\beta\in(0,\frac12)$, we obtain the third term on the right side of \eqref{bd:9.2mid}.

 \subsubsection{Case 2: $\sigma\in(1,2)$.}
 In this case, the estimate is similar to Case 1. It suffices to establish an analogue of \eqref{bd:bpsi/phi2} 
for the regime $\sigma \in (1,2)$. 
However, the analysis is slightly more involved and requires a case-by-case discussion.

We start from the resonance condition
$\xi_1 - \xi_2 + \xi_3 - \xi_4 = 0.$
For convenience, we perform the change of variables 
$\xi_2 \mapsto -\xi_2$ and $\xi_4 \mapsto -\xi_4$. 
Under this transformation, the expressions in \eqref{def:psiphi} remain unchanged, 
and the resonance condition becomes
\begin{align*}
\xi_1 + \xi_2 + \xi_3 + \xi_4 = 0.
\end{align*}
We note that if $\xi_i = 0$ for some $i \in \{1,2,3,4\}$, 
then $Q^{(\sigma)}$ vanishes due to the factor 
$B_{\xi_1,\xi_2,\xi_3,\xi_4} = |\xi_1|^\beta|\xi_2|^\beta|\xi_3|^\beta|\xi_4|^\beta$. 
Hence, in the sequel we assume $\xi_i \neq 0$ for all $i$.

Let $\iota_i = \mathrm{sgn}(\xi_i)$ and denote by 
$\iota = (\iota_1,\iota_2,\iota_3,\iota_4)$ 
the associated sign pattern. 
It is clear that there is no solution when 
$\iota = (+,+,+,+)$ or $\iota = (-,-,-,-)$, 
since in that case the identity 
$\xi_1+\xi_2+\xi_3+\xi_4=0$ cannot hold.

Taking symmetry into account, it remains to consider the following three cases:

\textbf{Case 2.1: $\sigma\in(1,2)$ and
$\iota=(-,-,+,+)$.}
We replace $\xi_1$ and $\xi_2$ by $-\xi_1$ and $-\xi_2$,
respectively, so that all frequencies are nonnegative.
Under this convention, the resonance condition becomes
\[
    \xi_1+\xi_2=\xi_3+\xi_4.
\]
By simultaneously exchanging $(\xi_1,\xi_4)$ and
$(\xi_3,\xi_2)$ if necessary, we may assume that $ \xi_1-\xi_4=\xi_3-\xi_2>0.$
Then
\begin{align}
|\Omega^{(\sigma)}|
&=\xi_1^\sigma-\xi_4^\sigma
  +\xi_3^\sigma-\xi_2^\sigma
\notag\\
&=(\xi_1-\xi_4)
  \int_0^1
  \sigma\bigl(\theta\xi_1+(1-\theta)\xi_4\bigr)^{\sigma-1}
  \dif\theta 
 +(\xi_3-\xi_2)
  \int_0^1
  \sigma\bigl(\theta\xi_3+(1-\theta)\xi_2\bigr)^{\sigma-1}
  \dif\theta
\notag\\
&=(\xi_3-\xi_2)
\left[
  \int_0^1
  \sigma\bigl(\theta\xi_1+(1-\theta)\xi_4\bigr)^{\sigma-1}
  \dif\theta 
 +\int_0^1
  \sigma\bigl(\theta\xi_3+(1-\theta)\xi_2\bigr)^{\sigma-1}
  \dif\theta
\right].
\label{bd:omega2.2}
\end{align}

Since all frequencies are nonnegative and $\sigma-1>0$, we have
\begin{align*}
&\int_0^1
\sigma\bigl(\theta\xi_1+(1-\theta)\xi_4\bigr)^{\sigma-1}
\dif\theta
\geq
\int_0^1\sigma(\theta\xi_1)^{\sigma-1}\dif\theta
=\xi_1^{\sigma-1},
\\
&\int_0^1
\sigma\bigl(\theta\xi_1+(1-\theta)\xi_4\bigr)^{\sigma-1}
\dif\theta
\geq
\int_0^1\sigma\bigl((1-\theta)\xi_4\bigr)^{\sigma-1}
\dif\theta
=\xi_4^{\sigma-1}.
\end{align*}
Consequently,
\begin{align*}
&\int_0^1
\sigma\bigl(\theta\xi_1+(1-\theta)\xi_4\bigr)^{\sigma-1}
\dif\theta
\geq
\frac12\bigl(
\xi_1^{\sigma-1}+\xi_4^{\sigma-1}
\bigr).
\end{align*}
Applying the same argument to the second integral in
\eqref{bd:omega2.2}, we obtain
\begin{align}
|\Omega^{(\sigma)}|
&\gtrsim_\sigma
(\xi_1^{\sigma-1}
 +\xi_2^{\sigma-1}
 +\xi_3^{\sigma-1}
 +\xi_4^{\sigma-1})
(\xi_3-\xi_2)
\notag\\
&\gtrsim_\sigma
(\xi_1+\xi_2+\xi_3+\xi_4)^{\sigma-1}
(\xi_3-\xi_2).
\label{eq:omega-lower-case21}
\end{align}
In the last inequality, we used $0<\sigma-1<1$.

After the above changes of variables, the representation
\eqref{bd:phi=} gives
\begin{align}
\left|
\varphi_{\underline{k}}(\xi)\Psi(\xi)
\right|
\lesssim
\varphi_{\underline{k}}(\xi)
|(\xi_1+\xi_2)(\xi_2-\xi_3)|
2^{(2N_0-2)\tilde{k}_1}.\notag
\end{align}  Thus,
\begin{align}
\left|
\varphi_{\underline{k}}(\xi)
\frac{\Psi(\xi)}{\Omega^{(\sigma)}(\xi)}
\right|
&\lesssim
\varphi_{\underline{k}}(\xi)
\frac{
(\xi_1+\xi_2)
2^{(2N_0-2)\tilde{k}_1}
}{
(\xi_1+\xi_2+\xi_3+\xi_4)^{\sigma-1}
} \lesssim
2^{(2N_0-\sigma)\tilde{k}_1}.\notag
\end{align}
Here we used the fact that, on the support of
$\varphi_{\underline{k}}$, $ \xi_1+\xi_2+\xi_3+\xi_4
    \sim 2^{\tilde{k}_1}.$ 

We next establish the corresponding derivative bounds. Define
\begin{align*}
G(\xi)
:=
\frac{\Omega^{(\sigma)}(\xi)}{\xi_3-\xi_2}.
\end{align*}
By \eqref{bd:omega2.2},
\begin{align*}
G(\xi)
&=
\int_0^1
\sigma\bigl(\theta\xi_1+(1-\theta)\xi_4\bigr)^{\sigma-1}
\dif\theta +
\int_0^1
\sigma\bigl(\theta\xi_3+(1-\theta)\xi_2\bigr)^{\sigma-1}
\dif\theta.
\end{align*} We know that
\begin{align}
\left|
\xi_1\partial_{\xi_1}
\bigl[
(\theta\xi_1+(1-\theta)\xi_4)^{\sigma-1}
\bigr]
\right|
&\lesssim
\theta\xi_1
(\theta\xi_1+(1-\theta)\xi_4)^{\sigma-2}
\notag\\
&\lesssim
(\theta\xi_1+(1-\theta)\xi_4)^{\sigma-1}.\notag
\end{align}
The same estimate holds for derivatives with respect to the other
frequency variables. Repeated application of the chain rule therefore
gives, for every relevant multi-index $\alpha$,
\begin{align}
\left|
\xi^\alpha\partial_\xi^\alpha G(\xi)
\right|
\lesssim_\alpha G(\xi).
\label{eq:G-symbol-case21}
\end{align}

 Then, we introduce the Fa\`a di Bruno's formula:
\begin{align*}
    \frac{d^n}{dx^n} f(g(x)) = \sum C_{m_1,m_2,...,m_n,n} f^{(m_1 + \cdots + m_n)}(g(x)) \cdot \prod_{j=1}^{n} \bigl(g^{(j)}(x)\bigr)^{m_j},
\end{align*}
where the sum is over all $n$-tuples of nonnegative integers 
$(m_{1},\ldots ,m_{n})$ satisfying the constraint
$1\cdot m_{1}+2\cdot m_{2}+3\cdot m_{3}+\cdots +n\cdot m_{n}=n.$ 
Using \eqref{eq:G-symbol-case21}, we conclude that
\begin{align}
\left|
\xi^\alpha\partial_\xi^\alpha
\left[
G(\xi)^{-1}
\right]
\right|
\lesssim_\alpha
G(\xi)^{-1}.
\label{eq:G-inverse-symbol-case21}
\end{align}

On the other hand, using \eqref{bd:phi=} after the above changes of
variables and applying the chain rule as in \eqref{bd:example}, we
obtain
\begin{align}
\left|
\xi^\alpha\partial_\xi^\alpha
\left[
\varphi_{\underline{k}}(\xi)
\frac{\Psi(\xi)}{\xi_3-\xi_2}
\right]
\right|
\lesssim
2^{(2N_0-1)\tilde{k}_1}.
\label{eq:psi-divided-symbol-case21}
\end{align}
Indeed, after cancellation of $\xi_3-\xi_2$, the remaining expression
contains one factor of size at most $2^{\tilde{k}_1}$ and one factor
of $(m_{N_0}^2)''$, whose size is bounded by
$2^{(2N_0-2)\tilde{k}_1}$.

Combining \eqref{eq:omega-lower-case21},
\eqref{eq:G-inverse-symbol-case21}, and
\eqref{eq:psi-divided-symbol-case21}, and also using the standard
derivative bounds for the Littlewood--Paley cutoffs, we conclude that
\begin{align}
\left|
\xi^\alpha\partial_\xi^\alpha
\left[
\varphi_{\underline{k}}(\xi)
\frac{\Psi(\xi)}{\Omega^{(\sigma)}(\xi)}
\right]
\right|
&=
\left|
\xi^\alpha\partial_\xi^\alpha
\left[
\varphi_{\underline{k}}(\xi)
\frac{\Psi(\xi)}{\xi_3-\xi_2}
\left(
\frac{\Omega^{(\sigma)}(\xi)}
     {\xi_3-\xi_2}
\right)^{-1}
\right]
\right| \lesssim
2^{(2N_0-\sigma)\tilde{k}_1}.\notag
\end{align}

\textbf{Case 2.2: $\sigma\in(1,2)$ and
$\iota=(-,+,+,+)$.}
We replace $\xi_1$ by $-\xi_1$, so that all frequencies are
nonnegative. Under this convention, the resonance condition becomes
\[
    \xi_1=\xi_2+\xi_3+\xi_4.
\]
We use the elementary inequality
\[
    a^\sigma-b^\sigma
    \geq a^{\sigma-1}(a-b),
    \qquad a>b>0.
\] 

By symmetry between $\xi_2$ and $\xi_4$, we may assume that
$\xi_2\geq\xi_4$. We then have
\begin{align*}
\Omega^{(\sigma)}
&=(\xi_2+\xi_3+\xi_4)^\sigma
  -\xi_2^\sigma+\xi_3^\sigma-\xi_4^\sigma
\\
&\geq
(\xi_2+\xi_3+\xi_4)^{\sigma-1}
(\xi_3+\xi_4)-\xi_4^\sigma
\\
&=
\Big(
(\xi_2+\xi_3+\xi_4)^{\sigma-1}
-\xi_4^{\sigma-1}
\Big)\xi_4
+
(\xi_2+\xi_3+\xi_4)^{\sigma-1}\xi_3.
\end{align*}
Since $\xi_2\geq\xi_4$, we have  
$
     \xi_2+\xi_3+\xi_4\geq2\xi_4,
$
and hence
$
    \xi_4^{\sigma-1}
    \leq
    2^{1-\sigma}\xi_1^{\sigma-1}.
$
Therefore, setting
$
    \epsilon_\sigma:=1-2^{1-\sigma}>0,
$
we obtain
\begin{align}
\Omega^{(\sigma)}
&\geq
\epsilon_\sigma
(\xi_2+\xi_3+\xi_4)^{\sigma-1}\xi_4
+
(\xi_2+\xi_3+\xi_4)^{\sigma-1}\xi_3 \geq
\epsilon_\sigma
\xi_1^{\sigma-1}(\xi_3+\xi_4).
\label{eq:omega-lower-case22}
\end{align}

After the above changes of variables, the representation
\eqref{bd:phi=} yields
\begin{align}
\left|
\varphi_{\underline k}(\xi)\Psi(\xi)
\right|
\lesssim
(\xi_2+\xi_3)(\xi_3+\xi_4)
2^{(2N_0-2)\tilde k_1}.\notag
\end{align}  Moreover,
\begin{align}
\left|
\varphi_{\underline{k}}(\xi)
\frac{\Psi(\xi)}{\Omega^{(\sigma)}(\xi)}
\right|
&\lesssim
\frac{
(\xi_2+\xi_3)(\xi_3+\xi_4)
2^{(2N_0-2)\tilde{k}_1}
}{
\xi_1^{\sigma-1}(\xi_3+\xi_4)
}  \lesssim
2^{(2N_0-\sigma)\tilde{k}_1}.\notag
\end{align} 

We next establish the corresponding derivative bounds. We first claim
that, for $i=2,3,4$,
\begin{align}
    \left|
    \xi_i\partial_{\xi_i}\Omega^{(\sigma)}
    \right|
    \lesssim
    \Omega^{(\sigma)}.\notag
\end{align}
Since the roles of $\xi_2$ and $\xi_4$ are symmetric, it suffices to
consider $i=2,3$. We compute
\begin{align*}
\frac1\sigma
\xi_2\partial_{\xi_2}\Omega^{(\sigma)}
&=
\xi_2(\xi_2+\xi_3+\xi_4)^{\sigma-1}
-\xi_2^\sigma
\\
&\leq
(\xi_2+\xi_3+\xi_4)^\sigma
-\xi_2^\sigma+\xi_3^\sigma-\xi_4^\sigma,
\\
\frac1\sigma
\xi_3\partial_{\xi_3}\Omega^{(\sigma)}
&=
\xi_3(\xi_2+\xi_3+\xi_4)^{\sigma-1}
+\xi_3^\sigma
\\
&\leq
(\xi_2+\xi_3+\xi_4)^\sigma
-\xi_2^\sigma+\xi_3^\sigma-\xi_4^\sigma.
\end{align*}
For the second inequality, we used
\begin{align*}
    \xi_2^\sigma+\xi_4^\sigma
    \leq
    (\xi_2+\xi_4)
    (\xi_2+\xi_3+\xi_4)^{\sigma-1}.
\end{align*} 
The estimate for $i=4$ follows by symmetry.

For higher-order derivatives, we use the mean-value representation
\begin{align}
\frac1\sigma
\partial_{\xi_2}\Omega^{(\sigma)}
&=
(\xi_2+\xi_3+\xi_4)^{\sigma-1}
-\xi_2^{\sigma-1}
\notag\\
&=
(\xi_3+\xi_4)
\int_0^1
(\sigma-1)
\bigl(
\xi_2+\theta(\xi_3+\xi_4)
\bigr)^{\sigma-2}
\,\dif\theta.\notag
\end{align}
Since each component in the expression are of power law,  by the application of the chain rule gives, for every relevant
multi-index $\alpha$,
\begin{align}
\left|
\xi_2\xi^\alpha
\partial_\xi^\alpha
\left[
\partial_{\xi_2}\Omega^{(\sigma)}
\right]
\right|
\lesssim_\alpha
\left|
\xi_2\partial_{\xi_2}\Omega^{(\sigma)}
\right|
\lesssim
\Omega^{(\sigma)}.
\label{eq:higher-derivative-xi2-case22}
\end{align}
The same conclusion holds for $\xi_4$ by symmetry.

For the $\xi_3$ derivative, we use
\[
    \frac1\sigma
    \partial_{\xi_3}\Omega^{(\sigma)}
    =
    (\xi_2+\xi_3+\xi_4)^{\sigma-1}
    +\xi_3^{\sigma-1}.
\]
Both terms satisfy the standard homogeneous symbol bounds. Thus,
for every relevant multi-index $\alpha$,
\begin{align}
\left|
\xi_3\xi^\alpha
\partial_\xi^\alpha
\left[
\partial_{\xi_3}\Omega^{(\sigma)}
\right]
\right|
\lesssim_\alpha
\left|
\xi_3\partial_{\xi_3}\Omega^{(\sigma)}
\right|
\lesssim
\Omega^{(\sigma)}.
\label{eq:higher-derivative-xi3-case22}
\end{align}
It follows from
\eqref{eq:higher-derivative-xi2-case22}--%
\eqref{eq:higher-derivative-xi3-case22} that
\begin{align*}
\left|
\xi^\alpha\partial_\xi^\alpha
\Omega^{(\sigma)}
\right|
\lesssim_\alpha
\Omega^{(\sigma)}
\end{align*}
for every nonzero relevant multi-index $\alpha$. 

On the other hand, the representation \eqref{bd:phi=} and the chain
rule imply
\begin{align}
\left|
\xi^\alpha\partial_\xi^\alpha
\left[
\varphi_{\underline k}(\xi)\Psi(\xi)
\right]
\right|
\lesssim_\alpha
(\xi_2+\xi_3)(\xi_3+\xi_4)
2^{(2N_0-2)\tilde k_1}.
\label{eq:psi-derivative-case22}
\end{align} 

Combining \eqref{eq:omega-lower-case22}  and
\eqref{eq:psi-derivative-case22}, and applying Leibniz's rule, we
conclude that
\begin{align}
\left|
\xi^\alpha\partial_\xi^\alpha
\left[
\varphi_{\underline{k}}(\xi)
\frac{\Psi(\xi)}{\Omega^{(\sigma)}(\xi)}
\right]
\right|
&\lesssim
\frac{
(\xi_2+\xi_3)(\xi_3+\xi_4)
2^{(2N_0-2)\tilde k_1}
}{
\xi_1^{\sigma-1}(\xi_3+\xi_4)
} \lesssim
2^{(2N_0-\sigma)\tilde k_1}.\notag
\end{align}
  
\textbf{Case 2.3: $\sigma\in(1,2)$ and
$\iota=(-,+,-,+)$.}
We replace $\xi_1$ and $\xi_3$ by $-\xi_1$ and $-\xi_3$,
respectively, so that all frequencies are nonnegative.
Under this convention, the resonance condition becomes
\[
    \xi_1+\xi_3=\xi_2+\xi_4.
\]
Using the mean value theorem twice, we write
\begin{align}
\Omega^{(\sigma)}
&=\xi_1^\sigma-\xi_2^\sigma+\xi_3^\sigma-\xi_4^\sigma
\notag\\
&=(\xi_1-\xi_2)
  \int_0^1
  \sigma\bigl(\xi_2+\theta(\xi_1-\xi_2)\bigr)^{\sigma-1}
  \dif\theta 
-(\xi_4-\xi_3)
  \int_0^1
  \sigma\bigl(\xi_3+\theta(\xi_4-\xi_3)\bigr)^{\sigma-1}
  \dif\theta
\notag\\
&=(\xi_1-\xi_2)\sigma
  \int_0^1
  \Big[
      \bigl(\xi_2+\theta(\xi_1-\xi_2)\bigr)^{\sigma-1}
 -
      \bigl(\xi_3+\theta(\xi_1-\xi_2)\bigr)^{\sigma-1}
  \Big]
  \dif\theta
\notag\\
&=(\xi_1-\xi_2)(\xi_2-\xi_3)\sigma(\sigma-1)
\notag\\
&\quad\times
\int_0^1\int_0^1
\Big(
    (1-\theta)\eta\xi_2
    +(1-\theta)(1-\eta)\xi_3
    +\eta\theta\xi_1
    +\theta(1-\eta)\xi_4
\Big)^{\sigma-2}
\dif\eta\dif\theta.
\label{bd:omega2.3}
\end{align}
Here we used $ \xi_1-\xi_2=\xi_4-\xi_3.$ 
For brevity, set
\begin{align*}
X_{\eta,\theta}(\xi)
:=
(1-\theta)\eta\xi_2
+(1-\theta)(1-\eta)\xi_3
+\eta\theta\xi_1
+\theta(1-\eta)\xi_4.
\end{align*}
The coefficients in this expression are nonnegative and sum to one.
Hence, 
\[
    X_{\eta,\theta}(\xi)^{\sigma-2}
    \gtrsim
    2^{(\sigma-2)\widetilde k_1}.
\]
Therefore,
\begin{align}
\left|\Omega^{(\sigma)}(\xi)\right|
\gtrsim_\sigma
\left|
(\xi_1-\xi_2)(\xi_2-\xi_3)
\right|
2^{(\sigma-2)\widetilde k_1}.
\label{eq:omega-lower-case23}
\end{align}

By \eqref{bd:phi=}, $\Psi$ contains the same two factors
$(\xi_1-\xi_2)(\xi_2-\xi_3)$. Consequently,  
\begin{align}
\left|
\varphi_{\underline{k}}(\xi)
\frac{\Psi(\xi)}{\Omega^{(\sigma)}(\xi)}
\right|
&\lesssim
\frac{
\left|
(\xi_1-\xi_2)(\xi_2-\xi_3)
\right|
2^{(2N_0-2)\widetilde{k}_1}
}{
\left|
(\xi_1-\xi_2)(\xi_2-\xi_3)
\right|
2^{(\sigma-2)\widetilde{k}_1}
} \lesssim
2^{(2N_0-\sigma)\widetilde{k}_1}.\notag
\end{align}

We next establish the corresponding derivative estimates. Define
\begin{align*}
G_\sigma(\xi)
:=
\frac{\Omega^{(\sigma)}(\xi)}
     {(\xi_1-\xi_2)(\xi_2-\xi_3)}.
\end{align*}
By \eqref{bd:omega2.3},
\begin{align*}
G_\sigma(\xi)
=
\sigma(\sigma-1)
\int_0^1\int_0^1
X_{\eta,\theta}(\xi)^{\sigma-2}
\dif\eta\dif\theta>0.
\end{align*}
For each $i\in\{1,2,3,4\}$, we have
\[
    |\xi_i\partial_{\xi_i}X_{\eta,\theta}(\xi)|
    \leq X_{\eta,\theta}(\xi).
\] 
Repeated application of the chain rule therefore yields, for every
relevant multi-index $\alpha$,
\begin{align}
\left|
\xi^\alpha\partial_\xi^\alpha
G_\sigma(\xi)
\right|
\lesssim_\alpha
G_\sigma(\xi).\notag
\end{align}
Since $G_\sigma>0$, the multivariable Fa\`a di Bruno formula gives
\begin{align}
\left|
\xi^\alpha\partial_\xi^\alpha
\left[
G_\sigma(\xi)^{-1}
\right]
\right|
\lesssim_\alpha
G_\sigma(\xi)^{-1}.
\label{eq:G-inverse-case23}
\end{align}
Furthermore, \eqref{eq:omega-lower-case23} implies
\begin{align}
G_\sigma(\xi)^{-1}
\lesssim
2^{(2-\sigma)\widetilde k_1}.
\label{eq:G-inverse-size-case23}
\end{align}

On the other hand, using \eqref{bd:phi=} and the chain rule as in
\eqref{bd:example}, we obtain the absolute dyadic bound
\begin{align}
\left|
\xi^\alpha\partial_\xi^\alpha
\left[
\varphi_{\underline k}(\xi)
\frac{\Psi(\xi)}
     {(\xi_1-\xi_2)(\xi_2-\xi_3)}
\right]
\right|
\lesssim_\alpha
2^{(2N_0-2)\widetilde k_1}.
\label{eq:psi-divided-symbol-case23}
\end{align}
Notice that we use an absolute dyadic bound here, rather than an
estimate relative to the size of $\Psi$, since the latter may vanish.

Combining \eqref{eq:G-inverse-case23},
\eqref{eq:G-inverse-size-case23}, and
\eqref{eq:psi-divided-symbol-case23}, and applying Leibniz's rule, we
obtain
\begin{align}
\left|
\xi^\alpha\partial_\xi^\alpha
\left[
\varphi_{\underline k}(\xi)
\frac{\Psi(\xi)}{\Omega^{(\sigma)}(\xi)}
\right]
\right|
&=
\left|
\xi^\alpha\partial_\xi^\alpha
\left[
\varphi_{\underline k}(\xi)
\frac{\Psi(\xi)}
     {(\xi_1-\xi_2)(\xi_2-\xi_3)}
G_\sigma(\xi)^{-1}
\right]
\right| \lesssim
2^{(2N_0-\sigma)\widetilde k_1}.
\label{bd:case2.3}
\end{align}

Combining the estimates obtained in Cases 2.1--2.3 and applying
Lemma~\ref{lem:sinfty}, we conclude that
\begin{align}
\left\|
B_{\xi_1,\xi_2,\xi_3,\xi_4}
\frac{\Psi(\xi)}{\Omega^{(\sigma)}(\xi)}
\varphi_{\underline k}(\xi)
\right\|_{\widetilde S^\infty}
&\lesssim
2^{(2N_0-\sigma)\widetilde{k}_1}
2^{\beta(
\widetilde{k}_1+\widetilde{k}_2
+\widetilde{k}_3+\widetilde{k}_4)}
\notag\\
&\lesssim
2^{(2N_0-\sigma+2\beta)\widetilde{k}_1}
2^{\beta(\widetilde{k}_3+\widetilde{k}_4)}.
\notag
\end{align}

We are now in a position to estimate the terms in
\eqref{bd:exp:Q}. For the boundary terms, Lemma~\ref{lem:holder}
gives
\begin{align*}
&2^{\epsilon'
(\max\{\widetilde k_3,0\}
-\min\{\widetilde k_4,0\})}
\left|
Q^{(\sigma)}
(P_{k_1}u,P_{k_2}\overline u,
 P_{k_3}u,P_{k_4}\overline u)(s)
\right|
\\
&\quad\lesssim
A_{\widetilde k_1}(s)
A_{\widetilde k_2}(s)
\epsilon^2(s)
2^{\beta\widetilde{k}_3
-(2-\epsilon')\max\{\widetilde{k}_3,0\}}
\\
&\qquad\qquad\times
2^{\beta\widetilde{k}_4
-2\max\{\widetilde{k}_4,0\}
-\epsilon'\min\{\widetilde k_4,0\}}
2^{(4\beta-\sigma)\max\{\widetilde{k}_1,0\}}.
\end{align*}
Here the factor $  2^{(4\beta-\sigma)\max\{\widetilde k_1,0\}}$ 
is a harmless weakening of the sharper boundary estimate, chosen so
that the boundary and space-time contributions have the same dyadic
form.

For the space-time contributions, using \eqref{bd:pkbu},
Lemma~\ref{lem:holder}, and the above multiplier bound, we similarly
obtain
\begin{align*}
&2^{\epsilon'
(\max\{\widetilde k_3,0\}
-\min\{\widetilde k_4,0\})}
\left|
Q^{(\sigma)}
(P_{k_1}\mathscr B(u),P_{k_2}\overline u,
 P_{k_3}u,P_{k_4}\overline u)(s)
\right|
\\
&\quad\lesssim
\lambda^2
A_{\widetilde k_1}(s)
A_{\widetilde k_2}(s)
\epsilon^4(s)
2^{\beta\widetilde{k}_3
-(2-\epsilon')\max\{\widetilde{k}_3,0\}}
\\
&\qquad\qquad\times
2^{\beta\widetilde{k}_4
-2\max\{\widetilde{k}_4,0\}
-\epsilon'\min\{\widetilde k_4,0\}}
2^{(4\beta-\sigma)\max\{\widetilde{k}_1,0\}}.
\end{align*} 

Since $4\beta<\sigma$ and $0<\epsilon'<\beta$, the dyadic factors are
bounded by $1$. Therefore,  integration over $[t_1,t_2]$ yields $\lambda^2
    \int_{t_1}^{t_2}
    A_l(s)^2\epsilon^4(s)\dif s,$ 
which is the third term on the right-hand side of
\eqref{bd:9.2mid}. 

\subsubsection{Case 3: $\sigma\in(0,1)$.}

In this regime, the analysis becomes more delicate due to the presence
of nontrivial resonant interactions. As in Case 2, we first replace
$\xi_2$ and $\xi_4$ by $-\xi_2$ and $-\xi_4$, respectively. The
momentum constraint then takes the form
\[
    \xi_1+\xi_2+\xi_3+\xi_4=0.
\]

We decompose
\begin{align*}
&Q(P_{k_1}u,P_{k_2}\overline u,
   P_{k_3}u,P_{k_4}\overline u)
\\
&\qquad
=
Q_1(P_{k_1}u,P_{k_2}\overline u,
    P_{k_3}u,P_{k_4}\overline u)
+
Q_2(P_{k_1}u,P_{k_2}\overline u,
    P_{k_3}u,P_{k_4}\overline u),
\end{align*}
where
\begin{align*}
&Q_1(P_{k_1}u,P_{k_2}\overline u,
     P_{k_3}u,P_{k_4}\overline u)
\\
&\qquad
:=
\frac{1}{L^3}
\sum_{\mathcal Q_1}
\widehat u_{\xi_1}
\overline{\widehat u}_{-\xi_2}
\widehat u_{\xi_3}
\overline{\widehat u}_{-\xi_4}
B_{\xi_1,\xi_2,\xi_3,\xi_4}
\Psi(\xi_1^*,\xi_2,\xi_3^*,\xi_4)
\varphi_{\underline k}(\xi),
\\
&Q_2:=Q-Q_1.
\end{align*}
Here $\sum_{\mathcal Q_1}$ denotes summation over
$\xi\in(L^{-1}\mathbb Z)^4$ satisfying
\[
    \xi_1+\xi_2+\xi_3+\xi_4=0
\]
and whose sign configuration belongs to
\begin{align*}
\{
&(-,+,+,+),(+,-,+,+),(+,+,-,+),(+,+,+,-),
\\
&(+,-,-,-),(-,+,-,-),(-,-,+,-),(-,-,-,+)
\}.
\end{align*}

We next define the modified frequencies. Consider, for example, the
configuration $\iota=(-,+,+,+).$ 
We set $ \xi_1^*
    :=
    -(\xi_2+\xi_3^*+\xi_4),$ 
where $\xi_3^*$ is the unique positive solution of
\begin{align}
    (x+\xi_2+\xi_4)^\sigma
    +x^\sigma
    -\xi_2^\sigma
    -\xi_4^\sigma
    =0.
\label{eq:resonant-root}
\end{align} 
Since $0<\sigma<1$, the strict concavity of $x\mapsto x^\sigma$
implies  \eqref{eq:resonant-root} admits a unique solution
satisfying
\[
    0<\xi_3^*<\min\{\xi_2,\xi_4\}.
\]

We aim to prove that
\begin{align}
&2^{\epsilon'
(\max\{\tilde k_3,0\}-\min\{\tilde k_4,0\})}
\left|
\int_{t_1}^{t_2}
Q_1(P_{k_1}u,P_{k_2}\overline u,
    P_{k_3}u,P_{k_4}\overline u)(s)
\,\dif s
\right|
\notag\\
&\qquad
\lesssim
\int_{t_1}^{t_2}
\widetilde A_l(s)A_l(s)\epsilon^2(s)
\,\dif s,
\label{bd:q1}
\end{align}
and
\begin{align}
&2^{\epsilon'
(\max\{\tilde k_3,0\}-\min\{\tilde k_4,0\})}
\left|
\int_{t_1}^{t_2}
Q_2(P_{k_1}u,P_{k_2}\overline u,
    P_{k_3}u,P_{k_4}\overline u)(s)
\,\dif s
\right|
\notag\\
&\qquad
\lesssim
[A_l(t_1)\epsilon(t_1)]^2
+
[A_l(t_2)\epsilon(t_2)]^2
+
\lambda^2
\int_{t_1}^{t_2}
A_l^2(s)\epsilon^4(s)
\,\dif s.
\label{bd:q2}
\end{align}

The estimate for $Q_1$ is treated in Case 3.0 below. 
For $Q_2$, the argument is similar to Case 2, and it suffices to establish a symbol bound analogous to \eqref{bd:bpsi/phi2}.
For convenience, we rewrite $\xi_2,\xi_4$ as$-\xi_2,-\xi_4$, so that
\[
\xi_1+\xi_2+\xi_3+\xi_4=0.
\]
Taking symmetry into account, we consider the sign configurations in Cases 3.1–3.3.

\textbf{Case 3.0: $\sigma\in(0,1)$ and the proof of
\eqref{bd:q1}.}
By symmetry, it suffices to consider the sign configuration
$\iota=(-,+,+,+)$. Using \eqref{bd:phi=} after the corresponding
changes of signs, we obtain
\begin{align*}
&\left|
\Psi(\xi_1^*,\xi_2,\xi_3^*,\xi_4)
\varphi_{\underline{k}}(\xi)
\right|
\lesssim
(\xi_2+\xi_3^*)
(\xi_3^*+\xi_4)
(\xi_2+\xi_3^*+\xi_4)^{2N_0-2}
\varphi_{\underline{k}}(\xi)
\\
&\qquad
\lesssim
\xi_2\xi_4
(\xi_2+\xi_4)^{2N_0-2}
\varphi_{\underline{k}}(\xi) 
\lesssim
2^{\widetilde{k}_1+\widetilde{k}_3
  +(2N_0-2)\widetilde{k}_1}.
\end{align*}
Here we used $ 0<\xi_3^*<\min\{\xi_2,\xi_4\}.$ 
Moreover,  
\[
    \max\{\xi_2,\xi_4\}\lesssim 2^{\widetilde k_1},
    \qquad
    \min\{\xi_2,\xi_4\}\lesssim 2^{\widetilde k_3}.
\]
It follows that
\begin{align*}
&\left|
B_{\xi_1,\xi_2,\xi_3,\xi_4}
\Psi(\xi_1^*,\xi_2,\xi_3^*,\xi_4)
\varphi_{\underline{k}}(\xi)
\right| 
\lesssim
2^{(2N_0-1+2\beta)\widetilde{k}_1}
2^{(1+\beta)\widetilde{k}_3}
2^{\beta\widetilde{k}_4}.
\end{align*}

We next establish the corresponding derivative estimates. Regard $ 
    \xi_3^*=\xi_3^*(\xi_2,\xi_4)$
as a function of $(\xi_2,\xi_4)$, and write
$
    S:=\xi_2+\xi_3^*(\xi_2,\xi_4)+\xi_4.
$
By definition,
\[
    S^\sigma+(\xi_3^*)^\sigma
    -\xi_2^\sigma-\xi_4^\sigma=0.
\]
Implicit differentiation with respect to $\xi_2$ gives
\begin{align}
\partial_{\xi_2}\xi_3^*
&=
\frac{
\xi_2^{\sigma-1}
-
S^{\sigma-1}
}{
S^{\sigma-1}
+
(\xi_3^*)^{\sigma-1}
} =
\frac{
(1-\sigma)(\xi_3^*+\xi_4)
\displaystyle\int_0^1
\bigl(
\xi_2+\theta(\xi_3^*+\xi_4)
\bigr)^{\sigma-2}
\,\dif\theta
}{
S^{\sigma-1}
+
(\xi_3^*)^{\sigma-1}
}.
\label{eq:derivative-xi3star}
\end{align} 
We claim that
\begin{align}
|\xi_4\partial_{\xi_4}\xi_3^*|+|\xi_2\partial_{\xi_2}\xi_3^*|\lesssim \xi_3^*  . \notag
\end{align} We verify the $\xi_2\partial_{\xi_2}$ case. It suffices to show
\begin{align*}
    \xi_2^{\sigma}-  \xi_2(\xi_2+\xi_3^*+\xi_4)^{\sigma-1}\leq  \xi_3^*(\xi_2+\xi_3^*+\xi_4)^{\sigma-1}+(\xi_3^*)^{\sigma}.
\end{align*}
By the definition of $\xi_3^*$, it suffices to check
\begin{align*}
    (\xi_2+\xi_3^*+\xi_4)^\sigma-\xi_4^\sigma-  \xi_2(\xi_2+\xi_3^*+\xi_4)^{\sigma-1}\leq  \xi_3^*(\xi_2+\xi_3^*+\xi_4)^{\sigma-1}.
\end{align*}
Since $\sigma\in(0,1)$,    we use  $(\xi_2+\xi_3^*+\xi_4)^\sigma=(\xi_2+\xi_3^*+\xi_4)\cdot (\xi_2+\xi_3^*+\xi_4)^{\sigma-1}$ to conclude the desired bound. The $\xi_4$ case follows similarly. 

For higher order derivatives, $\partial^\alpha \xi_3^*$ can be written as a ratio of expressions with homogeneous  power law structure.
More generally, for every multi-index
$\alpha=(\alpha_2,\alpha_4)$ needed below, repeated implicit
differentiation yields
\begin{align}
\left|
\xi_2^{\alpha_2}
\xi_4^{\alpha_4}
\partial_{\xi_2}^{\alpha_2}
\partial_{\xi_4}^{\alpha_4}
\xi_3^*
\right|
\lesssim_\alpha
\xi_3^*.
\label{bd:part2xi3}
\end{align}

We now estimate derivatives of the modified symbol. Set
\[
\Psi^*(\xi_2,\xi_4)
:=
\Psi\bigl(
-\xi_2-\xi_3^*(\xi_2,\xi_4)-\xi_4,\,
\xi_2,\,
\xi_3^*(\xi_2,\xi_4),\,
\xi_4
\bigr).
\]
By the chain rule,
\begin{align*}
\xi_2\partial_{\xi_2}\Psi^*
&=
\xi_2
\left[
-(1+\partial_{\xi_2}\xi_3^*)\partial_1
+\partial_2
+(\partial_{\xi_2}\xi_3^*)\partial_3
\right]
\Psi
\bigl(
\xi_1^*,\xi_2,\xi_3^*,\xi_4
\bigr).
\end{align*}
Using   the factorized
representation of $\Psi$ above, and the symbol estimates for
$(m_{N_0}^2)''$, we obtain
\begin{align*}
\left|
\xi_2\partial_{\xi_2}\Psi^*
\right|
&\lesssim
2^{\widetilde k_1}
2^{\widetilde k_3
  +(2N_0-2)\widetilde k_1}
\lesssim
2^{\widetilde k_3
  +(2N_0-1)\widetilde k_1}.
\end{align*}
Higher-order derivatives are controlled in the same way by repeated
application of the chain rule and \eqref{bd:part2xi3}. Consequently,
for every relevant multi-index $\alpha$,
\begin{align}
\left|
\xi^\alpha\partial_\xi^\alpha
\left[
B_{\xi_1,\xi_2,\xi_3,\xi_4}
\Psi(\xi_1^*,\xi_2,\xi_3^*,\xi_4)
\varphi_{\underline{k}}(\xi)
\right]
\right|
\lesssim_\alpha
2^{(2N_0-1+2\beta)\widetilde{k}_1}
2^{(1+\beta)\widetilde{k}_3}
2^{\beta\widetilde{k}_4}.
\label{eq:q1-symbol-derivatives}
\end{align}
Here derivatives with respect to $\xi_1$ and $\xi_3$ only act on the
remaining localized factors, while the modified symbol itself depends
on $\xi_2$ and $\xi_4$.

Lemma~\ref{lem:sinfty} now gives
\begin{align}
\left\|
B_{\xi_1,\xi_2,\xi_3,\xi_4}
\Psi(\xi_1^*,\xi_2,\xi_3^*,\xi_4)
\varphi_{\underline{k}}(\xi)
\right\|_{\widetilde S^\infty}
\lesssim
2^{(2N_0-1+2\beta)\widetilde{k}_1}
2^{(1+\beta)\widetilde{k}_3}
2^{\beta\widetilde{k}_4}.
\label{eq:q1-sinfty}
\end{align}

We next return to
\[
    Q_1(P_{k_1}u,P_{k_2}\overline u,
        P_{k_3}u,P_{k_4}\overline u).
\]
We may insert an additional cutoff on the largest-frequency factor:
\begin{align*}
&Q_1(P_{k_1}u,P_{k_2}\overline u,
     P_{k_3}u,P_{k_4}\overline u)
\\
&\quad
=
\frac{1}{L^3}
\sum_{-\xi_1=\xi_2+\xi_3+\xi_4}
\varphi_{\geq-10}(\xi_1)
\widehat u_{\xi_1}
\overline{\widehat u}_{-\xi_2}
\widehat u_{\xi_3}
\overline{\widehat u}_{-\xi_4}
B_{\xi_1,\xi_2,\xi_3,\xi_4} 
\Psi(\xi_1^*,\xi_2,\xi_3^*,\xi_4)
\varphi_{\underline{k}}(\xi).
\end{align*}
Indeed, on the support of $1-\varphi_{\geq-10}(\xi_1)$, all the
actual frequencies satisfy
\[
    |\xi_j|\leq|\xi_1|,
\]
while
\[
    0<\xi_3^*<\min\{\xi_2,\xi_4\},
    \qquad
    |\xi_1^*|
    =
    \xi_2+\xi_3^*+\xi_4
    \lesssim|\xi_1|.
\]
Thus all four arguments of $\Psi$ lie in the region where
$m_{N_0}$ vanishes, and hence
\[
    \Psi(\xi_1^*,\xi_2,\xi_3^*,\xi_4)=0.
\]
 Moreover, under the constraint $-\xi_1 = \xi_2 + \xi_3 + \xi_4$, we have 
\[
2^{k_1}\sim 2^{\tilde{k}_1}.
\]
Applying Lemma \ref{lem:holder}, we obtain
\begin{align*}
 |Q_1(P_{k_1}u,P_{k_2}\overline u,P_{k_3}u,P_{k_4}\overline u)(s)|\lesssim \tilde{ A}_{\tilde k_1}(s)A_{\tilde k_2}(s)\epsilon^2(s)2^{(1+\beta) \tilde{k}_3-2\max\{\tilde{k}_3,0\}}\cdot 2^{\beta \tilde{k}_4-2\max\{\tilde{k}_4,0\}}.
\end{align*}
 This completes the proof of \eqref{bd:q1}.

\textbf{Case 3.1: $\sigma\in(0,1)$ and
$\iota=(-,+,+,+)$.}
We replace $\xi_1$ by $-\xi_1$, so that all frequencies are
nonnegative. Under this convention, the resonance condition becomes
\[
    \xi_1=\xi_2+\xi_3+\xi_4.
\]

To estimate $Q_2$, we apply a normal-form transformation analogous to
\eqref{bd:exp:Q}. We are led to study the multiplier
\begin{align}
\varphi_{\underline k}(\xi)
\frac{\Psi(\xi)-\Psi(\xi^*)}{\Omega^{(\sigma)}(\xi)}
&=
\varphi_{\underline{k}}(\xi)
\frac{\Psi(\xi)-\Psi(\xi^*)}
     {\Omega^{(\sigma)}(\xi)-\Omega^{(\sigma)}(\xi^*)},
\label{eq:case31-difference-quotient}
\end{align}
where $ \xi^*=(\xi_1^*,\xi_2,\xi_3^*,\xi_4)$ 
and, by construction,
\[
    \Omega^{(\sigma)}(\xi^*)=0.
\]

Fix $(\xi_2,\xi_4)$ and define
\begin{align*}
\widetilde\Psi(x)
&:=
\Psi\bigl(
    -\xi_2-x-\xi_4,\,
    \xi_2,\,
    x,\,
    \xi_4
\bigr),
\\
\widetilde\Omega(x)
&:=
(\xi_2+x+\xi_4)^\sigma
-\xi_2^\sigma+x^\sigma-\xi_4^\sigma.
\end{align*}
 
By the mean value theorem,
\begin{align*}
\left|
\widetilde\Psi(\xi_3)
-\widetilde\Psi(\xi_3^*)
\right|
&=
|\xi_3-\xi_3^*|
\left|
\int_0^1
\widetilde\Psi'
\bigl(
    \theta\xi_3+(1-\theta)\xi_3^*
\bigr)
\,\dif\theta
\right|
\\
&\lesssim
|\xi_3-\xi_3^*|
\bigl(
\max\{\xi_3,\xi_3^*,\xi_2,\xi_4\}
\bigr)^{2N_0-1}
\\
&\lesssim
|\xi_3-\xi_3^*|
2^{(2N_0-1)\widetilde{k}_1}.
\end{align*}
Similarly, 
\begin{align*}
\left|
\widetilde\Omega(\xi_3)
-\widetilde\Omega(\xi_3^*)
\right|
&\gtrsim_\sigma
|\xi_3-\xi_3^*|
\bigl(
\max\{\xi_3,\xi_3^*,\xi_2,\xi_4\}
\bigr)^{\sigma-1}
\\
&\gtrsim_\sigma
|\xi_3-\xi_3^*|
2^{(\sigma-1)\widetilde{k}_1}.
\end{align*}
Here we used
$
    0<\xi_3^*<\min\{\xi_2,\xi_4\}$
and
$
    \max\{\xi_3,\xi_2,\xi_4\}
    \lesssim  \xi_1
    \sim 2^{\widetilde k_1}.
$

Consequently, the quotient in
\eqref{eq:case31-difference-quotient} admits a continuous extension
across $\xi_3=\xi_3^*$ and satisfies
\begin{align}
\left|
\varphi_{\underline{k}}(\xi)
\frac{\Psi(\xi)-\Psi(\xi^*)}
     {\Omega^{(\sigma)}(\xi)}
\right|
\lesssim
2^{(2N_0-\sigma)\widetilde{k}_1}.
\label{bd:case3.1}
\end{align}

We next establish the corresponding derivative bounds. Set
\begin{align*}
z_\theta
:=
\theta\xi_3+(1-\theta)\xi_3^*,
\end{align*}
and write
\begin{align*}
D(\xi)
&:=
\frac{
\widetilde\Omega(\xi_3)
-\widetilde\Omega(\xi_3^*)
}{
\xi_3-\xi_3^*
}=
\sigma
\int_0^1
\left[
    (\xi_2+z_\theta+\xi_4)^{\sigma-1}
    +z_\theta^{\sigma-1}
\right]
\,\dif\theta.
\end{align*}

By \eqref{bd:part2xi3}, the dependence of $\xi_3^*$ on
$(\xi_2,\xi_4)$ satisfies the required symbol-type estimates. Hence,
for every relevant multi-index $\alpha$,
\begin{align}
\left|
\xi^\alpha\partial_\xi^\alpha
D(\xi)
\right|
\lesssim_\alpha
D(\xi).
\label{eq:D-symbol-case31}
\end{align}
For example, the integrands that arise after differentiation are
controlled by
\begin{align*}
&\left|
\xi^\alpha\partial_\xi^\alpha
\left[
(\xi_2+z_\theta+\xi_4)^{\sigma-1}
+z_\theta^{\sigma-1}
\right]
\right| 
\lesssim_\alpha
(\xi_2+z_\theta+\xi_4)^{\sigma-1}
+z_\theta^{\sigma-1}.
\end{align*}
The multivariable Fa\`a di Bruno formula therefore gives
\begin{align}
\left|
\xi^\alpha\partial_\xi^\alpha
\left[D(\xi)^{-1}\right]
\right|
\lesssim_\alpha
D(\xi)^{-1}.
\label{eq:D-inverse-case31}
\end{align}

For the numerator, define
\begin{align*}
N(\xi)
&:=
\frac{
\widetilde\Psi(\xi_3)
-\widetilde\Psi(\xi_3^*)
}{
\xi_3-\xi_3^*
} =
\int_0^1
\widetilde\Psi'(z_\theta)
\,\dif\theta.
\end{align*}
Using the symbol estimates for $m_{N_0}^2$, the chain rule, and
\eqref{bd:part2xi3}, we obtain
\begin{align}
\left|
\xi^\alpha\partial_\xi^\alpha
\left[
\varphi_{\underline k}(\xi)N(\xi)
\right]
\right|
\lesssim_\alpha
2^{(2N_0-1)\widetilde k_1}.
\label{eq:N-symbol-case31}
\end{align}
Combining  
\eqref{eq:D-inverse-case31}, and
\eqref{eq:N-symbol-case31}, we conclude that
\begin{align}
\left|
\xi^\alpha\partial_\xi^\alpha
\left[
\varphi_{\underline{k}}(\xi)
\frac{\Psi(\xi)-\Psi(\xi^*)}
     {\Omega^{(\sigma)}(\xi)}
\right]
\right|
\lesssim_\alpha
2^{(2N_0-\sigma)\widetilde{k}_1}.
\label{eq:case31-all-derivatives}
\end{align}
Thus, the bound \eqref{bd:case3.1} remains valid under all derivatives
required in Lemma~\ref{lem:sinfty}.

\medskip

\textbf{Case 3.2: $\sigma\in(0,1)$ and
$\iota=(-,-,+,+)$.}
We replace $\xi_1$ and $\xi_2$ by $-\xi_1$ and $-\xi_2$,
respectively, so that all frequencies are nonnegative. Under this
convention, the resonance condition becomes
\[
    \xi_1+\xi_2=\xi_3+\xi_4.
\]
By symmetry, we may assume that $\xi_1-\xi_4=\xi_3-\xi_2>0.$ 
By the mean value theorem,
\begin{align}
|\Omega^{(\sigma)}|
&=
\xi_1^\sigma-\xi_4^\sigma
+\xi_3^\sigma-\xi_2^\sigma
\notag\\
&=
(\xi_1-\xi_4)
\left[
\int_0^1
\sigma
\bigl(
    \theta\xi_1+(1-\theta)\xi_4
\bigr)^{\sigma-1}
\,\dif\theta
\right.
\notag\\
&\hspace{3.6cm}\left.
+
\int_0^1
\sigma
\bigl(
    \theta\xi_3+(1-\theta)\xi_2
\bigr)^{\sigma-1}
\,\dif\theta
\right].
\label{eq:omega-case32-mvt}
\end{align}
Since $\sigma-1<0$,
we have
\begin{align*}
\int_0^1
\sigma
\bigl(
    \theta\xi_1+(1-\theta)\xi_4
\bigr)^{\sigma-1}
\,\dif\theta
\geq
\sigma(\xi_1+\xi_4)^{\sigma-1}.
\end{align*} 
\begin{align*}
\int_0^1
\sigma
\bigl(
    \theta\xi_3+(1-\theta)\xi_2
\bigr)^{\sigma-1}
\,\dif\theta
\geq
\sigma(\xi_3+\xi_2)^{\sigma-1}.
\end{align*}
Consequently,
\begin{align}
|\Omega^{(\sigma)}|
&\gtrsim_\sigma
(\xi_1-\xi_4)\left[
    (\xi_1+\xi_4)^{\sigma-1}
    +(\xi_3+\xi_2)^{\sigma-1}
\right]
\notag\\
&\gtrsim_\sigma
(\xi_1-\xi_4)
(\xi_1+\xi_2+\xi_3+\xi_4)^{\sigma-1}.\notag
\end{align}

On the other hand, \eqref{bd:phi=} gives
\begin{align}
\left|
\varphi_{\underline{k}}(\xi)\Psi(\xi)
\right|
\lesssim
|(\xi_1+\xi_2)(\xi_2-\xi_3)|
2^{(2N_0-2)\widetilde{k}_1}.
\label{eq:psi-case32}
\end{align}  It follows that 
\begin{align}
\left|
\varphi_{\underline{k}}(\xi)
\frac{\Psi(\xi)}{\Omega^{(\sigma)}(\xi)}
\right|
&\lesssim
\frac{
(\xi_1+\xi_2)
2^{(2N_0-2)\widetilde{k}_1}
}{
(\xi_1+\xi_2+\xi_3+\xi_4)^{\sigma-1}
} \lesssim
2^{(2N_0-\sigma)\widetilde{k}_1}.\notag
\end{align} 

For the derivative estimates, define
\begin{align*}
G(\xi)
:=
\frac{\Omega^{(\sigma)}(\xi)}{\xi_1-\xi_4}.
\end{align*}
By \eqref{eq:omega-case32-mvt}, $G(\xi)>0$. Moreover, since each term  in the expression is of power law, repeated
application of the chain rule gives, for every relevant multi-index
$\alpha$,
\begin{align}
\left|
\xi^\alpha\partial_\xi^\alpha G(\xi)
\right|
\lesssim_\alpha
G(\xi).\notag
\end{align} 

Since $G>0$, the multivariable Fa\`a di Bruno formula yields
\begin{align}
\left|
\xi^\alpha\partial_\xi^\alpha
\left[G(\xi)^{-1}\right]
\right|
\lesssim_\alpha
G(\xi)^{-1}.
\label{eq:G-inverse-case32}
\end{align} 

For the numerator, \eqref{eq:psi-case32} and the chain rule give the
absolute dyadic estimate
\begin{align}
\left|
\xi^\alpha\partial_\xi^\alpha
\left[
\varphi_{\underline k}(\xi)
\frac{\Psi(\xi)}{\xi_1-\xi_4}
\right]
\right|
\lesssim_\alpha
2^{(2N_0-1)\widetilde k_1}.
\label{eq:psi-divided-case32}
\end{align}
Combining \eqref{eq:G-inverse-case32}  and
\eqref{eq:psi-divided-case32}, we obtain
\begin{align}
\left|
\xi^\alpha\partial_\xi^\alpha
\left[
\varphi_{\underline{k}}(\xi)
\frac{\Psi(\xi)}{\Omega^{(\sigma)}(\xi)}
\right]
\right|
\lesssim_\alpha
2^{(2N_0-\sigma)\widetilde{k}_1}.\notag
\end{align} 

\textbf{Case 3.3: $\sigma\in(0,1)$ and
$\iota=(-,+,-,+)$.}
We replace $\xi_1$ and $\xi_3$ by $-\xi_1$ and $-\xi_3$,
respectively, so that all frequencies are nonnegative.
Under this convention, the resonance condition becomes
\[
    \xi_1-\xi_2=\xi_4-\xi_3.
\]
Using \eqref{bd:omega2.3}, we write
\begin{align}
\Omega^{(\sigma)}
&=
(\xi_1-\xi_2)(\xi_2-\xi_3)\sigma(\sigma-1)
\notag\\
&\quad\times
\int_0^1\int_0^1
\Big(
    (1-\theta)\eta\xi_2
    +(1-\theta)(1-\eta)\xi_3
    +\eta\theta\xi_1
    +\theta(1-\eta)\xi_4
\Big)^{\sigma-2}
\,\dif\eta\,\dif\theta.\notag
\end{align}
For brevity, set
\[
X_{\eta,\theta}(\xi)
:=
(1-\theta)\eta\xi_2
+(1-\theta)(1-\eta)\xi_3
+\eta\theta\xi_1
+\theta(1-\eta)\xi_4.
\]
Since the coefficients in $X_{\eta,\theta}$ are nonnegative and sum
to one, on the support of $\varphi_{\underline k}$ we have 
\[
    X_{\eta,\theta}(\xi)^{\sigma-2}
    \gtrsim
    2^{(\sigma-2)\widetilde k_1}.
\]
Consequently,
\begin{align}
\left|\Omega^{(\sigma)}(\xi)\right|
\gtrsim_\sigma
\left|
(\xi_1-\xi_2)(\xi_2-\xi_3)
\right|
2^{(\sigma-2)\widetilde{k}_1}.\notag
\end{align}

On the other hand, by \eqref{bd:phi=},
\begin{align*}
\left|
\varphi_{\underline{k}}(\xi)\Psi(\xi)
\right|
\lesssim
\left|
(\xi_1-\xi_2)(\xi_2-\xi_3)
\right|
2^{(2N_0-2)\widetilde{k}_1}.
\end{align*}  Moreover,
\begin{align}
\left|
\varphi_{\underline{k}}(\xi)
\frac{\Psi(\xi)}{\Omega^{(\sigma)}(\xi)}
\right|
&\lesssim
\frac{
\left|
(\xi_1-\xi_2)(\xi_2-\xi_3)
\right|
2^{(2N_0-2)\widetilde{k}_1}
}{
\left|
(\xi_1-\xi_2)(\xi_2-\xi_3)
\right|
2^{(\sigma-2)\widetilde{k}_1}
} \lesssim
2^{(2N_0-\sigma)\widetilde{k}_1}.\notag
\end{align}

We next establish the corresponding derivative estimates. Define
\begin{align*}
G_\sigma(\xi)
:=
-\frac{\Omega^{(\sigma)}(\xi)}
{(\xi_1-\xi_2)(\xi_2-\xi_3)} 
=
\sigma(1-\sigma)
\int_0^1\int_0^1
X_{\eta,\theta}(\xi)^{\sigma-2}
\,\dif\eta\,\dif\theta
>0.
\end{align*} Repeated application of the chain rule yields, for
every relevant multi-index $\alpha$,
\begin{align}
\left|
\xi^\alpha\partial_\xi^\alpha
G_\sigma(\xi)
\right|
\lesssim_\alpha
G_\sigma(\xi).\notag
\end{align}
Indeed, each derivative produces a factor of the form
$X_{\eta,\theta}^{\sigma-2-r}$, while the corresponding scaled
frequency factors contribute at most $X_{\eta,\theta}^r$.

Since $G_\sigma>0$, the multivariable Fa\`a di Bruno formula implies
\begin{align}
\left|
\xi^\alpha\partial_\xi^\alpha
\left[
G_\sigma(\xi)^{-1}
\right]
\right|
\lesssim_\alpha
G_\sigma(\xi)^{-1}.
\label{eq:G-inverse-case33}
\end{align} 

For the numerator, \eqref{bd:phi=} and the chain rule give the absolute
dyadic estimate
\begin{align}
\left|
\xi^\alpha\partial_\xi^\alpha
\left[
\varphi_{\underline k}(\xi)
\frac{\Psi(\xi)}
{(\xi_1-\xi_2)(\xi_2-\xi_3)}
\right]
\right|
\lesssim_\alpha
2^{(2N_0-2)\widetilde k_1}.
\label{eq:psi-divided-case33}
\end{align} 

Finally,
\[
\frac{\Psi(\xi)}{\Omega^{(\sigma)}(\xi)}
=
-
\frac{\Psi(\xi)}
     {(\xi_1-\xi_2)(\xi_2-\xi_3)}
G_\sigma(\xi)^{-1}.
\]
Combining \eqref{eq:G-inverse-case33} and
\eqref{eq:psi-divided-case33}, and applying Leibniz's rule, we obtain
\begin{align}
\left|
\xi^\alpha\partial_\xi^\alpha
\left[
\varphi_{\underline{k}}(\xi)
\frac{\Psi(\xi)}{\Omega^{(\sigma)}(\xi)}
\right]
\right|
\lesssim_\alpha
2^{(2N_0-\sigma)\widetilde{k}_1}.\notag
\end{align}

In summary, under all circumstances (case 3.1-3.3), by Lemma~\ref{lem:sinfty}, we obtain 

\begin{align}
  \| B_{\xi_1,\xi_2,\xi_3,\xi_4}\frac{\tilde{\Psi}(\xi)}{\Omega^{(\sigma)}(\xi)}\varphi_{\underline k}(\xi)\|_{\tilde{S}^\infty}\lesssim 2^{(2N_0-\sigma)\tilde{k}_1}2^{\beta(\tilde{k}_1+\tilde{k}_2+\tilde{k}_3+\tilde{k}_4)}\lesssim2^{(2N_0-\sigma+2\beta)\tilde{k}_1}2^{\beta(\tilde{k}_3+\tilde{k}_4)}.\label{bd:9.m3}
\end{align} 
Here $\tilde{\Psi}(\xi)$ denotes either $\Psi(\xi)$ or $\Psi(\xi)-\Psi(\xi^*)$.   To estimate \eqref{bd:q2}, by the same decomposition as in \eqref{bd:exp:Q},  it suffices to bound
\begin{align*}
 | Q_2^{(\sigma)}(P_{k_1}u,P_{k_2}\overline u,P_{k_3}u,P_{k_4}\overline u)(t_i)|
 &+  \bigg{|}\int_{t_1}^{t_2}Q_2^{(\sigma)}  (P_{k_1}\mathscr{B}(u,u,u),P_{k_2}\overline u,P_{k_3}u,P_{k_4}\overline u)(s) \bigg{|}.
\end{align*}
Under the condition $\beta<\sigma/4$, 
these terms can be estimated exactly as in Case 2, 
with the bound \eqref{bd:9.m3} replacing the corresponding symbol estimate there. 
This completes the proof.

\section{Proof of the Main Result: Theorem \ref{thm1}}\label{sec:proofthm1}
This section is devoted to the proof of the main result, which consists of two parts.
The first part concerns the well-posedness of equation \eqref{MMT}.
We have already established the well-posedness of the truncated equation \eqref{TrMMT} with a high probability.
Using the acceptable  energy inequality, we extend this result to \eqref{MMT} up to time $T_0$ via a bootstrap argument.
This part is presented in Section~\ref{sec:well-posed}.
The second part concerns the derivation of the wave kinetic equation stated in \eqref{app}.
Since the derivation for the truncated equation has already been obtained, it remains to justify the convergence as $K \to \infty$.
This part is addressed in Section~\ref{sec:mainresult}

\subsection{The well-posedness of \eqref{MMT}}\label{sec:well-posed}
In this section, we begin the proof of the main result by establishing the well-posedness of the equation \eqref{MMT}. This will be achieved by combining the energy estimates with a bootstrap argument. First, we introduce the following hyper-contractivity  estimate, the proof of which can be found in \cite[Proposition 2.2]{DIP25}.
 
\begin{proposition}[Multilinear Gaussian estimates] \label{prop:dapiancha}
Assume that $g_k : \Omega \to \mathbb{C}, k \in \{1,...,n\}$,
 are i.i.d. normalized complex Gaussian random variables on a probability space $\Omega$ and define
 \begin{align*}
 G(\omega) :=\sum_{k_1,...,k_r\in\{1,...,n\}}
 a_{k_1,...,k_r} \prod _{i=1}^rg^{\zeta_i}_{k_i}(\omega),
 \end{align*}
 where $r \geq 1, \omega \in\Omega, \zeta_j \in \{+,-\}$, and $ a_{k_1,...,k_r}\in\mathbb{C}$. Then, for any $A \geq 1$, it holds that
 \begin{align*}
     \mathbf{P}(\omega: |G(\omega)| \geq A\|G(\omega)\|_{L^2_{\omega}})
 \leq  Ce^{-A^{\frac2r}/C},
 \end{align*}
 for some constant $C \geq 1$.
\end{proposition}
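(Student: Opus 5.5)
The plan is to obtain the tail bound from a high-moment estimate combined with Chebyshev's inequality; the moment estimate is exactly Gaussian hypercontractivity, Lemma~\ref{lem:Gaussian Hypercontractivity}. Here $G$ is a polynomial of degree $r$ in the Gaussians $g_k,\overline{g_k}$, so for every $q\geq 2$ the lemma gives
\begin{align*}
\mathbf{E}|G|^q\leq (q-1)^{rq/2}\,\big(\mathbf{E}|G|^2\big)^{q/2},
\end{align*}
that is, $\|G\|_{L^q_\omega}\leq (q-1)^{r/2}\|G\|_{L^2_\omega}$. If one prefers not to rely on that lemma, the same bound follows from the standard hypercontractivity of the Ornstein--Uhlenbeck semigroup. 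One first decomposes $G$ into Wiener chaoses of order at most $r$. Since the $g_k$ are complex, products such as $|g_k|^2$ contain lower-order chaos, which is why only the bound by $r$ is available. Then $\|P_m F\|_{L^q}\leq (q-1)^{m/2}\|P_m F\|_{L^2}$ holds on each chaos, and orthogonality of the chaoses sums these with a loss of only a constant depending on $r$.

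Next I apply Markov's inequality at level $A\|G\|_{L^2}$:
\begin{align*}
\mathbf{P}\big(|G|\geq A\|G\|_{L^2_\omega}\big)\leq \frac{\mathbf{E}|G|^q}{A^q\|G\|_{L^2_\omega}^q}\leq \Big(\frac{(q-1)^{r/2}}{A}\Big)^q .
\end{align*}
Now optimize in $q$ by choosing $q$ so that $(q-1)^{r/2}=A/e$, i.e. $q=1+(A/e)^{2/r}$. The right-hand side then becomes $e^{-q}\leq e^{-(A/e)^{2/r}}$, which has the form $e^{-A^{2/r}/C}$ with $C=e^{2/r}\leq e^2$.

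This choice is admissible only when $q\geq 2$, which means $A\geq e$. For $1\leq A<e$ I use the trivial bound: the probability is at most $1$, and $1\leq Ce^{-A^{2/r}/C}$ as soon as $C\geq e^{e^{2}}$, since then $e^{-A^{2/r}/C}\geq e^{-e^{2}/C}\geq 1/C$. Taking $C$ to be the larger of the two constants covers all $A\geq1$. The constant depends only on $r$, and it is uniform in $n$ and in the coefficients $a_{k_1,\dots,k_r}$, since the hypercontractive constant never involved them.

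I do not expect a genuine obstacle here. The only delicate point is the case of a non-homogeneous chaos, which arises from the complex conjugates and from repeated indices. Lemma~\ref{lem:Gaussian Hypercontractivity} is stated for exactly such sums of products, so it is used as given, and the remaining work is routine bookkeeping of constants.
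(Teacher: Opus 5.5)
Your proof is correct and follows the standard route. The paper does not prove this itself but cites \cite[Proposition 2.2]{DIP25}, and the argument behind that citation is the moment bound $\|G\|_{L^q_\omega}\le (q-1)^{r/2}\|G\|_{L^2_\omega}$ from Gaussian hypercontractivity (Lemma~\ref{lem:Gaussian Hypercontractivity}), followed by Chebyshev's inequality with $q\sim A^{2/r}$ and the small-$A$ range absorbed into $C$, exactly as you do. One caveat: dividing by $\|G\|_{L^2_\omega}^q$ requires $\|G\|_{L^2_\omega}>0$. For $G\equiv 0$ the event $\{|G|\ge 0\}$ has probability one, so the statement itself must exclude that trivial case.
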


Let $\delta>0$ be a fixed small constant such that  $\delta_0 := \frac{\delta}{10\beta}<\frac1{200}$. We define
$
N_0 := 10 + \Bigl\lfloor \frac{1}{\delta_0^2} \Bigr\rfloor.$
By the decay property of Schwartz functions and the definition  \eqref{wellprepared}, we have
 
\begin{align*}
   \mathbf{E}\|u_{\mathrm{in}} \|^2_{H^{N_0}}\lesssim L^{-1}\sum_{k\in\mathbb{Z}_L} \langle k\rangle^{2N_0} n_{\mathrm{in}}(k) \lesssim 1. 
\end{align*}
Then, by Proposition~\ref{prop:dapiancha}, it follows that with probability $\geq 1-L^{-40}$,
\begin{align}\label{bd:uinhn0}
   \|u_{\mathrm{in}} \|_{H^{N_0}} \lesssim L^\theta,
\end{align}
where $\theta>0$ is another small constant to be determined later.

Moreover, the truncated solution $u_{\rm tr}$ satisfies the following $L^\infty$-bound: 
With probability $\geq 1-L^{-40}$, it holds that 
    \begin{align}
  \sup_{t\in[0,T]}  \|u_{\rm tr}(t)\|_{W^{2,0}}\lesssim L^{-1/2+\theta}.\label{bd:utrw2infty}
\end{align}
    
We recall that $ c_k(s)= e^{-2\pi i[|k|^\sigma Ts+\Gamma(k,s)]} (\widehat u_{\rm tr})_k(Ts)$ and have for $t=Ts\in[0,T]$,
\begin{align}
    u_{\rm tr}(t,x)=\frac{1}{L}\sum_{k \in \mathbb{Z}_L}  c_k(s) e^{2\pi i[|k|^\sigma Ts+\Gamma(k,s)]} e^{2\pi i kx}.\notag
\end{align}
Thus,
\begin{align}
    \langle \nabla \rangle^4u_{\rm tr}(t,x)=\frac{1}{L}\sum_{k \in \mathbb{Z}_L}   \langle k \rangle^4(c^{\leq N}_k  +\mathcal{R}^{N+1}_k)e^{2\pi i[|k|^\sigma Ts+\Gamma(k,s)]} e^{2\pi i kx}.\notag
\end{align}
By the estimates on tree couples in \eqref{bd:couple} and the remainder bound \eqref{bd:mathcalr}, we obtain
\begin{align}
    \mathbf{E}|\langle \nabla \rangle^4u_{\rm tr}(t,x)|^2\lesssim \frac{1}{L^{2}}\sum_{k \in \mathbb{Z}_L}\sum_{i,j=0}^N   \langle k \rangle^8  \mathbf{E}[c^{i}_k\overline c^{j}_k]  +L^{-800}\lesssim  L^{-1}.\notag
\end{align}
Applying Proposition~\ref{prop:dapiancha}, we deduce that with probability $\geq 1-L^{-40}$,
\begin{align*}
   \sup_t\|u_{\rm tr}(t)\|_{W^{2,0}}\lesssim L^{-1/2+\theta}
\end{align*}
for small constant $\theta>0$.  
The uniformity in space-time follows from a standard discretization argument, see e.g.\ \cite{DH21,DH23a}.

We now present the key step in establishing the well-posedness of \eqref{MMT}.

\begin{proposition}[Main Bootstrap Argument]\label{prop:bootstrap}
Let $\sigma\in(0,1)\cup (1,2]$ and $\beta\in(0,\frac{\sigma}{4})$. 
There exists a sufficiently small constant $\theta>0$ such that, with probability $\geq 1-L^{-40}$, the following holds.
 Assume that $t \in [0,T_0]$ and $u$ is a solution to \eqref{MMT} satisfying
 $t \in [0,T_0]$ and $u$ is a solution to \eqref{MMT} satisfying
\begin{align*}
   \sup_{t'\in[0,t]} \|u(t')\|_{H^{N_0}}\leq L^{4\theta},\ \ \sup_{t'\in[0,t]} \|u(t')\|_{W^{2,0}}\leq L^{-1/2+4\theta}.
\end{align*}
Then we have 
\begin{align}
   \sup_{t'\in[0,t]} \|u(t')\|_{H^{N_0}}\leq L^{2\theta},\ \ \sup_{t'\in[0,t]} \|u(t')\|_{W^{2,0}}\leq L^{-1/2+2\theta}.\label{bd:bootstrap}
\end{align}
\end{proposition}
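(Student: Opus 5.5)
The bootstrap has two parts. The $H^{N_0}$ bound comes from the deterministic energy inequality, Theorem~\ref{thm:energy inequality}. The $W^{2,0}$ bound comes from comparing $u$ with the truncated solution $u_{\rm tr}$, whose $L^\infty$ smallness \eqref{bd:utrw2infty} is already known. We work on the intersection of the good events for \eqref{bd:uinhn0}, \eqref{bd:utrw2infty}, Proposition~\ref{prop:rnsmall} and Theorem~\ref{thm2}; this intersection has probability $\geq 1-L^{-40}$ after adjusting constants.

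\textbf{Step 1 (Sobolev bound).} Apply Theorem~\ref{thm:energy inequality} on $[0,t']$ with $A=L^{4\theta}$ and $\epsilon\equiv L^{-1/2+4\theta}$. The hypothesis $\epsilon\ll\lambda^{-1}$ holds because $\lambda=(\alpha L)^{1/2}=L^{(1-\gamma)/2}$, so $\lambda\epsilon\sim L^{-\gamma/2+4\theta}\ll 1$. This gives
\[
\|u(t')\|_{H^{N_0}}^2\lesssim L^{2\theta}+\lambda^4L^{8\theta}L^{-2+16\theta}T_0+1_{\{\sigma\in(0,1)\}}\lambda^2L^{8\theta}\epsilon^{2+\frac{1}{4N_0}}T_0 .
\]
The second term is $(\alpha^2T_0)L^{24\theta}$. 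For $\sigma\in(1,2]$ this is small because $T_0\le L^{5\gamma/4-10\delta}\le \alpha^{-2}L^{-10\delta}$, which holds since $5/4<2$. The third term is $\lambda^2L^{-1}\,L^{-\frac{1}{8N_0}}\,T_0\,L^{O(\theta)}=\alpha T_0\,L^{-\frac{1}{8N_0}+O(\theta)}$. This is $\ll L^{2\theta}$ precisely because $T_0\le L^{\gamma+\theta}$ and $\theta$ is chosen small relative to $1/N_0$ (this is why $T_0$ is capped at $L^{\gamma+\theta}$ for $\sigma<1$). The smallness of these two terms, together with \eqref{bd:uinhn0}, yields $\|u(t')\|_{H^{N_0}}\le L^{2\theta}$ once $\theta$ is fixed appropriately relative to $\delta$ and $N_0$.

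\textbf{Step 2 (closeness to the truncated solution).} Set $w=u-u_{\rm tr}$. At time zero, $w(0)=P_{>K}u_{\rm in}$ essentially, with $2^K\sim L^{\delta_0}$ and $\delta_0=\delta/(10\beta)$. Because $n_{\rm in}$ is Schwartz, $\|w(0)\|_{H^{N_0/2}}\lesssim L^{-100}$. The plan is to run an energy estimate for $w$ in a lower norm $H^{s}$, say $s=N_0/2$. The difference equation has nonlinear terms containing $(1-P_{\le K})\mathscr B(u)$ and $\mathscr B(u)-\mathscr B(u_{\rm tr})$. For the first, high-frequency projection of a smooth function gains $2^{-K(N_0-s-1)}\lesssim L^{-\delta_0 N_0/3}$ by the bootstrap $H^{N_0}$ bound. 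Since $N_0\gtrsim\delta_0^{-2}$, this gain is $\ll T_0^{-1}L^{-10}$. For the second, the derivative loss is controlled by the same quasilinear symmetrization (the multiplier bounds of Section~\ref{sec:acceptineq}), using $\|u\|_{W^{2,0}},\|u_{\rm tr}\|_{W^{2,0}}\lesssim L^{-1/2+4\theta}$. The growth rate is then $\lambda^2\epsilon^2\sim\alpha L^{O(\theta)}$, and the error from the loss of $\beta$ derivatives is absorbed by interpolating against the $H^{N_0}$ bound. Gronwall then gives
\[
\sup_{t'}\|w(t')\|_{H^{s}}\lesssim L^{-50}e^{C\alpha T_0L^{O(\theta)}} .
\]
This is the main obstacle: for $\sigma\in(1,2]$ the quantity $\alpha T_0$ can be as large as $L^{\gamma/4}$, so exponential growth is unacceptable. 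To handle this I would exploit the normal-form structure of the energy functional used for Proposition~\ref{prop:highenergy}, which gives the growth factor $\lambda^4\epsilon^4T_0\sim\alpha^2T_0\ll1$ instead of $\alpha T_0$. Concretely, apply the integration by parts in time of \eqref{bd:exp:Q} to the difference functional $\|J^{s}w\|_{L^2}^2$, treating terms linear in $w$ with two factors of $u$ or $u_{\rm tr}$ placed in $L^\infty$. An alternative would be to compare instead with $u_{\rm tr}$ run with a slightly larger cutoff $K'$.

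\textbf{Step 3 (conclusion).} The $W^{2,0}$ norm is controlled by $\|w\|_{H^{s}}$ for $s\ge 3$ via Sobolev embedding on $\mathbb{T}_L$, with constants independent of $L$. Hence
\[
\|u(t')\|_{W^{2,0}}\le\|u_{\rm tr}(t')\|_{W^{2,0}}+C\|w(t')\|_{H^{s}}\lesssim L^{-1/2+\theta}+L^{-10}\le L^{-1/2+2\theta}.
\]
This closes \eqref{bd:bootstrap}. A standard continuity argument in $t$ then extends the solution to the whole interval $[0,T_0]$.
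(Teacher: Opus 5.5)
Your Steps 1 and 3 agree with the paper: you apply the energy inequality with $A=L^{4\theta}$ and $\epsilon=L^{-1/2+4\theta}$, and then use $\|u\|_{W^{2,0}}\le\|u_{\rm tr}\|_{W^{2,0}}+C\|w\|_{H^5}$ together with \eqref{bd:utrw2infty}.

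\textbf{The gap is Step~2.} No deterministic estimate for $w=u-u_{\rm tr}$ survives up to $T_0$, and this fails for both ranges of $\sigma$, not only $\sigma\in(1,2]$.
\begin{itemize}
\item The terms linear in $w$ with two factors of $u_{\rm tr}$ have size $\lambda^2\|u_{\rm tr}\|_{W^{2,0}}^2\sim\alpha L^{O(\theta)}$, times $L^{O(\delta)}$ from the symbols at $|k|\lesssim 2^K$.
\item Gronwall therefore gives a factor $e^{C\alpha T_0L^{O(\delta)}}$. This is superpolynomial in $L$ already for $\sigma\in(0,1)$, since there $\alpha T_0$ can be as large as $L^{\theta}$.
\end{itemize}

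Your normal-form repair does not work. In Proposition~\ref{prop:highenergy} the normal form succeeds because the fully symmetrized symbol $\Psi$ vanishes on the trivial resonances $\{\xi_1,\xi_3\}=\{\xi_2,\xi_4\}$. That vanishing uses the fact that all four slots carry the same function $u$.
\begin{itemize}
\item In $\frac{d}{dt}\|J^sw\|_{L^2}^2$ the slots carry $w,\overline{u_{\rm tr}},u_{\rm tr},\overline{w}$.
\item The exactly resonant pairing of $w$ with $\overline{u_{\rm tr}}$ ($\xi_1=\xi_2$, $\xi_3=\xi_4$) leaves a symbol essentially equal to $B\,(\langle\xi_3\rangle^{2s}-\langle\xi_1\rangle^{2s})$. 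This does not vanish, and it cannot be divided by $\Omega=0$.
\item It produces $\lambda^2L^{-1}\,\mathrm{Im}(a\bar b)$, where $a,b$ are weighted inner products of $w$ and $u_{\rm tr}$. This is of size $\alpha\|w\|^2$, and the available $L^\infty$ bounds do not improve it.
\item The $\overline{w}\,\overline{w}\,u_{\rm tr}u_{\rm tr}$ terms give similar contributions.
\end{itemize}
These are exactly the terms $\sum_{k_1}(c_{k_1}\overline{v_{k_1}}+v_{k_1}\overline{c_{k_1}})B_{k,k_1,k_1,k}c_k$ in $\mathscr L_1$. Changing the cutoff to some $K'$ does not address them.

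\textbf{The missing idea is to control the linearized flow probabilistically, using a Duhamel argument instead of an energy estimate.} The paper sets $d_k(s)=e^{-2\pi i[|k|^\sigma Ts+\Gamma(k,s)]}\widehat w_k(Ts)$, so that $d=d(0)+\widetilde{\mathscr L}(d)+\tilde{\mathscr R}$. The argument then runs as follows.
\begin{itemize}
\item Every term linear, quadratic or cubic in $w$ carries $P_{\le K}$ on the output. So the derivative loss costs only $L^{O(\delta)}$, and no quasilinear symmetrization is needed.
\item The only high-frequency term, $P_{>K}\mathscr B(u,u,u)$, is a forcing of size $\lesssim L^{-1/\delta_0}\|u\|_{H^{N_0}}^3$ by the bootstrap hypothesis.
\item $\widetilde{\mathscr L}$ differs from the random operator $\mathscr L$ built from $c^{\le N}$ by $O(L^{-100})$ as a map $Z\to Z$, because $\|\mathcal R^{N+1}\|_Z\le L^{-500}$.
\item Proposition~\ref{prop:l}, proved via the flower-tree expansion and hypercontractivity, gives $\|\mathscr L^n\|_{Z\to Z}\lesssim(\alpha T^{4/5}L^{2\delta})^{n/2}L^{50}$. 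Hence $\|(1-\mathscr L)^{-1}\|_{Z\to Z}\lesssim L^{70}$ with high probability.
\end{itemize}
This polynomial bound replaces your exponential Gronwall factor. It closes the bootstrap $\|d\|_Z\le L^{-100}$, i.e.\ $\sup_{t'}\|w(t')\|_{H^5}\le L^{-100}$, which is Proposition~\ref{prop:wh5d}.
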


The bootstrap argument in Proposition~\ref{prop:bootstrap} follows directly from Proposition~\ref{prop:wh5d}, together with the acceptable energy inequality stated in Theorem~\ref{thm:energy inequality}.

\begin{proposition} \label{prop:wh5d}
   Under the assumption of Proposition \ref{prop:bootstrap}, we define $w:=u-u_{\rm tr}=u-P_{\leq K} u$ and have  with probability $\geq 1-L^{-40}$,
\begin{align*}
   \sup_{t'\in[0,t]} \|w(t')\|_{W^{2,0}}\lesssim   \sup_{t'\in[0,t]} \|w(t')\|_{H^{5}}\leq L^{-100}.
\end{align*}
\end{proposition}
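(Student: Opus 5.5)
The plan is to treat $w:=u-u_{\rm tr}$ as a perturbation whose initial datum is essentially zero at low frequencies and very small at high frequencies, and to close an $H^5$ energy estimate for $w$ on $[0,t]$ by Gronwall. First note that $w(0)=(1-P_{\leq K})u_{\rm in}=P_{>K}u_{\rm in}$. Since $n_{\rm in}$ is Schwartz and $2^K\sim L^{\delta/(10\beta)}$, the $H^5$ norm (indeed any $H^M$ norm) of $w(0)$ has expectation bounded by $L^{-M'}$ for any $M'$. Proposition~\ref{prop:dapiancha} then gives $\|w(0)\|_{H^{5}}\leq L^{-1000}$ with probability $\geq 1-L^{-40}$. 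The first inequality $\|w\|_{W^{2,0}}\lesssim\|w\|_{H^5}$ is a routine Bernstein/Sobolev bound, uniform in $L$: dyadic pieces give $\|P_kw\|_{L^\infty}\lesssim 2^{k/2}\|P_kw\|_{L^2}$, and summing against $2^{2k}$ uses only $H^{5/2+}$.

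Next I would write the equation for $w$. Subtracting \eqref{TrMMT} from \eqref{MMT} gives
\[
i\partial_t w+2\pi|\nabla|^\sigma w=-\bigl[\mathscr B(u,u,u)-P_{\leq K}\mathscr B(u_{\rm tr},u_{\rm tr},u_{\rm tr})\bigr].
\]
I would split the right-hand side as
\[
(1-P_{\leq K})\mathscr B(u,u,u)+P_{\leq K}\bigl[\mathscr B(u,u,u)-\mathscr B(u_{\rm tr},u_{\rm tr},u_{\rm tr})\bigr].
\]
The second piece is a sum of terms that are at least linear in $w$, with the remaining factors $u$ or $u_{\rm tr}$. The frequency cutoff $P_{\leq K}$ absorbs the outer $|\nabla|^\beta$ at a cost $2^{K\beta}\leq L^{\delta}$, so this piece is benign.

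For the $H^5$ energy of $w$ I would run the same quasilinear energy argument as in the proof of Proposition~\ref{prop:highenergy}, but only at the quartic level, not the normal form. The key point is that the derivative loss is handled by symmetrizing the top-order terms: the contributions where all $|\nabla|^\beta$ derivatives fall on $w$ pair up as $\mathrm{Im}\langle |\nabla|^{\beta}J^5 w\cdot|\,|\nabla|^\beta u|^2, |\nabla|^\beta J^5 w\rangle$-type expressions. These vanish or reduce to commutators costing at most one derivative, which is placed on the low-frequency factors measured in $W^{2,0}$. This yields
\[
\frac{d}{dt}\|w\|_{H^5}^2\lesssim \lambda^2 L^{C\delta}\bigl(\|u\|_{W^{2,0}}+\|u_{\rm tr}\|_{W^{2,0}}\bigr)^2\|w\|_{H^5}^2+\lambda^2\|w\|_{H^5}\,\|(1-P_{\leq K})\mathscr B(u,u,u)\|_{H^5}.
\]
The bootstrap hypothesis and \eqref{bd:utrw2infty} give $\lambda^2(\cdots)^2\lesssim\lambda^2L^{-1+8\theta}=\alpha L^{8\theta}$, so the Gronwall factor is $\exp(C\alpha T_0L^{C\delta+8\theta})$. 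This is where the time restriction enters. It is only harmless if $\alpha T_0\lesssim L^{\gamma'}$ with small exponent, which holds since $T_0\leq L^{\gamma+\theta}$ or $T_0\lesssim\alpha^{-5/4}$. The resulting factor $e^{L^{c}}$ is not polynomial, so I would instead iterate over subintervals of length $\alpha^{-1}L^{-C\theta}$, giving a polynomial loss $L^{O(1)}$ per unit of $\alpha T_0$.

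The source term is the main obstacle. I expect to write $(1-P_{\leq K})\mathscr B(u,u,u)=(1-P_{\leq K})\mathscr B(u,u,u)-(1-P_{\leq K})\mathscr B(u_{\rm tr},u_{\rm tr},u_{\rm tr})$. The second term vanishes when all inputs have frequency $\leq \frac85 2^{K}$ and the output frequency exceeds $3\cdot 2^{K}$; near the cutoff edge, a small band remains. So the source is again at least linear in $w$ plus a band-limited term, and the latter is controlled by interpolating the high $H^{N_0}$ bound $L^{4\theta}$ against the separation of frequencies, giving a bound $2^{-K(N_0-5)}L^{4\theta}\leq L^{-(N_0-5)\delta/(10\beta)+4\theta}$. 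The choice $N_0\sim\beta^2/\delta^2$ is precisely what makes this beat $L^{-200}$ after the polynomial Gronwall loss. The delicate point I would check most carefully is that the edge band does not leak: frequencies of $u_{\rm tr}$ in $[2^K,\frac85 2^K]$ are damped by $\varphi_{\leq K}$ but are nonzero, and I would handle them by the same high-Sobolev interpolation. Combining everything gives $\sup_{t'\leq t}\|w(t')\|_{H^5}\leq L^{-100}$.
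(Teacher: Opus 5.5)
\emph{There is a genuine gap in how you control the part of the $w$-equation that is linear in $w$.}

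Your differential inequality has linear rate $\lambda^2L^{C\delta}(\|u\|_{W^{2,0}}+\|u_{\rm tr}\|_{W^{2,0}})^2\sim\alpha L^{8\theta+C\delta}$. In general $\alpha T_0L^{8\theta}$ is a positive power of $L$. For $\sigma\in(1,2]$, $\alpha T_0$ can be $L^{\gamma/4-10\delta}$ or $L^{\gamma\beta/(1-\beta)-10\delta}$; for $\sigma\in(0,1)$ with $T_0=L^{\gamma+\theta}$ one gets $L^{9\theta}$. Gronwall therefore produces a factor $\exp(cL^{c'})$. Against this, your source is only polynomially small, $\|P_{>K}\mathscr B(u,u,u)\|_{H^5}\lesssim 2^{-K(N_0-6)}\lambda^2\|u\|_{H^{N_0}}^3$. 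The data $P_{>K}u_{\rm in}$ are only rapidly decreasing in $L$, not $\exp(-L^{c})$-small, for general Schwartz $n_{\rm in}$.

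Iterating over subintervals of length $\tau=\alpha^{-1}L^{-C\theta}$ does not help. The amplification factors of successive intervals multiply, so the total loss is $e^{O(T_0/\tau)}=\exp(O(\alpha T_0L^{C\theta}))$ however you subdivide. A linear Gronwall bound cannot be turned into a polynomial loss this way. This is not a technicality: deterministic quartic-level control of the linearized flow stops at times of order $\alpha^{-1}$, and $T_0\gg\alpha^{-1}$ is the whole point.

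Independently, your quartic energy identity is not right as stated. The $\bar w$-coupling $\mathscr B(u_{\rm tr},w,u_{\rm tr})=-\lambda^2|\nabla|^\beta[(|\nabla|^\beta u_{\rm tr})^2\,\overline{|\nabla|^\beta w}]$ contributes $\mathrm{Im}\int(|\nabla|^\beta u_{\rm tr})^2\,\overline{v}^{\,2}$ with $v=J^5|\nabla|^\beta w$. This neither vanishes nor reduces to a commutator, so it costs $2\beta$ derivatives in $H^5$. The paper's energy inequality needs the normal form (division by $\Omega^{(\sigma)}$, using $4\beta<\sigma$) precisely for such high--high interactions.

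\emph{The missing idea is probabilistic, and it makes any energy estimate for $w$ unnecessary.}

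The paper treats the whole term $P_{>K}\mathscr B(u,u,u)$, including its parts linear in $w$, as a source. It is bounded by $L^{-\delta_0(N_0-6)}\|\mathscr B(u,u,u)\|_{H^{N_0-1}}\lesssim L^{-190}$ using the bootstrap hypothesis $\|u\|_{H^{N_0}}\le L^{4\theta}$, so no derivative loss arises. No subtraction of $(1-P_{\le K})\mathscr B(u_{\rm tr},u_{\rm tr},u_{\rm tr})$ is needed; that term is in any case supported on the full band $\frac54 2^K<|k|\le\frac{24}{5}2^K$, not a thin edge.

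The only term linear in $w$ that remains is $P_{\le K}\sum_i\mathscr B^i[u_{\rm tr}](w)$, which involves only bounded frequencies. Pass to $d_k(s)=e^{-2\pi i[|k|^\sigma Ts+\Gamma(k,s)]}\widehat w_k(Ts)$, which carries the same renormalized phase as $c_k$. In these variables the term becomes $\widetilde{\mathscr L}$. It differs from $\mathscr L$ in \eqref{def:l} only through $c-c^{\le N}=\mathcal R^{N+1}$, so $\|(\widetilde{\mathscr L}-\mathscr L)v\|_Z\le L^{-100}\|v\|_Z$.

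One then writes $d=(1-\mathscr L)^{-1}[d(0)+(\widetilde{\mathscr L}-\mathscr L)d+\tilde{\mathscr R}]$ and uses \eqref{bd:1-l}, $\|(1-\mathscr L)^{-1}\|_{Z\to Z}\lesssim L^{70}$. That bound comes from Proposition~\ref{prop:l}, $\|\mathscr L^n\|_{Z\to Z}\lesssim(\alpha T^{4/5}L^{2\delta})^{n/2}L^{50}$, proved with flower trees and Gaussian hypercontractivity. It exploits the randomness of $u_{\rm tr}$ and holds on the whole interval with only a polynomial loss. With $\|d(0)\|_Z\le L^{-190}$ and the quadratic and cubic terms in $w$ bounded by $L^{-175}$, the bootstrap $\|d\|_Z\le L^{-100}$ closes.

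Your initial-data bound and the embedding $\|w\|_{W^{2,0}}\lesssim\|w\|_{H^5}$ are fine.
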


\begin{proof}[Proof of Proposition \ref{prop:wh5d}]

We recall that $u$ and $u_{\rm tr}=P_{\leq K} u$ solve
\begin{align*}
    i \partial_t u +2\pi |\nabla|^\sigma u&= \mathscr{B}(u,u,u),\ \ u(0)=u_{\rm in},\\
     i \partial_t u_{\rm tr} +2\pi |\nabla|^\sigma u_{\rm tr}&=P_{\leq K} \mathscr{B}(u_{\rm tr},u_{\rm tr},u_{\rm tr}),\ \ u_{\rm tr}(0)=P_{\leq K}  u_{\rm in}.
\end{align*}
Taking the difference and setting $w:=u-u_{\rm tr}$, we obtain
\begin{align*}
    i \partial_t w +2\pi |\nabla|^\sigma w=P_{\leq K}\sum_{i=1}^3 \mathscr{B}^i[u_{\rm tr}](w)+\mathscr R,\ \ w(0)=P_{> K} u_{\rm in},
\end{align*}
where   $\mathscr{B}^1[u_{\rm tr}](w):=\mathscr{B}(w,u_{\rm tr},u_{\rm tr}),\mathscr{B}^2[u_{\rm tr}](w):=\mathscr{B}(u_{\rm tr},w,u_{\rm tr}),$ $\mathscr{B}^3[u_{\rm tr}](w):=\mathscr{B}(u_{\rm tr},u_{\rm tr},w)$, and the remainder $\mathscr R$ is given by 
\begin{align*}
    \mathscr R:=P_{>K} \mathscr{B}(u,u,u)+P_{\leq K}\left[\mathscr{B}(u,u,u)-\mathscr{B}(u_{\rm tr},u_{\rm tr},u_{\rm tr})- \sum_{
    i=1}^3 \mathscr{B}^i[u_{\rm tr}](w)\right].
\end{align*}
Here  $P_{>K}=1-P_{\leq K}$.

Following the same reduction as in Section~\ref{reductions}, we introduce the analogous transformation
$$d_k(s):= e^{-2\pi i[|k|^\sigma Ts+\Gamma(k,s)]} \widehat w_k(Ts),\ \ s\in[0,1],$$
which satisfy
\begin{align}
    d_k=d_k(0)+\widetilde{\mathscr{L}}( d)_k+\tilde{\mathscr{R}}_k.\label{eq:overck}
\end{align}
Here $d_k(0):=[1-\varphi_{\leq K}(k)](\widehat{u}_{\rm in})_k$, and the linear operator $\widetilde{\mathscr{L}}$ is defined as 
\begin{align}
\widetilde{\mathscr{L}}(v)&:=\int_0^t \big(2\mathcal{W}_1(c,\overline c, v)+\mathcal{W}_1(c, \overline  v, c)+\widetilde{\mathscr{L}_1}[c](v)\big)\dif s,  \notag
\end{align} 
    where
\begin{align}
\widetilde{\mathscr{L}_1}[c]_k(v):&=2\frac{i\alpha T}{L}\sum_{k_1}\left(|c_{k_1}|^2 -\mathbf{E}[|c^{\leq N}_{k_1}|^2] \right) B_{k_1,k_1,k,k}\varphi_{\leq K}(k)v_k\notag\\
&+2\frac{i\alpha T}{L}\sum_{k_1}\left(c_{k_1}\overline {v_{k_1}}+v_{k_1}\overline {c_{k_1}} \right) B_{k,k_1,k_1,k}\varphi_{\leq K}(k)c_k.\notag
\end{align}
We recall that $$ c_k(t)= e^{-2\pi i[|k|^\sigma Tt+\Gamma(k,t)]} (\widehat u_{\rm tr})_k(Tt).$$ The term $\mathbf{E}[|c^{\leq N}_{k_1}|^2]$ arises from the renormalization $\Gamma(k,t)$ and is therefore consistent with the previous definitions. 

Finally, the remainder term is given by 
\begin{align*}
  \tilde{\mathscr{R}}_k(t):=\int_0^t e^{-2\pi i[|k|^\sigma Ts+\Gamma(k,s)]}  \widehat{\mathscr{R}}_k(Ts)\dif s.
\end{align*}

We recall the linear operator $\mathscr{L}$ defined in \eqref{def:l}.  
Then, by \eqref{eq:overck}, we obtain  
\begin{align}
 d_k=(1-\mathscr{L})   ^{-1}[d_k(0)+(\widetilde{\mathscr{L}}-\mathscr{L})_k( d)+\tilde{\mathscr{R}}_k].\label{eq:dk}
\end{align}
Then, we know that 
\begin{align*}
  \|w\|_{L_{[0,t]}^\infty H^5}^2=  \|d\|_{Z'}^2:=\sup_{s\in[0,t/T]}L^{-1}\sum_{k \in \mathbb{Z}_L}\langle k\rangle^{10}|d_k(s)|^2.
\end{align*}
In the following, we slightly abuse notation and continue to denote this norm by $Z$.

By definition, we have
\begin{align*}
    \tilde\varphi_{\leq K}(k)c_k=c_k,\ \  \tilde\varphi_{\leq K}(k)c^{\leq N}_k=c^{\leq N}_k, 
\end{align*}
which implies that 
\begin{align*}
    \mathscr{L}(v)&:=\int_0^t \big(2\mathcal{W}_1( \tilde\varphi_{\leq K}c^{\leq N},  \tilde\varphi_{\leq K}\overline{c^{\leq N}}, v)+\mathcal{W}_1( \tilde\varphi_{\leq K}c^{\leq N},\overline{ v},  \tilde\varphi_{\leq K}c^{\leq N})+\mathscr{L}_1[ \tilde\varphi_{\leq K}c^{\leq N}](v)\big)\dif s,  \\
\widetilde{\mathscr{L}}(v)&:=\int_0^t \big(2\mathcal{W}_1(\tilde\varphi_{\leq K} c,\tilde\varphi_{\leq K}\overline c, v)+\mathcal{W}_1(\tilde\varphi_{\leq K}c, \overline v, \tilde\varphi_{\leq K}c)+\widetilde{\mathscr{L}_1}[\tilde\varphi_{\leq K}c](v)\big)\dif s. 
\end{align*}

Taking the difference and exploiting the properties of $\tilde{\varphi}_{\leq K}$, a calculation analogous to \eqref{bd:est:w1} yields the following bound: with probability $\geq 1 - L^{-40}$,
\begin{align*}
   \| (\widetilde{\mathscr{L}}-\mathscr{L})(v)\|_{Z}
  & \lesssim  L^{21} \|c -c^{\leq N}\|_{Z}( \|c \|_Z+ \|c^{\leq N}\|_Z)\|v\|_{Z}\\
   &\lesssim L^{21} \|\mathcal{R}^{N+1}\|_{Z}( \|\mathcal{R}^{N+1}\|_{Z}+ \|c^{\leq N}\|_{Z})\|v\|_{Z}\leq L^{-100}\|v\|_{Z}.
\end{align*}
Here we used the decomposition $c=c^{\leq N}+\mathcal{R}^{N+1}$.  The bounds for $c^{\leq N}$ and $\mathcal{R}^{N+1}$ are given in \eqref{bd:mathcalr} and Proposition~\ref{prop:ckn}, respectively. In particular, with probability $\geq 1-L^{-40}$,
\begin{align}
    \|c^{\leq N} \|_{Z}\leq \sum_{i=1}^N\|c^{i} \|_{Z}\lesssim L.\label{bd:cz}
\end{align}

By \eqref{bd:1-l}, the operator $(1-\mathscr{L})^{-1}$ is bounded on $Z$. 
Combining this with \eqref{eq:dk}, we obtain 
\begin{align}
 \|d\|_{Z}\lesssim L^{70}[\|d(0)\|_{Z}+\|\tilde{\mathscr{R}}\|_{Z}].\label{bd:overlinec:boot}
\end{align}
We now apply a bootstrap argument again. Assume that
\begin{align}
 \sup_{t'\in[0,t]} \|w(t')\|_{H^{5}}\leq   \|d\|_{Z}\leq  L^{-100}.\label{bd:boot:wh5}
\end{align}
We aim to improve this bound to  \begin{align}
 \sup_{t'\in[0,t]} \|w(t')\|_{H^{5}}\leq   \|d\|_{Z}\leq  \frac12L^{-100}.\notag
\end{align}
Since the initial data $u_{\rm in}$ is a Schwartz function, by \eqref{def:K} and \eqref{bd:uinhn0} we have
\begin{align*}
    \|d(0)\|_{Z}\leq \|P_{>K}u_{\rm in}\|_{H^{5}}\lesssim L^{-\delta_0 (N_0-5)}\|u_{\rm in}\|_{H^{N_0}}\lesssim L^{-\frac{1}{\delta_0}} \|u_{\rm in}\|_{H^{N_0}}\leq  L^{-190}.
\end{align*}

For the remainder term, by definition it suffices to estimate the following two contributions:
\begin{align*}
    \|\tilde{\mathscr{R}}\|_Z\ &\lesssim \|P_{>K} \mathscr{B}(u,u,u)\|_{L^\infty_{[0,t]}H^{5}}\notag\\
    &\quad +\bigg\|P_{\leq K}[\mathscr{B}(u,u,u)-\mathscr{B}(u_{\rm tr},u_{\rm tr},u_{\rm tr})-\sum_{i=1}^3 \mathscr{B}^i[u_{\rm tr}](w)]\bigg\|_{L^\infty_{[0,t]}H^{5}}.
\end{align*}
For the first term above, by the bootstrap assumption in Proposition~\ref{prop:bootstrap}, we have
\begin{align*}
   \|P_{>K} \mathscr{B}(u,u,u)\|_{H^{5}}& \lesssim L^{-\delta_0(N_0-6)}\| \mathscr{B}(u,u,u)\|_{H^{N_0-1}}\\
   &\lesssim L^{-\frac1{\delta_0}}\|u\|_{H^{N_0}}^3\leq L^{-200} \|u\|_{H^{N_0}}^3\leq L^{-190},
\end{align*}
where we used that $H^{N_0}$ is an algebra for $N_0>10$, and chose $\delta>0$ sufficiently small.

For the second term, by direct expansion,
\begin{align*}
  P_{\leq K}[&\mathscr{B}(u,u,u)-\mathscr{B}(u_{\rm tr},u_{\rm tr},u_{\rm tr})-\sum_{i=1}^3 \mathscr{B}^i[u_{\rm tr}](w)]\\
  &=  P_{\leq K}[ \mathscr{B}(u_{\rm tr},w,w)+\mathscr{B}(w,u_{\rm tr},w)+\mathscr{B}(w,w,u_{\rm tr})+\mathscr{B}(w,w,w)].
\end{align*}
Using the bootstrap assumption \eqref{bd:boot:wh5} together with Sobolev embedding, we obtain
\begin{align*}
   \|P_{\leq K}  \mathscr{B}(u_{\rm tr},w,w)\|_{H^{5}}\lesssim L^{20} \|u_{\rm tr}\|_{H^{5}}\|w\|_{H^{5}}^2 
\end{align*}
and similarly for the remaining terms. Hence,
\begin{align}
   &\bigg\|P_{\leq K}[\mathscr{B}(u,u,u)-\mathscr{B}(u_{\rm tr},u_{\rm tr},u_{\rm tr})-\sum_{i=1}^3 \mathscr{B}^i[u_{\rm tr}](w)]\bigg\|_{L^\infty_{[0,t]}H^{5}}\notag\\
   &\quad \lesssim L^{20} \|u_{\rm tr}\|_{L^\infty_{[0,t]}H^{5}}\|w\|_{L^\infty_{[0,t]}H^{5}}^2+L^{20} \|w\|_{L^\infty_{[0,t]}H^{5}}^3\lesssim L^{-175},\label{bd:mid6}
\end{align}
where we used \eqref{bd:cz} to deduce
\begin{align*}
    \|u_{\rm tr}\|_{L_{[0,t]}^\infty H^{5}}\lesssim \|c\|_{Z}\lesssim L.
\end{align*}

Combining the above bounds with \eqref{bd:overlinec:boot}, we obtain
\begin{align}
\sup_{t'\in[0,t]} \|w(t')\|_{H^{5}}\leq  \|d_k\|_{Z}\leq L^{70} \cdot L^{-175}\leq \frac12L^{-100}.\notag
\end{align}
 which improves the bootstrap assumption and thus closes the argument.

   Finally, by the Sobolev embedding estimate (\cite[(2.27)]{DIP25a})  
\begin{align*}
\|P_kf\|_{L^\infty}\lesssim (2^{k/2}+L^{-1/2})\|P_kf\|_{L^2},
\end{align*}
we obtain the embedding $H^{5} \subset W^{2,0}$,  which completes the proof.
\end{proof}

Now, we are in position to prove the main bootstrap argument in Proposition \ref{prop:bootstrap}.
\begin{proof}[Proof of Proposition \ref{prop:bootstrap}]
 Under the assumption of  Proposition \ref{prop:wh5d}, together with  \eqref{bd:utrw2infty} we have
 \begin{align}
   \sup_{t'\in[0,t]} \|u(t')\|_{W^{2,0}}\leq  \sup_{t'\in[0,t]} \|u_{\rm tr}(t')\|_{W^{2,0}}+ \sup_{t'\in[0,t]} \|w(t')\|_{W^{2,0}}\lesssim L^{-1/2+\theta}\leq  L^{-1/2+2\theta},\notag
\end{align}
which yields the second estimate in \eqref{bd:bootstrap}. Here we take $L$ sufficiently large to absorb the implicit constant.

For the first estimate in \eqref{bd:bootstrap}, we use the bootstrap assumptions
\begin{align*}
   \sup_{t'\in[0,t]} \|u(t')\|_{H^{N_0}}\leq L^{4\theta},\ \ \sup_{t'\in[0,t]} \|u(t')\|_{W^{2,0}}\leq L^{-1/2+4\theta}.
\end{align*}
Applying Theorem~\ref{thm:energy inequality} with 
$A=L^{4\theta},\epsilon=L^{-1/2+4\theta}$, we note that $\lambda^2\epsilon^2=\alpha L\cdot L^{-1+8\theta}\ll 1$ for $\theta>0$ sufficiently small.

For the case $\sigma\in(1,2]$, using   \eqref{bd:uinhn0}, we obtain
\begin{align}
\sup_{t'\in[0,t]}  \|u(t')\|_{H^{N_0}}^2&\lesssim  \|u_{\rm in}\|_{H^{N_0}}^2+\lambda^4A^2T\epsilon^4\notag\\
&\lesssim L^{\theta}+\alpha^2 L^{2}\cdot L^{8\theta}\cdot TL^{-2+16\theta}\leq L^{2\theta},\notag
\end{align}
where we used the fact that $\alpha^2T\ll1$, and chose $\theta>0$ small enough such that $\alpha^2TL^{25\theta}\ll1$.

Similarly, for the case $\sigma\in(0,1)$,  we have instead \begin{align}
\sup_{t'\in[0,t]}  \|u(t')\|_{H^{N_0}}^2&\lesssim  \|u_{\rm in}\|_{H^{N_0}}^2+\lambda^4A^2T\epsilon^4+\lambda^2A^{2-\frac{1}{4N_0}}T\epsilon^{2+\frac{1}{4N_0}}\notag\\ 
&\lesssim L^{\theta}+\alpha L\cdot L^{8\theta}\cdot TL^{-1-\frac{1}{8N_0}+8\theta}\leq L^{2\theta},\notag
\end{align}
where  we used $\alpha TL^{-\theta}<1$, and chose $\theta>0$ small enough such that $20\theta<\frac{1}{8N_0}$.

\end{proof}

\subsection{The derivation of WKE}\label{sec:mainresult}
The main bootstrap argument in Proposition~\ref{prop:bootstrap} yields the existence of a smooth solution to \eqref{MMT} up to time $T_0$, satisfying both the $H^{N_0}$-bound and the $L^\infty$-type bound. We now turn to the verification of \eqref{app}. 

 From Theorem~\ref{thm2}, for the truncated solution $u_{\rm tr}$ to \eqref{TrMMT}, we have
\begin{equation}
    \mathbf E |\widehat u_{\rm tr}(t, k)|^2 =\varphi^2_{\leq K}(k)n_{\mathrm{in}}(k)+\frac{t}{T_{\mathrm{kin}}}\mathcal K_{\rm tr}(n_{\mathrm{in}})(k)+o_{l^\infty_k}\left(\frac{t}{T_{\mathrm {kin}}}\right)_{L \to \infty}.\notag
\end{equation}
where $\mathcal K_{\rm tr}$ is defined in \eqref{bd:ktr}.


 We now show that   \begin{align*}
    \mathbf E |\widehat u (t, k)|^2-\mathbf E |\widehat u_{\rm tr}(t, k)|^2 =(1-\varphi^2_{\leq K}(k))n_{\mathrm{in}}(k)+o_{l^\infty_k}\left(\frac{t}{T_{\mathrm {kin}}}\right)_{L \to \infty}.
\end{align*}
Recall that $\widehat u=\widehat u_{\rm tr}+\widehat w$, and define
\begin{align*}
    c_k(t)= e^{-2\pi i[|k|^\sigma Tt+\Gamma(k,t)]} (\widehat u_{\rm tr})_k(Tt),\ \ d_k(t):= e^{-2\pi i[|k|^\sigma Tt+\Gamma(k,t)]} \widehat w_k(Tt).
\end{align*}
It suffices to prove that for $s\in[0,1]$,
\begin{align*}
    \mathbf E&|c_k(s)+d_k(s)|^2- \mathbf E|c_k(s)|^2-\mathbf E|c_k(0)+d_k(0)|^2+\mathbf E|c_k(0)|^2\\
    &=\mathbf E[(d_k(s)+d_k(0)+2c_k(s))(d_k(s)-d_k(0))]+2\mathbf E[(c_k(s)-c_k(0))d_k(0)]
\end{align*}
is of order $o_{l^\infty_k}\left(\frac{Ts}{T_{\mathrm {kin}}}\right)_{L \to \infty}$.  Expanding the expression, we obtain  that with probability $\geq 1-L^{-40}$,
\begin{align*}
    |d_k(s)+c_k(s)|&\lesssim L^{1/2}(\|d\|_{Z}+\|c\|_{Z})\lesssim L^2,\\
    |d_k(0)|&\lesssim|(1-\varphi_{\leq K}(k))n_{\mathrm{in}}(k)|\lesssim  L^{-100}\sup_k\left||k|^{100/\delta_0}\cdot n_{\mathrm{in}}(k)\right|\lesssim L^{-100}.
\end{align*}
By \eqref{eq:overck},  \eqref{bd:cz}, 
\eqref{bd:boot:wh5},  \eqref{bd:mid6} it holds that
\begin{align*}
   | d_k(s)-d_k(0)|&\lesssim |\widetilde{\mathscr{L}}( d)_k(s)|+|\tilde{\mathscr{R}}_k(s)|\lesssim  L^{1/2}\| \widetilde{\mathscr{L}}_k(d)(s)\|_{l_L^2}+ L^{1/2}\| \tilde{\mathscr{R}}_k(s)\|_{l_L^2}\\
   &\lesssim s\cdot L^{30}\|c\|_{Z}^2\|d\|_{Z}+s\cdot L^{-100}\lesssim
    sL^{-50},
\end{align*}
where the factor $s$ comes from the time integration. 

Similarly, by \eqref{ipartck} and  \eqref{bd:cz}  
\begin{align*}
     | c_k(s)-c_k(0)|&\lesssim |\widetilde{\mathscr{L}}( c)_k(s)|\lesssim  L^{1/2}\| \widetilde{\mathscr{L}}_k(c)(s)\|_{l_L^2} \lesssim s\cdot L^{30}\|c\|_{Z}^3 \lesssim
    sL^{40}.
\end{align*}

Combining the above estimates, we obtain
\begin{align*}
    \mathbf E&[(d_k(s)+d_k(0)+2c_k(s))(d_k(s)-d_k(0))]+2\mathbf E[(c_k(s)-c_k(0))d_k(0)]\\
    &\lesssim L^2\cdot   sL^{-50}+sL^{40}\cdot L^{-100} =o_{l^\infty_k}\left(\frac{Ts}{T_{\mathrm {kin}}}\right)_{L \to \infty}.
\end{align*}

 Then, it suffices to prove the convergence of the collision kernel.\begin{align*} 
   \mathcal K(n_{\mathrm{in}})(k)-\mathcal K_{\rm tr}(n_{\mathrm{in}})(k)
   =o_{l^\infty_k}(1),
\end{align*}
   We need to consider the truncation function $\varphi_{\leq K}$ and  $\Gamma_0$ in the domain of integration. Under the condition $\xi=\xi_1-\xi_2+\xi_3$, it is easy to see that
\begin{align*}
   (1-\varphi_{\leq K}^2(\xi_1)) &|\xi_1|^{2\beta} |n_{\mathrm{in}}(\xi_1) |\lesssim  L^{-\delta}\left||\xi_1|^{2\beta+1}\cdot n_{\mathrm{in}}(\xi_1)\right|,\\
    (1-\varphi_{\leq K}^2(\xi_1)) &|\xi_1|^{2\beta}|n_{\mathrm{in}}(\xi_2)n_{\mathrm{in}}(\xi_3)n_{\mathrm{in}}(\xi) |\\
    &\lesssim L^{-\delta }\left||\xi_1|^{2\beta+1}\cdot n_{\mathrm{in}}(\xi_2)n_{\mathrm{in}}(\xi_3)n_{\mathrm{in}}(\xi)\right|\\
    &\lesssim L^{-\delta }\left|(|\xi_2|^{2\beta+1}+|\xi_3|^{2\beta+1}+|\xi|^{2\beta+1})\cdot n_{\mathrm{in}}(\xi_2)n_{\mathrm{in}}(\xi_3)n_{\mathrm{in}}(\xi)\right|.
\end{align*}
Then, for the Schwarz function $n_{\mathrm{in}}$, it holds that
\begin{align*}
  \mathcal K(n_{\mathrm{in}})(k)-\mathcal K _{\rm tr,1}(n_{\mathrm{in}})(k)=o_{l^\infty_k}(1),
\end{align*}
where 
\begin{align}
    \mathcal K_{\rm tr,1}(\phi)(\xi):=& \int_{\xi=\xi_1-\xi_2+\xi_3,\  {\Omega}(\xi)=0}\varphi_{\leq K}^2(\xi_1)\varphi_{\leq K}^2(\xi_2)\varphi_{\leq K}^2(\xi_3)\varphi_{\leq K}^2(\xi)|\xi|^{2\beta}|\xi_1|^{2\beta}|\xi_2|^{2\beta}|\xi_3|^{2\beta}  \notag\\
   &\ \ \  \  
   \times \phi \phi_1 \phi_2 \phi_3\left(\frac{1}{\phi_1}-\frac{1}{\phi_2}+\frac{1}{\phi_3}-\frac{1}{\phi}\right)\, \dif \xi_1 \dif \xi_2 \dif \xi_3.\notag
\end{align} 

It remains to prove that
\begin{align*}
   \mathcal K_{\rm tr,1}(n_{\mathrm{in}})(k)-\mathcal K_{\rm tr}(n_{\mathrm{in}})(k)
   =o_{l^\infty_k}(1).
\end{align*}
We recall that uniformly in $k\in\mathbb{R}$,
 \begin{align*}
     \Gamma_0(k)=C_K\frac{\alpha }{\pi }\varphi_{\leq K}(k)|k|^{2\beta}\leq C\alpha L^\delta.
 \end{align*} 

By writing $n_{\rm in }(\xi)=\langle \xi\rangle^{-100}\tilde{n}_{\rm in }(\xi)$, it suffices to consider the problem on a compact domain. 
Using the relation $\xi=\xi_1-\xi_2+\xi_3$, we rewrite the collision kernel as a function $W(\xi_1,\xi_3)$. We aim to prove
\begin{align*}
      \left|\int_{\Omega(\xi_1,\xi_3)+  \sum \Gamma_0(\alpha,\xi_1,\xi_3)=0}\chi (\xi_1,\xi_3)W (\xi_1,\xi_3)\dif \xi_1\dif \xi_3-\int_{\Omega(\xi_1,\xi_3)=0}\chi (\xi_1,\xi_3)W (\xi_1,\xi_3)\dif \xi_1\dif \xi_3\right| =o_{l^\infty_k}(1).
\end{align*}
where $\chi$ is a cutoff supported on $|\xi_1-a|\leq1$, $|\xi_3-b|\leq1$.

We decompose the domain into
\begin{align*}
  E_1:=  &\{(\xi_1,\xi_3):\min\{|\xi_1|,|\xi_3|,|\xi|\}\leq L^{-\epsilon}  \},\\
   E_2:=  &\{(\xi_1,\xi_3):\min\{|\xi_1-\xi|,|\xi_3-\xi|\}\leq L^{-3\epsilon}  \},
\end{align*}
for some $\epsilon>0$ small enough.  On the set $E_1$, we use the symbol $|\xi_1|^\beta|\xi_2|^\beta|\xi_3|^\beta|\xi|^\beta$ to gain smallness, , which yields a contribution of order $o_{l^\infty_k}(1)$.  On the set $E_2\cap (E_1)^c$, we use the cancellation property $W(\xi,\xi_3)=0$. For instance, if $|\xi_1-\xi|\leq L^{-3\epsilon}$, then
\begin{align*}
     |W (\xi_1,\xi_3)|=|W (\xi_1,\xi_3)-W (\xi,\xi_3)|\leq \|W'\|_{C^1_{(\xi_1,\xi_3)}}\cdot L^{-3\epsilon} =o_{l^\infty_k}(1).
\end{align*}

We now consider this problem on the domain $(E_1\cup E_2)^c$. On this set, all variables stay away from singular configurations. 
We further decompose the domain into finitely many subdomains where the signs of $\xi_i$ are fixed. 
On each subdomain, the functions $\Omega$, $\Gamma_0$, and $W$ are $C^1$, with derivatives bounded by $L^{C\epsilon}$. Define the perturbed resonance surface
 \[
S_\alpha := \{(\xi_1,\xi_3):\Omega(\xi_1,\xi_3)+\sum\Gamma_0(\xi_1,\xi_3)=0\}.
\]
By the non-degeneracy condition $|\nabla \Omega|\gtrsim L^{-10\epsilon}$, 
the implicit function theorem implies that $S_\alpha$ can be parametrized as
\[
\xi_1=F(\xi_3,\alpha),
\]
with $F$ smooth.

 We then compare the integrals along $S_\alpha$ and $S_0$:
\[
\Big|\int_{S_\alpha} W\,\dif S - \int_{S_0} W\,\dif S\Big| =\Big|\int_{-\delta}^\delta W(F\big(y,\alpha\big),y)-W(F\big(y,0\big),y)\dif y\Big|\leq \alpha \|W\|_{C^1_{x}}\|F\|_{C_\alpha^1},
\]
Moreover, differentiating the identity $S(F(y,\alpha),y,\alpha)=0,$
we obtain
\[
|\partial_\alpha F|
\lesssim
\frac{|\partial_\alpha S|}{|\nabla S|}
\lesssim L^{10\epsilon}.
\]
Hence,
\[
\left|
\int_{S_\alpha} W\,\dif S
-
\int_{S_0} W\,\dif S
\right|
\lesssim \alpha L^{10\epsilon},
\]
which tends to $0$ as $L\to\infty$.

Since the domain is compact, a finite covering argument completes the proof.


\ 
\noindent{\bf Acknowledgment.}  
The author would like to thank Prof. Yu Deng for valuable discussions and
helpful suggestions. His insights and comments have been very helpful in
the development of this work.

\appendix
 \renewcommand{\appendixname}{Appendix~\Alph{section}}
  \renewcommand{\theequation}{A.\arabic{equation}}

\section{The Algorithm in \cite{Vas24}}\label{sec:app}
In this section, we recall the algorithm associated with the molecule $\mathbb{M}(\mathcal{Q}^{sp})$.  We start   the following at Operation 1, where each time we remove an atom, we also remove all  bonds connected to it:
 \begin{enumerate}
     \item[(0)] If possible, remove an atom of degree 2 with a double bond. Go to (0).
     \item  Otherwise, if possible, remove a bridge (recall Definition \ref{def-molecules}). Go to (0).
     \item  Otherwise, if possible, remove an atom of degree 3 with a triple bond. Go to (1).
     \item Otherwise, if possible, remove an atom of degree 3 with a double bond. Go to (0).
     \item  Otherwise, if possible, remove an atom of degree 3 with only single bonds. Go to (1).
     \item Otherwise, if possible, remove an atom of degree 2 with only single bonds which  satisfy at least one of the following:
\begin{itemize}
    \item  The atom is connected to two atoms which are themselves connected by a double  bond.
    \item  The atom is connected to two atoms which are themselves connected by a single  bond, which becomes a bridge if the operation were to be performed.
    \item  The atom is connected to two atoms which are themselves connected by a single  bond and there is no double bond at either. 
    \item  The atom is connected to two atoms which are themselves not connected.
\end{itemize}
 Go to (1).
 \item  Otherwise, if possible, remove an atom of degree 2 with only single bonds connected  to two atoms which are themselves connected by a single bond, and there is a double  bond at precisely one of them. Go to (1).
 \item  Otherwise, if possible, remove an atom of degree 2 with only single bonds which is  connected to two atoms which are themselves connected by a triple bond. Go to (2).
 \item  Otherwise, if possible, remove an atom of degree 2 with only single bonds connected  to two atoms which are themselves connected by a single bond, and there is a double bond at both of them. Go to (1).
 \item  Otherwise, remove a sole atom (degree 0) with no edges. Repeat.
 \end{enumerate}

\bigskip

 \end{document}